\documentclass[a4paper, 11pt, reqno]{amsart}
\usepackage{amsthm,amssymb,amsmath,enumerate,graphicx,psfrag,enumitem,mathrsfs}

\usepackage[margin=2.5cm]{geometry}
\usepackage{hyperref}
\usepackage{array}
\hypersetup{colorlinks=true,
   citecolor=blue,
   filecolor=blue,
   linkcolor=blue,
   urlcolor=blue
}

\usepackage{tikz}
\usepackage[textsize=footnotesize]{todonotes}

\usepackage{algorithm}
\usepackage{tikz}

\allowdisplaybreaks

\usetikzlibrary{backgrounds}

\newtheorem{definition}{Definition}[section]
\newtheorem{claim}{Claim}
\newtheorem{proposition}[definition]{Proposition}
\newtheorem{theorem}[definition]{Theorem}
\newtheorem{corollary}[definition]{Corollary}
\newtheorem{lemma}[definition]{Lemma}

\newtheorem*{claim*}{Claim}

\numberwithin{equation}{section}

\newcommand{\comment}[1]{}

\newcommand{\cA}{\mathcal{A}}

\newcommand{\cK}{\mathcal{K}}

\newcommand{\cQ}{\mathcal{Q}}

\newcommand{\cF}{\mathcal{F}}

\newcommand{\cC}{\mathcal{C}}

\newcommand{\cH}{\mathcal{H}}
\newcommand{\cM}{\mathcal{M}}

\newcommand{\den}{{\rm den}}

\renewcommand{\epsilon}{\varepsilon}

%


\newcommand{\COMMENT}[1]{}

\floatstyle{plain}
\restylefloat{figure}

\title[A bandwidth theorem for approximate decompositions]{A bandwidth theorem for approximate decompositions}
\author{Padraig Condon, Jaehoon Kim, Daniela K\"uhn and Deryk Osthus}

\thanks{The research leading to these results was partially supported by the EPSRC, grant no. EP/N019504/1 (D.~K\"uhn), 
and by the Royal Society and the Wolfson Foundation (D.~K\"uhn).
The research was  also partially supported by the European Research Council under the European Union's Seventh Framework Programme (FP/2007--2013) / ERC Grant 306349 (J.~Kim and D.~Osthus). }

\date{\today}

\begin{document}

\date{\today}

\restylefloat{figure} 
\maketitle

\begin{abstract}
We provide a degree condition on a regular $n$-vertex graph $G$ which ensures the 
existence of a near optimal packing of any family $\mathcal H$ of bounded
degree $n$-vertex $k$-chromatic separable graphs into $G$.
In general, this degree condition is best possible.

Here a graph is separable if it has a sublinear separator whose removal results in a set of components of sublinear size. Equivalently, the separability condition can be replaced by that of having small bandwidth.
Thus our result can be viewed as a version of the bandwidth theorem of 
B\"ottcher, Schacht and Taraz in the setting of approximate decompositions.

More precisely, let $\delta_k$ be the infimum over all $\delta\ge 1/2$ ensuring an approximate $K_k$-decompo\-si\-tion of any sufficiently large regular $n$-vertex graph $G$ of degree at least $\delta n$. 
Now suppose that $G$ is an $n$-vertex graph which is close to $r$-regular for some $r \ge (\delta_k+o(1))n$ and suppose that $H_1,\dots,H_t$ is a sequence of bounded degree $n$-vertex $k$-chromatic separable graphs with $\sum_i e(H_i) \le (1-o(1))e(G)$. We show that there is an edge-disjoint packing of 
$H_1,\dots,H_t$ into~$G$. 

If the $H_i$ are bipartite, then $r\geq (1/2+o(1))n$ is sufficient.
In particular, this yields an approximate version of the tree packing conjecture 
in the setting of regular host graphs $G$ of high degree.
Similarly, our result implies approximate versions of the Oberwolfach problem,
the Alspach problem and the existence of resolvable designs in the setting of
regular host graphs of high degree.


\end{abstract}

\section{Introduction}\label{sec: intro} 
Starting with Dirac's theorem on Hamilton cycles, a successful research direction in extremal combinatorics has been to find
appropriate minimum degree conditions on a graph $G$
which guarantee the existence of a copy of a (possibly spanning) graph $H$
as a subgraph.
On the other hand, several important questions and results in design theory ask for the existence of 
a decomposition of $K_n$ into edge-disjoint copies of a (possibly spanning) graph $H$,
or more generally into a suitable family of graphs $H_1,\dots,H_t$.

Here, we combine the two directions: rather than finding just a single spanning graph $H$ in a dense graph $G$, we seek (approximate)
decompositions of a dense regular graph $G$ into edge-disjoint copies of spanning sparse graphs $H$.
A specific instance of this is the recent proof of the Hamilton decomposition
conjecture and the $1$-factorization conjecture for large $n$~\cite{CKOLT}: 
the former states that for  $r \ge   \lfloor n/2 \rfloor $, every $r$-regular $n$-vertex graph $G$  has a decomposition
into Hamilton cycles and at most one perfect matching,
the latter provides the corresponding threshold for decompositions into perfect matchings. 
In this paper, we restrict ourselves to approximate decompositions, but
achieve asymptotically best possible results for a much wider class of graphs than matchings and Hamilton cycles.

\subsection{Previous results: degree conditions for spanning subgraphs}
Minimum degree conditions for spanning subgraphs have been obtained mainly for
(Hamilton) cycles, trees, factors and bounded degree graphs.
We now briefly discuss several of these.
Recall that Dirac's theorem states that any $n$-vertex graph $G$ with minimum degree at least $n/2$ contains a Hamilton cycle.
More generally, Abbasi's proof~\cite{A} of the El-Zahar conjecture determines the minimum degree threshold for the existence
of a copy of $H$ in $G$ where $H$ is a spanning union of vertex-disjoint cycles
(the threshold turns out to be $\lfloor (n+odd_H)/2\rfloor$, where $odd_H$ denotes the number of 
odd cycles in $H$).

Koml\'os, S\'ark\"ozy and  Szemer\'edi \cite{KSS} proved a conjecture
of Bollob\'as by showing that a minimum degree degree of $n/2+o(n)$ guarantees
 every bounded degree $n$-vertex tree as a subgraph
 (this was later strengthened in~\cite{KSS2, CLGS, J}).
 
 An \emph{$F$-factor} in a graph $G$ is a set of vertex-disjoint copies of $F$
 covering all vertices of $G$.
The Hajnal-Szemer\'{e}di theorem~\cite{HS} implies that the minimum degree
threshold for the existence of a $K_k$-factor is $(1- 1/k)n$.
This was generalised to $k$th powers of Hamilton cycles by 
Koml\'os, S\'ark\"ozy and  Szemer\'edi~\cite{KSS3}.
 The threshold for arbitrary $F$-factors was determined by K\"uhn and Osthus~\cite{KO09}, 
 and is given by $(1-c(F))n+O(1)$, where $c(F)$  satisfies $1/\chi(F) \le c(F) \le 1/(\chi(F)-1)$
 and can be determined explicitly (e.g.~$c(C_5)=2/5$, in accordance with 
 Abbasi's result).

A far-reaching generalisation of the Hajnal-Szemer\'{e}di theorem~\cite{HS}
would be provided by the  Bollob\'as-Catlin-Eldridge (BEC) conjecture.
This would imply that every $n$-vertex graph $G$ of minimum degree
at least $(1-1/(\Delta+1))n$ contains every $n$-vertex graph $H$ of maximum
degree at most $\Delta$ as a subgraph.
Partial results include the proof for 
$\Delta=3$ and large $n$ by Csaba, Shokoufandeh and Szemer\'edi~\cite{CSS} and bounds for
 large $\Delta$ by Kaul, Kostochka and Yu~\cite{KKY}.

Bollob\'as and Koml\'os conjectured that one can improve 
on the BEC-conjecture for
graphs $H$ with a linear structure: any $n$-vertex graph $G$ with minimum degree at least $(1-1/k+ o(1))n$ contains a copy of every $n$-vertex $k$-chromatic graph $H$ with bounded maximum degree and small bandwidth. 
Here an $n$-vertex graph $H$ has \emph{bandwidth} $b$ if there exists an ordering $v_1,\dots, v_n$ of $V(H)$ such that all edges $v_iv_j\in E(H)$ satisfy $|i-j|\leq b$.
Throughout the paper, by $H$ being $k$-chromatic we mean $\chi(H) \le k.$
This conjecture was resolved by the bandwidth theorem of B\"ottcher, Schacht and Taraz \cite{BST}. 
Note that while this result is essentially best possible when considering the 
class of $k$-chromatic graphs as a whole
(consider e.g.~$K_k$-factors), 
the results in~\cite{A, KO09}
mentioned above
show that there are many graphs $H$ for which the actual threshold is significantly
smaller (e.g.~the $C_5$-factors mentioned above).

The notion of bandwidth is related to the concept of separability:
 An $n$-vertex graph $H$ is said to be \emph{$\eta$-separable} if there exists a set $S$ of at most $\eta n$ vertices such that every component of $H\setminus S$ has size at most $\eta n$. 
 We call such a set an \emph{$\eta$-separator} of $H$.
In general, the notion of having small bandwidth is more restrictive than that of 
being separable. However, for graphs with bounded maximum degree, 
it turns out that these notions are actually equivalent (see \cite{BPTW}).

\subsection{Previous results: (approximate) decompositions into large graphs}

We say that a collection $\mathcal{H}=\{H_1,\dots, H_s\}$ of graphs \emph{packs} into $G$ if there exist pairwise edge-disjoint copies of $H_1, \dots, H_s$ in $G$. 
In cases where $\mathcal{H}$ consists of copies of a single graph $H$ we refer to this packing as an $H$-$\emph{packing}$ in $G$.
If $\cH$ packs into $G$ and $e(\cH)=e(G)$ (where $e(\cH)=\sum_{H\in \cH} e(H)$), then we say that $G$ has a \emph{decomposition} into $\cH$.
Once again, if $\mathcal{H}$ consists of copies of a single graph $H$, we refer to this as an $H$-decomposition of $G$.
Informally, we refer to a packing which covers almost all edges of the host graph $G$ as an approximate decomposition.

As in the previous section, most attention so far has focussed on (Hamilton) cycles, trees, factors,
and graphs of bounded degree.
Indeed, a classical construction of Walecki going back to the 19th century 
guarantees a decomposition of $K_n$ into Hamilton cycles whenever $n$ is odd.
As mentioned earlier, this was extended to Hamilton decompositions of 
regular graphs~$G$ of high degree by Csaba, K\"uhn, Lo, Osthus and Treglown \cite{CKOLT}
(based on the existence of Hamilton decompositions in robustly  expanding graphs
proved in~\cite{KO}).
A different generalisation of Walecki's construction is given by the Alspach problem,
which asks for a decomposition of $K_n$ into cycles of given length.
This was recently resolved by Bryant, Horsley and Petterson~\cite{BHP}.

A further famous open problem in the area  is the tree packing conjecture of Gy\'arf\'as and Lehel, which says that for any collection  $\mathcal{T}=\{T_1,\dots, T_n\}$ of trees with $|V(T_i)|=i$, the complete graph $K_n$ has a decomposition into $\mathcal{T}$.
This was recently proved by Joos, Kim, K\"uhn and Osthus \cite{JKKO} for
the case where $n$ is large and each $T_i$ has bounded degree. 
The crucial tool for this was the blow-up lemma for approximate decompositions
of $\varepsilon$-regular graphs $G$ by Kim, K\"uhn, Osthus and Tyomkyn \cite{KKOT}.
In particular, this lemma implies that if
$\mathcal{H}$ is a family of bounded degree $n$-vertex graphs with $e(\mathcal{H}) \le (1-o(1))\binom{n}{2}$, then $K_n$
has an approximate decomposition into $\mathcal{H}$.
This generalises earlier results of
B\"ottcher, Hladk\`y, Piguet and Taraz~\cite{BHPT} on tree packings,
as well as results of
Messuti, R\"odl and Schacht \cite{MRS} and Ferber, Lee and Mousset \cite{FLM}
on packing separable graphs.
Very recently, Allen, B\"ottcher, Hladk\`y and Piguet~\cite{ABHP} were able to show that one 
can in fact find an approximate decomposition of $K_n$ into $\mathcal{H}$
provided that the graphs in $\mathcal H$ have bounded degeneracy and 
maximum degree $o( n/\log n)$. 
This implies an approximate version of the tree packing conjecture when the 
trees have maximum degree $o( n/\log n)$.
The latter improves a bound of Ferber and Samotij~\cite{FS} which follows from their work on packing 
(spanning) trees in random graphs.

An important type of  decomposition of $K_n$ is given by resolvable designs:
a resolvable $F$-design consists of a decomposition into $F$-factors.
Ray-Chaudhuri and Wilson~\cite{RW} proved the existence of resolvable $K_k$-designs in $K_n$ (subject to the necessary divisibility conditions being satisfied).
This was generalised to arbitrary $F$-designs by Dukes and Ling~\cite{DL}.

\subsection{Main result: packing separable graphs of bounded degree}
Our main result provides a degree condition which ensures that $G$ has an approximate
decomposition into $\cH$ for any collection $\cH$ of $k$-chromatic $\eta$-separable graphs of bounded degree.
As discussed below, our degree condition is best possible in general (unless one has additional information about
the graphs in $\cH$).
By the remark at the end of Section $1.1$ earlier, one can replace the condition of being $\eta$-separable by that of having bandwidth at most $\eta n$ in Theorem~\ref{thm:main}.
Thus our result implies a version of the bandwidth theorem of \cite{BST} in the setting of approximate decompositions.

 To state our result, we first introduce the approximate $K_k$-decomposition 
 threshold $\delta_k^{\text{reg}}$ for regular graphs.
\begin{definition}[Approximate $K_k$-decomposition threshold for regular graphs]\label{def: delta k}
For each $k\in \mathbb{N}\backslash \{1\}$, let $\delta_k^{\text{reg}}$ be the infimum over all $\delta \ge 0$ satisfying the following:  for any $\epsilon>0$, there exists $n_0\in\mathbb{N}$ such that for all $n\geq n_0$ and $r \ge \delta n$ every $n$-vertex $r$-regular graph $G$  has a $K_k$-packing consisting of at least $(1-\epsilon)e(G)/e(K_k)$ copies of $K_k$.
\end{definition}
\COMMENT{The purpose of this parameter is to claim that our result is sharp.
If we take a number $\delta' < \delta_k^{\text{reg}}$, then there exists an almost-$\delta'n$-regular graph $G$ which does not have an approximate decomposition into $K_k$, thus our result is sharp. (Need only to define $\delta_k^{\text{reg}}$ for
regular graphs (instead of close to regular graphs) here since in the proof of Lemma~\ref{lem: factor decomposition} we only apply it to a regular graph.)

However, if we take a number $\delta' < \delta_k^{0+}$, then we don't know whether there exists an almost $\delta'n$-regular graph $G$ without an approximate decomposition into $K_k$. Since $\delta'< \delta_k^{0+}$, we know that there exists a graph $G'$ with $\delta(G') \geq \delta'n$ which does not have an approximate decomposition into $K_k$. However, $G'$ might be very far from regular. So if we just use $\delta^{0+}_k$, then it is not easy to claim that our result is sharp.
}
Roughly speaking, we will pack $k$-chromatic graphs $H$ into regular host graphs $G$ of degree at least $\delta_k^{\text{reg}} n$.
Actually it turns out that it suffices to assume that $H$ is `almost' $k$-chromatic in the sense that $H$ has a ($k+1$)-colouring where one colour
is used only rarely. More precisely, we say that $H$ is \emph{$(k,\eta)$-chromatic} if there exists a proper colouring of the graph $H'$ obtained from $H$ by deleting all its isolated vertices with $k+1$ colours such that one of the colour classes has size at most $\eta |V(H')|$. A similar feature is also present in~\cite{BST}.

\begin{theorem}\label{thm:main}
For all $\Delta, k \in \mathbb{N}\backslash\{1\}$, $0<\nu<1$ and 
$\max\{1/2, \delta_k^{\text{reg}}\} < \delta \leq 1$,  there exist $\xi, \eta>0$ and $n_0 \in \mathbb{N}$ such that for all $n \ge n_0$ the following holds. 
Suppose that $\mathcal{H}$ is a collection of $n$-vertex $(k,\eta)$-chromatic $\eta$-separable graphs and $G$ is an $n$-vertex graph such that
\begin{enumerate}[label=(\roman*)]
\item $(\delta - \xi)n \le \delta(G) \le \Delta(G) \le (\delta + \xi)n,$
\item $\Delta(H) \le \Delta$ for all $H \in \mathcal{H},$
\item $e(\mathcal{H}) \le (1-\nu)e(G).$
\end{enumerate}
Then $\mathcal{H}$ packs into $G$.
\end{theorem}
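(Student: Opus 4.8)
The plan is to prove Theorem~\ref{thm:main} by combining a regularity/blow-up approach with an iterative absorption-free packing strategy in the style of \cite{KKOT,JKKO}, but adapted to a host graph $G$ which is merely close to regular rather than complete. First I would apply Szemer\'edi's regularity lemma to $G$ to obtain a reduced graph $R$ on a bounded number of clusters; since $G$ is $(\delta\pm\xi)n$-regular, $R$ is close to $\delta|R|$-regular, and every cluster has essentially the same size. The key structural input is then that an (almost) $\delta_k^{\text{reg}}|R|$-regular graph admits an approximate $K_k$-decomposition (Definition~\ref{def: delta k}); applying this to (a slightly cleaned-up version of) $R$ yields an almost-perfect packing of $R$ into edge-disjoint copies of $K_k$. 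Each such $K_k$ in the reduced graph corresponds to a $k$-partite $\epsilon$-regular ``chunk'' of $G$ of positive density, and these chunks are edge-disjoint and together cover almost all edges of $G$.

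Next I would distribute the graphs $H\in\mathcal{H}$ among these $K_k$-chunks. Because each $H$ is $(k,\eta)$-chromatic and $\eta$-separable (equivalently, has bandwidth $\le \eta n$ by \cite{BPTW}), one can cut $H$ along its separator into a bounded number of pieces, each of which is $(k+1)$-colourable with a tiny extra colour class and small compared to a cluster, and then assign these pieces to the clusters of a chosen $K_k$-chunk so that the colour classes respect the $k$-partite structure; the rare $(k+1)$st colour class and the separator vertices are handled by a small reserved portion of the graph (a standard ``buffer'' argument, where $\delta>1/2$ is exactly what guarantees enough room to connect pieces across the separator, as in the bipartite case and in \cite{BST}). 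One must do this simultaneously and consistently for all $H\in\mathcal{H}$, balancing the total edge usage across chunks so that no chunk is asked to absorb more than $(1-\nu/2)$ of its edges.

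Having reduced to the task of packing, for each fixed $K_k$-chunk, a family of ``$k$-partite bounded-degree separable'' graphs into a $k$-partite $\epsilon$-regular graph whose parts have the cluster sizes and whose total demand is a $(1-\Omega(\nu))$-fraction of the chunk's edges, I would invoke the blow-up lemma for approximate decompositions of \cite{KKOT} (or rather the packing consequence stated after it in the excerpt): it delivers edge-disjoint embeddings of all the assigned pieces into the chunk, placing each vertex of a piece into the cluster prescribed by its colour. Finally I would glue the pieces of each individual $H$ back together across the reserved buffer edges using the connecting structure set aside earlier, obtaining an embedded copy of $H$, and check that the copies are pairwise edge-disjoint because distinct $H$'s were either routed through distinct chunks or through disjoint edge quotas within a chunk.

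The main obstacle, and where most of the real work lies, is the bookkeeping of the reduction from ``close to $\delta n$-regular $G$'' to a collection of balanced $K_k$-chunks on which \cite{KKOT} can be applied: one needs the reduced graph to be regular enough to feed into Definition~\ref{def: delta k} (handling the exceptional set and the $\pm\xi n$ slack in the degrees of $G$ without destroying regularity of $R$), one needs the $K_k$-packing of $R$ to leave only $o(|R|^2)$ uncovered edges \emph{and} to be ``locally balanced'' so that every vertex of $G$ still has almost all of its edges inside chunks, and one needs the assignment of the separated pieces of the $H$'s to clusters to be simultaneously feasible for the whole family while keeping every chunk under its edge budget. The buffer/connection argument that stitches pieces across separators — the part that genuinely uses $\delta>1/2$ and mirrors the proof of the bandwidth theorem — is technically involved but follows known templates; the novel difficulty is marrying it with the approximate-decomposition machinery so that all the copies remain globally edge-disjoint.
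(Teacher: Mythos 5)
Your high-level toolkit is the right one (regularity lemma, the threshold $\delta_k^{\text{reg}}$ applied to the reduced graph, separability of each $H$, and the blow-up lemma for approximate decompositions from \cite{KKOT}), but there is a genuine structural gap in the way you propose to use the $K_k$-packing of the reduced graph $R$. You obtain an approximate $K_k$-\emph{decomposition} of $R$ and then treat each individual $K_k$ of $R$ as a separate $k$-partite chunk, assigning pieces of the $H$'s to chunks. Since each $H\in\mathcal{H}$ is an $n$-vertex (i.e.\ essentially spanning) graph, while a single $K_k$-chunk only covers roughly $kn/r$ vertices of $G$, every copy of $H$ must be spread over $\Theta(r/k)$ chunks. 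But distinct chunks share clusters, so the pieces of one $H$ embedded into different chunks land in overlapping vertex sets, and one needs them to be pairwise vertex-disjoint. Theorem~\ref{Blowup} is designed to pack many \emph{full spanning graphs} into \emph{one} $K_k$-factor blow-up; it is not built to coordinate, across many different chunks, the images of the many pieces of a single $H$, while simultaneously keeping the cross-chunk edges (the separator edges) of all $|\mathcal{H}|$ graphs edge-disjoint. Your ``buffer/gluing'' step would have to absorb all separator edges of all graphs in the family and enforce all of these cross-chunk vertex- and edge-disjointness constraints at once; as sketched, this is where the argument breaks down.

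The key missing idea is that one should not stop at a $K_k$-packing of $R$ but should go one level higher and approximately decompose $R$ into (almost) $K_k$-\emph{factors}, i.e.\ vertex-disjoint unions of $K_k$'s covering almost all of $V(R)$. This is exactly what Lemma~\ref{lem: factor decomposition} provides: it combines $\delta_k^{\text{reg}}$ with the Pippenger--Spencer theorem (Theorem~\ref{thm: hypergraph chromatic index}) on the chromatic index of nearly regular hypergraphs to group the $K_k$'s into many edge-disjoint almost-$K_k$-factors. Each such factor blows up to a near-spanning $K_k$-factor blow-up $G_{t,s}$ of $G$, so a whole subcollection $\mathcal{H}_{t,s}$ of graphs can be packed (almost) entirely into the single $G_{t,s}$ via Theorem~\ref{Blowup}, with the vertex-disjointness of the cliques in the factor automatically handling what was the cross-chunk consistency problem above. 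This single change of viewpoint is what makes the ``buffer'' part tractable: only a small fraction of each $H$ (the separator vertices $Z^H$, the low-degree vertices $A^H$ for the exceptional set $V_0$, and a small patch $W^H$) needs to go outside $G_{t,s}$, and the paper spends a significant amount of machinery (the $T$ reservoir sets $Res_t$, the clique walks of Lemma~\ref{lem: clique extension}, the stars $F'_{v,t}$ into $V_0$, and the careful bookkeeping in Lemmas~\ref{lem: separable partition} and \ref{embedone}) making precisely this small part consistent and balanced. Your proposal does not address the exceptional set $V_0$ at all, and the ``small reserved portion'' you invoke is doing orders of magnitude more work in your version than a single buffer could, because it would have to accommodate separator edges crossing every pair of chunks for every $H$.
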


Note that our result holds for any minor-closed family $\mathcal{H}$ of $k$-chromatic bounded degree graphs by the separator theorem of Alon, Seymour and Thomas \cite{AST}.
Moreover, note that since $\mathcal{H}$ may consist e.g.~of Hamilton cycles, the condition
that $G$ is close to regular is clearly necessary.
 Also, the condition $\max\{1/2, \delta_k^{\text{reg}}\} < \delta$ is necessary.
 To see this, if $\delta_k^{\text{reg}} \leq 1/2$ (which holds if $k=2$), then we consider $K_{n/2-1,n/2+1}$ which does not even contain a single perfect matching, let alone an approximate decomposition into perfect matchings. 
If $\delta_k^{\text{reg}} > 1/2$ (which holds if $k\geq 3$), then for any $\delta < \delta_k^{\text{reg}}$, the definition of $\delta_k^{\text{reg}}$ ensures that there exist arbitrarily large regular graphs $G$ of degree at least $\delta n$ without an approximate decomposition into copies of $K_k$. 
As a disjoint union of a single copy of $K_k$ with $n-k$ isolated vertices satisfies (ii), this shows that the condition of $\max\{1/2, \delta_k^{\text{reg}}\} < \delta$ is sharp when considering the class of all $k$-chromatic separable graphs (though as in the case of embedding a single copy of some $H$ into $G$, it may be possible to improve the degree bound for certain families $\cH$).


To obtain explicit estimates for $\delta_k^{\text{reg}}$, we also introduce the approximate $K_k$-decomposition threshold $\delta_k^{0+}$ for graphs of large minimum degree.

\begin{definition}[Approximate $K_k$-decomposition threshold]
For each $k\in \mathbb{N}\backslash \{1\}$, let $\delta^{0+}_k$ be the infimum over all $\delta \ge 0$ satisfying the following:
for any $\epsilon>0$, there exists $n_0 \in \mathbb{N}$ such that any $n$-vertex graph $G$ with $n\geq n_0$ and $\delta(G)\geq \delta n$ has a $K_k$-packing consisting of at least $(1-\epsilon)e(G)/e(K_k)$ copies of $K_k$.%
\end{definition}

It is easy to see that $\delta^{\text{reg}}_2=\delta^{0+}_2=0$ and $\delta_k^{\text{reg}} \leq \delta^{0+}_k$.
The value of $\delta_k^{0+}$ has been subject to much attention recently:
one reason is that by results of~\cite{BKLO,GKLOM}, for $k \ge 3$ the approximate
decomposition threshold $\delta_k^{0+}$ is equal to the analogous threshold
$\delta_k^{\rm dec}$ which ensures a `full' $K_k$-decomposition
of any $n$-vertex graph $G$ with $\delta(G) \ge (\delta_k^{\rm dec}+o(1))n$
which satisfies the necessary divisibility conditions.
A beautiful conjecture (due to Nash-Williams in the triangle case and 
Gustavsson in the general case) would imply  that  $\delta_k^{\rm dec}=1-1/(k+1)$ for $k \ge 3$.
On the other hand for $k\geq 3$, it is easy to modify a well-known construction (see Proposition \ref{extremal example}) to show that $\delta_k^{\text{reg}}\geq 1- 1/(k+1)$.
Thus the conjecture would imply that $\delta_k^{\text{reg}} = \delta_k^{0+} = \delta_k^{\rm dec} = 1 - 1/(k+1)$ for $k \ge 3.$ A result of Dross~\cite{Dr} implies that $\delta_3^{0+} \leq 9/10$, and a very recent result of 
Montgomery~\cite{M17} implies that
$\delta_k^{0+} \leq 1- 1/(100k)$ (see Lemma~\ref{lem: delta* value}).
With these bounds, the following corollary is immediate.
\begin{corollary}
For all $\Delta, k  \in \mathbb{N}\backslash\{1\}$ and  $0<\nu ,\delta< 1,$   there exist $\xi >0$ and $n_0 \in \mathbb{N}$ such that for $n \ge n_0$ the following holds for every $n$-vertex graph $G$ with $$(\delta - \xi)n \le \delta(G) \le \Delta(G) \le (\delta + \xi)n.$$
\begin{enumerate}[label=(\roman*)]
\item Let $\mathcal{T}$ be a collection of trees such that for all $T \in \mathcal{T}$ we have $|T| \le n$ and $\Delta(T) \le \Delta$. 
Further suppose $\delta>1/2$ and $e(\mathcal{T}) \le (1-\nu)e(G)$. 
Then $\mathcal{T}$ packs into $G.$ 
\item Let $F$ be an $n$-vertex graph consisting 
of a union of vertex-disjoint cycles and let $\mathcal{F}$ be a collection of copies of $F$.
Further suppose $\delta> 9/10$ and $e(\mathcal{F}) \le (1-\nu)e(G)$.
Then $\mathcal{F}$ packs into $G$.
\item Let $\mathcal{C}$ be a collection of cycles, each on at most $n$ vertices.
Further suppose $\delta> 9/10$  and  $e(\mathcal{C}) \le (1-\nu)e(G).$
Then $\mathcal{C}$ packs into $G$.
\item Let $n$ be divisible by $k$ and let $\mathcal{K}$ be a collection of $n$-vertex $K_k$-factors.
Further suppose $\delta>1 - 1/(100k)$ and $e(\mathcal{K}) \le (1-\nu)e(G).$
Then $\mathcal{K}$ packs into $G$.

\end{enumerate}
\end{corollary}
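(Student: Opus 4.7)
The plan is to deduce each of (i)--(iv) directly from Theorem~\ref{thm:main}, after checking that the family fits the hypotheses with an appropriate choice of $k$. First I would pad any graph $H$ in the family that has fewer than $n$ vertices with isolated vertices, so that $H$ is $n$-vertex; since isolated vertices are deleted in the definition of $(k,\eta)$-chromatic and contribute nothing to edges, to maximum degree, or to the search for a sublinear separator, this padding changes none of the relevant parameters. What remains is to verify the degree bound (immediate in each case), the $(k,\eta)$-chromatic property (which in every case will follow from a proper $k$-colouring by leaving the $(k+1)$-st colour class empty, so the bound $\eta|V(H')|$ is trivially satisfied), and the $\eta$-separability.

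For (i), every tree is bipartite, hence $(2,\eta)$-chromatic. For separability I would use the classical centroid argument: a tree on $m$ vertices contains a vertex whose removal leaves components of size at most $m/2$, and iterating this for $\lceil \log_2(1/\eta)\rceil$ levels in each resulting subtree yields a separator of constant size $O(1/\eta)$ with all components of size at most $\eta n$, which is an $\eta$-separator once $n$ is large. Since $\delta_2^{\text{reg}}=0$, the assumption $\delta>1/2$ matches the hypothesis of Theorem~\ref{thm:main} with $k=2$.

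For (ii) and (iii), the graphs in question are unions of vertex-disjoint cycles, hence $3$-chromatic with maximum degree $2$. For separability I would observe that cycles of length at most $\eta n$ already form components of acceptable size, while there are at most $1/\eta$ cycles of length exceeding $\eta n$; cutting each such long cycle of length $c_i$ by removing every $\lceil \eta n\rceil$-th vertex deletes at most $2c_i/(\eta n)+1$ vertices, and summing yields a total separator size of $O(1/\eta)$, hence at most $\eta n$ for $n$ large. Combining the bound $\delta_3^{\text{reg}}\le \delta_3^{0+}\le 9/10$ from Dross's theorem (recorded in Lemma~\ref{lem: delta* value}) with the assumption $\delta>9/10$ allows Theorem~\ref{thm:main} to apply with $k=3$.

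For (iv), every $n$-vertex $K_k$-factor consists of components of size exactly $k$, so the empty set is already an $\eta$-separator once $n\ge k/\eta$; such a factor is $k$-chromatic and has maximum degree $k-1\le \Delta$. Using $\delta_k^{\text{reg}}\le \delta_k^{0+}\le 1-1/(100k)$ (Montgomery's theorem, as recorded in Lemma~\ref{lem: delta* value}) together with the assumption $\delta>1-1/(100k)$, which exceeds $1/2$ for all $k\ge 2$, Theorem~\ref{thm:main} applies with the given $k$. The main obstacle, in so far as there is one, is the mild case split in (ii) and (iii) needed to handle short versus long cycles when producing the separator; all of the remaining work is unpacking the definitions and citing the stated bounds on $\delta_k^{0+}$.
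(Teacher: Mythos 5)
Your proof is correct and takes essentially the same approach the paper intends: the paper declares the corollary ``immediate'' from Theorem~\ref{thm:main} together with the bounds $\delta_2^{\text{reg}}=0$, $\delta_3^{\text{reg}}\le 9/10$, $\delta_k^{\text{reg}}\le 1-1/(100k)$ recorded in Lemma~\ref{lem: delta* value}, and your write-up supplies exactly the routine verification (padding to $n$ vertices, proper $k$-colourability giving $(k,\eta)$-chromaticity with an empty extra colour class, and constant-size separators) that each family satisfies the hypotheses. One minor cosmetic remark: for bounded-degree trees the iterated-centroid separator has size $O(\eta^{-1}\log\eta^{-1})$ rather than $O(1/\eta)$ once one accounts for the branching into at most $\Delta$ components at each level and the $\lceil\log_2(1/\eta)\rceil$ depth; this is still a constant independent of $n$, so your conclusion is unaffected.
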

Note that (i) can be viewed as an approximate version of the tree packing conjecture
in the setting of dense (almost) regular graphs.
In a similar sense, (ii) relates to the Oberwolfach conjecture,
(iii) relates to the Alspach problem and (iv) relates to the existence of resolvable designs in graphs.

Moreover, the feature that Theorem~\ref{thm:main} allows us to efficiently pack $(k,\eta)$-chromatic graphs (rather than $k$-chromatic graphs)
gives several additional consequences, for example:
if the cycles of $F$ in (ii) are all sufficiently long, then we can replace the condition `$\delta> 9/10$'
by `$\delta> 1/2$'.

If we drop the assumption of being $G$ close to regular, then one can still ask for the 
size of the largest packing of bounded degree separable graphs.
For example, it was shown in~\cite{CKOLT} that every sufficiently large graph $G$ with 
$\delta(G) \ge n/2$ contains at least $(n-2)/8$ edge-disjoint Hamilton cycles.
The following result gives an approximate answer to the above question in the case when $\mathcal{H}$ consists of (almost) bipartite graphs.

\begin{theorem}\label{thm:2}
For all $\Delta \in \mathbb{N}$, $1/2 <\delta \le 1$ and $\nu>0$, there exist $\eta>0$ and $n_0 \in \mathbb{N}$ such that for all $n \ge n_0$ the following holds. 
Suppose that $\mathcal{H}$ is a collection of $n$-vertex $(2,\eta)$-chromatic $\eta$-separable graphs and $G$ is an $n$-vertex graph such that
\begin{enumerate}[label=(\roman*)]
\item $\delta(G) \geq \delta n,$
\item $\Delta(H) \le \Delta$ for all $H \in \mathcal{H},$
\item $e(\mathcal{H}) \leq \frac{(\delta+ \sqrt{2\delta-1} -\nu)n^2}{4}.$
\end{enumerate}
Then $\mathcal{H}$ packs into $G$.
\end{theorem}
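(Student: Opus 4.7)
The plan is to deduce Theorem~\ref{thm:2} from Theorem~\ref{thm:main} by first passing to a nearly-regular spanning subgraph of $G$.

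The first step is to extract a spanning subgraph $G_0\subseteq G$ that is almost $(\delta n)$-regular: $\delta(G_0),\Delta(G_0)\in[(\delta-\xi)n,(\delta+\xi)n]$ for a small $\xi>0$. Since $\delta(G)\geq\delta n$ and $\delta>1/2$, the graph $G$ is dense, and such $G_0$ can be extracted by an iterative edge-removal argument (while some vertex has degree exceeding $(\delta+\xi)n$, delete an edge between two such high-degree vertices; if no such pair exists, a slightly different local adjustment is used), or by appealing to a factor theorem of Katerinis type for dense graphs. Parity issues may require additional care but are standard.

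The second step is to apply Theorem~\ref{thm:main} to $G_0$ with $k=2$ and the same parameter $\delta$, choosing $\nu'$ small relative to $\nu$. Since $\delta_2^{\text{reg}}=0$, the requirement $\max\{1/2,\delta_2^{\text{reg}}\}<\delta$ becomes $\delta>1/2$, which holds. Hypothesis~(i) of Theorem~\ref{thm:main} is satisfied by construction of $G_0$; hypothesis~(ii) is inherited from Theorem~\ref{thm:2}; and hypothesis~(iii) becomes $e(\cH)\leq(1-\nu')e(G_0)\approx(1-\nu')\delta n^2/2$. The elementary inequality $(\delta+\sqrt{2\delta-1})/4\leq\delta/2$ for $\delta\in[1/2,1]$ (which rearranges to $(\delta-1)^2\geq 0$) shows that the bound in (iii) of Theorem~\ref{thm:2} implies this, so Theorem~\ref{thm:main} produces the desired packing of $\cH$ into $G_0\subseteq G$. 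Concretely, taking $\nu'=\nu/(2\delta)$ gives the required slack.

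The main obstacle is Step~1: producing a spanning subgraph of $G$ that is close to $(\delta n)$-regular. We must control both $\delta(G_0)$ and $\Delta(G_0)$ uniformly within $\xi n$ of $\delta n$, which requires balanced edge deletions and possibly handling parity. Factor-theorem arguments or a careful probabilistic construction should suffice, but this is where the bulk of the technical work lies. Once $G_0$ is in hand, the remainder is an application of Theorem~\ref{thm:main} together with the algebraic inequality above; the crucial slack comes from the fact that $(\delta+\sqrt{2\delta-1})/4<\delta/2$ strictly for $\delta<1$ (with the boundary case $\delta=1$ handled by the explicit $-\nu$ in~(iii)).
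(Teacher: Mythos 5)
Your Step~1 rests on a false premise. For $\delta<1$, there are graphs $G$ with $\delta(G)\geq\delta n$ that contain \emph{no} spanning subgraph close to $(\delta n)$-regular. As the paper remarks immediately after Theorem~\ref{thm:2}, the densest regular spanning subgraph of such a $G$ can have only about $(\delta+\sqrt{2\delta-1})n^2/4$ edges, i.e.\ degree about $(\delta+\sqrt{2\delta-1})n/2$, which by the very inequality you invoke is \emph{strictly} less than $\delta n$ for $\delta<1$. This is precisely the extremal constraint that produces the $\sqrt{2\delta-1}$ term in condition~(iii); it is not a technicality that balanced deletions or parity care will fix. The factor theorem the paper does use (Theorem~\ref{thm: hamilton decomp} from \cite{CKO}) gives an $r$-regular spanning subgraph with $r$ near $\delta(G)$ only under the \emph{additional} hypothesis $\Delta(G)\leq\delta(G)+\gamma^2 n$, i.e.\ when $G$ is already nearly regular --- exactly the hypothesis of Theorem~\ref{thm:main} that Theorem~\ref{thm:2} drops. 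Even the retargeted plan of aiming for $r_0\approx(\delta+\sqrt{2\delta-1})n/2$ breaks when $1/2<\delta<2-\sqrt{2}$: there $r_0<n/2$, and no spanning subgraph of the extremal $G$ is close to $\delta'n$-regular for any $\delta'>1/2$, so Theorem~\ref{thm:main} cannot be invoked at all.

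The paper's proof sidesteps this entirely by never extracting a regular spanning subgraph of $G$. Instead it reruns the proof of Theorem~\ref{thm:main} with one surgical change: near-regularity of $G$ is only used in Step~1 of Section~6 to feed the reduced multigraph $R'_{\rm multi}$ (which inherits $\delta(R'_{\rm multi})\geq\delta q r'$ but not regularity) into Lemma~\ref{lem: factor decomposition}, which produces almost-$K_k$-factor decompositions of \emph{almost-regular} multigraphs. For $k=2$ an almost-$K_2$-factor is a perfect matching, and Lemma~\ref{lem: factor decomposition} is replaced by Lemma~\ref{lem: factor decomposition2}, which only needs a minimum-degree hypothesis: it applies the Christofides--K\"uhn--Osthus bound of roughly $(\delta+\sqrt{2\delta-1})r'/4$ edge-disjoint Hamilton cycles in the reduced graph to extract about $(\delta+\sqrt{2\delta-1})qr'/2$ perfect matchings. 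The blow-up lemma then packs $\cH$ into the super-regular pairs indexed by those matchings, and the $\sqrt{2\delta-1}$ in~(iii) falls out of this Hamilton-cycle count in the reduced graph, not from any regularization of $G$ itself. Your arithmetic observation that $(\delta+\sqrt{2\delta-1})/4\leq\delta/2$ is correct, but it is pointing the wrong way: it shows the regular subgraph you want is strictly denser than what exists, rather than giving you slack.
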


The result in general cannot be improved:
Indeed, for $\delta>1/2$ the number of edges of the densest regular spanning subgraph of $G$
is close to $(\delta+ \sqrt{2\delta-1} )n^2/4$ (see~\cite{CKO}).
So the bound in~(iii) is asymptotically optimal e.g.~if $n$ is even and
$\cH$ consists of Hamilton cycles.
We discuss the very minor modifications to the proof of Theorem~\ref{thm:main}
which give Theorem~\ref{thm:2}  at the end of Section~\ref{sec: main proof}.


We raise the following open questions:
\begin{itemize}

\item
We conjecture that the error term $\nu e(G)$ in condition (iii) of 
Theorem~\ref{thm:main} can be improved.
Note that it cannot be completely removed unless one assumes some 
divisibility conditions on $G$. However, even additional divisibility conditions will not 
always ensure a `full' decomposition under the current degree conditions: indeed, for $C_4$, the minimum degree threshold which guarantees a $C_4$-decomposition of a graph $G$ is close to $2n/3$, and the extremal example
is close to regular (see~\cite{BKLO} for details, more generally, the decomposition threshold
of an arbitrary bipartite graph is determined in~\cite{GKLOM}).

\item It would be interesting to know whether the condition on separability can be 
 omitted. Note however, that if we do not assume separability, then the degree
 condition may need to be strengthened.
 
\item It would be interesting to know whether one can relax the maximum degree condition in assumption (ii) of Theorem~\ref{thm:main},
e.g.~for the class of trees.

\item Given the recent progress on the existence of decompositions and designs in the hypergraph setting
and the corresponding minimum degree thresholds~\cite{K,GKLO1,GKLO2}, it would be interesting
to generalise (some of) the above results to hypergraphs.

\end{itemize}
 
Our main tool in the proof of Theorem~\ref{thm:main} will be the recent blow-up lemma for approximate decompositions by Kim, K\"uhn, Osthus and Tyomkyn~\cite{KKOT}: roughly speaking, given a set $\mathcal{H}$ of $n$-vertex bounded degree graphs and an $n$-vertex graph $G$ with $e(\mathcal{H}) \leq (1-o(1))e(G)$ consisting of super-regular pairs, it guarantees a packing of $\mathcal{H}$ in $G$ (such super-regular pairs arise from applications of Szemer\'edi's regularity lemma). Theorem~\ref{Blowup} gives the precise statement of the special case that we shall apply (note that the original blow-up lemma of  Koml\'os, S\'ark\"ozy and  Szemer\'edi \cite{KSS-blowup} corresponds to the case where $\mathcal{H}$ consists of a single graph).

Subsequently, Theorem~\ref{thm:main} has been used as a key tool in the resolution of the Oberwolfach problem
in~\cite{GJKKO}. This was posed by Ringel in 1967, given an $n$-vertex graph $H$ consisting of vertex-disjoint cycles, it
asks for a decomposition of $K_n$ into copies of $H$ (if $n$ is odd). 
In fact, the results in~\cite{GJKKO} go considerably beyond the setting of the Oberwolfach problem, and imply e.g.~a positive resolution also to the 
Hamilton-Waterloo problem.

\section{Outline of the argument}\label{sec: outline}

Consider a given collection $\mathcal{H}$ of $k$-chromatic $\eta$-separable graphs with
bounded degree and a given almost-regular graph $G$ as in Theorem \ref{thm:main}. 
We wish to pack $\mathcal{H}$ into $G$.
The approach will be to decompose $G$ into a bounded number of highly structured subgraphs $G_t$ and partition $\cH$ into a bounded number of collections $\cH_t$. 
We then aim to pack each $\cH_t$ into $G_t$. 
As described below, for each $H \in \cH_t,$ most of the edges will be embedded via the blow-up lemma for approximate decompositions proved in \cite{KKOT}. 

As a preliminary step, we first apply Szemer\'edi's regularity lemma (Lemma~\ref{Szemeredi}) to $G$ to obtain a reduced multigraph $R$ which is almost regular.
Here each edge $e$ of $R$ corresponds to a bipartite $\epsilon$-regular subgraph of $G$ and the density of these subgraphs does not depend on $e$.
We can then apply a result of Pippenger and Spencer on the chromatic index of regular hypergraphs and the definition of $\delta_k^{\text{reg}}$ to find an approximate decomposition of the reduced multigraph $R$ into almost $K_k$-factors.
 More precisely, we find a set of edge-disjoint copies of almost $K_k$-factors covering almost all edges of $R$, where an almost $K_k$-factor is a set of vertex-disjoint copies of $K_k$ covering almost all vertices of $R$.
 This approximate decomposition translates into the existence of an approximate decomposition of $G$ into `(almost-)$K_k$-factor blow-ups'. 
Here a $K_k$-factor blow-up consists of a bounded number of clusters $V_1,\dots, V_{kr}$ where
 each pair $(V_i, V_j)$ with $\lfloor (i-1)/k\rfloor = \lfloor (j-1)/k\rfloor$ is $\epsilon$-regular of density $d$, and crucially $d$ does not depend on $i, j$. 
We wish to use the blow-up lemma for approximate decompositions (Theorem~\ref{Blowup}) to pack graphs into each $K_k$-factor blow-up.
 Ideally, we would like to split $\mathcal{H}$ into a bounded number of subcollections $\mathcal{H}_{t,s}$ and pack each $\mathcal{H}_{t,s}$ into a separate $K_k$-factor blow-up $G_{t,s}$, where the $G_{t,s}\subseteq G$ are all edge-disjoint.

There are several obstacles to this approach. 
The first obstacle is that (i) the $K_k$-factor blow-ups $G_{t,s}$ are not spanning.
 In particular, they do not contain the vertices in the exceptional set $V_0$ produced by the regularity lemma. 
On the other hand, if we aim to embed an $n$-vertex graph $H \in \cH$ into $G$, we must embed some vertices of $H$ into $V_0$.
 However, Theorem~\ref{Blowup} does not produce an embedding into vertices outside the $K_k$-factor blow-up.
The second obstacle is that (ii)  the $K_k$-factor blow-ups are not  connected, whereas $H$ may certainly be (highly) connected.
This is one significant difference to \cite{BST}, where the existence of a structure similar to a blown-up power of a Hamilton path in $R$ could be utilised for the embedding.
A third issue is that (iii) any resolution of (i) and (ii) needs to result in a `balanced' packing of the $H \in \cH$, i.e. the condition $e(\cH) \le (1-\nu)e(G)$ means that for most $x \in V(G)$ almost all their incident edges need to be covered. 

To overcome the first issue, we use the fact that $H$ is $\eta$-separable to choose a small separating set $S$ for $H$ and  consider the small components of $H-S$.
To be able to embed (most of) $H$ into the $K_k$-factor blow-up, we need to add further edges to each $K_k$-factor blow-up so that the resulting `augmented $K_k$-factor blow-ups' have strong connectivity properties. 
For this, we partition $V(G) \backslash V_0$ into $T$ disjoint `reservoirs' $Res_1,\dots, Res_T$, where $1/T \ll 1$. 
We will later embed some vertices of $H$ into $V_0$ using the edges between $Res_t$ and $V_0$ (see Lemma~\ref{lem: separable partition}).
Here we have to embed a vertex of $H$ onto $v \in V_0$ using only edges between $v$ and $Res_{t}$ because we do not have any control on the edges between $v$ and a regularity cluster $V_i$.
We explain the reason for choosing a partition into many reservoir sets
(rather than choosing a single small reservoir)  below.

We also decompose most of $G$ into graphs $G_{t,s}$ so that each 
$G_{t,s}$ has vertex set $V(G)\backslash (Res_t \cup V_0)$
and is a $K_k$-factor blow-up.
We then find sparse bipartite graphs $F_{t,s} \subseteq G$ connecting $Res_t$ with $G_{t,s}$, bipartite graphs $F'_t\subseteq G$ connecting $Res_t$ with $V_0$
as well as sparse graphs $G_t^* \subseteq G$ which provide
connectivity within $Res_t$ as well as between $Res_t$ and $G_{t,s}$. 
The fact that $G_{t,s}$ and $G_{t,s'}$ share the same reservoir for $s \neq s'$
permits us to choose the reservoir $Res_t$ to be significantly larger than $V_0$. Moreover, 
as $\bigcup Res_{t}$ covers all vertices in $V\setminus V_0$, if the graphs $F'_{t}$ are appropriately chosen, then almost all edges incident to the vertices in $V_0$ are available to be used at some stage of the packing process.
Our aim is to pack each $\cH_{t,s}$ into the `augmented' $K_k$-factor
blow-up $G_{t,s} \cup F_{t,s} \cup F'_t \cup G_t^*$. 
To ensure that the resulting packings can be combined into a packing of all of the graphs in $\cH$, we will
use the fact that the graphs $G_t:=   \bigcup_s (G_{t,s} \cup F_{t,s} ) \cup F'_t \cup G_t^*$ referred to in the first paragraph are edge-disjoint for different~$t$.

We now discuss how to find this packing of $\cH_{t,s}.$
Consider some $H \in \cH_{t,s}$. 
We first use the fact that $H$ is separable to find a partition of $H$ which reflects the structure of (the augmentation of) $G_{t,s}$  (see Section~\ref{sec: H structure}).
Then we construct an appropriate embedding $\phi_*$ of parts of each graph $H\in \cH_{t,s}$ into $Res_t \cup V_0$ which covers all vertices in $Res_t\cup V_0$
(this makes crucial use of the fact that $Res_t$ is much larger than $V_0$).
 Later we aim to use the blow-up lemma for approximate decompositions (Theorem~\ref{Blowup}) to find an embedding $\phi$ of the remaining vertices of $H$ into $V(G)\backslash(Res_t \cup V_0)$.
When we apply Theorem~\ref{Blowup}, we use its additional features: in particular, the ability to prescribe appropriate `target sets' for some of the vertices of $H$, to guarantee the consistency between the two embeddings $\phi_*$ and $\phi$.

An important advantage of the reservoir partition which helps us to overcome obstacle (iii) is the following: the blow-up lemma for approximate decompositions can achieve a near optimal packing, i.e. it uses up almost all available edges. 
This is far from being the case for the part of the embeddings that use $F_{t,s}$, $F'_t$ and $G_t^*$ to embed vertices into $Res_t \cup V_0$, where the edge usage might be comparatively `imbalanced' and `inefficient'. (In fact, we will try to avoid using these edges as much as possible in order to preserve the connectivity properties of these graphs. We will use
probabilistic allocations to avoid over-using any parts of $F_{t,s}$, $F'_t$ and $G_t^*$.)
However, since every vertex in $V(G_0) \backslash V_0$ is a reservoir vertex for only a small proportion of the embeddings, the resulting effect of these imbalances on the overall leftover degree of the vertices in $V(G_0) \backslash V_0$ is negligible.
For $V_0$, we will be able to assign only low degree vertices of each $H$ to ensure that there will always be edges of $F_t'$ available to embed their incident edges
(so the overall leftover degree of the vertices in $V_0$ may be large).

The above discussion motivates why we use many reservoir sets which cover all vertices in $V(G)\backslash V_0$, rather than using only one vertex set $Res_1$ for all $H \in \mathcal{H}$.
Indeed, if some vertices of $G$ only perform the role of reservoir vertices, this might result in an imbalance of the usage of edges incident to these vertices: some vertices in the reservoir might lose incident edges much faster or slower than the vertices in the regularity clusters. Apart from the fact that a fast loss of the edges incident to one vertex can prevent us from embedding any further spanning graphs into $G$, a large loss of the edges incident to the reservoir is also problematic in its own right. Indeed, since we are forced to use the edges incident to the reservoir in order to be able to embed some vertices onto vertices in $V_0$, this would prevent us from packing any further graphs.


Another issue is that the regularity lemma only gives us $\epsilon$-regular $K_k$-factor blow-ups while we need super-regular $K_k$-factor blow-ups in order to use Theorem~\ref{Blowup}. 
To overcome this issue, we will make appropriate adjustments to each $\epsilon$-regular $K_k$-factor blow-up. 
This means that the exceptional set $V_0$ will actually be different for each pair $t,s$ of indices. We can however use probabilistic arguments to ensure that this does not significantly affect the overall
`balance' of the packing.
 In particular, for simplicity, in the above proof sketch we have ignored this issue.

The paper is organised as follows. 
We collect some basic tools in Section~\ref{sec: prelim}, and we prove a lemma which finds a suitable partition of each graph $H \in \cH$  in Section~\ref{sec: H structure} (Lemma~\ref{lem: separable partition}).
We prove our main lemma (Lemma~\ref{embedone}) in Section~\ref{sec: main lemma}.
 This lemma guarantees that we can find a suitable packing of an appropriate collection $\cH_{t,s}$ of $k$-chromatic $\eta$-separable graphs with bounded degree into a graph consisting of a super-regular $K_k$-factor blow-up $G_{t,s}$ and suitable connection graphs $F_{t,s}$, $F'_t$ and $G_t^*$.
In Section~\ref{sec: main proof}, we will partition $G$ and $\cH$ as described above. 
Then we will repeatedly apply Lemma~\ref{embedone} to construct a packing of $\mathcal{H}$ into $G$.

\section{Preliminaries}\label{sec: prelim}
\subsection{Notation}
We write $[t] := \{1, \dots, t\}.$ 
We often treat large numbers as integers whenever this does not affect the argument.
The constants in the hierarchies used to state our results  are chosen from right to left.
That is, if we claim that a result holds for $0<1/n \ll a \ll b \leq 1$, we mean there exist non-decreasing functions $f:(0,1] \rightarrow (0,1] $ and $g:(0,1] \rightarrow (0,1]$ such that the result holds for all $0 \le a, b \le 1$ and all $n \in \mathbb{N}$ with $a \le f(b)$ and $1/n \le g(a)$. 
We will not calculate these functions explicitly. 

We use the word \emph{graphs} to refer to simple undirected finite graphs, and refer to \emph{multi-graphs} as graphs with potentially parallel edges, but without loops. 
\emph{Multi-hypergraphs} refer to (not necessarily uniform) hypergraphs with potentially parallel edges.
A \emph{$k$-graph} is a $k$-uniform hypergraph.
A \emph{multi-$k$-graph} is a $k$-uniform hypergraph with potentially parallel edges. 
For a multi-hypergraph $H$ and a non-empty set $Q\subseteq V(H)$,
we define $\text{mult}_H(Q)$ to be the number of parallel edges of $H$ consisting of exactly the vertices in $Q$.
 We say that a multi-hypergraph has \emph{edge-multiplicity} at most $t$ if $\text{mult}_H(Q) \le t$ for all non-empty $Q \subseteq V(H)$. 
A \emph{matching} in a multi-hypergraph $H$ is a collection of pairwise disjoint edges of $H$.
The \emph{rank} of a multi-hypergraph $H$ is the size of a largest edge.

We write $H\simeq G$ if two graphs $H$ and $G$ are isomorphic.  
For a collection $\cH$ of graphs, we let $v(\cH):= \sum_{H\in \cH} |V(H)|$.
We say a partition $V_1, \dots, V_k$ of a set $V$ is an \textit{equipartition} if $||V_i| - |V_j|| \le 1$ for all $i,j \in [k]$. 
For a multi-hypergraph $H$ and $A,B \subseteq V(H),$ we let $E_H(A,B)$ denote the set of edges in $H$ intersecting both $A$ and $B$. 
We define $e_H(A,B):= |E_H(A,B)|.$
For $v \in V(H)$ and $A \subseteq V(H)$,  we let $d_{H,A}(v): = |\{e \in E(H): v \in e, e\backslash \{v\} \subseteq A\}|$.%
\COMMENT{New: this definition has changed as the old definition allowed for edges that only contained $v$ and nothing else from $A$ whenever $v \in A.$}
 Let $d_{H}(v) := d_{H,V(H)}(v)$. 
For $u,v \in V(H)$, we define $c_H(u,v):= |\{ e \in E(H): \{u,v\} \subseteq e\}|.$ Let $\Delta(H) = \max\{ d_H(v) : v\in V(H)\}$ and $\delta(H):=\min\{ d_H(v): v\in V(H)\}$.

 For a graph $G$ and sets $X, A \subseteq V(G)$, we define
 $$N_{G,A}(X) := \{ w \in A: uw \in E(G) \text{ for all } u\in X \} \enspace \text{and} \enspace N_{G}(X) := N_{G,V(G)}(X).$$
Thus $N_G(X)$ is the common neighbourhood of $X$ in $G$ and $N_{G,A}(\emptyset) = A$.
 For a set $X \subseteq V(G),$ we define $N_G^d(X) \subseteq V(G)$ to be the set of all vertices of distance at most $d$ from a vertex in $X$.
 In particular, $N^d_{G}(X)= \emptyset$ for $d < 0$.
 Note that $N_{G}(X)$ and $N^1_{G}(X)$ are different in general as e.g. vertices with a single edge to $X$ are included in the latter.
 Moreover, note that $N_{G}(X) \subseteq N^1_{G}(X)$.
We say a set $I \subseteq V(G)$ in a graph $G$ is $k$-$\emph{independent}$ if for any two distinct vertices $u,v \in I$, the distance between $u$ and $v$ in $G$ is at least $k$ (thus a 2-independent set $I$ is an independent set).
If $A$, $B \subseteq V(G)$ are disjoint, we write $G[A,B]$ for the bipartite subgraph of $G$ with vertex classes $A$, $B$ and edge set $E_G(A,B).$

 For two functions $\phi:A\rightarrow B$ and $\phi': A'\rightarrow B'$ with $A\cap A'=\emptyset$, we let $\phi\cup \phi'$ be the function from $A\cup A'$ to $B\cup B'$ such that for each $x\in A\cup A'$, $$(\phi\cup\phi')(x) := \left\{ \begin{array}{ll}
\phi(x) & \text{ if } x\in A, \\
\phi'(x) & \text{ if } x\in A'.
\end{array}\right.$$

For graphs $H$ and $R$ with $V(R)\subseteq [r]$ and an ordered partition $ (X_1,\dots,X_r)$ of $V(H)$, we say that $H$ $\emph{admits the vertex partition}$ $(R, X_1,\dots, X_r)$, if  $H[X_i]$ is empty for all $i \in [r]$, and for any $i,j\in [r]$ with $i \ne j$
we have that $e_H(X_i, X_j) > 0$ implies $ij\in E(R).$ 
We say that $H$ is \emph{internally $q$-regular with respect to} $(R,X_1,\dots, X_r)$ if $H$ admits $(R,X_1,\dots, X_r)$ and $H[X_i,X_j]$ is $q$-regular for each $ij\in E(R)$.

We will often use the following Chernoff bound (see e.g. ~Theorem A.1.16 in \cite{AS08}). 

\begin{lemma} \label{lem: chernoff} \cite{AS08}
Suppose $X_1,\dots, X_n$ are independent random variables such that
$0\leq X_i\leq b$ for all $i\in [n]$. Let $X:= X_1+\dots + X_n$. Then for all $t>0$, $\mathbb{P}[|X - \mathbb{E}[X]| \geq t] \leq 2e^{-t^2/(2b^2 n)}$. 
\end{lemma}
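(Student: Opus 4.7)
The plan is to use the standard exponential moment (Chernoff/Hoeffding) method. First I would center the variables by setting $Y_i := X_i - \mathbb{E}[X_i]$, so that the $Y_i$ are independent, mean zero, and satisfy $|Y_i| \le b$ (since $X_i \in [0,b]$ and hence $\mathbb{E}[X_i] \in [0,b]$, which gives $Y_i \in [-b,b]$). Writing $Y := X - \mathbb{E}[X] = \sum_i Y_i$, the goal is to control $\mathbb{P}[|Y| \ge t]$.

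Next I would apply Markov's inequality to $e^{\lambda Y}$ for a parameter $\lambda > 0$ to be optimised later: for any $\lambda > 0$,
\[
\mathbb{P}[Y \ge t] = \mathbb{P}[e^{\lambda Y} \ge e^{\lambda t}] \le e^{-\lambda t}\,\mathbb{E}[e^{\lambda Y}].
\]
By independence, $\mathbb{E}[e^{\lambda Y}] = \prod_{i=1}^n \mathbb{E}[e^{\lambda Y_i}]$. The main step is to bound each factor. Since $Y_i \in [-b,b]$, convexity of $y \mapsto e^{\lambda y}$ on $[-b,b]$ gives
\[
e^{\lambda y} \le \cosh(\lambda b) + \frac{y}{b}\sinh(\lambda b),
\]
and taking expectations (using $\mathbb{E}[Y_i]=0$) yields $\mathbb{E}[e^{\lambda Y_i}] \le \cosh(\lambda b) \le e^{\lambda^2 b^2/2}$, where the last inequality follows by comparing Taylor series. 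Multiplying across $i$, one obtains $\mathbb{E}[e^{\lambda Y}] \le e^{\lambda^2 n b^2/2}$.

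Combining the previous two displays gives $\mathbb{P}[Y \ge t] \le \exp\bigl(-\lambda t + \lambda^2 n b^2/2\bigr)$. Choosing $\lambda := t/(nb^2)$ to minimise the exponent yields $\mathbb{P}[Y \ge t] \le \exp(-t^2/(2nb^2))$. Finally, applying the identical argument to the variables $-Y_i$ (which also satisfy $|-Y_i| \le b$ and have mean zero) gives the matching lower-tail bound $\mathbb{P}[Y \le -t] \le \exp(-t^2/(2nb^2))$, and a union bound over the two tails produces the claimed $2e^{-t^2/(2b^2 n)}$. The only nontrivial technical ingredient is the convexity/MGF estimate $\mathbb{E}[e^{\lambda Y_i}] \le e^{\lambda^2 b^2/2}$ (a form of Hoeffding's lemma); everything else is Markov, independence, and optimisation.
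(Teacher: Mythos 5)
Your proof is correct: the centering, the convexity bound $e^{\lambda y}\le \cosh(\lambda b)+\frac{y}{b}\sinh(\lambda b)$ on $[-b,b]$, the term-by-term comparison $\cosh(x)\le e^{x^2/2}$, the optimisation $\lambda=t/(nb^2)$, and the two-tail union bound are all sound, and they deliver exactly the claimed $2e^{-t^2/(2b^2n)}$. The paper itself gives no proof of this lemma and simply cites Theorem~A.1.16 of Alon--Spencer, so there is nothing to compare routes against; your argument is the standard exponential-moment (Hoeffding) derivation of precisely this bound.
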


\subsection{Tools involving $\epsilon$-regularity}
In this subsection, we introduce the definitions of $(\epsilon, d)$-{regularity} and $(\epsilon, d)$-{super-regularity}. 
We then state a suitable form of the {regularity lemma} for our purpose. 
 We will also state an embedding lemma (Lemma \ref{lem: embedding lemma}) which we will use later to prove our main lemma (Lemma~\ref{embedone}).

We say that a bipartite graph $G$ with vertex partition $(A,B)$ is  $(\epsilon,d)$-\emph{regular} if
for all sets $A'\subseteq A$, $B'\subseteq B$ with  $|A'|\geq \epsilon |A|$, $|B'|\geq \epsilon |B|$,
we have $| \frac{e_G(A',B')}{|A'||B'|}- d| < \epsilon.$
Moreover, we say that $G$ is \emph{$\epsilon$-regular}
if it is $(\epsilon,d)$-regular for some $d$.
If $G$ is $(\epsilon,d)$-regular and $d_{G}(a)= (d\pm \epsilon)|B|$ for $a\in A$ and $d_{G}(b)= (d\pm \epsilon)|A|$ for $b\in B$, then we say $G$ is  $(\epsilon,d)$-\emph{super-regular}.
We say that $G$ is $(\epsilon,d)^+$-\emph{(super)-regular} if it is $(\epsilon,d')$-(super)-regular for some $d'\geq d$.

For a graph $R$ on vertex set $[r]$, and disjoint vertex subsets $V_1,\dots, V_r$ of $V(G)$, we say that $G$ is $(\epsilon,d)^+$-\emph{(super)-regular with respect to the vertex partition} $(R,V_1,\dots, V_r)$ if $G[V_i,V_j]$ is $(\epsilon,d)^+$-(super)-regular for all $ij\in E(R)$. 
Being $(\epsilon,d)$-\emph{(super)-regular with respect to the vertex partition} $(R,V_1,\dots, V_r)$ is defined analogously. 
The following observations follow directly from the definitions.
\begin{proposition}\label{prop: reg smaller}
Let $0<\epsilon\leq \delta \leq d \leq 1$. Suppose $G$ is an $(\epsilon,d)$-regular bipartite graph with vertex partition $(A,B)$ and let $A' \subseteq A$, $B'\subseteq B$ with ${|A'|}/{|A|}$, ${|B'|}/{|B|}\geq \delta$.
Then $G[A',B']$ is $(\epsilon/\delta, d)$-regular.
\end{proposition}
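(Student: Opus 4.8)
\textbf{Proof plan for Proposition~\ref{prop: reg smaller}.}

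The plan is to verify the defining inequality of $(\epsilon/\delta,d)$-regularity directly from the hypothesis, by observing that any subset of $A'$ (resp.\ $B'$) which is large relative to $A'$ (resp.\ $B'$) is automatically large enough relative to $A$ (resp.\ $B$) to witness the original $(\epsilon,d)$-regularity. Concretely, fix sets $A''\subseteq A'$ and $B''\subseteq B'$ with $|A''|\ge (\epsilon/\delta)|A'|$ and $|B''|\ge(\epsilon/\delta)|B'|$. The key point is the chain $|A''|\ge(\epsilon/\delta)|A'|\ge(\epsilon/\delta)\cdot\delta|A|=\epsilon|A|$, and similarly $|B''|\ge\epsilon|B|$; here I use the assumption $|A'|/|A|,|B'|/|B|\ge\delta$ together with the fact that $\epsilon/\delta\le 1$ (which holds since $\epsilon\le\delta$), so that the constraint $|A''|\ge(\epsilon/\delta)|A'|$ is nonvacuous. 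First I would note that $A'',B''$ are then admissible test sets for the $(\epsilon,d)$-regularity of $G=G[A,B]$, whence $\bigl|\tfrac{e_G(A'',B'')}{|A''||B''|}-d\bigr|<\epsilon$.

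Next, since $e_{G[A',B']}(A'',B'')=e_G(A'',B'')$ for $A''\subseteq A'$ and $B''\subseteq B'$, the same inequality reads $\bigl|\tfrac{e_{G[A',B']}(A'',B'')}{|A''||B''|}-d\bigr|<\epsilon\le\epsilon/\delta$, using once more that $\delta\le 1$. As $A''\subseteq A'$ and $B''\subseteq B'$ with $|A''|\ge(\epsilon/\delta)|A'|$, $|B''|\ge(\epsilon/\delta)|B'|$ were arbitrary, this is exactly the statement that $G[A',B']$ is $(\epsilon/\delta,d)$-regular, completing the proof.

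There is essentially no obstacle here: the proposition is a direct bookkeeping consequence of the definition, and the only small care needed is to track that $\epsilon/\delta\le 1$ (so the notion is not vacuous and so $\epsilon\le\epsilon/\delta$) and that restricting to the subgraph $G[A',B']$ does not change the relevant edge counts. I would present it as a couple of lines rather than a displayed computation.
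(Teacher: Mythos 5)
Your proof is correct and is exactly the intended unwinding of the definitions; the paper itself omits the argument, stating only that this proposition "follows directly from the definitions." The chain $|A''|\ge(\epsilon/\delta)|A'|\ge\epsilon|A|$, the preservation of edge counts under restriction, and the final weakening $\epsilon\le\epsilon/\delta$ via $\delta\le 1$ are precisely the bookkeeping steps one would write if the paper had spelled it out.
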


\begin{proposition}\label{prop: reg subgraph}
Let $0<\epsilon \le \delta \le d \leq 1$. Suppose $G$ is an $(\epsilon,d)$-regular bipartite graph with vertex partition $(A,B)$. If $G'$ is a subgraph of $G$ with $V(G') = V(G)$ and $e(G') \geq (1-\delta)e(G)$, then $G'$ is $(\epsilon+ \delta^{1/3}, d)$-regular.%
\COMMENT{ Let $X$ and $Y$ be subsets of $A$ and $B$ respectively with $|X| > (\epsilon+ \delta^{1/3})|A|$ and $|Y| > (\epsilon+ \delta^{1/3})|B|$. We have that $e_{G'[X,Y]}$ can be as small as $d|X||Y| - \epsilon |X||Y| - \delta |A||B|.$  In order for the proposition to hold we require this to be larger in value than $d|X||Y| - (\epsilon + \delta^{1/3})|X||Y|$. This is equivalent to $\delta |A||B| < \delta^{1/3}|X||Y|.$ We have that $\delta^{1/3}|X||Y| >(\delta^{1/3})^3|A||B| = \delta |A||B|.$ The result follows.}
\end{proposition}

\begin{proposition}\label{prop: super}
Let $0<\epsilon \ll d \leq 1$.
Suppose $G$ is an $(\epsilon,d)$-regular bipartite graph with vertex partition $(A,B)$.
Let 
$$A':= \{ a\in A: d_{G}(a) \neq (d\pm \epsilon)|B|\} \text{ and } B':=\{  b\in B: d_{G}(b)\neq (d\pm \epsilon)|B|\}.$$
Then $|A'|\leq 2\epsilon |A|$ and $|B'|\leq 2\epsilon|B|$. 
\end{proposition}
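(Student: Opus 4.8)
\textbf{Proof proposal for Proposition~\ref{prop: super}.}

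The plan is a direct counting argument. Fix a small constant in the hierarchy $0<\epsilon \ll d \le 1$, so that in particular $\epsilon < d$ and $2\epsilon < 1$. By symmetry it suffices to bound $|A'|$; the bound on $|B'|$ follows by interchanging the roles of $A$ and $B$. Partition the exceptional set $A'$ according to whether the degree is too small or too large: set
\[
A^-:=\{a\in A: d_G(a) < (d-\epsilon)|B|\}, \qquad A^+:=\{a\in A: d_G(a) > (d+\epsilon)|B|\},
\]
so that $A'=A^-\cup A^+$ and these two sets are disjoint. The first step is to show $|A^-|\le \epsilon|A|$. Suppose for contradiction that $|A^-|>\epsilon|A|$. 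Then, taking $A'':=A^-$ and $B':=B$ in the definition of $(\epsilon,d)$-regularity (note $|B|\ge \epsilon|B|$ trivially), we would get $e_G(A^-,B)/(|A^-||B|) > d-\epsilon$, i.e.\ $e_G(A^-,B) > (d-\epsilon)|A^-||B|$. But every vertex of $A^-$ has degree strictly less than $(d-\epsilon)|B|$, so $e_G(A^-,B) = \sum_{a\in A^-} d_G(a) < (d-\epsilon)|A^-||B|$, a contradiction. Hence $|A^-|\le \epsilon|A|$.

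The second step handles $A^+$ symmetrically: if $|A^+|>\epsilon|A|$, then applying $(\epsilon,d)$-regularity with the pair $(A^+,B)$ gives $e_G(A^+,B) < (d+\epsilon)|A^+||B|$, whereas summing the degrees gives $e_G(A^+,B) = \sum_{a\in A^+}d_G(a) > (d+\epsilon)|A^+||B|$, again a contradiction; thus $|A^+|\le \epsilon|A|$. Combining the two steps, $|A'|=|A^-|+|A^+|\le 2\epsilon|A|$, and by symmetry $|B'|\le 2\epsilon|B|$, as required.

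I do not anticipate a genuine obstacle here: the statement is one of the standard "few vertices have exceptional degree" consequences of $\epsilon$-regularity, and the only point requiring minor care is the direction of the inequalities (using $B'=B$, which always satisfies $|B'|\ge\epsilon|B|$, so that the regularity estimate is available regardless of $d$), together with the harmless use of the hierarchy $\epsilon\ll d$ to ensure the thresholds $(d\pm\epsilon)|B|$ make sense as stated.
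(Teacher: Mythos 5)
Your proof is correct and is exactly the standard argument by which this type of "exceptional degree" estimate follows from $\epsilon$-regularity; the paper treats this proposition as an observation following directly from the definitions and gives no proof, so there is no separate paper argument to compare against. One small remark: the statement as printed in the paper writes $d_G(b)\neq(d\pm\epsilon)|B|$ in the definition of $B'$, but since $b\in B$ has all its neighbours in $A$ this should read $(d\pm\epsilon)|A|$ (consistently with the definition of super-regularity just above it); your symmetric argument bounds $|B'|$ with that corrected reading, which is what is actually used elsewhere in the paper.
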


The next lemma is a `degree version' of Szemer\'edi's regularity lemma (see e.g.~\cite{KOsurvey} on how to derive it from the original version).

\begin{lemma} [Szemer\'edi's regularity lemma]
\label{Szemeredi} 
Suppose $M, M', n \in \mathbb{N}$ with $0< 1/n \ll 1/M \ll \epsilon, 1/M' <1$ and $d >0$. 
Then for any $n$-vertex graph $G$, there exist a partition of $V(G)$ into $V_0, V_1, \dots, V_r$ and a spanning subgraph $G' \subseteq G$ satisfying the following. \begin{enumerate}[label=(\roman*)]
\item $M' \le r \le M,$
\item $|V_0| \le \epsilon n,$
\item $|V_i| = |V_j|$ for all $i, j \in [r],$
\item $d_{G'}(v) > d_G(v)  - (d + \epsilon)n$ for all $ v \in V (G),$
\item $e(G'[V_i]) = 0$ for all $ i \in [r],$
\item for all $i, j$ with $1 \le i < j \le r$, the graph $G' [V_i, V_j ]$ is either empty or $(\epsilon, d_{i,j})$-regular for some $d_{i,j} \in [d,1]$.
\end{enumerate}

\end{lemma}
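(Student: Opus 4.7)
The plan is to derive this degree form from the standard form of Szemer\'edi's regularity lemma, which for any $\epsilon'>0$ and integer $M_0'\ge 1$ produces an equipartition $W_0,W_1,\dots,W_R$ with $|W_0|\le \epsilon' n$, $R\in[M_0',M_S(\epsilon',M_0')]$ for the usual tower-type function $M_S$, and at most $\epsilon' R^2$ non-$\epsilon'$-regular pairs $(W_i,W_j)$. I would apply this with an auxiliary $\epsilon'\ll \epsilon$ and $M_0'$ slightly larger than $M'$, leaving room for the cleanup below.

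I would then perform a two-stage cleanup. First, call a cluster $W_i$ \emph{bad} if it is in more than $\sqrt{\epsilon'}R$ irregular pairs; by double counting at most $2\sqrt{\epsilon'}R$ clusters are bad, and I absorb all their vertices into the exceptional set at a cost of $2\sqrt{\epsilon'}n$ extra exceptional vertices. Second, for each surviving cluster $W_i$ and each $\epsilon'$-regular pair $(W_i,W_j)$ of density less than $d$ in $G$, $\epsilon'$-regularity forces at most $\epsilon'|W_i|$ vertices $v\in W_i$ to satisfy $d_G(v,W_j)>(d+\epsilon')|W_j|$ (otherwise such a set would witness density above $d+\epsilon'$, contradicting that the pair is $\epsilon'$-regular of density below $d$). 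I also move these ``imbalanced'' vertices into the exceptional set. A final rebalancing step trims the surviving clusters to a common size, moving at most one further vertex per cluster.

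Next, I define $G'\subseteq G$ by retaining (a) every edge incident to $V_0$ and (b) for each pair $(V_i,V_j)$ with $1\le i<j\le r$ that is $\epsilon'$-regular with density at least $d$, all edges of $G[V_i,V_j]$; every other edge of $G$ is deleted. Conditions (i)--(iii), (v), and (vi) hold by construction, with $\epsilon'\le \epsilon$ giving the $\epsilon$-regularity in (vi). For (iv), a vertex $v\in V_0$ retains all of its edges, and for $v\in V_i$ with $i\ge 1$ the degree loss splits into three contributions: at most $|V_i|\le n/M'$ edges inside $V_i$; at most $\sqrt{\epsilon'}R\cdot (n/R)=\sqrt{\epsilon'}n$ edges in irregular pairs involving $V_i$ (since $V_i$ is non-bad); and $\sum_{j\,:\,\rho_{i,j}<d}(d+\epsilon')|V_j|\le (d+\epsilon')n$ edges in low-density $\epsilon'$-regular pairs (since $v$ is not imbalanced). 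These three contributions sum to at most $(d+\epsilon)n$, yielding (iv).

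The main technical point is the parameter hierarchy: the crude bound on the number of imbalanced vertices is $\epsilon' R n$, so for $|V_0|\le \epsilon n$ to survive the cleanup we need $\epsilon' R\lesssim \epsilon$, while $R$ is determined a posteriori by Szemer\'edi's lemma and can be as large as $M_S(\epsilon',M_0')$. The standard resolution fixes an upper bound $M$ on the intended partition size in advance, sets $\epsilon'$ accordingly small in terms of $\epsilon$ and $M$, and then verifies the choice of $M$ is compatible with Szemer\'edi's output for this $\epsilon'$; the detailed execution is the bookkeeping in~\cite{KOsurvey}, and no ideas beyond the standard form of the regularity lemma are needed.
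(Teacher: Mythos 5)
The paper does not prove this lemma; it merely cites~\cite{KOsurvey} for the derivation, so there is no ``paper proof'' to compare against. Evaluating your sketch on its own merits, the overall structure (standard Szemer\'edi $\to$ absorb bad clusters $\to$ handle atypical degrees $\to$ trim $\to$ delete low-density and irregular edges) is the right one, and your treatment of bad clusters via double counting is fine. But the ``imbalanced vertex'' cleanup as you describe it is the wrong operation, and the parameter fix you propose at the end cannot repair it.

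Concretely: you move $v\in W_i$ to the exceptional set as soon as there exists a single low-density regular pair $(W_i,W_j)$ for which $d_G(v,W_j)$ is anomalously large. As you yourself compute, this removes up to $\epsilon' R n$ vertices. Your suggested resolution is to choose $\epsilon'$ small in terms of $\epsilon$ and an a priori bound $M$ on $R$. This is circular and cannot be carried out: the number of clusters $R$ returned by the standard regularity lemma is only bounded by $M_S(\epsilon',M_0')$, which is a tower in $1/\epsilon'$, so $\epsilon'\, M_S(\epsilon',M_0')\to\infty$ as $\epsilon'\to 0$. There is no choice of $\epsilon'$ for which the product $\epsilon' R$ is forced to be at most $\epsilon$, and $R$ is not in your control.

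The correct cleanup is to move a vertex only if it is atypical for \emph{many} pairs. For each $v\in W_i$, let $B(v)$ be the set of indices $j$ of low-density $\epsilon'$-regular pairs $(W_i,W_j)$ with $d_G(v,W_j)>(\rho_{ij}+\epsilon')|W_j|$. Summing the per-pair bound over $j$ gives $\sum_{v\in W_i}|B(v)|\le \epsilon' R|W_i|$, so by Markov at most $\sqrt{\epsilon'}|W_i|$ vertices of $W_i$ have $|B(v)|>\sqrt{\epsilon'}R$; move only those, a total of at most $\sqrt{\epsilon'}n$ vertices over all clusters. For a surviving $v$, the low-density pairs contribute at most
\[
\sum_{j\notin B(v)}(\rho_{ij}+\epsilon')|W_j|\;+\;\sum_{j\in B(v)}|W_j|\;\le\;(d+\epsilon')n+\sqrt{\epsilon'}\,n,
\]
which is below $(d+\epsilon)n$ once $\epsilon'\ll\epsilon$, with no condition on $R$ at all. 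With this replacement your argument closes; without it, condition (iv) is not established, and (ii) would fail as well.
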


The next lemma allows us to embed a small graph $H$ into a graph $G$ which is $(\epsilon,d)^+$-regular with respect to a suitable vertex partition $(R,V_1,\dots, V_r)$. 
In our proof of Lemma \ref{embedone} later on, properties (B1)$_{\ref{lem: embedding lemma}}$ and (B2)$_{\ref{lem: embedding lemma}}$ will help us to prescribe appropriate `target sets' for some of the vertices when we apply the blow-up lemma for approximate decompositions (Theorem \ref{Blowup}).
There, $H$ will be part of a larger graph that is embedded in several stages. 
(B1)$_{\ref{lem: embedding lemma}}$ ensures that the embedding of $H$ is compatible with constraints arising from earlier stages and (B2)$_{\ref{lem: embedding lemma}}$ will ensure the existence of sufficiently large target sets when embedding vertices $x$ in later stages (each edge of $\cM$ corresponds to the neighbourhood of such a vertex $x$).

\begin{lemma}\label{lem: embedding lemma}
Suppose $n, \Delta \in \mathbb{N}$ with $0<1/n \ll \epsilon \ll \alpha, \beta, d, 1/\Delta \leq 1$. Suppose that $G, H$ are graphs and $\mathcal{M}$ is a multi-hypergraph on $V(H)$ with edge-multiplicity at most $\Delta$.
Suppose $V_1,\dots, V_r$ are pairwise disjoint subsets of $V(G)$ with $\beta n\leq |V_i| \leq n$ for all $i\in [r]$,
and $X_1,\dots, X_r$ is a partition of $V(H)$ with $|X_i| \leq \epsilon n$ for all $i\in [r]$. Let $f: E(\mathcal{M}) \rightarrow [r]$ be a function, and for all $i\in [r]$ and $x\in X_i$, let $A_x \subseteq V_i$.
Let $R$ be a graph on $[r].$
 Suppose that the following hold.
\begin{enumerate}
\item[\rm (A1)$_{\ref{lem: embedding lemma}}$] $G$ is $(\epsilon,d)^+$-regular with respect to $(R,V_1,\dots, V_r)$,
\item[\rm (A2)$_{\ref{lem: embedding lemma}}$] $H$ admits the vertex partition $(R,X_1,\dots, X_r)$,
\item[\rm (A3)$_{\ref{lem: embedding lemma}}$] $\Delta(H)\leq \Delta$, $\Delta(\mathcal{M})\leq \Delta$ and the rank of $\mathcal{M}$ is at most $\Delta$,
\item[\rm (A4)$_{\ref{lem: embedding lemma}}$] for all $i, j \in [r]$, if $f(e)=i$ and $e\cap X_j\neq \emptyset$, then $ij\in E(R)$,\COMMENT{Had $i\ne j$ but without this we're still consistent by our definition of a graph (no loops)}
\item[\rm (A5)$_{\ref{lem: embedding lemma}}$] for all $i\in [r]$ and $x\in X_i$, we have $|A_x| \geq \alpha |V_i| $.
\end{enumerate}
Then there exists an embedding $\phi$ of $H$ into $G$ such that 
\begin{enumerate}
\item[\rm (B1)$_{\ref{lem: embedding lemma}}$] for each $x\in V(H)$, we have $\phi(x)\in A_x$,
\item[\rm (B2)$_{\ref{lem: embedding lemma}}$] for each $e\in \mathcal{M}$, we have
$|N_{G}( \phi(e)) \cap V_{f(e)} | \geq  (d/2)^{\Delta} |V_{f(e)}|$.
\end{enumerate}
\end{lemma}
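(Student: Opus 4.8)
The plan is to prove Lemma~\ref{lem: embedding lemma} by a greedy sequential embedding of the vertices of $H$, processing them in a carefully chosen order and maintaining candidate sets that shrink in a controlled way; this is the standard way one embeds a graph of bounded size into a regular (not necessarily super-regular) partition, with the extra bookkeeping needed to respect the prescribed sets $A_x$ and the target-set condition (B2)$_{\ref{lem: embedding lemma}}$. First I would fix an ordering $x_1,\dots,x_m$ of $V(H)$ (where $m=|V(H)|\le \epsilon n \cdot r$ is small compared with $\beta n$). Augment $H$ to a graph $H^+$ on the same vertex set by adding, for each $e\in E(\mathcal M)$, a new auxiliary vertex $y_e$ of degree $|e|\le \Delta$ joined to all of $e$ — or rather, conceptually track $e$ as a constraint — and note that because of (A3) the relevant "back-degree" of any vertex (number of already-embedded neighbours in $H$, plus number of hyperedges of $\mathcal M$ it lies in, each of rank $\le\Delta$) is bounded by a function of $\Delta$ only, say $D=D(\Delta)$.

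For each $i\in[r]$ and each $x\in X_i$, maintain a candidate set $C(x)\subseteq A_x\subseteq V_i$; initially $C(x)=A_x$, so $|C(x)|\ge \alpha|V_i|$ by (A5)$_{\ref{lem: embedding lemma}}$. When it is $x_t$'s turn, with $x_t\in X_i$: the already-embedded neighbours of $x_t$ in $H$ lie, by (A2)$_{\ref{lem: embedding lemma}}$, in clusters $V_j$ with $ij\in E(R)$, so by (A1)$_{\ref{lem: embedding lemma}}$ each such neighbour's image, intersected with $C(x_t)$, cuts $C(x_t)$ down by at most a factor close to $d$ (using $(\epsilon,d)^+$-regularity and the fact that $C(x_t)$ is still of size $\ge \epsilon|V_i|$ — this is where we need the intermediate candidate sets to never get too small, which holds because $D$ is bounded and $\alpha,d,1/\Delta$ are constants while $\epsilon$ is tiny). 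Choose $\phi(x_t)$ to be any vertex of the resulting set that, in addition, is a typical vertex towards every not-yet-embedded neighbour of $x_t$ and towards every not-yet-embedded member of every $\mathcal M$-edge containing $x_t$ — by Proposition~\ref{prop: super}-type counting only an $O(\epsilon)$-fraction of candidates are atypical in any one direction, and there are $\le D$ directions, so a valid choice exists. Then update: for each unembedded neighbour $x'$ of $x_t$, replace $C(x')$ by $C(x')\cap N_G(\phi(x_t))$, and likewise intersect along $\mathcal M$-edges; by the typicality we just enforced, $|C(x')|$ drops by a factor $(d\pm\epsilon)$ at each such step, so after all its $\le D$ relevant predecessors are processed, $|C(x')|\ge (d-\epsilon)^D\alpha|V_j|\ge 2\epsilon|V_j|$, which keeps the whole process alive and simultaneously, at the very end, gives $|N_G(\phi(e))\cap V_{f(e)}|\ge (d/2)^\Delta|V_{f(e)}|$: indeed by (A4)$_{\ref{lem: embedding lemma}}$ the cluster $V_{f(e)}$ is adjacent in $R$ to every cluster meeting $e$, so the common neighbourhood of $\phi(e)$ inside $V_{f(e)}$ can be tracked by exactly the same intersection argument applied to the (fictitious) extra vertex with neighbourhood $e$ and target cluster $V_{f(e)}$, and $|e|\le\Delta$ gives the exponent $\Delta$. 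Property (B1)$_{\ref{lem: embedding lemma}}$ holds because $C(x)\subseteq A_x$ throughout.

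The only genuine subtlety — and the main obstacle — is ensuring the candidate sets never shrink below the $\epsilon|V_i|$ threshold at which regularity stops being usable, while simultaneously delivering the stronger lower bound $(d/2)^\Delta|V_{f(e)}|$ required in (B2)$_{\ref{lem: embedding lemma}}$. This forces the hierarchy $1/n\ll\epsilon\ll\alpha,\beta,d,1/\Delta$ to be used in the right order: one first fixes $D=D(\Delta)$ and the number of intersection steps, which bounds the exponent; then $\alpha,d$ being constant makes $(d-\epsilon)^{D}\alpha$ a constant bounded away from $0$; then $\epsilon$ small makes both the "atypical fraction" negligible and the gap between $(d-\epsilon)^D\alpha|V_i|$ and the survival threshold positive; and finally $1/n$ small ensures the small number of vertices already embedded into a cluster ($\le|X_i|\le\epsilon n\ll\beta n\le|V_i|$) does not interfere. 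One has to be slightly careful that when we count candidates for $\phi(x_t)$ we exclude the $\le m\le\epsilon n r$ vertices already used as images, but since any individual cluster has received at most $\epsilon n$ images and the current candidate set has size $\gg\epsilon n$, this loses at most a further $o(1)$-fraction. Assembling these estimates is routine once the order of quantifiers is fixed.
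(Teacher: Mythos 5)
Your proposal is correct and follows essentially the same route as the paper: a greedy sequential embedding of $V(H)$, where one maintains a candidate set for each unembedded vertex and, in parallel, a common-neighbourhood set inside $V_{f(e)}$ for each $e\in E(\mathcal M)$ (which you realise via the auxiliary vertices $y_e$, exactly matching the paper's auxiliary multi-hypergraph $\cM^*=\cM\cup\cM'$ and its invariant $(\Phi3)$). The choice of a ``typical'' image at each step via $(\epsilon,d)^+$-regularity and Proposition~\ref{prop: super}, the use of $\Delta(\cM^*)=O(\Delta)$ and rank $\le\Delta$ to bound the number of shrinking steps, and the resulting bound $(d/2)^{\Delta}|V_{f(e)}|$ all coincide with the paper's argument.
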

Note that (A4)$_{\ref{lem: embedding lemma}}$ implies for all $e \in E(\mathcal{M})$ that $e \cap X_{f(e)}= \emptyset.$
\begin{proof}
For each $x \in V(H)$, let $e_x:= N_H(x)$ and $\cM'$ be a multi-hypergraph on vertex set $V(H)$ with $E(\cM') = \{ e_x : x\in V(H) \}$.
Since a vertex $x\in V(H)$ belongs to $e_y$ only when $y\in N_H(x)$, we have $d_{\cM'}(x) = d_{H}(x)$. So $\cM'$ is a multi-hypergraph with rank at most $\Delta$ and $\Delta(\cM')\leq \Delta$.
Let $\cM^* := \cM \cup \cM'$ and for each $e\in E(\cM^*)$,  define
$$B_e := \left\{\begin{array}{ll}
V_{f(e)} & \text{ if } e\in E(\cM), \\
A_x & \text{ if } e=e_x \in E(\cM') \text{ for } x\in V(H).
\end{array}\right. $$\COMMENT{Note that since $\cM^*$ is a multi-hypergraph, there could be a case that $e_x = e_y$ for vertices $x\neq y$. In this case, $B_{e_x}$ and $B_{e_y}$ may be  different.}
Note that by (A3)$_{\ref{lem: embedding lemma}}$, we have
\begin{align}\label{eq: max degree cH*}
\text{$\cM^*$ has rank at most $\Delta$, and }
\Delta(\cM^*) \leq \Delta(\cM)+ \Delta(\cM')\leq 2\Delta.
\end{align}
Let $V(H):= \{ x_1,\dots, x_m\}$, and for each $i\in [m]$, we let $Z_i:=\{x_1,\dots, x_i\}$.
We will iteratively extend partial embeddings $\phi_{0},\dots, \phi_{m}$ of $H$ into $G$ in such a way that the following hold for all $i \le m$.
\begin{enumerate}
\item[\rm ($\Phi$1)$^i_{\ref{lem: embedding lemma}}$] $\phi_{i}$ embeds $H[Z_i]$ into $G$,
\item[\rm ($\Phi$2)$^i_{\ref{lem: embedding lemma}}$] $\phi_i (x_k) \in A_{x_k}$, for all $k \in [i],$
\item[\rm ($\Phi$3)$^i_{\ref{lem: embedding lemma}}$] for all $e \in \cM^*$, we have $|N_{G}( \phi_i(e\cap Z_i)) \cap B_e| \geq (d/2)^{ |e\cap Z_i|} |B_e|.$
\end{enumerate}
Note that ($\Phi$1)$^0_{\ref{lem: embedding lemma}}$--($\Phi$3)$^0_{\ref{lem: embedding lemma}}$ hold for an empty embedding $\phi_0: \emptyset \rightarrow \emptyset$.
Assume that for some $i\in [m]$, we have already defined an embedding $\phi_{i-1}$ satisfying ($\Phi$1)$^{i-1}_{\ref{lem: embedding lemma}}$--($\Phi$3)$^{i-1}_{\ref{lem: embedding lemma}}$. 
We will construct $\phi_{i}$ by choosing an appropriate image for $x_{i}$.
Let $s \in [r]$ be such that  $x_i \in X_{s}$, and let $S:= N_{G}( \phi_{i-1}(Z_i \cap e_{x_{i}}) ) \cap B_{e_{x_i}}.$
Thus $S \subseteq V_s.$
Since $Z_{i-1} \cap e_{x_{i}} = Z_i \cap e_{x_i}$, we have that ($\Phi$3)$^{i-1}_{\ref{lem: embedding lemma}}$ implies  
\begin{align}\label{eq: S size}
|S| \geq (d/2)^{|Z_i \cap e_{x_{i}}|} \alpha \beta n > (d/2)^{\Delta} \alpha \beta n > \epsilon^{1/3}n.
\end{align}
For each $e\in E(\cM^*)$ containing $x_{i}$,\COMMENT{Note that $e_{x_i}$ does not contain $x_i$.} we consider
$$S_{e}:= N_{G}( \phi_{i-1}( Z_{i-1}\cap e ) ) \cap B_e.$$
By ($\Phi$3)$^{i-1}_{\ref{lem: embedding lemma}}$, we have 
\begin{align}\label{eq: Se size}
|S_{e}| \geq (d/2)^{\Delta}\alpha \beta n > \epsilon^{1/3}n.
\end{align}
If $e = N_H(x)$ for some $x\in X_{s'}$ with $s'\in [r]$, then we have $S_e\subseteq B_e \subseteq V_{s'}$, and (A2)$_{\ref{lem: embedding lemma}}$ implies that $ss' \in E(R)$.
 Moreover, note that if $e\in \cM$ with $f(e)=s'$ for some $s'\in [r]$, then $S_e\subseteq B_e = V_{s'}$, and (A4)$_{\ref{lem: embedding lemma}}$ implies that $ss'\in E(R)$.
Thus in any case, (A1)$_{\ref{lem: embedding lemma}}$ implies that $G[V_s, V_{s'}]$ is $(\epsilon,d')$-regular for some $d'\geq d$.
Hence, Proposition~\ref{prop: reg smaller} with \eqref{eq: S size} and \eqref{eq: Se size} implies that $G[S,S_e]$ is $(\epsilon^{1/2},d')$-regular.
Let 
$$S'_e:= \{ v\in S: d_{G,S_e}(v) < (d/2)|S_e|\}.$$
By Proposition~\ref{prop: super}, we have $|S'_e| \leq 2\epsilon^{1/2} n$.
Thus 
\begin{align}
 |S \setminus \bigcup_{e\in E(\cM^*): \space x_i \in e} S'_e| 
 \stackrel{\eqref{eq: max degree cH*}}{\geq} |S| - 2\Delta \cdot 2\epsilon^{1/2} n \stackrel{\eqref{eq: S size}}{\geq } 1. 
 \end{align}
We choose $v \in S\setminus \bigcup_{e\in E(\cM^*): \space x_i \in e} S'_e$, and we extend $\phi_{i-1}$ into $\phi_{i}$ by letting $\phi_{i}(x_i) := v$.
Since 
$$\phi_{i}(x_i) \in S = N_{G}( \phi_{i-1}(Z_i \cap e_{x_{i}}) ) \cap B_{e_{x_i}}= N_{G}( \phi_i(Z_{i} \cap N_H(x_i)) ) \cap A_{x_i},$$
 ($\Phi$1)$^i_{\ref{lem: embedding lemma}}$ and ($\Phi$2)$^i_{\ref{lem: embedding lemma}}$ hold.
Also, for each $e\in E(\cM^*)$,
if $x_{i}\notin e$, then as we have $Z_i\cap e = Z_{i-1} \cap e$,
$$|N_{G}( \phi_i( Z_i\cap e ) ) \cap B_e| = |N_{G}(  \phi_{i-1}( Z_{i-1}\cap e )) \cap B_e| \stackrel{\text{($\Phi$3)$^{i-1}_{\ref{lem: embedding lemma}}$ }}{\geq}  (d/2)^{|Z_i\cap e |} |B_e|.$$
If $x_{i}\in e$, then since $\phi_i(x_{i})\notin S'_e$ and $|Z_i\cap e| = |Z_{i-1} \cap e|+1$, we have 
\begin{eqnarray}
|N_{G}( \phi_i( Z_i\cap e)) \cap B_e| &\geq& |N_{G}(\phi_i(x_i) )\cap S_e|  \geq (d/2) |S_e| 
\stackrel{\text{($\Phi$3)$^{i-1}_{\ref{lem: embedding lemma}}$ }}{\geq} (d/2)^{|Z_i\cap e|} |B_e|.
\end{eqnarray}
Thus ($\Phi$3)$^{i}_{\ref{lem: embedding lemma}}$ holds.
By repeating this until we have embedded all vertices of $H$, we obtain an embedding $\phi_m$ satisfying ($\Phi$1)$^m_{\ref{lem: embedding lemma}}$--($\Phi$3)$^m_{\ref{lem: embedding lemma}}$.
Let $\phi:= \phi_m$. 
Then ($\Phi$2)$^m_{\ref{lem: embedding lemma}}$ implies that (B1)$_{\ref{lem: embedding lemma}}$ holds, and 
($\Phi$3)$^m_{\ref{lem: embedding lemma}}$ together with (A3)$_{\ref{lem: embedding lemma}}$ and the definition of $B_e$ implies that (B2)$_{\ref{lem: embedding lemma}}$ holds.
\end{proof}
\subsection{Decomposition tools}


In this subsection, we first give bounds on $\delta_{k}^{\text{reg}}$.
The following proposition provides a lower bound for $\delta_{k}^{\text{reg}}$. 
The proof is only a slight extension of the extremal construction given by Proposition $1.5$ in \cite{BKLO}, and thus we omit it here. 

\begin{proposition}\label{extremal example}
For all $k \in \mathbb{N}\backslash\{1,2\}$ we have $\delta_{k}^{\text{reg}} \ge 1-1/(k+1).$
\end{proposition}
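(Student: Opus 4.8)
The plan is to establish the lower bound $\delta_k^{\text{reg}} \ge 1 - 1/(k+1)$ by exhibiting, for every $\delta < 1 - 1/(k+1)$, an infinite family of regular $n$-vertex graphs $G$ of degree at least $\delta n$ which admit no approximate $K_k$-decomposition, i.e.\ whose largest $K_k$-packing covers only a bounded-away-from-$1$ proportion of $E(G)$. As the statement indicates, the construction is essentially that of Proposition~1.5 in~\cite{BKLO}; the only extra work is to make it (almost) regular and then smooth it to exactly regular, so the argument should be presented as a short adaptation rather than from scratch.

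First I would recall the non-regular extremal example: partition $V$ into $k+1$ parts, take a complete multipartite-type graph but delete edges inside one or two designated parts so that the number of edges inside the union $W$ of those `deficient' parts is too small to be fully covered, while the total degree stays at roughly $(1-1/(k+1))n$. The key counting point is that every copy of $K_k$ must use a controlled number of edges inside $W$ versus crossing edges, so a lower bound on the density of crossing edges relative to internal $W$-edges forces any $K_k$-packing to leave a positive fraction of the crossing edges (or of the $W$-edges) uncovered. I would reproduce exactly this density inequality: if $G$ is close to $d$-regular with $d < (1-1/(k+1))n$, one counts the triangles/$K_k$'s through a typical vertex $w \in W$ and shows the edges at $w$ cannot all lie in edge-disjoint $K_k$'s, because each such $K_k$ also consumes edges from a `scarce' part of the graph at a fixed rate.

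Second, to upgrade `close to regular' to `exactly regular' — which is what Definition~\ref{def: delta k} demands — I would note that the construction is already vertex-transitive up to small perturbations, or alternatively add a sparse regularising graph (for instance a suitable circulant) on the low-degree vertices: adding $o(n)$ to each degree changes neither the asymptotic degree bound $\delta n$ nor, by a crude edge-count, the fact that a linear fraction of edges must remain uncovered, since the obstruction is a fixed multiplicative deficiency in the number of available $K_k$'s. One must check the added edges do not create enough new $K_k$'s to matter, which follows because they form a sparse (bounded-degree, or $o(n)$-degree) graph and hence lie in only $o(e(G))$ copies of $K_k$. Finally one takes $n \to \infty$ along an appropriate arithmetic progression to get arbitrarily large examples, and since this works for every $\delta < 1-1/(k+1)$, the infimum defining $\delta_k^{\text{reg}}$ is at least $1 - 1/(k+1)$.

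The main obstacle I anticipate is purely bookkeeping rather than conceptual: verifying that the regularising perturbation genuinely preserves the packing obstruction, i.e.\ that one cannot cleverly route $K_k$'s through the new edges to rescue an (almost) full decomposition. This is handled by the observation that the scarcity is located on a linear-sized set of edges whose endpoints have low degree into the relevant parts, so any $K_k$ covering such an edge still pays the same `fixed cost' in that scarce set; since the perturbation only adds $o(n)$ edges per vertex there, the total budget in the scarce set grows by a lower-order term and the deficiency ratio is unchanged in the limit. Because \cite{BKLO} already carries out the analogous estimate in the minimum-degree setting, I would simply indicate the (minor) modifications and refer the reader there for the detailed calculation, exactly as the sentence preceding the proposition promises.
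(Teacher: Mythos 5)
Your proposal correctly sets the goal (a family of regular graphs of degree just below $(1-1/(k+1))n$ whose largest $K_k$-packing misses a positive fraction of the edges), but it misidentifies what the ``slight extension'' of Proposition~1.5 in \cite{BKLO} actually is, and this is a substantive gap. The construction behind that proposition is a blow-up of $K_{k+1}$ minus a perfect matching, where each vertex becomes a clique $K_h$ and each edge a complete bipartite graph; this graph is already exactly regular (when $k+1$ is even, with a minor variant otherwise). So regularisation is \emph{not} the missing step, and your third paragraph is devoted to verifying a modification that is never performed. The construction is also not ``complete multipartite with one or two deficient parts'': the scarce edges are those \emph{internal} to the $k+1$ blocks, symmetrically, and the removed matching forces every $K_k$ to spend at least $(k-1)/2$ of them (two vertices in matched blocks cannot both lie in a $K_k$, so the $k$ vertices occupy at most $(k+1)/2$ blocks and hence repeat blocks, incurring internal edges).

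The real gap is quantitative. Proposition~1.5 of \cite{BKLO} only rules out an \emph{exact} $K_k$-decomposition; the counting argument on the blow-up gives an edge deficit of order $n$, which is $o(e(G))$. The quantity $\delta_k^{\text{reg}}$ is defined via \emph{approximate} decompositions, so you must exhibit a deficit of order $n^2$, i.e.\ bounded away from $0$ as a fraction of $e(G)$. The modification that accomplishes this is to \emph{delete} $\Theta(\eta n)$ internal edges from each vertex (for instance by removing edge-disjoint Hamilton cycles inside each clique block, which preserves regularity while lowering the degree to just above $\delta n$). Since each $K_k$ still spends at least $(k-1)/2$ internal edges, the upper bound on the number of coverable edges drops by roughly $k$ times the number of deleted edges while $e(G)$ drops only by that number, yielding a net deficit of order $\eta n^2$. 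Your proposal never reaches this amplification step, so even if the bookkeeping in your paragraph three were carried out, it would only reprove the exact-decomposition lower bound, not the one for $\delta_k^{\text{reg}}$. (Separately, the assertion that a sparse $o(n^2)$-edge set ``lies in only $o(e(G))$ copies of $K_k$'' is false in a dense host; the correct, and sufficient, statement is that at most $o(n^2)$ members of any edge-disjoint packing can use an edge of that set.)
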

It will be convenient to use that for $k\ge 2$ this lower bound implies
\begin{align}\label{eq: deltak 1-1/k}
\max\{1/2, \delta_k^{\text{reg}}\} \geq 1 - 1/k.
\end{align}

\COMMENT{\begin{proof}
In fact we prove more. The proof is given by the following claim on letting $k = r+1$, and is based on the one in \cite{BKLO}.
\begin{claim*}
For every $r \in \mathbb{N}\backslash\{1\}$, there exist infinitely many $n$ such that there exists a $K_{r+1}$-divisible $n$-vertex regular graph $G$ with $\delta(G) = \lceil(1-1/(r+2))n\rceil - 1$ without a $K_{r+1}$-decomposition. 
Moreover,  we have $\delta_{r+1}^{\text{reg}} > 1-1/(r+2).$
\end{claim*}
We first consider the case where $r$ is even.
Let $\ell, s \in \mathbb{N}$ such that $r=2\ell.$
Let $h := (sr+1)(r+1).$
Let $K_{2\ell +2} - M$ be the subgraph obtained from $K_{2\ell +2}$ after the removal of a perfect matching $M$.
Let $G^{2\ell}_h$ be the graph obtained by taking the blow up of $K_{2\ell +2} - M$ where each vertex is replaced by a copy of $K_h,$ and each edge is replaced by a complete bipartite graph between the corresponding copies of $K_h.$
Note that $G^{2\ell}_h$ is a graph on $n:= (r+2)h$ vertices and is $d$-regular where $d :=rh + (h-1) = (1 - 1/(r+2))n -1$. 
Moreover, we have $e(G^{2\ell}_h) = ((h-1) + rh)(n/2).$
Since $r$ divides $d$ and $r+1$ divides $h$ we have that ${r+1\choose 2}$ divides $e(G^{2\ell}_h).$
It follows that $G^{2\ell}_h$ is $K_{r+1}$-divisible.
We call an edge in $G^{2\ell}_h$ \emph{internal} if it lies completely in one of the copies of $K_h$.
We have $I_{G^{2\ell}_h} := (r+2){h \choose 2}$ internal edges in $G^{2\ell}_h$.
Note that any copy of $K_{r+1}$ in $G^{2\ell}_h$ must use at least $r/2$ internal edges (because of the matching $M$ we removed).
It follows that $I_{G^{2\ell}_h}/(r/2)$ is an upper bound on the number of edge-disjoint copies of $K_{r+1}$ that can be embedded into $G^{2\ell}_h.$
We have that $$I_{G^{2\ell}_h} {r+1\choose 2}/(r/2) =(r+2){h \choose 2}{r+1\choose 2}/(r/2) < ((h(r+1)-1)(h(r+2)/2)= e(G^{2\ell}_h).$$
It follows that $G^{2\ell}_h$ does not have a $K_{r+1}$-decomposition.

To see the result for approximate decompositions let $G_h^{2\ell}$ be defined as above.
Let $\eta \ll 1$ and let $\lambda \in \{\lfloor 100\eta n \rfloor -2, \lfloor 100\eta n \rfloor - 1 \},$ such that $\lambda$ is divisible by 2.
We can choose $s$ large enough so that $\lambda < h/2$ and $1/\lambda \ll 1.$ 
We remove $\lambda$ internal edges from each vertex of $G_h^{2\ell}$.
The resulting subgraph is regular of degree at least $(1- 1/(r+2) - 100\eta)n.$
Note this step is possible by deleting $\lambda/2$ edge-disjoint Hamilton cycles within each copy of $K_h$ (which exist since $h$ is odd and thus $K_h$ has a decomposition into Hamilton cycles by Walecki's theorem).  
Call the resulting graph $G^{2\ell, *}_h$.
We have that $I_{G^{2\ell, *}_h} = I_{G^{2\ell}_h} - \lambda n/2$ and that $e(G^{2\ell,*}_h) = e(G^{2\ell}_h) - \lambda n/2.$ 
A similar argument as before shows that any $K_{r+1}$-packing in $G_h^{2\ell}$ leaves at least $$e(G^{2\ell,*}_h) - \frac{{r+1\choose 2}I_{G^{2\ell,*}_h}}{(r/2)} > \eta n^2,$$ edges uncovered. (By considering the new terms in this expression from before, we observe that $$\frac{{r+1\choose 2}\lambda n/2}{r/2} - \lambda n/2 > \eta n^2.)$$
The result follows.

Now let $r := 2\ell + 1.$
Let $h : = (s(r+1) +1)r.$
We let $G^{2\ell + 1}_h$ be the graph obtained by adding a set $W$ of $h+1$ vertices to the graph  $G^{2\ell }_h$, and connect each vertex in $W$ to all of the vertices of $V(G^{2\ell }_h)$.
Note that $G^{2\ell + 1}_h$  is an $n$-vertex graph where $n:=(r+2)h +1,$ and is $d$-regular where $d:=(r+1)h = (r+1)(n-1)/(r+2) = \lceil (1 - \frac{1}{r+2})n\rceil  -1.$
Note also that $e(G^{2\ell + 1}_h) = n(r+1)h/2.$
It follows that since $r$ divides $d$ and ${r+1\choose 2}$ divides $e(G^{2\ell + 1}_h)$, we have that $e(G^{2\ell + 1}_h)$ is $K_{r+1}$-divisible.
We call an edge of $G_{h}^{2\ell +1}$ internal if it is an internal edge of $G_{h}^{2\ell}$.
Then the number $I_{G^{2\ell +1}_h}$ of internal edges of $G_h^{2\ell+1}$ satisfies $I_{G^{2\ell +1}_h}  = (r+1){h \choose 2}.$ 
Each copy of $K_{r+1}$ in $G^{2\ell +1}_h$ must contain at least $(r-1)/2$ internal edges.
Moreover, if a copy of $K_{r+1}$ contains precisely $(r-1)/2$ internal edges then it must contain a vertex from $W.$
There are at most $\frac{d|W|}{r}$ edge-disjoint such copies.
It follows that the number of edge-disjoint copies of $K_{r+1}$ is at most 
\begin{align*}&\frac{d|W|}{r} + \frac{(I_{G^{2\ell +1}_h} -  \frac{d|W|}{r}\frac{r-1}{2})}{\frac{r+1}{2}}\enspace =\enspace  h(h - 1) + 2(h + 1)(s(r + 1) + 1)  \\& = (s(r + 1) + 1)((r + 2)h - (r - 2)) 
< (s(r + 1) + 1)((r + 2)h + 1) = \frac{e(G^{2\ell + 1}_h)}{{r+1\choose 2}}.
\end{align*}
It follows that $G^{2\ell + 1}_h$ does not have a $K_{r+1}$-decomposition.

Finally, for the result on approximate decompositions, let $h:= sr^2$ where $s$ is odd.
Let $G_{{2\ell + 1},h}$ be the graph formed by taking the union of $G^{2\ell}_h$ and a set $W$ of $h$ vertices where each vertex is connected to all of the vertices in $G^{2\ell}_h$.
We have that $G_{{2\ell + 1},h}$ is an $n$-vertex graph where $n:= (r+2)h$.
Let $\eta \ll 1$ and let $\lambda \in \{\lfloor 100\eta n \rfloor -2(r+1), \dots ,\lfloor 100\eta n \rfloor -1\}$ such that $\lambda = (r+1)p,$ for some odd integer $p$.
We choose $s$ sufficiently large such that $\lambda <h/2$ and $1/\lambda \ll 1$.
For each copy of $K_h$ we remove $p$ edge-disjoint perfect matchings between $W$ and this copy.
We then remove $(\lambda - p - 1)/2$ Hamilton cycles (similar to before and ok since $h$ is odd) from each copy of $K_h$.
 Then the resulting graph $G$ is regular of degree $d:= (r+1)h - \lambda > (1 - 1/(r+2) - 100\eta)n.$
We have $d|W|/r = h((r+1)h - \lambda)/r = (sr)((r+1)h - \lambda).$
Once more we let $I_G$ be the number of edges in $G$ that are internal in $G^{2\ell}_h.$
We have that $I_G = {h\choose2}(r+1) - h(\lambda - p -1)(r+1)/2.$
It follows that the number of edge-disjoint copies of $K_{r+1}$ in $G$ is at most 
$$\frac{d|W|}{r} + \frac{(I_{G} -  \frac{d|W|}{r}\frac{r-1}{2})}{\frac{r+1}{2}}\enspace \le  \frac{(r+2)h^2}{r} -\frac{2}{r+1}sr\lambda - hrp.$$\begin{eqnarray}\big(
&\ sr^2 h + srh - sr\lambda + \frac{\frac{h(h-1)(r+1)}{2} -\frac{r+1}{2}(h\lambda -hp -h) -\frac{r-1}{2}(sr)(rh + h - \lambda)}{\frac{r+1}{2}}\\& = sr^2h + srh -sr\lambda + h(h-1) - (h\lambda -hp -h) - \frac{r-1}{r+1}(sr)(rh+h - \lambda)\\
& = \frac{2}{r+1}sr^2 h +  \frac{2}{r+1}srh - \frac{2}{r+1}sr\lambda + h^2 -h\lambda + hp\\
& =\frac{2}{r+1}sr^2 h +  \frac{2}{r+1}srh + h^2 -\frac{2}{r+1}sr\lambda -hrp\\
& = \frac{(r^2+3r + 2)h^2}{(r+1)r} -\frac{2}{r+1}sr\lambda - hrp= \frac{(r+2)h^2}{r} - \frac{2}{r+1}sr\lambda- hrp.\big)\end{eqnarray}
Thus every $K_{r+1}$-packing in $G$ leaves at least $${e(G_{{2\ell + 1},h})} - \Big(\frac{d|W|}{r} + \frac{(I_{G} -  \frac{d|W|}{r}\frac{r-1}{2})}{\frac{r+1}{2}}\Big){r+1\choose 2} > {\eta n^2},$$ edges uncovered.
\begin{eqnarray}\big(
&\frac{(r+2)(r+1)h^2}{2} - \frac{\lambda(r+2)h}{2} -  (\frac{(r+2)h^2}{r} - \frac{2}{r+1}sr\lambda -  hrp) {r+1\choose 2}\\
& = \frac{hr^2(r+1)p}{2} - \frac{\lambda(r+2)h}{2} +r^2s\lambda\\
& = \frac{hr^2 \lambda}{2} - \frac{h(r+2)\lambda}{2}  +r^2s\lambda> {\eta n^2}.\big)\end{eqnarray}
The result follows.

\end{proof}}


Given two graphs $F$ and $G$, let $\binom{G}{F}$ denote the set of all copies of $F$ in $G$.
A function $\psi$ from $\binom{G}{F}$ to $[0,1]$ is a \emph{fractional $F$-packing} of $G$ if $\sum_{F'\in \binom{G}{F}: e\in F'} \psi(F')\leq 1$ for each $e\in E(G)$ (if we have equality for each $e \in E(G)$ then this is referred to as a \emph{fractional $F$-decomposition}). 
Let $\nu^*_F(G)$ be the maximum value of $\sum_{F'\in \binom{G}{F}} \psi(F')$ over all fractional $F$-packings $\psi$ of $G$.
Thus $\nu^*_F(G) \le e(G)/e(F)$ and $\nu^*_F(G) = e(G)/e(F)$ if and only if $G$ has a fractional $F$-decomposition.
The following very recent result of Montgomery gives a degree condition which ensures a fractional $K_k$-decomposition in a graph.

\begin{theorem}\cite{M17}\label{thm: Montgomery}
Suppose $k, n \in \mathbb{N}$ and $0< 1/n \ll 1/k<1$. 
Then any $n$-vertex graph $G$ with $\delta(G) \geq (1- 1/(100k))n$ satisfies $\nu^*_{K_k}(G) = e(G)/e(K_k).$ 
\end{theorem}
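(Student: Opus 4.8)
The statement to prove is Theorem~\ref{thm: Montgomery}, a result of Montgomery: if $\delta(G) \geq (1-1/(100k))n$ then $G$ has a fractional $K_k$-decomposition, i.e. $\nu^*_{K_k}(G) = e(G)/e(K_k)$. Since this is quoted from~\cite{M17}, I will sketch the proof strategy that the iterative absorption / local-switching approach of Montgomery follows.

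\medskip

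\textbf{Overall strategy.} The plan is to start from a natural ``uniform'' weighting and correct it edge by edge. Concretely, assign to every copy $K \in \binom{G}{K_k}$ the weight $\psi_0(K) = 1 / \binom{\delta(G)}{k-2}$ or a similar normalisation so that, if $G$ were $K_k$-regular in a suitable sense, $\psi_0$ would already be a fractional decomposition. In general, for each edge $e$ the total weight $w(e) := \sum_{K \ni e} \psi_0(K)$ will differ slightly from $1$; write $g(e) := 1 - w(e)$ for the ``defect'' at $e$. The minimum degree condition forces $|g(e)|$ to be small (of order $1/n$ times a bounded factor) and, crucially, to have total defect controllable. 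The task reduces to finding a signed combination of ``local gadgets'' — small fractional objects supported on few edges — whose boundary defect exactly cancels $g$.

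\medskip

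\textbf{Key steps.} First, I would set up the defect function $g: E(G) \to \mathbb{R}$ and prove the estimate $\sum_{e} |g(e)|$ and $\max_e |g(e)|$ are small, using $\delta(G) \geq (1-1/(100k))n$ and double counting of $K_k$-copies through edges and through cherries. Second, I would construct, for each pair of edges $e, e'$ sharing appropriate structure, a signed fractional object $\theta_{e,e'}$ (a difference of two fractional $K_k$-packings, e.g. built from two $K_{k+1}$- or $K_{k+2}$-configurations) whose net effect on edge weights is $+c$ on $e$, $-c$ on $e'$ and $0$ elsewhere — this is the ``transferral'' gadget, and the minimum degree condition guarantees enough such gadgets exist and that one can route defect between any two edges through short paths of gadgets. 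Third, I would show that the family of available gadgets spans (over $\mathbb{R}$) the space of functions orthogonal to the all-ones vector in a suitable sense, so that $g$ — which by construction has the right global balance — lies in their span; adding the corresponding small multiples of gadgets to $\psi_0$ yields $\psi$ with $w(e) = 1$ for all $e$. Finally, I would verify that all the coefficients used are small enough that $\psi$ remains nonnegative and at most $1$ on every copy, using that the initial weights $\psi_0(K)$ are of order $1/n^{k-2}$ while all corrections are of lower order.

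\medskip

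\textbf{Main obstacle.} The hard part is controlling the accumulation of corrections so that $\psi$ stays in $[0,1]$: each gadget touches several copies of $K_k$, and cancelling the defect on one edge must not push any copy's weight out of range, nor create new large defects elsewhere. This requires either an iterative scheme where at each round one cancels a constant fraction of the total defect while only spending a controlled amount of ``slack'' (a geometric-series argument), or a clever global linear-algebraic argument showing the gadget matrix is well-conditioned. Montgomery's improvement over earlier bounds (such as Dross's $\delta_3 \le 9/10$) comes precisely from a more efficient such scheme, exploiting that near the threshold $1-1/(100k)$ the neighbourhood of every edge is dense enough that gadgets are abundant and short transferral paths always exist. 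I would also need the routine but essential input that $G[N(u)\cap N(v)]$ has large minimum degree for every edge $uv$, which follows immediately from the hypothesis and is what makes the local structure rich enough to support the construction.
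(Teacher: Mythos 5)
This statement is cited from Montgomery \cite{M17}; the present paper does not prove it, so there is no in-paper argument to compare yours against. Your sketch is a reasonable impressionistic account of the local-correction style of argument for fractional clique decompositions (start from an approximately-correct uniform weighting, bound the per-edge defect, use small signed gadgets to transfer weight between edges, and control the accumulation of corrections). But at this level of abstraction it is essentially a description of the \emph{genre} rather than a proof: the two claims you flag as ``key steps'' --- that a family of transfer gadgets exists whose net effects span the orthogonal complement of the all-ones direction, and that the corrections can be scheduled so that no copy's weight leaves $[0,1]$ --- are precisely the technically hard content of Montgomery's paper, and your sketch does not establish either. The initial normalisation $\psi_0(K)=1/\binom{\delta(G)}{k-2}$ is also not quite the right one; the number of $K_k$'s through an edge $uv$ depends on the common neighbourhood $N(u)\cap N(v)$, which has size roughly $(2\delta-1)n$, not $\delta n$, and the actual argument has to track this more carefully. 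Finally, a small factual slip: for $k=3$ Dross's bound $\delta_3^{0+}\le 9/10$ is strictly stronger than Montgomery's $1-1/300$; Montgomery's contribution is a bound of the form $1-\Theta(1/k)$ valid for all $k$, not an improvement on Dross in the triangle case. In short, there is no gap to identify relative to the paper, because the paper simply quotes the result; as a free-standing proof attempt your proposal is an outline rather than an argument.
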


The next result due to Haxell and R\"odl implies that a fractional $K_k$-decomposition gives rise to the existence of an approximate $K_k$-decomposition.

\begin{theorem}\cite{HR}\label{thm: fractional implies}
Suppose $n \in \mathbb{N}$ with $0 < 1/n \ll \epsilon <1$. 
Then any $n$-vertex graph $G$ has an $F$-packing consisting of at least $\nu^*_F(G) - \epsilon n^2$ copies of $F$.
\end{theorem}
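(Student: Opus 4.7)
The plan is to reduce the statement to the semi-random (``R\"odl nibble'') method for finding large matchings in hypergraphs with small codegree. Fix an optimal fractional $F$-packing $\psi : \binom{G}{F} \to [0,1]$ with total weight $\nu := \nu^*_F(G)$. I would build the auxiliary $e(F)$-uniform hypergraph $\mathcal{H}$ with $V(\mathcal{H}) := E(G)$ and one hyperedge per copy of $F$ in $G$; then $\psi$ is a fractional matching of $\mathcal{H}$ of weight $\nu$, and an $F$-packing in $G$ corresponds exactly to a matching in $\mathcal{H}$. The crucial structural input is the codegree bound: for any two distinct $e, e' \in E(G)$, the number of hyperedges of $\mathcal{H}$ containing both is at most a constant (depending only on $v(F)$) times $n^{v(F)-4}$, whereas an individual edge of $G$ lies in at most $O(n^{v(F)-2})$ hyperedges. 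This gap of a factor $n^2$ between degree and codegree is precisely what makes nibble-style rounding work.

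Next I would run a standard nibble/rounding iteration. In each round, select each copy $F' \in \binom{G}{F}$ independently with probability $\gamma \psi(F')$ for a small constant $\gamma \ll \epsilon$, and keep a selected copy only if it is edge-disjoint from every other selected copy (the ``collision-free'' copies). Remove the edges used by these kept copies from $G$, update $\psi$ by setting $\psi(F'') := 0$ for every copy $F''$ sharing an edge with a kept copy, and proceed. Using the codegree bound together with standard concentration (Azuma/Chernoff/McDiarmid, as in \cite{HR}), one shows that with positive probability:
\begin{itemize}
\item the total weight of $\psi$ decreases by a factor of essentially $(1-\gamma)$;
\item the fraction of weight lost to collisions is $o(\gamma)$;
\item the analogous degree and codegree bounds persist in the updated structure.
\end{itemize}
Iterating $O(\gamma^{-1} \log(1/\epsilon))$ rounds reduces the residual fractional weight to at most $\epsilon n^2 / 2$, while the cumulative collision loss is $o(\nu) \le \epsilon n^2 / 2$. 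The copies actually selected (and kept) across all rounds form an $F$-packing in $G$, and its size is at least $\nu - \epsilon n^2$ as required.

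The main obstacle, and the step where the bulk of the technical work lives, is the concentration analysis sustaining the iteration: one needs to track a collection of random variables (weighted degrees, codegrees, residual fractional weight at each vertex of $\mathcal{H}$) and show that none deviate too far from their expectations in any round. The cleanest route is to phrase each round's selection as a product probability space, verify bounded Lipschitz-type dependencies (each copy of $F$ is adjacent to at most $O(n^{v(F)-2})$ others through a shared edge, which is small compared to the expected number of kept copies), and apply Azuma's inequality. A mild technical point is that $\psi$ is not uniform, so one should work with weighted degrees throughout; this is handled by scaling $\psi$ and treating the multi-hypergraph obtained by replicating each copy $F'$ with multiplicity proportional to $\psi(F')$, thereby reducing to the classical near-regular setting in which Frankl--R\"odl's nibble theorem directly applies.
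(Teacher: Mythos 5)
The paper does not prove this statement; it is cited directly from Haxell and R\"odl~\cite{HR} as a black box, so there is no ``paper's own proof'' against which to compare. Evaluating your proposal on its own terms, there is a genuine gap in the bookkeeping of the nibble iteration, and it is not cosmetic.

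The crucial issue is the claim that ``the total weight of $\psi$ decreases by a factor of essentially $(1-\gamma)$'' under your protocol of zeroing out every copy $F''$ that shares an edge with a kept copy. That is not what happens. Each kept copy removes $e(F)$ edges of $G$, and each removed edge $e$ carries total $\psi$-weight $c_e := \sum_{F'\ni e}\psi(F')\le 1$ of copies through it. So the expected weight destroyed per round is of order $\gamma\, e(F)\cdot W$, while the expected number of kept copies is of order $\gamma W$. The ratio of copies gained to weight lost is therefore $\approx 1/e(F)$, not $\approx 1$. Summing over rounds, you accumulate only about $W/e(F)$ kept copies, not $W-\epsilon n^2$. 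You can check this concretely for $F=K_3$ and $G=K_n$ with the uniform fractional packing $\psi\equiv 1/(n-2)$: a single round with rate $\gamma$ removes a $\gamma$-fraction of the edges, but this ruins a $3\gamma$-fraction of the triangles, so the restricted $\psi$ has weight $\approx W(1-3\gamma)$, whereas the \emph{re-optimised} residual fractional triangle packing still has weight $\approx W(1-\gamma)$. The standard nibble works precisely because it tracks the number of remaining vertices of the hypergraph (equivalently, re-normalises or re-solves the residual LP), not the restriction of the initial fractional solution; your protocol throws away a factor of $e(F)$ of the weight you are entitled to keep. Fixing this requires showing that the \emph{maximum} fractional packing of the residual graph decays only at rate $\gamma$ per round, and that is exactly where near-regularity of the hypergraph of $F$-copies is needed --- a condition that fails for general $G$ and is not restored by the suggested trick of replicating each hyperedge with multiplicity proportional to $\psi(F')$, since that operation leaves the vertex degrees $c_e$ non-uniform.

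This near-regularity obstacle is also why Haxell and R\"odl do not run the nibble naively on the hypergraph of $F$-copies. Their proof first applies Szemer\'edi's regularity lemma to $G$; after restricting to transversal copies of $F$ inside regular tuples (which costs only $o(n^2)$ of the fractional weight), the resulting auxiliary hypergraph becomes near-regular with small codegree on each tuple, and only then does a nibble-type argument convert the fractional solution into an integer packing with $o(n^2)$ loss. So the regularity lemma preprocessing is not an optional convenience in their argument; it is precisely what supplies the structural hypothesis your outline assumes but does not secure.
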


\begin{lemma}\label{lem: delta* value}
For $k\in \mathbb{N} \backslash \{1,2\}$, we have $\delta_k^{\text{reg}} \leq \delta_k^{0+} \leq 1- 1/(100k)$.
Moreover, $\delta^{\text{reg}}_2 = \delta_2^{0+} = 0$ and $\delta^{\text{reg}}_3\leq \delta^{0+}_3 \leq 9/10$.
\end{lemma}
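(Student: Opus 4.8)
The plan is to derive Lemma~\ref{lem: delta* value} directly from the threshold definitions together with the fractional decomposition machinery (Theorems~\ref{thm: Montgomery} and~\ref{thm: fractional implies}) stated just above it. The key observation is that an approximate $K_k$-packing covering all but $\epsilon n^2$ edges automatically covers all but $\epsilon' e(G)$ edges once $\delta(G) \ge \delta n$ with $\delta$ bounded below by a constant, since then $e(G) = \Omega(n^2)$; so the error terms in the two formulations are interchangeable up to rescaling $\epsilon$.

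First I would handle the bound $\delta_k^{0+} \le 1 - 1/(100k)$ for $k \ge 3$. Fix $\epsilon > 0$ and introduce a constant hierarchy $1/n \ll \epsilon' \ll \epsilon, 1/k$. Let $G$ be an $n$-vertex graph with $\delta(G) \ge (1-1/(100k))n$. By Theorem~\ref{thm: Montgomery}, $\nu^*_{K_k}(G) = e(G)/e(K_k)$, i.e.\ $G$ has a fractional $K_k$-decomposition. Then Theorem~\ref{thm: fractional implies} (applied with $\epsilon'$) yields a $K_k$-packing of size at least $\nu^*_{K_k}(G) - \epsilon' n^2 = e(G)/e(K_k) - \epsilon' n^2$. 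Since $\delta(G) \ge (1-1/(100k))n \ge n/2$ forces $e(G) \ge n^2/4$, we have $\epsilon' n^2 \le 4\epsilon' e(G) \le (\epsilon/e(K_k)) e(G)$ for $\epsilon'$ small enough, so the packing has at least $(1-\epsilon)e(G)/e(K_k)$ copies of $K_k$. As $\epsilon$ was arbitrary, this shows $\delta_k^{0+} \le 1 - 1/(100k)$. The inequality $\delta_k^{\text{reg}} \le \delta_k^{0+}$ is immediate from the two definitions, since every $r$-regular graph with $r \ge \delta n$ in particular has minimum degree $\ge \delta n$; this was already noted in the excerpt but I would restate it for completeness.

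Next, for $k = 3$ I would repeat the same argument but invoke the result of Dross quoted in the introduction (``$\delta_3^{0+} \le 9/10$'') — more precisely, one uses that $\delta(G) \ge (9/10 + o(1))n$ guarantees a fractional $K_3$-decomposition, then feeds this into Theorem~\ref{thm: fractional implies} exactly as above to upgrade the fractional decomposition to an approximate integral one, giving $\delta_3^{0+} \le 9/10$ and hence $\delta_3^{\text{reg}} \le 9/10$. Finally, for $k = 2$: a $K_2$-packing is just a set of disjoint... rather, edge-disjoint edges, i.e.\ a matching is not needed — a $K_2$-packing covering all of $E(G)$ is trivial since $K_2$ is a single edge, so any graph $G$ has a $K_2$-decomposition and thus $\delta_2^{\text{reg}} = \delta_2^{0+} = 0$; alternatively this is already recorded in the excerpt as ``$\delta^{\text{reg}}_2=\delta^{0+}_2=0$'' and I would simply cite that remark.

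I do not anticipate a genuine obstacle here: the lemma is essentially a bookkeeping corollary that packages the cited fractional-decomposition results (Montgomery for general $k$, Dross for $k=3$) with the Haxell–R\"odl rounding theorem. The only mild subtlety is the conversion between the ``$\epsilon n^2$ uncovered edges'' error term native to Theorem~\ref{thm: fractional implies} and the ``$(1-\epsilon)e(G)/e(K_k)$ copies'' error term in Definitions~\ref{def: delta k}, which as explained is harmless precisely because the relevant host graphs are dense ($\delta(G) \ge cn$ with $c$ a fixed positive constant gives $e(G) = \Theta(n^2)$); care is only needed to fix the hierarchy of constants in the right order.
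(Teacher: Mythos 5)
Your proposal is correct and takes essentially the same route as the paper: the paper's proof is a two-sentence remark that Theorems~\ref{thm: Montgomery} and~\ref{thm: fractional implies} give the $1-1/(100k)$ bound, that Theorem~\ref{thm: fractional implies} combined with Dross's fractional triangle decomposition result gives $\delta_3^{0+}\le 9/10$, and that $K_2$-decompositions are trivial. You have simply spelled out the bookkeeping (in particular the harmless conversion between the $\epsilon n^2$ error in Theorem~\ref{thm: fractional implies} and the $(1-\epsilon)e(G)/e(K_k)$ formulation, using $e(G)\ge n^2/4$) that the paper leaves implicit.
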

\begin{proof}
It is easy to see that Theorem~\ref{thm: Montgomery} and Theorem~\ref{thm: fractional implies} together imply that $\delta^{0+}_k \le 1- 1/(100k)$. 
Moreover, Theorem~\ref{thm: fractional implies} together with a result of Dross \cite{BKLO} implies that $\delta^{0+}_3 \leq 9/10$.
As any graph can be decomposed into copies of $K_2$, we have $\delta^{0+}_2 =0$.
\end{proof}

In the remainder of this subsection, we prove Lemma~\ref{lem: factor decomposition}.
In the proof of Theorem~\ref{thm:main}, we will apply it to obtain an approximate decomposition of the reduced multi-graph $R$
into almost $K_k$-factors (see Section~\ref{sec: main proof}).
We will use the following consequence of Tutte's $r$-factor theorem.
\begin{theorem}\cite{CKO}
\label{thm: hamilton decomp}
Suppose $n \in \mathbb{N}$ and $0< 1/n \ll \gamma \ll 1$. 
If $G$ is an $n$-vertex graph with $\delta(G) \ge (1/2 + \gamma) n$ and $\Delta(G) \leq \delta(G) + \gamma^2 n$, then $G$ contains a spanning $r$-regular subgraph for every even $r$ with $r \leq \delta(G)-\gamma n$.
\end{theorem}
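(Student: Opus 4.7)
The plan is to verify the hypothesis of Tutte's $r$-factor theorem. Since $r$ is even by assumption, the theorem asserts that $G$ contains a spanning $r$-regular subgraph if and only if, for every pair of disjoint sets $S,T \subseteq V(G)$,
\[
\theta(S,T) \;:=\; r|S| + \sum_{v\in T}\bigl(d_G(v)-r\bigr) - e_G(S,T) - q(S,T) \;\geq\; 0,
\]
where $q(S,T)$ counts the components $C$ of $G-(S\cup T)$ for which $e_G(V(C),T)$ is odd (the parity term $r|V(C)|$ drops out because $r$ is even). Setting $U := V(G)\setminus(S\cup T)$, the key elementary observation is that each such component sends at least one edge to $T$, and the vertex sets of these components are pairwise disjoint, so
\[
q(S,T) \;\leq\; e_G(U,T).
\]
Plugging this bound together with the degree-sum identity $\sum_{v\in T} d_G(v) = 2e_G[T] + e_G(S,T) + e_G(U,T)$ into the definition of $\theta$ recasts the Tutte deficiency as
\[
\theta(S,T) \;\geq\; r(|S|-|T|) + 2e_G[T],
\]
which is already non-negative whenever $|S|\geq|T|$.

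For the case $|S|<|T|$ I would split by the size of $|S|+|T|$. When $|S|+|T| \leq \gamma n$, the graph $G-(S\cup T)$ retains minimum degree at least $\delta(G) - \gamma n \geq n/2$ and is therefore connected, so $q(S,T)\leq 1$; the dominant contribution $(\delta(G)-r)|T| \geq \gamma n\,|T|$ then comfortably absorbs the losses $e_G(S,T)\leq |S|\cdot|T|$ and $q(S,T)\leq 1$. When $|S|+|T|>\gamma n$, one uses the near-regularity hypothesis $\Delta(G)\leq \delta(G)+\gamma^2 n$ to bound $e_G(S,T)\leq |S|\Delta(G)$, combines this with the trivial bound $q(S,T)\leq |U|$, and exploits the min-degree lower bound on $e_G[T]$ (which is of order $\binom{|T|}{2}$ once $|T|$ is a constant fraction of $n$) to close the deficit arising from $r(|T|-|S|)$.

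The main technical obstacle I expect lies in the intermediate range where $|S|<|T|$, $|T|-|S|$ is small, and $|S|+|T|$ is a substantial fraction of $n$: here neither of the bounds $q(S,T)\leq e_G(U,T)$ nor $q(S,T)\leq |U|$ is tight enough on its own, and one must balance them simultaneously against both the min- and max-degree estimates. This is precisely the regime in which the assumption $\Delta(G)-\delta(G)\leq \gamma^2 n$ is essential: without a small degree spread one could concentrate the edges out of $S$ into $T$, inflating $e_G(S,T)$ enough to drive the Tutte deficiency below zero.
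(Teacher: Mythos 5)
The paper itself does not prove Theorem~\ref{thm: hamilton decomp}; it is quoted from~\cite{CKO}, where it is indeed obtained from Tutte's $r$-factor theorem. So the comparison below is against what a correct Tutte-type argument would require, rather than against an in-paper proof.

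Your setup is sound: the deficiency formula, the parity simplification for even $r$, the observation $q(S,T)\le e_G(U,T)$, and the reduction
\[
\theta(S,T)\;\ge\; r\bigl(|S|-|T|\bigr)+2e_G[T]
\]
via the degree-sum identity are all correct, as is the disposal of the case $|S|\ge|T|$ and the case $|S|+|T|\le\gamma n$ (where $G-(S\cup T)$ is connected, so $q\le 1$).

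The sketch for $|S|<|T|$, $|S|+|T|>\gamma n$ has genuine gaps. First, the claim that $e_G[T]$ is ``of order $\binom{|T|}{2}$ once $|T|$ is a constant fraction of $n$'' is false: the minimum degree gives only $e_G[T]\ge\tfrac12|T|\bigl(\delta(G)-n+|T|\bigr)^+$, which vanishes for $|T|\le n-\delta(G)$ and is far below $\binom{|T|}{2}$ even when positive. Second, the tools you propose to invoke there are inconsistent with the inequality you derived: $\theta\ge r(|S|-|T|)+2e_G[T]$ was obtained precisely by cancelling $e_G(S,T)$ in the degree-sum identity and by using $q\le e_G(U,T)$, so one cannot further feed in $e_G(S,T)\le|S|\Delta(G)$ or $q\le|U|$ into that same display; a fresh estimate of the raw deficiency is needed, and once one writes it out, the $\Delta$-bound plus $q\le|U|$ do not close the gap when $|T|-|S|$ is small and $|S|+|T|$ is moderate. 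What is missing is a stronger bound on $q$: since $G-(S\cup T)$ has minimum degree at least $\delta(G)-|S|-|T|$, the number of its components is at most $n-\delta(G)$ whenever $|S|+|T|\le\delta(G)$, and trivially at most $|U|<n-\delta(G)$ otherwise, so $q(S,T)\le n-\delta(G)$ always; since $n-\delta(G)<r$ for the (largest relevant even) $r\ge\delta(G)-\gamma n-1$, the term $r|S|$ alone dominates $q$ whenever $1\le|S|\le\delta(G)-r$. For $|S|>\delta(G)-r$ one must also retain the contribution $2e_G[S]+e_G(S,U)$ when bounding $e_G(S,T)$ via $\sum_{v\in S}d_G(v)\le|S|\Delta(G)$ — this (not $e_G[T]$) is where the near-regularity hypothesis is exploited. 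As written, the proposal correctly identifies the hard regime but reaches for estimates that are either wrong ($e_G[T]\approx\binom{|T|}{2}$) or insufficient without the component-count bound $q\le n-\delta(G)$ and the $e_G[S]$ term.
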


The following powerful result of Pippenger and Spencer \cite{PS} (based on the R\"odl nibble) shows that every almost regular multi-$k$-graph with small maximum codegree has small chromatic index.

\begin{theorem}\cite{PS}\label{thm: hypergraph chromatic index}
Suppose $n, k \in \mathbb{N}$ and $ 0< 1/n \ll \mu \ll \epsilon, 1/k <1$.
Suppose $H$ is an $n$-vertex multi-$k$-graph satisfying $\delta(H) \geq (1-\mu)\Delta(H)$, and 
$c_H(u,v) \leq \mu \Delta(H)$ for all $u\neq v \in V(H)$.
Then we can partition $E(H)$ into $(1+\epsilon)\Delta(H)$ matchings.
\end{theorem}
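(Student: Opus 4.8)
This is the Pippenger--Spencer theorem, so the plan is to reprove it by the R\"odl nibble (the semi-random method). Writing $D:=\Delta(H)$, the goal is a proper edge-colouring of $H$ with at most $(1+\epsilon)D$ colours, equivalently a partition of $E(H)$ into at most $(1+\epsilon)D$ matchings. The strategy has two stages: a nibble stage that peels off colour classes while keeping the uncoloured part quasirandom, reducing to a sparse leftover, and a trivial greedy stage that finishes off the leftover with a negligible number of extra colours.

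For the nibble stage I would fix constants $\mu \ll \gamma \ll \epsilon' \ll \epsilon$ and iterate over $O_\epsilon(1)$ rounds, maintaining as an invariant that the still-uncoloured sub-hypergraph $H'$ is quasirandom in the sense of the hypothesis: $\delta(H')\ge (1-\mu')\Delta(H')$ and $c_{H'}(u,v)\le \mu'\Delta(H')$ for an error parameter $\mu'$ that grows only slowly and stays below $1/2$. In a generic round, with current degree parameter $D'$, one samples a set $S$ of edges of $H'$, each independently with probability $\gamma/D'$, deletes from $S$ every edge meeting another edge of $S$, and declares the surviving matching to be a new colour class $M$. The small-codegree hypothesis forces a sampled edge to survive with probability $1-o(1)$, so each vertex is covered by $M$ with probability $\gamma(1\pm o(1))$; using concentration (Lemma~\ref{lem: chernoff}, supplemented by a bounded-differences estimate for the dependencies introduced by the conflict-deletion) one shows that with positive probability $H'\setminus M$ again satisfies the invariant with degree parameter $(1-\gamma)D'$ and a slightly larger $\mu'$. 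Since a single matching lowers $\Delta$ by only one, the rounds must be organised so that each unit drop of the maximum degree costs only $1+O(\epsilon)$ colours on average; the clean way to arrange this is Kahn's palette reformulation, where each uncoloured edge repeatedly proposes a uniformly random colour from a fixed palette of size $\lceil (1+\epsilon)D\rceil$ and keeps it if no conflicting edge proposed the same colour, and one tracks the sizes of the effective palettes instead of the degrees. After $O_\epsilon(1)$ rounds the uncoloured sub-hypergraph $H''$ has $\Delta(H'')\le (\epsilon/(2k))D$.

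Finally I would colour $H''$ greedily: since $H$ has rank $k$, the line graph of $H''$ has maximum degree at most $k(\Delta(H'')-1)$, so $\chi'(H'')\le k\,\Delta(H'')\le (\epsilon/2)D$; combining these colours with the at most $(1+\epsilon/2)D$ colours produced by the nibble yields at most $(1+\epsilon)D$ matchings partitioning $E(H)$.

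The hard part is the bookkeeping in the nibble stage: one must track simultaneously the maximum and minimum degrees and all the codegrees of the uncoloured hypergraph (equivalently, the sizes of the effective colour-palettes at all the edges) and prove they remain concentrated around their expectations despite the conflict-driven dependencies, which typically requires Azuma-- or Talagrand-type inequalities rather than the plain Chernoff bound of Lemma~\ref{lem: chernoff}, together with an inductive choice of the error parameters across rounds so that they do not accumulate beyond the allowed tolerance. Since for our purposes we only need the statement, in the paper we simply invoke~\cite{PS}.
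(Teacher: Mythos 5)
The paper offers no proof of this statement: Theorem~\ref{thm: hypergraph chromatic index} is stated as an imported result and simply cited to Pippenger and Spencer~\cite{PS}, which is exactly the option you settle on in your final sentence. So there is nothing in the paper to match your argument against, and citing \cite{PS} is the intended and correct move here.

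As a sketch of the Pippenger--Spencer proof itself, your outline has the right two-phase shape (semi-random nibble producing the bulk of the colour classes, greedy clean-up on the sparse leftover via the rank-$k$ bound on the line graph). But there is an internal inconsistency in the nibble phase as written. You describe a single round as producing one matching $M$ by sampling edges with probability $\gamma/D'$ and resolving conflicts, and then assert that $H'\setminus M$ satisfies the invariant with degree parameter $(1-\gamma)D'$. Removing a single matching lowers every degree by at most $1$, not by a $\gamma$-fraction, so this cannot hold; your own next sentence notices exactly this, but the appeal to ``Kahn's palette reformulation'' is left as a black box, and the subsequent claim that $O_\epsilon(1)$ rounds suffice to drive $\Delta(H'')$ down to $(\epsilon/(2k))D$ is only true if each round either peels off a batch of roughly $\gamma D'$ matchings simultaneously or, in the palette formulation, colours a $\gamma$-fraction of the remaining edges across all colours at once. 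In Pippenger--Spencer's own argument each nibble step indeed removes many colour classes in parallel (so the degree parameter genuinely drops geometrically per round at a cost of $\gamma D'$ colours), and the concentration is done with martingale/Azuma-type estimates precisely because, as you note, the conflict-deletion introduces dependencies that plain Chernoff does not handle. None of this affects the paper, since the result is used as a black box, but if you wanted to turn the sketch into a proof you would need to replace the ``one matching per round'' description with the batched (or palette) version and then carry out the concentration bookkeeping you correctly identify as the hard part.
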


We can now combine these tools to approximately decompose an almost regular multi-graph $G$ of sufficient degree into `almost' $K_k$-factors. 
All vertices of $G$ will be used in almost all these factors except the
vertices in a `bad' set $V'$ which are not used in any factor. Moreover, the factors come in $T$ groups of equal size such that parallel edges of $G$ belong to different groups.
As explained in Section~\ref{sec: outline}, we will apply this to the reduced
multi-graph obtained from Szemer\'edi's regularity lemma.

\begin{lemma}\label{lem: factor decomposition}
Suppose $n, k, q, T \in \mathbb{N}$ with $0< 1/n \ll \epsilon, \sigma, 1/T, 1/k, 1/q, \nu \le 1/2$\COMMENT{Need $\epsilon \le 1/2$ so to ensure that $(\delta - \nu \pm \epsilon)$ is reasonably large.} and $0<1/n \ll  \xi \ll  \nu < \sigma/2 < 1$ and $\delta = \max\{1/2,\delta_k^{\text{reg}}\} +\sigma$ and $q$ divides $T$.
Let $G$ be an $n$-vertex multi-graph with edge-multiplicity at most $q$, such that for all $v\in V(G)$ we have  
$$d_G(v) = (\delta \pm \xi)qn.$$
Then there exists a subset $V'\subseteq V(G)$ with $|V'|\leq \epsilon n$ and $k$ dividing $|V(G)\backslash V'|$,  and there exist pairwise edge-disjoint subgraphs $F_{1,1},\dots, F_{1,\kappa}, F_{2,1}, \dots, F_{T,\kappa}$ with $\kappa = (\delta - \nu \pm \epsilon)\frac{qn}{T(k-1)}$ satisfying the following. 
\begin{enumerate}
\item[\rm (B1)$_{\ref{lem: factor decomposition}}$] For each $(t',i)\in [T]\times[\kappa]$, we have that $V(F_{t',i}) \subseteq V(G) \backslash V'$ and $F_{t',i}$ is a vertex-disjoint union of at least $(1-\epsilon)n/k$ copies of $K_k$,
\item[\rm (B2)$_{\ref{lem: factor decomposition}}$] for each $v\in V(G)\setminus V'$, we have  $|\{ (t',i) \in [T]\times [\kappa] : v\in V(F_{t',i})\}| \geq T \kappa - \epsilon n,$ 
\item[\rm (B3)$_{\ref{lem: factor decomposition}}$] for all $t'\in [T]$ and $u,v \in V(G)$, we have 
$|\{ i \in [\kappa] :  u \in N_{F_{t',i}}(v)  \}|\leq 1.$
\end{enumerate}
\end{lemma}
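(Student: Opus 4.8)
The plan is to first regularise $G$, then apply the Pippenger--Spencer theorem to a suitably constructed auxiliary multi-hypergraph whose edges correspond to copies of $K_k$, and finally organise the resulting matchings into the required $T\times\kappa$ array of almost-$K_k$-factors. First I would discard a small bad set: let $G'$ be obtained from $G$ by deleting up to $k-1$ vertices so that $k$ divides $|V(G)|-|V'_0|$, adjusting degrees only negligibly; abusing notation, assume $k\mid n$. Since $G$ has edge-multiplicity at most $q$ and $q\mid T$, I can split the edge set of $G$ into $q$ simple graphs and distribute them so as to form $T$ edge-disjoint spanning subgraphs (via an equitable edge-colouring / Vizing-type argument on each multiplicity class), each of which is close to $(\delta\pm\xi')n$-regular; actually it is cleaner to first take one spanning regular subgraph and then split. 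Concretely, apply Theorem~\ref{thm: hamilton decomp} to the underlying near-regular structure to extract a spanning $D$-regular simple subgraph $G_0\subseteq G$ with $D=(\delta-\nu/2\pm\xi')n$ (after dividing multiplicities appropriately), and then decompose $G_0$ suitably; the point is to arrive at $T$ pairwise edge-disjoint near-$\frac{D}{\,?\,}$-regular simple graphs, one per index $t'\in[T]$, such that parallel edges of the original $G$ lie in different $t'$. This handles (B3): within a fixed $t'$ we work with a simple graph, so no two copies of $K_k$ we output there will share an edge, hence $|\{i: u\in N_{F_{t',i}}(v)\}|\le 1$.

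Next, fix $t'$ and consider the simple graph $G_{t'}$ assigned to it, which is near-$d'n$-regular with $d'\ge \max\{1/2,\delta_k^{\text{reg}}\}+\sigma'$ for a slightly smaller $\sigma'>0$. By definition of $\delta_k^{\text{reg}}$ (Definition~\ref{def: delta k}), a genuinely $r$-regular graph of this degree has a $K_k$-packing covering $(1-\epsilon')$ of its edges; to exploit this cleanly I would first pass, via Theorem~\ref{thm: hamilton decomp} again, to a spanning $r$-regular subgraph $G_{t'}^{\text{reg}}\subseteq G_{t'}$ with $r$ even and $r=(d'-\sigma'/2)n$, so that $\delta_k^{\text{reg}}$ applies directly and yields a large $K_k$-packing $\mathcal{P}_{t'}$. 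Rather than extracting factors vertex-by-vertex, I would instead build the multi-$k$-graph $\mathcal{K}_{t'}$ on vertex set $V(G_{t'}^{\text{reg}})$ whose edges are the $k$-sets spanning copies of $K_k$ in $\mathcal{P}_{t'}$ — or, to get the near-regularity needed for Pippenger--Spencer, build $\mathcal{K}_{t'}$ from \emph{all} copies of $K_k$ in $G_{t'}^{\text{reg}}$ weighted by an (approximately uniform) fractional $K_k$-decomposition, then round. This is the standard route: a near-$r$-regular graph with $r\ge(1/2+\sigma)n$ satisfying the $\delta_k^{\text{reg}}$ condition has an \emph{almost-perfect} $K_k$-packing, and a short averaging argument promotes this to the statement that the $k$-graph of copies of $K_k$ used is itself near-regular with small codegree, so Theorem~\ref{thm: hypergraph chromatic index} partitions its edges into $(1+\epsilon'')\Delta$ matchings $M_{t',1},\dots$. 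Each matching $M_{t',i}$ is a set of vertex-disjoint copies of $K_k$, i.e.\ a partial $K_k$-factor; discarding the matchings that are too small and keeping exactly $\kappa=(\delta-\nu\pm\epsilon)\frac{qn}{T(k-1)}$ of the large ones gives the $F_{t',i}$, with each covering $\ge(1-\epsilon)n/k$ copies of $K_k$, which is (B1). The count of copies of $K_k$ across all of $\mathcal{P}_{t'}$ is $(1-\epsilon')e(G_{t'}^{\text{reg}})/\binom{k}{2}\approx (1-\epsilon')\frac{r n}{k(k-1)}$, and since $e(G)\approx \delta qn^2/2$ is split into $T$ parts, the total $T\kappa\approx(\delta-\nu)\frac{qn}{k-1}$ of kept $K_k$'s times $k$ over $n$ recovers the right edge-density, pinning down $\kappa$ as claimed.

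For (B2) I would use a probabilistic / counting argument: each vertex $v$ lies in all but at most $\epsilon' n/k$ of the copies in $\mathcal{P}_{t'}$ (by the almost-spanning property, summed over the whole packing), and when we throw away the small matchings we lose only $o(n)$ of the $K_k$'s, hence only $o(n)$ of the ones through any fixed $v$; so $v$ is missing from at most $\epsilon n$ of the $T\kappa$ factors overall. To be careful, one runs the whole $t'$-by-$t'$ construction with independent small slack and sums the (bounded) deficiencies, possibly invoking Lemma~\ref{lem: chernoff} if one randomises which matchings to keep, so that the $\epsilon n$ bound holds for every vertex simultaneously. Finally one records that all $F_{t',i}$ are pairwise edge-disjoint: within a fixed $t'$ this is because the $M_{t',i}$ are edge-disjoint matchings in $\mathcal{K}_{t'}$ (and each copy of $K_k$ contributes distinct edges of $G_{t'}$), and across different $t'$ it is because the $G_{t'}$ were chosen edge-disjoint at the outset.

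\textbf{Main obstacle.} The delicate point is the interface between the two theorems: Pippenger--Spencer needs a multi-$k$-graph that is \emph{near-regular} with \emph{small codegree}, but the $\delta_k^{\text{reg}}$ hypothesis only hands us an almost-perfect $K_k$-\emph{packing}, whose $k$-graph of used copies need not be near-regular at all (some vertices could be saturated, others barely touched). Bridging this requires either (i) iterating the packing on the leftover to even out the degrees, or (ii) working with a fractional $K_k$-decomposition of the near-regular $G_{t'}^{\text{reg}}$ — which exists with uniform-ish weights by the argument underlying $\delta_k^{\text{reg}}\le\delta_k^{0+}$ together with high minimum degree — and then integer-rounding it to a near-regular small-codegree $k$-graph before applying Theorem~\ref{thm: hypergraph chromatic index}. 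Getting the error terms $\xi\ll\nu<\sigma/2$ to propagate correctly through the chain (regular subgraph extraction $\to$ splitting into $T$ parts $\to$ regular subgraph again $\to$ nibble $\to$ discarding small matchings) is the bookkeeping heart of the proof, but each individual step is a black-box application of a result quoted above.
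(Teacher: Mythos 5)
Your overall architecture is the paper's: reduce to the case $T=q$ by a relabelling trick, split $G$ into $q$ simple near-regular graphs, extract an exactly $(\delta-\nu)n$-regular spanning subgraph of each via Theorem~\ref{thm: hamilton decomp}, use $\delta_k^{\text{reg}}$ to get a near-perfect $K_k$-packing, pass to the $k$-graph of used cliques, apply Theorem~\ref{thm: hypergraph chromatic index} (Pippenger--Spencer), and then keep the large matchings as the $F_{t',i}$. The paper's splitting step is concretely done by choosing, for each pair $\{u,v\}$, a uniformly random set in $\binom{[q]}{e_G(u,v)}$ and assigning colours bijectively, rather than your Vizing/equitable-colouring detour, but that is cosmetic.

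The genuine gap is in the step you flag as your ``main obstacle,'' namely how to get from an almost-perfect $K_k$-packing $\cQ^c$ to a near-regular small-codegree $k$-graph to feed into Pippenger--Spencer. You describe the $k$-graph of used copies as potentially ``not near-regular at all (some vertices could be saturated, others barely touched)'' and propose to fix this by iterating the packing on the leftover or by passing through a fractional $K_k$-decomposition and rounding. Neither workaround is needed, and the fractional route is in fact unavailable under the stated hypothesis: the lemma only assumes $\delta>\delta_k^{\text{reg}}$, not $\delta>\delta_k^{0+}$, and it is $\delta_k^{0+}$ that is connected to fractional decompositions. The point you miss is that the irregularity here is \emph{one-sided and automatically controlled}. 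Since $G_*^c$ is exactly $(\delta-\nu)n$-regular and the copies of $K_k$ in $\cQ^c$ are pairwise edge-disjoint, each vertex $v$ lies in at most $\frac{(\delta-\nu)n}{k-1}$ copies, i.e.\ $\Delta(\cH^c)\le\frac{(\delta-\nu)n}{k-1}$; no vertex can be ``over-saturated.'' Moreover
\[
\sum_{v}\Bigl(d_{G_*^c}(v)-(k-1)\,d_{\cH^c}(v)\Bigr) \;=\; 2\bigl(e(G_*^c)-e(\cQ^c)\bigr)\;\le\;2\mu n^2,
\]
because the packing misses only $O(\mu n^2)$ edges. Markov's inequality applied to the nonnegative quantity $\frac{(\delta-\nu)n}{k-1}-d_{\cH^c}(v)$ then shows that only $O(\mu^{2/3} n)$ vertices can have $k$-graph degree below $\frac{(\delta-\nu-\mu^{1/3})n}{k-1}$. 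The paper simply puts these few under-saturated vertices into $V'$ (together with up to $k-1$ extra vertices for divisibility), restricts $\cH^c$ to $V(G)\setminus V'$, and the restricted $k$-graph $\tilde{\cH}^c$ satisfies $\delta(\tilde{\cH}^c)\ge(1-\mu^{1/4})\Delta(\tilde{\cH}^c)$ with codegree at most $1$. That is the whole bridge to Pippenger--Spencer. Your sketch never commits to this deletion step, which is what the conclusion's set $V'$ exists for; without it, your appeal to Theorem~\ref{thm: hypergraph chromatic index} is unjustified, and with your suggested alternatives the argument either does not close (iteration) or requires a stronger hypothesis than the lemma provides (fractional route).
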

\begin{proof}
It suffices to prove the lemma for the case when $T=q.$
The general case then follows by relabelling. (We can split each group obtained from the $T=q$ case into $T/q$ equal groups arbitrarily.)%
\COMMENT{To obtain the general case, apply the case where $T=q$ to obtain $F'_{1,1},\dots, F'_{1,\kappa'}, F'_{2,1}, \dots, F'_{q,\kappa'}$ for some $\kappa' = (\delta - \nu \pm \epsilon/2)\frac{n}{(k-1)}.$ 
Let $\kappa := \lfloor \frac{q\kappa'}{T} \rfloor.$ 
For each $i \in [q]$ relabel $F'_{i,1},\dots, F'_{i,T\kappa/q}$ as $F'_{\frac{(i-1)T}{q} + 1,1},\dots, F'_{\frac{(i-1)T}{q}+1,\kappa}, F'_{\frac{(i-1)T}{q} + 2,1}, \dots, F'_{\frac{iT}{q},\kappa}.$}
We choose a new constant $\mu$ such that 
$$1/n \ll \mu \ll \epsilon, \xi, \sigma, 1/k, 1/q.$$
For an edge colouring $\phi: E(G) \rightarrow [q]$ and $c \in [q]$, we let $G^c \subseteq G$ be the subgraph with edge set $\{e \in E(G): \phi(e)=c\}$. 
We wish to show that there exists an edge-colouring $\phi: E(G) \rightarrow [q]$ satisfying the following for all $v\in V(G)$ and $c\in [q]$:
\begin{enumerate}
\item[($\Phi$1)$_{\ref{lem: factor decomposition}}$] 
$d_{G^c}(v) = (\delta \pm 2\xi)n,$
\item[($\Phi$2)$_{\ref{lem: factor decomposition}}$]  $G^c$ is a simple graph.
\end{enumerate}

Recall that $e_G(u,v)$ denotes the number of edges of $G$ between $u$ and $v$.
 For each $\{u,v\} \in \binom{V(G)}{2}$, we choose a set $A_{\{u,v\}}$ uniformly at random from $\binom{[q]}{ e_G(u,v) }$.
 For each $e\in E(G)$, we let $\phi(e)\in [q]$ be such that $\phi$ is bijective between $E_{G}(u,v)$ and $A_{\{u,v\}}$.
 This ensures that ($\Phi$2)$_{\ref{lem: factor decomposition}}$ holds.
 It is easy to see that ($\Phi$1)$_{\ref{lem: factor decomposition}}$ also holds with high probability by using Lemma~\ref{lem: chernoff}.\COMMENT{
For $v\in V(G)$ and $c \in [q]$ let $X_{v}^c  = |N_{G^c}(v)|.$
We have
 $$\mathbb{E}[X_{v}^c] = \sum_{ u \in V(G)\setminus\{v\} }\frac{
 \binom{q-1}{e_G(u,v)-1}}{ \binom{q}{e_G(u,v)}} = \sum_{ u \in V(G)\setminus\{v\} } \frac{e_G(u,v) } {q} = \frac{1}{q}\sum_{u \in V(G)\setminus\{v\} }  e(u,v) = \frac{1}{q}d_G(v).$$
By Lemma~\ref{lem: chernoff} 
$$ \mathbb{P}[|X_{v}^c-\frac{1}{q}d_G(v) | \ge \xi n ] \le 2e^{-(\xi n)^2/(2n)} < e^{-\xi^2 n/2}.$$
By a union bound and the fact that $1/n \ll 1/q, \xi$ the probability that ($\Phi$1)$_{\ref{lem: factor decomposition}}$ holds for all $v \in V(G)$ and $c \in [q]$ is at least 
$$1 -  q n e^{-\xi^2 n/2} > 0.$$
It follows that there exists an edge colouring $\phi: E(G) \rightarrow [q]$ such that  ($\Phi$1)$_{\ref{lem: factor decomposition}}$ and  ($\Phi$2)$_{\ref{lem: factor decomposition}}$ hold.}

Since $\delta \geq 1/2+\sigma$ and $\xi \ll \nu,\sigma$, Theorem~\ref{thm: hamilton decomp} implies that, for each $c\in [q]$, there exists a $(\delta - \nu)n$-regular spanning subgraph $G_*^c$ of $G^c$.
(By adjusting $\nu$ slightly we may assume that $(\delta -\nu)n$ is an even integer.)
Since $\delta-\nu > \delta_k^{\text{reg}} + \sigma/2$ and $1/n \ll \mu$, 
 the graph $G_*^c$ has a $K_k$-packing $\cQ^c:=\{Q^c_{1},\dots, Q^c_{t}\}$
 of size 
\begin{align}\label{eq: size q}
t:= \frac{(\delta -\nu - \mu)n^2}{k(k-1)}.
\end{align}\COMMENT{Since $1/n_0 \ll \mu^2$, we have a $K_k$-packing of size at least $(1-\mu^2 )e(G^c_*)/e(K_k) \geq \frac{(\delta -\nu - \mu)n^2}{k(k-1)}.$ }
For each $c\in [q]$,  let $\cH^c$ be the $k$-graph with 
$V(\cH^c) = V(G^c_*)$ and $E(\cH^c):= \{ V(Q^c_{i}) : i \in [t]\}.$
By construction of $\cH^c$, we have
\begin{align}\label{eq: max degree cH}
\Delta(\cH^c) \leq \frac{\Delta(G^c_*)}{k-1} \leq \frac{(\delta -  \nu)n}{k-1}.
\end{align}
As $\cQ^c$ is a $K_k$-packing in $G^c_*$, any pair $\{u,v\} \in \binom{V(G)}{2}$ belongs to at most one edge in $\cH^c$. Thus for  $\{u,v\} \in \binom{V(G)}{2}$,
\begin{align}\label{eq: codegree G}
c_{\cH^c}(u,v) \leq 1.
\end{align}
Let 
$$V'':=  \bigcup_{c\in [q]}\Big\{ v \in V(G): \left|\{ i\in [t]: v \in V(Q^c_{i}) \}\right| < \frac{1}{k-1}(\delta - \nu- \mu^{1/3})n \Big\},$$
and let $V'$ be a set consisting of the union of $V''$ as well as at most $k-1$ vertices arbitrarily chosen from $V(G)\backslash V''$ such that $k$ divides $|V(G)\backslash V'|$.  
Note that for each $c\in [q]$, we have
$$e(G^c_*) - e(\cQ^c) \leq \frac{1}{2}(\delta-\nu)n^2 - \binom{k}{2} t \stackrel{\eqref{eq: size q}}{\leq} \mu n^2.$$
On the other hand, since $G^c_*$ is a $(\delta-\nu)n$-regular graph, we have
\begin{align}
 |V'| & \le k+1 + \sum_{c\in [q]} \frac{1}{\mu^{1/3}n}\sum_{v\in V(G)} \left(d_{G^c_*}(v) - (k-1)d_{{\cH}^c}(v) \right)  \nonumber \\
 & = k+1 + \sum_{c\in [q]} \frac{  2(e(G^c_*)-e(\cQ^c))}{\mu^{1/3}n }  \leq \frac{3 q \mu n^2}{\mu^{1/3}n }  \leq \mu^{1/2} n. \label{eq: V' size}
\end{align}
Let $\tilde{\cH}^c$ be the $k$-graph with 
$V(\tilde{\cH}^c):= V(G^c_*)\setminus V'$ and $E(\tilde{\cH}^c) := \{ e \in E(\cH^c) : e \cap V' =\emptyset\}.$
Note that for any $v\in V(\tilde{\cH}^c) = V(\cH^c) \setminus V'$, 
\begin{eqnarray}\label{eq: H degree v}
d_{\tilde{\cH}^c}(v) =  d_{{\cH}^c}(v) \pm \sum_{u\in V'} c_{\cH^c}(u,v)
\stackrel{\eqref{eq: codegree G}}{=}  d_{{\cH}^c}(v) \pm |V'| 
\stackrel{\eqref{eq: V' size},\eqref{eq: max degree cH}}{=} \frac{ (\delta - \nu \pm 2\mu^{1/3} ) n}{k-1}.
\end{eqnarray}
Note that we obtain the final equality from the definition of $V'$ and the assumption that $v\notin V'$.
 Thus for each $c\in [q]$, we have
$\delta(\tilde{\cH}^c) \geq (1 - \mu^{1/4}) \Delta(\tilde{\cH^c}).$
Together with \eqref{eq: codegree G} and the fact that $1/n \ll \mu \ll \epsilon, 1/k, 1/q$, this ensures that we can apply Theorem~\ref{thm: hypergraph chromatic index} to see that for each $c\in [q]$,
$E(\tilde{\cH}^c)$ can be partitioned into $\kappa':= \frac{(\delta - \nu+\epsilon^3/q)n}{k-1}$ matchings $M^c_{1},\dots, M^c_{\kappa'}$.
Let
$$\mathcal{M}^c:=\{ M^c_{i} : i\in [\kappa']\} \enspace \text{and} \enspace \mathcal{M}_*^c:=\{ M^c_{i} : i\in [\kappa'], |M^c_{i}| < (1-\epsilon)n/k\}.$$
As $|M^c_{i}| \leq n/k$ for any $i\in [\kappa']$ and $c\in [q]$, we have
$$\frac{(\delta - \nu- 3\mu^{1/3}) n^2}{k(k-1)} \stackrel{\eqref{eq: V' size},\eqref{eq: H degree v}}{ \leq} |E(\tilde{\cH}^c)| = \sum_{i\in [\kappa']} |M^c_{i}| < \frac{|\mathcal{M}_*^c|(1-\epsilon)n }{k} + \frac{(\kappa'- |\mathcal{M}_*^c|)n}{k}.$$
 This gives 
 \begin{align}\label{eq: M' size}
 |\mathcal{M}_*^c|  \leq \frac{(\epsilon^3/q+ 3\mu^{1/3})kn^2 }{\epsilon n k(k-1)} \leq \frac{2\epsilon^2  n}{q(k-1)}.
 \end{align}
We let
\begin{align}\label{eq: s size}
\kappa:=  \min_{c\in [q]} \{ |\mathcal{M}^c \setminus \mathcal{M}^c_*| \}
=  \kappa' - \max_{c\in [q]} \{|\mathcal{M}_*^c|\} 
 = \frac{(\delta-\nu )n \pm 2\epsilon^2 n/q}{k-1}.
\end{align}
Thus, by permuting indices, we can assume that for each $c\in [q]$, we have $M^c_1,\dots, M^c_{\kappa} \subseteq \mathcal{M}^c\setminus \mathcal{M}_*^c$.
For each $(c,i) \in [q]\times [\kappa]$, let 
$$F_{c,i} := \bigcup_{ j: V(Q^c_j) \in M^c_i } Q^c_j.$$

The fact that $\mathcal{M}^c \setminus \mathcal{M}_*^c$ is a collection of pairwise edge-disjoint matchings of $\tilde{\cH}^c \subseteq \cH^c$ together with \eqref{eq: codegree G} implies that, for each $c\in [q]$, the collection $\{F_{c,i} : i\in [\kappa]\}$ consists of pairwise edge-disjoint subgraphs of $G^c_* \subseteq G$, each of which is a union of at least $(1-\epsilon)n/k$ vertex-disjoint copies of $K_k$. This with ($\Phi$2)$_{\ref{lem: factor decomposition}}$ shows that (B3)$_{\ref{lem: factor decomposition}}$ holds.
As $G^1_*,\dots, G^q_*$ are pairwise edge-disjoint subgraphs, $\{F_{c,i} : (c,i) \in [q]\times [\kappa]\}$ forms a collection of pairwise edge-disjoint subgraphs of $G$. Thus (B1)$_{\ref{lem: factor decomposition}}$ holds. 

Moreover, for each $c\in [q]$ and each vertex $v\in V(G)\setminus V'$, we have
\begin{eqnarray*}
|\{ i\in [\kappa] : v\in V(F_{c,i})\}|
&\geq&  |\{M \in \{M^c_1,\dots, M^c_\kappa\}  : v\in V(M) \}| \\
&\ge&  |\{M \in \mathcal{M}^c  : v\in V(M) \}| - (\kappa'-\kappa) \\
& \ge& d_{\tilde{H}^c}(v) -  \kappa' + \kappa
\stackrel{\eqref{eq: H degree v}}{\geq} \kappa - \epsilon n/q.
\end{eqnarray*}
 Thus (B2)$_{\ref{lem: factor decomposition}}$ holds.
\end{proof}

\subsection{Graph packing tools}

The following two results from \cite{KKOT} will allow us to pack many bounded degree graphs into appropriate super-regular blow-ups.
Lemma~\ref{Pack} first allows us to pack graphs into internally regular graphs which still have bounded degree, and Theorem~\ref{Blowup} allows us to pack the internally regular graphs into an appropriate dense $\epsilon$-regular graph.
The results in \cite{KKOT} are actually significantly more general, mainly because they allow for more general reduced graphs $R.$

\begin{lemma} \cite[Lemma 7.1]{KKOT}
\label{Pack}
Suppose $n, \Delta, q, s, k, r \in \mathbb{N}$ with $0<1/n \ll \epsilon \ll 1/s \ll 1/\Delta, 1/k$ and $\epsilon \ll 1/q \ll 1$ and $k$ divides $r$.
 Suppose that $0< \xi < 1$ is such that $s^{2/3} \le \xi q$.
Let $R$ be a graph on $[r]$ consisting of $r/k$ vertex-disjoint copies of $K_k.$
Let $V_1, \dots, V_r$ be a partition of some vertex set $V$ such that $|V_i| = n$ for all $i \in [r].$
Suppose for each $j \in [s]$, $L_j$ is a graph admitting the vertex partition $(R, X^{j}_1,\dots, X^{j}_r)$ such that $\Delta (L_j) \le \Delta$ and for each $ii'\in E(R)$, we have 
$$\sum_{j=1}^{s} e(L_j [X_i^j, X_{i'}^j]) = (1 - 3\xi \pm \xi) q n,$$
and $|X_i^j| \leq n$.
 Also suppose that for all $j \in [s]$ and $i \in [r]$, we have sets $W_i^j \subseteq X_i^j$ such that $|W_i^j| \le \epsilon n$.
 Then there exists a graph $H$ on $V$ which is internally $q$-regular with respect to $(R,V_1,\dots, V_r)$ and a function $\phi$ which packs $\{L_1, \dots, L_s\}$ into $H$ such that $\phi(X_i^j) \subseteq V_i$, and such that for all distinct $j, j' \in [s]$ and $i \in [r],$ we have $\phi(W_i^j) \cap \phi(W_i^{j'}) = \emptyset$.
\end{lemma}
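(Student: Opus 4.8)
The plan is to build the internally $q$-regular graph $H$ and the packing $\phi$ simultaneously, in four stages. First I would reduce to the case $R=K_k$: since $R$ is a disjoint union of $r/k$ copies of $K_k$ and the clusters $V_i$, the vertex classes $X_i^j$ and the sets $W_i^j$ all respect this partition, it suffices to prove the lemma on each $K_k$-component of $R$ separately and then take the union of the resulting graphs and packing functions. Within a single component I would also pad each $L_j$ by adding isolated vertices until $|X_i^j|=n$ for all $i$; this changes neither $\Delta(L_j)$, nor the degree sums, nor the sets $W_i^j$, and it makes each restriction $\phi_j|_{X_i^j}$ a bijection onto $V_i$.

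Second I would deal with the small sets $W_i^j$. Since $\sum_j|W_i^j|\le s\epsilon n\ll n$, one can fix pairwise disjoint sets $U_i^j\subseteq V_i$ with $|U_i^j|=|W_i^j|$ together with bijections $W_i^j\to U_i^j$; declaring $\phi_j$ to act on $W_i^j$ in this way immediately gives the required condition $\phi(W_i^j)\cap\phi(W_i^{j'})=\emptyset$. Each pre-embedded $W_i^j$ pins down the location of at most $\Delta|W_i^j|$ further edges of $L_j$, namely those incident to $W_i^j$; in total at most $\Delta\epsilon s n\ll\xi qn$ edges per cluster pair get frozen in this way (here $\epsilon\ll 1/s$ and $\xi q\ge s^{2/3}\gg\Delta$), so I would embed the few vertices they touch greedily into still-free slots and carry their small contribution as an error term in all later degree counts.

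Third, and this is the heart of the matter, I would pack the remaining bulk of $L_1,\dots,L_s$ edge-disjointly into the complete $k$-partite blow-up on $(V_1,\dots,V_k)$, consistently with the pre-embeddings of the previous stage, so that the resulting graph $H_0$ satisfies $(1-5\xi)q\le d_{H_0,V_{i'}}(v)\le q$ for every $ii'\in E(R)$ and every $v\in V_i$. The difficulty is that $s$ need only be large relative to $\Delta$ and $k$, so a single random bijection per $L_j$ will not concentrate at the level of individual vertices (the $V_{i'}$-degree of $v$ inside $\phi_j(L_j)$ is controlled by just one random choice per $j$, namely which vertex of $X_i^j$ is mapped to $v$). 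Instead I would obtain such a balanced, edge-disjoint packing by a R\"odl-nibble / semi-random argument, in the spirit of the use of Theorem~\ref{thm: hypergraph chromatic index} in the proof of Lemma~\ref{lem: factor decomposition}: one sets up an auxiliary almost-regular, bounded-codegree (multi-)hypergraph, and from one of its near-perfect matchings reads off pairwise edge-disjoint embeddings $\phi_1,\dots,\phi_s$ meeting the above degree bounds, the slack $3\xi q$ together with the hypothesis $s^{2/3}\le\xi q$ absorbing the nibble leftover and the frozen $W$-edges. Simultaneously achieving exact edge-disjointness, the per-vertex upper bound $d_{H_0,V_{i'}}(v)\le q$ (needed so that completion by edge-addition is possible at all) and near-regularity of $H_0$ is the main obstacle of the proof.

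Finally I would complete $H_0$ to an internally $q$-regular graph. Fix $ii'\in E(R)$. The deficiency sequences $\big(q-d_{H_0,V_{i'}}(v)\big)_{v\in V_i}$ and $\big(q-d_{H_0,V_i}(v)\big)_{v\in V_{i'}}$ have the common sum $qn-e(H_0[V_i,V_{i'}])\ge 0$ and all entries in $[0,5\xi q]$, while $\Delta(H_0[V_i,V_{i'}])\le q$; hence, working inside the complement of $H_0[V_i,V_{i'}]$ in the complete bipartite graph on $(V_i,V_{i'})$ (which has minimum degree at least $n-q$), a standard Gale--Ryser / greedy argument produces a bipartite graph $D_{ii'}$ edge-disjoint from $H_0[V_i,V_{i'}]$ with $d_{D_{ii'},V_{i'}}(v)=q-d_{H_0,V_{i'}}(v)$ for every $v\in V_i$. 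Setting $H:=H_0\cup\bigcup_{ii'\in E(R)}D_{ii'}$ yields a graph which is internally $q$-regular with respect to $(R,V_1,\dots,V_r)$, and $\phi:=\bigcup_j\phi_j$ packs $\{L_1,\dots,L_s\}$ into $H$ with $\phi(X_i^j)\subseteq V_i$ and $\phi(W_i^j)\cap\phi(W_i^{j'})=\emptyset$ by construction.
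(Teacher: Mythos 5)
The paper does not prove this lemma---it is quoted from~\cite[Lemma~7.1]{KKOT}---so your proposal has to stand as a self-contained proof of the cited result, and it does not. Your Steps~1, 2 and~4 are sound in outline: restricting to a single $K_k$-component of $R$, padding each $X_i^j$ to size $n$, pre-embedding the $W_i^j$ onto pairwise disjoint targets, and completing a near-regular $H_0$ to an internally $q$-regular $H$ by a Gale--Ryser/greedy argument are all reasonable reductions once Step~3 is in hand. But Step~3 is the entire content of the lemma, and you have asserted it rather than proved it.

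The concrete gap is that the R\"odl-nibble / Pippenger--Spencer argument you invoke does not apply in any evident way. The objects you need to ``match'' are whole bijections $\phi_j\colon X_i^j\to V_i$; encoded as hyperedges on the set of edge-slots in $V_i\times V_{i'}$ these have size $\Theta(n)$, so the auxiliary hypergraph has unbounded rank and Theorem~\ref{thm: hypergraph chromatic index} is inapplicable. If instead you try to make a hyperedge out of the placement of a single vertex $x\in X_i^j$ at a slot $v\in V_i$ (by analogy with the clique hyperedges in Lemma~\ref{lem: factor decomposition}), the edge-slots that placement would occupy are $\{v,\phi_j(y)\}$ for $y\in N_{L_j}(x)$, which depend on where $N_{L_j}(x)$ is sent; the hypergraph is therefore not fixed in advance, and a matching in it does not encode a consistent family of bijections. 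Furthermore, even if some encoding were found, a near-perfect matching controls neither the pointwise upper bound $d_{H_0,V_{i'}}(v)\le q$ (which, as you rightly note, is essential for the completion step to be possible at all) nor the lower bound $(1-5\xi)q$; these are per-vertex degree constraints, not matching properties. You have correctly identified that independent uniformly random bijections fail here---for fixed $s,\Delta,q$ the probability that a given $v$ has $d_{H_0,V_{i'}}(v)>q$ is bounded away from zero, so a union bound over $n$ vertices cannot work---but the replacement argument you sketch does not repair this, and the heart of the proof is missing.
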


\begin{theorem} [Blow-up lemma for approximate decompositions {\cite[Theorem 6.1]{KKOT}}]
\label{Blowup}
Suppose $ n,$ $q$, $s$, $k, r \in \mathbb{N} $ with $0<1/n \ll \epsilon \ll \alpha, d, d_0, 1/q, 1/k \le 1$ and $1/n \ll 1/r$ and $k$ divides $r$.
Suppose that $R$ is a graph on $[r]$ consisting of $r/k$ vertex-disjoint copies of $K_k$.
Suppose $s \le \frac{d}{q} ( 1 - \alpha / 2) n$ and the following hold. \begin{enumerate}
\item[{\rm (A1)}$_{\ref{Blowup}}$] $G$ is  $(\epsilon, d)$-super-regular with respect to the vertex partition $(R,V_1,\dots, V_r)$.
\item[{\rm(A2)}$_{\ref{Blowup}}$] $\mathcal{H} = \{H_1, \dots ,H_s\}$ is a collection of graphs, where each $H_j$ is internally $q$-regular with respect to the vertex partition $(R,X_1,\dots, X_k),$ and $|X_i| = |V_i|=n$ for all $i\in [r]$.
\item[{\rm(A3)}$_{\ref{Blowup}}$]  For all $j \in [s]$ and $ i \in [r],$ there is a set $W_i^j \subseteq X_i$ with $|W_i^j| \le \epsilon n$ and for each $w\in W_i^j$, there is a set $A_w^j \subseteq V_i $ with $|A_w^j| \ge d_0 n.$
\item[{\rm(A4)}$_{\ref{Blowup}}$]  $\Lambda$ is a graph with $V(\Lambda) \subseteq [s] \times \bigcup_{i=1}^{r}X_i$ and $\Delta(\Lambda) \le (1-\alpha)d_0 n$\COMMENT{$\Delta(\Lambda) \le (1-\alpha)d_0n$ instead of $\Delta(\Lambda) \le (1-2\alpha)d_0n$ is ok here since we apply result in \cite{KKOT} with $\alpha/2$ playing the role of $\alpha$ there} such that for all $(j,x) \in V(\Lambda)$ and $j' \in [s]$, we have $|\{x': (j',x') \in N_\Lambda((j,x)) \}| \le q^2.$
Moreover, for all $j \in [s]$ and $i \in [r]$, we have $|\{ (j,x) \in V(\Lambda) : x \in X_i\}| \le \epsilon|X_i|.$
\end{enumerate}
Then there is a function $\phi$ packing $\mathcal{H}$ into $G$ such that, writing $\phi_j$ for the restriction of $\phi$ to $H_j$, the following hold for all $j \in [s]$ and $i \in [r].$\begin{enumerate}
\item[{\rm(B1)}$_{\ref{Blowup}}$]  $\phi_j(X_i) = V_i,$ 
\item[{\rm(B2)}$_{\ref{Blowup}}$]  $\phi_j(w) \in A_w^j$ for all $w \in W_i^j$, 
\item[{\rm(B3)}$_{\ref{Blowup}}$]  for all $(j,x)(j',y) \in E(\Lambda),$ we have that $\phi_j(x) \ne \phi_{j'}(y).$
\end{enumerate}
\end{theorem}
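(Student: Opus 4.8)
The plan is to prove Theorem~\ref{Blowup} by an iterative, nibble‑style embedding that extends the random‑greedy proof of the classical blow‑up lemma of Komlós, Sárközy and Szemerédi \cite{KSS-blowup}, the new point being to keep the already‑used edges of $G$ spread quasirandomly throughout the process. The first step is a reduction to the case $r=k$. Since $R$ is a disjoint union of $r/k$ copies of $K_k$ and each $H_j$ admits the partition $(R,X_1,\dots,X_r)$, no $H_j$ has an edge between clusters in different $K_k$‑components; likewise every target set $A^j_w$ lies inside the component of $x$, and any $\Lambda$‑edge $(j,x)(j',y)$ with $x,y$ in distinct clusters is automatically satisfied by {\rm (B1)}$_{\ref{Blowup}}$. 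Hence the required packing splits into $r/k$ genuinely independent instances, and we may assume $r=k$, so $G$ is $(\epsilon,d)$‑super‑regular with respect to $(V_1,\dots,V_k)$ with $|V_i|=n$ and each $H_j$ is a $q$‑regular $k$‑partite graph on parts of size $n$. A crucial observation is that, since each $\phi_j$ restricts to a \emph{bijection} $X^j_i\to V_i$ and $H_j$ is $q$‑regular, deleting $\phi_j(H_j)$ lowers the degree of every vertex across every pair $(V_i,V_{i'})$ by exactly $q$; thus the degree part of super‑regularity of the leftover graph is preserved for free, and only the density‑uniformity part of $\epsilon$‑regularity can deteriorate.

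Next I would set up the iteration. Randomly split $G=G^{0}\cup G^{\mathrm{flex}}$ by sending each edge to $G^{\mathrm{flex}}$ independently with probability $\zeta$, where $\epsilon\ll\zeta\ll\alpha$; by Lemma~\ref{lem: chernoff} one may assume $G^{0}$ is $(2\epsilon,(1-\zeta)d)$‑super‑regular and $G^{\mathrm{flex}}$ is $(2\epsilon,\zeta d)$‑super‑regular. Then embed $H_1,\dots,H_s$ one at a time into a shrinking subgraph $G_{j-1}\subseteq G^{0}$; by the observation above $G_{j-1}$ is $((1-\zeta)d-(j-1)q/n)$‑regular across every pair, and since $s\le\frac dq(1-\alpha/2)n$ this density never drops below $\alpha d/4$. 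Each $H_j$ is embedded by a random‑greedy procedure as in \cite{KSS-blowup}: process $V(H_j)$ in a suitable order and assign to the current vertex $x\in X^j_i$ a uniformly random image from its candidate set
\[
C(x)=\Big(\bigcap_{y}N_{G_{j-1}}(\phi_j(y))\Big)\cap\big(V_i\setminus\phi_j(X^j_i)\big)\cap A(x),
\]
the intersection ranging over the already‑embedded neighbours $y$ of $x$ in $H_j$, where $A(x)=A^j_x$ if $x\in W^j_i$ and $A(x)=V_i$ otherwise; for the few constrained vertices (those carrying a target set or $\Lambda$‑conflicts) the last remaining incident edges are routed through the reserve graph $G^{\mathrm{flex}}$, whose available degrees have barely changed. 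The slack between $|A^j_w|\ge d_0n$ and $\Delta(\Lambda)\le(1-\alpha)d_0n$, together with $|\{(j,x):x\in X_i\}|\le\epsilon|X_i|$, is exactly what lets {\rm (A3)}$_{\ref{Blowup}}$--{\rm (A4)}$_{\ref{Blowup}}$ be absorbed; this bookkeeping is as in the classical blow‑up lemma and in the proof of Lemma~\ref{lem: embedding lemma}.

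The heart of the argument, and the main obstacle, is keeping every $C(x)$ of size linear in $n$ (respectively of size a constant times $d_0 n$ for constrained $x$) throughout all $s\approx n/q$ embeddings. This requires the union $\bigcup_{\ell<j}\phi_\ell(H_\ell)$ of already‑used edges to be \emph{uniformly} distributed: for every pair $(V_i,V_{i'})$ and all not‑too‑small $A'\subseteq V_i$, $B'\subseteq V_{i'}$, the number of used edges between them should be $(jq/n\pm o(\alpha d))|A'||B'|$. A single naive random embedding only controls this to within an error of order $n^{-1/2}$ per step, which would accumulate fatally over $\Theta(n)$ near‑spanning embeddings; so the embeddings must be carried out in short batches, re‑randomising the reserved edges of $G^{\mathrm{flex}}$ between batches. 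One then runs the Doob martingale exposing the random image choices, uses the batch structure to bound its increments, applies Azuma's inequality together with a union bound over an $\epsilon^{1/2}$‑net of pairs of subsets, over the $s$ steps and over the $n$ vertices, and invokes Proposition~\ref{prop: reg subgraph} for the $o(1)$‑fraction removed inside one batch. The genuinely delicate, self‑referential point is that the estimate for $\mathbb{E}\,|C(x)|$ itself relies on $G_{j-1}$ still being regular, so the concentration of the candidate sets and the regularity of the leftover must be maintained \emph{simultaneously}, by induction on the batches. Carrying this out produces, with positive probability, a run of all $s$ greedy embeddings that never gets stuck, and hence the desired packing $\phi$ satisfying {\rm (B1)}$_{\ref{Blowup}}$--{\rm (B3)}$_{\ref{Blowup}}$; it is precisely this quasirandomness control under a constant‑fraction loss of edges that makes Theorem~\ref{Blowup} a theorem of \cite{KKOT} rather than a short corollary of \cite{KSS-blowup}.
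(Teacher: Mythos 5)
The paper does not prove Theorem~\ref{Blowup}; it is quoted as a special case of \cite[Theorem~6.1]{KKOT} and used as a black box, so there is no in-paper proof to compare against. I will therefore assess your sketch on its own merits.

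Your reduction to $r=k$, the observation that internal $q$-regularity plus bijectivity of each $\phi_j$ preserves the \emph{degree} part of super-regularity for free, and the identification of the real difficulty (controlling the \emph{uniformity} part of regularity across $\Theta(n/q)$ near-spanning embeddings) are all correct and insightful. The closing remark about why this is a theorem of \cite{KKOT} rather than a corollary of \cite{KSS-blowup} is also right. However, the technical core is not closed, and the gaps are not cosmetic.

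First, ``re-randomising the reserved edges of $G^{\mathrm{flex}}$ between batches'' is not a well-defined operation: edges once used cannot be returned to the pool. The legitimate version of this idea is to split $G^{\mathrm{flex}}$ into $T$ independent slices up front, one per batch; but then each slice has density only $\zeta d/T$, and the assertion that ``available degrees in $G^{\mathrm{flex}}$ have barely changed'' is false inside a single slice. So the safety net that is supposed to rescue the constrained vertices is too thin to be used as described. Second, the self-referential point you flag (the estimate $\mathbb{E}|C(x)|$ requires $G_{j-1}$ to be regular, whose regularity in turn requires the images to have been well spread) is \emph{acknowledged} but not \emph{resolved}: ``by induction on the batches'' is not an argument. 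You need a quantitative statement of the form ``if the leftover before the batch is $(\epsilon',d')$-regular, then after the batch it is $(\epsilon'+\delta,d'-q\cdot|{\rm batch}|/n)$-regular with high probability,'' together with a bound $T\delta\ll \alpha d$. Producing that $\delta$ is exactly where the hard work lives: within a batch, the image of $x$ influences the candidate sets of all $H_j$-neighbours of $x$, so the Doob martingale on image choices does not have $O(1)$ increments without a more careful exposure order, and the $\epsilon^{1/2}$-net union bound must be taken \emph{simultaneously} with the inductive regularity hypothesis rather than after it. Third, the treatment of the constrained vertices is too loose. The sets $W^j_i$ and the graph $\Lambda$ constrain vertices that appear throughout the embedding order, not only at the end, and the $\Lambda$-constraint couples \emph{different} $H_j$'s; a local ``route the last incident edges through $G^{\mathrm{flex}}$'' device does not obviously respect (B3), which is a global, cross-packing disjointness condition.

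For what it is worth, the argument in \cite{KKOT} follows your broad strategy but avoids these pitfalls by working not with the leftover of $G$ directly but with auxiliary \emph{candidacy} (bipartite) graphs recording, for each unembedded $x$ and each vertex $v$, whether $v$ is currently a valid image of $x$; one then tracks super-regularity of these auxiliary graphs through the packing, and (B3) is enforced by a dedicated conflict-avoidance step rather than by a reserve graph. Your sketch captures the right motivation, but to close the induction you would need this (or an equivalent) more global bookkeeping device; as written, the batch/martingale step does not go through.
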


\subsection{Miscellaneous}
In the proof of Theorem~\ref{thm:main}, we often partition various graphs into parts with certain properties. 
The next two lemmas will allow us to obtain such partitions.
Lemma~\ref{Reservoir2} follows by considering a random equipartition and applying concentration of the hypergeometric distribution.
Lemma~\ref{star} can be proved by assigning each edge of $G$ to $G_1,\dots, G_s$ independently at random according to $(p_1,\dots, p_s)$, and applying Lemma~\ref{lem: chernoff}. We omit the details.

\begin{lemma} 
\label{Reservoir2}
Suppose $n, T, r \in \mathbb{N}$ with $0< 1/n \ll  1/T, 1/r \le 1$. 
Let $G$ be an $n$-vertex graph.
Let $V \subseteq V(G)$ and let $V_1 \dots, V_r$ be a partition of $V$.
Then there exists an equipartition $Res_1, \dots, Res_{T}$ of $V$ such that the following hold. \begin{enumerate}[label=(\roman*)]
\item For all $t\in [T]$, $i\in [r]$ and $v\in V(G)$, we have
$d_{G, Res_t\cap V_i}(v) = \frac{1}{T}d_{G,V_i}(v) \pm n^{2/3},$
\item  for all $t\in [T]$, $i\in [r]$, we have  $|Res_t \cap V_i| =  \frac{1}{T}|V_i| \pm n^{2/3} $.
\end{enumerate}
\end{lemma}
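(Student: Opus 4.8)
The plan is to choose the equipartition $Res_1,\dots,Res_T$ of $V$ uniformly at random (say, among all ordered partitions of $V$ whose part sizes lie in $\{\lfloor|V|/T\rfloor,\lceil|V|/T\rceil\}$) and to show that properties (i) and (ii) hold with positive probability, which yields the desired equipartition.

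Fix $t\in[T]$, $i\in[r]$ and $v\in V(G)$, and set $S:=N_G(v)\cap V_i$, so that $|S|=d_{G,V_i}(v)$ and $d_{G,Res_t\cap V_i}(v)=|S\cap Res_t|$. Since $Res_t$ is marginally a uniformly random subset of $V$ of some size $n_t\in\{\lfloor|V|/T\rfloor,\lceil|V|/T\rceil\}$, the quantity $|S\cap Res_t|$ has a hypergeometric distribution with mean $|S|n_t/|V|$; as $n_t=|V|/T\pm1$ and $|S|\le|V|$, this mean equals $\tfrac1T d_{G,V_i}(v)\pm1$. I would then apply the Chernoff-type tail bound for the hypergeometric distribution --- which obeys the same estimate as in Lemma~\ref{lem: chernoff} for a sum of $|S|\le n$ independent $0/1$ variables, since sampling without replacement is at least as concentrated as sampling with replacement --- to obtain
\[
\mathbb{P}\Big[\big||S\cap Res_t|-\mathbb{E}|S\cap Res_t|\big|\ge\tfrac12 n^{2/3}\Big]\le 2e^{-n^{1/3}/8}.
\]
Thus property (i) fails for the triple $(t,i,v)$ with probability at most $2e^{-n^{1/3}/8}$ once $n$ is large, and the same estimate with $S:=V_i$ shows that $|Res_t\cap V_i|=\tfrac1T|V_i|\pm n^{2/3}$ fails with probability at most $2e^{-n^{1/3}/8}$, which handles (ii).

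To conclude, I would take a union bound over all bad events: since $1/n\ll 1/T,1/r$, there are at most $Trn+Tr\le n^3$ of them, so the probability that some bad event occurs is at most $2n^3 e^{-n^{1/3}/8}<1$ for $n$ sufficiently large. Hence a uniformly random equipartition satisfies (i) and (ii) simultaneously with positive probability, which proves the lemma. There is essentially no obstacle here; the only point requiring a little care is invoking the hypergeometric concentration inequality in place of Lemma~\ref{lem: chernoff} directly. If one wishes to avoid this, one can instead give each vertex of $V$ an independent uniform label in $[T]$, apply Lemma~\ref{lem: chernoff} with slack $\tfrac13 n^{2/3}$, and then rebalance the parts by moving $O(Tn^{2/3})$ vertices; since this perturbs each of the counts above by at most $\tfrac13 n^{2/3}+1$, all deviations remain below $n^{2/3}$.
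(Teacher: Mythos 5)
Your proof is correct and follows essentially the same approach as the paper's: choose a uniformly random equipartition, observe that each count $|S\cap Res_t|$ has a hypergeometric distribution, apply the Chernoff-type concentration bound for the hypergeometric, and conclude via a union bound over the $O(Trn)$ events. The small computational differences (your $\pm 1$ adjustment for the rounding of part sizes, slightly different constants in the exponent) do not affect the argument, and your alternative via independent labels plus rebalancing is a sound workaround but not needed.
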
\COMMENT{
Consider an equipartition, $Res_1, \dots, Res_T$ of $V$, chosen uniformly at random from the set of all equipartitions.
Fix a vertex $v\in V(G)$, $t \in [T]$ and $i \in [r]$. 
Let $Y:= d_{G, V_i\cap Res_t}(v).$ 
Then $Y$ has hypergeometric distribution with parameters $n, n/T, d_{G, V_i}(v)$.
We have
 $$ \mathbb{E}[Y] = \frac{1}{T}|N_G(v) \cap V_i|. $$
 By a Chernoff bound for the hypergeometric distribution,
$$\mathbb{P}[|Y - \mathbb{E}[Y]| > n^{2/3}] \le 2\exp( -(n^{2/3})^2/(3n)) \leq \exp(-n^{1/4}).$$ 

Fix $i \in [r]$ and $t \in [T]$. Let $Z: = |Res_t \cap V_i|$.
 We have that $\mathbb{E}[Z] = \frac{1}{T}|V_i|$ and
$$\mathbb{P}[|Z - \mathbb{E}[Z]| > n^{2/3}] \le \exp( -n^{1/4}).$$
By the union bound the probability that (i) holds for all $v \in {V(G)}$, $s \in [T]$ and $i \in [r]$ and (ii) holds for all $i\in [r]$ and all $s\in [T]$ is at least $1 - Trn \exp (-n^{1/4}) -2Tr \exp(-n^{1/4}) > 0.$
Thus it follows that we can choose such an equipartition satisfying both (i) and (ii).}

\begin{lemma}
\label{star}
Suppose $n, s \in \mathbb{N}$ with $0 < 1/n  \ll  \epsilon \ll  1/s \le1$ and $m_i \in [n]$ for each $i \in [2]$.
Let $G$ be an $n$-vertex graph.
Suppose that  $\mathcal{U}$ is a collection of $m_1$ subsets of $V(G)$ and $\mathcal{U}'$ is a collection of $m_2$ pairs of disjoint subsets of $V(G)$ such that each $(U_1, U_2) \in \mathcal{U'}$ satisfies $|U_1|, |U_2| >n^{3/4}$. Let $0 \leq p_1, \dots, p_s \le 1$ with $\sum_{i=1}^s p_i = 1.$
Then there exists a decomposition $G_1, \dots ,G_s$ of $G$ satisfying the following. \begin{enumerate}[label=(\roman*)]
\item For all $i \in [s]$, $U\in \mathcal{U}$ and $v \in V(G)$, we have 
$d_{G_i,U}(v) = p_i d_{G,U}(v)  \pm n^{2/3},$
\item for all $i \in [s]$ and $(U_1,U_2)\in \mathcal{U}'$ such that $G[U_1, U_2]$ is $(\epsilon,d_{(U_1,U_2)})$-regular for some $d_{(U_1, U_2)}$, we have that $G_i[U_1, U_2]$ is $(2\epsilon, p_id_{(U_1,U_2)})$-regular.
\end{enumerate}
\end{lemma}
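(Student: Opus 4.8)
The plan is to take the obvious random decomposition and verify both conclusions via Chernoff's inequality (Lemma~\ref{lem: chernoff}) together with union bounds. Assign each edge $e\in E(G)$ independently to one of $G_1,\dots,G_s$, placing $e$ into $G_i$ with probability $p_i$; since $\sum_{i=1}^s p_i=1$ this is well defined and produces a decomposition $G_1,\dots,G_s$ of $G$. It then suffices to show that, with positive probability, (i) and (ii) hold simultaneously.

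For (i), fix $i\in[s]$, $U\in\mathcal{U}$ and $v\in V(G)$, and for each $u\in N_G(v)\cap U$ write $X_{vu}:=\mathbf{1}[vu\in E(G_i)]$. The $X_{vu}$ are independent with mean $p_i$, and $d_{G_i,U}(v)=\sum_u X_{vu}$ is a sum of at most $n$ such variables, so $\mathbb{E}[d_{G_i,U}(v)]=p_i d_{G,U}(v)$. Lemma~\ref{lem: chernoff} with $b=1$ and $t=n^{2/3}$ gives $\mathbb{P}[|d_{G_i,U}(v)-p_i d_{G,U}(v)|\ge n^{2/3}]\le 2e^{-n^{1/3}/2}$. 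A union bound over the at most $s m_1 n\le s n^2$ triples $(i,U,v)$ shows that (i) fails with probability at most $2sn^2 e^{-n^{1/3}/2}=o(1)$.

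For (ii), it is cleanest to first establish a concentration statement that makes no reference to regularity: with high probability, for all $i\in[s]$, all $(U_1,U_2)\in\mathcal{U}'$ and all $A'\subseteq U_1$, $B'\subseteq U_2$ with $|A'|\ge 2\epsilon|U_1|$ and $|B'|\ge 2\epsilon|U_2|$, we have $|e_{G_i}(A',B')-p_i e_G(A',B')|<\epsilon|A'||B'|$. Indeed, for fixed such $i,U_1,U_2,A',B'$, the quantity $e_{G_i}(A',B')$ is a sum of $e_G(A',B')\le|A'||B'|$ independent mean-$p_i$ indicators (one per edge of $G$ between $A'$ and $B'$), so Lemma~\ref{lem: chernoff} with $t=\epsilon|A'||B'|$ bounds the probability of failure by $2\exp(-\epsilon^2|A'||B'|/2)\le 2\exp(-2\epsilon^4 n^{3/2})$, where we used $|A'||B'|\ge 4\epsilon^2|U_1||U_2|>4\epsilon^2 n^{3/2}$. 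The number of relevant tuples is at most $s\cdot m_2\cdot 2^n\cdot 2^n\le s n 4^n$, so a union bound gives total failure probability at most $2sn4^n\exp(-2\epsilon^4 n^{3/2})=o(1)$, since $1/n\ll\epsilon$ forces $\epsilon^4 n^{3/2}\gg n$. Granting this event, if $G[U_1,U_2]$ is $(\epsilon,d)$-regular then (using $|A'|\ge 2\epsilon|U_1|\ge\epsilon|U_1|$ and likewise for $B'$) we have $e_G(A',B')=(d\pm\epsilon)|A'||B'|$, whence $|e_{G_i}(A',B')/(|A'||B'|)-p_i d|<2\epsilon$; that is, $G_i[U_1,U_2]$ is $(2\epsilon,p_i d)$-regular. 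So (ii) holds on this event, and intersecting it with the event from the previous paragraph completes the proof.

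The only delicate point — and hence the main obstacle, such as it is — is the union bound in (ii): there are exponentially many candidate pairs $(A',B')$, so the per-pair deviation probability must be exponentially small with an exponent beating $n\ln 4$. This is exactly what the hypothesis $|U_1|,|U_2|>n^{3/4}$ provides, since it forces $|A'||B'|$ to be polynomially large (of order at least $n^{3/2}$), while $\epsilon$ is a constant with $1/n\ll\epsilon$, so the Chernoff exponent $\epsilon^4 n^{3/2}$ dominates $n$. Property (i) and the edge-assignment setup are entirely routine.
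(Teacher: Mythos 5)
Your proof is correct and takes essentially the same approach as the paper's: assign each edge to $G_1,\dots,G_s$ independently at random according to $(p_1,\dots,p_s)$, then verify (i) and (ii) by Chernoff plus union bounds, with the hypothesis $|U_1|,|U_2|>n^{3/4}$ providing the polynomially large lower bound on $|A'||B'|$ needed to beat the $2^n\cdot 2^n$ union bound over subsets.
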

\COMMENT{
For each edge $e$ of $G$, we choose a number $A_e \in [s]$ independently at random such that for each $i\in [s]$, $A_e = i$ with probability $p_i$.
Let $G_i$ be a graph with $V(G_i):=V(G)$ and $E(G_i):=\{ e\in E(G) : A_e=i\}$.
Fix $v\in V(G), U\in \mathcal{U}$. 
We let $Y := d_{G_i,U}(v).$ 
For every vertex $u \in N_{G,U}(v)$, we have $u \in N_{G_i}(v)\cap U$ with probability $p_i$. Moreover, such events are independent for distinct vertices $u \in N_{G}(v)\cap U$.
So it is easy to see $\mathbb{E}[Y] = p_i d_{G,U}(v)$
and we can apply Lemma~\ref{lem: chernoff} to conclude that 
$$\mathbb{P}[|Y- \mathbb{E}[Y]| > n^{2/3}] < 2 e^{-n^{4/3}/(2|U|)} < e^{-n^{1/4}}.$$
Now we fix $i\in [s]$ and $(U_1,U_2)\in \mathcal{U'}$ such that $G[U_1, U_2]$ is $(\epsilon,d)$-regular for some $d\geq 0$. 
For each $U'_1\subseteq U_1, U'_2\subseteq U_2$ with $|U'_1| > \epsilon |U_1|, |U'_2| > \epsilon |U_2|$, we have $\den_G(U'_1,U'_2) = (d\pm \epsilon)$.
 If $\den_{G}[U'_1,U'_2] < \epsilon$, then for any subgraph $G'\subseteq G$, 
 we have $\den_{G'}(U'_1,U'_2) = d p_i \pm 2\epsilon$. Thus we may assume that $\den_{G}(U'_1,U'_2) \geq \epsilon$. Since each edge belongs to $G_i$ independently with probability $p_i$, we have 
$\mathbb{E}[e_{G_i}(U'_1,U'_2)] = p_i(d\pm \epsilon)|U'_1| |U'_2|.$
By Lemma~\ref{lem: chernoff}, we have that 
$$\mathbb{P}[ e_{G_i}(U'_1,U'_2)] = (dp_i \pm 2\epsilon)|U'_1||U'_2|] \geq 
1 - \exp( \frac{ -\epsilon^2|U'_1|^2|U'_2|^2}{e_{G}(U'_1,U'_2)} ) \geq 
1 - \exp(-\epsilon^4|U_1||U_2|) \geq 1 - e^{-\epsilon^4 n^{3/2} }.$$
Here, we used the fact $\den_G(U'_1,U'_2) > 0$, and we used the fact $|U_1|,|U_2|\geq n^{3/4}$ in the final inequality.
By a union bound over all $U'_1\subseteq U_1, U'_2\subseteq U_2$ with $|U'_1| > \epsilon |U_1|, |U'_2| > \epsilon |U_2|$, we conclude that $G_i[U'_1,U'_2]$ is $(2\epsilon, d p_i)$-regular with probability at least $1- 2^{|U_1|+|U_2|} e^{-\epsilon^4 n^{3/2} } \geq 1- e^{-\epsilon^5 n^{3/2}}$.
By a union bound  we conclude that both (i) and (ii) hold with probability at least 
$1 - sm_1n \binom{n}{k} e^{-n^{1/4}} - sm_2 e^{-\epsilon^{5}n^{3/2}}>0$.
Thus there exist a decomposition $G_1, \dots, G_s$ of $G$ which satisfy both $(i)$ and $(ii)$ together.  }

The following lemma allows us to find well-distributed subsets of a collection of
large sets. The required sets can be found via a straightforward greedy approach (while
avoiding the vertices which would violate (B3)$_{\ref{lem: choice}}$ in each step).
So we omit the details.

\begin{lemma}\label{lem: choice}
Suppose $n, s,  r\in \mathbb{N}$ and $0<1/n, 1/s \ll \epsilon \ll d < 1$.
Let $A$ be a set of size $n$, and
for each $(i,j)\in [s]\times [r]$ let $A_{i,j} \subseteq A$ be of size at least $dn$, and let $m_{i,j} \in \mathbb{N} \cup \{0\}$ be such that for all $i \in [s]$ we have $\sum_{j=1}^{r} m_{i,j} \leq \epsilon n$.
Then there exist sets $B_{1,1},\dots, B_{s,r}$ satisfying the following.
\begin{enumerate}[label=(\roman*)]
\item[{\rm(B1)}$_{\ref{lem: choice}}$]  For all $i \in [s]$ and $j \in [r],$ we have $B_{i,j}\subseteq A_{i,j}$ with $|B_{i,j}| = m_{i,j}$,
\item[{\rm(B2)}$_{\ref{lem: choice}}$]  for all $i \in [s]$ and $j'\neq j''\in [r]$, we have  $B_{i,j'}\cap B_{i,j''}=\emptyset$,
\item[{\rm(B3)}$_{\ref{lem: choice}}$]  for all $v \in A$, we have $|\{ (i,j) \in [s]\times [r]: v\in B_{i,j}\} | \leq \epsilon^{1/2}  s$.
 \end{enumerate}
\end{lemma}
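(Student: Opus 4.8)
The plan is to build the sets $B_{i,j}$ greedily, processing the pairs $(i,j)\in[s]\times[r]$ in an arbitrary fixed order and, at each step, selecting $m_{i,j}$ vertices of $A_{i,j}$ while avoiding a small ``overused'' set of vertices. Suppose we have reached the pair $(i,j)$ and the sets $B_{i',j'}$ have already been constructed for all preceding pairs $(i',j')$. Call a vertex $v\in A$ \emph{heavy} if it currently lies in at least $\epsilon^{1/2}s/2$ of the sets constructed so far. Since each constructed set $B_{i',j'}$ has exactly $m_{i',j'}$ elements, the total number of vertex--set incidences built so far is at most $\sum_{i'\in[s]}\sum_{j'\in[r]}m_{i',j'}\le s\epsilon n$, and hence the number of heavy vertices is at most $2\epsilon^{1/2}n$.

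Now let $A'_{i,j}$ consist of those vertices of $A_{i,j}$ that are neither heavy nor already contained in some $B_{i,j'}$ with $(i,j')$ preceding $(i,j)$. Since $|A_{i,j}|\ge dn$, since the heavy vertices number at most $2\epsilon^{1/2}n$, and since $\sum_{j'\in[r]}m_{i,j'}\le\epsilon n$, we obtain $|A'_{i,j}|\ge(d-2\epsilon^{1/2}-\epsilon)n\ge\epsilon n\ge m_{i,j}$, where we used $\epsilon\ll d$ together with $m_{i,j}\le\sum_{j'}m_{i,j'}\le\epsilon n$. So we may choose $B_{i,j}$ to be any subset of $A'_{i,j}$ of size exactly $m_{i,j}$. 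By construction $B_{i,j}\subseteq A_{i,j}$ with $|B_{i,j}|=m_{i,j}$, and $B_{i,j}$ is disjoint from every earlier $B_{i,j'}$, so after all pairs have been processed the families satisfy (B1)$_{\ref{lem: choice}}$ and (B2)$_{\ref{lem: choice}}$.

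For (B3)$_{\ref{lem: choice}}$, fix $v\in A$; if $v$ is never selected there is nothing to check, so consider the last step at which $v$ is placed into some set $B_{i,j}$. Just before that step $v$ was not heavy, so it belonged to fewer than $\epsilon^{1/2}s/2$ of the previously constructed sets; after this final selection it therefore belongs to at most $\epsilon^{1/2}s/2+1\le\epsilon^{1/2}s$ of the sets $B_{i',j'}$, where we used $1/s\ll\epsilon$. Since this is $v$'s final membership count, $|\{(i,j)\in[s]\times[r]:v\in B_{i,j}\}|\le\epsilon^{1/2}s$, which is (B3)$_{\ref{lem: choice}}$.

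The argument is routine; the only quantity that needs to be tracked is the number of heavy vertices, which the crude incidence count above keeps at $O(\epsilon^{1/2}n)$ throughout the process. This, together with the hierarchy $\epsilon\ll d$, is exactly what guarantees that $A'_{i,j}$ is always large enough to choose $B_{i,j}$ from, so there is no genuine obstacle.
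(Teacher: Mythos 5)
Your proof is correct and takes essentially the same greedy approach as the paper's own argument (which is given only in a hidden comment): both build the sets one at a time while avoiding a small set of ``overused'' vertices, and both control that set by the same incidence count $\sum_{i,j} m_{i,j}\le s\epsilon n$. The only cosmetic differences are that the paper processes row by row, recomputing the forbidden set once per index $i$ with threshold $\epsilon^{1/2}s-1$, whereas you recompute the heavy set at every pair with threshold $\epsilon^{1/2}s/2$; both variants yield the same bound on the forbidden set and verify (B3)$_{\ref{lem: choice}}$ in the same way.
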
%
\COMMENT{\begin{proof}
We choose $B_{i,1},\dots, B_{i,r}$ for each $i=1,\dots, s$ in increasing order.
Assume that for some $i \in [s]$ we have already chosen $B_{1,1},\dots, B_{i-1,r}$ satisfying the following.
\begin{enumerate}
\item[(B1)$^{i-1}_{\ref{lem: choice}}$] For all $i'\leq i-1$ and $j\in [r]$, we have $B_{i',j}\subseteq A_{i',j}$ with $|B_{i',j}| = m_{i',j}$,
\item[(B2)$^{i-1}_{\ref{lem: choice}}$] for all $i'\leq i-1$ and $j\neq j'\in [r]$, we have $B_{i',j}\cap B_{i',j'}=\emptyset$,
\item[(B3)$^{i-1}_{\ref{lem: choice}}$] for all $v\in A$, we have $|\{ (i',j) \in [i-1]\times [r]: v\in B_{i',j} \} | \leq \epsilon^{1/2} s$.
 \end{enumerate}
 Let 
 $B:= \{ v\in A: |\{ (i',j)  \in [i-1]\times [r]: v\in B_{i',r} \} |  > \epsilon^{1/2} s - 1\}.$
Then we have
$$|B| \leq \frac{1}{\epsilon^{1/2}s-1}\sum_{i'=1}^{i-1} \sum_{j=1}^{r} |B_{i',j}| \le \frac{s \epsilon n}{\epsilon^{1/2}s-1} \leq 2\epsilon^{1/2} n.$$

Now assume that for some $j\in [r-1]\cup \{0\}$, we have already chosen the sets $B_{i,1},\dots, B_{i,j}$ in such a way that for each $j'\in [j]$, we have $|B_{i,j'}|=m_{i,j'}$ and $B_{i,j'}\subseteq ( A_{i,j'}\setminus (\bigcup_{j''=1}^{j'-1} B_{i,j''}\cup B) )$. 
Then we have
$$|A_{i,j+1} \setminus (\bigcup_{j'=1}^{j} B_{i,j'}\cup B)|
\geq dn - \sum_{j'=1}^{r} m_{i,j'} - |B| \geq dn - \epsilon n - |B| \geq \epsilon n.$$
As $m_{i,j+1}\leq  \epsilon n$, we can choose $B_{i,j+1}\subseteq A_{i,j+1} \setminus (\bigcup_{j'=1}^{j} B_{i,j'}\cup B)$ of size $m_{i,j+1}$.
By repeating this, we can obtain $B_{i,1},\dots, B_{i,r}$ which satisfy (B1)$^{i}_{\ref{lem: choice}}$ and (B2)$^{i}_{\ref{lem: choice}}$. 
Moreover, no vertices in $B$ belong to $B_{i,j}$ for any $j\in [r]$, thus (B3)$^{i}_{\ref{lem: choice}}$ holds.
By repeating this, we obtain $B_{1,1},\dots, B_{s,r}$ having the desired properties.
\end{proof}}

The following lemma guarantees a set of $k$-cliques in a graph $G$ which cover every vertex a
prescribed number of times.

\begin{lemma}\label{lem: hypergraph degree}
Let $n,m,k, t \in \mathbb{N}$ and $0< 1/n \ll 1/t \ll \sigma, 1/k < 1$ with $k\mid n$.
Let $G$ be an $n$-vertex graph
with $\delta(G)\geq (1- \frac{1}{k}+\sigma)n$. 
Suppose that for each $v\in V(G)$, we have $d_v \in [m]\cup \{0\}$.
Then there exists a multi-$k$-graph $H$ on vertex set $V(G)$ satisfying the following.
\begin{enumerate}
\item[{\rm(B1)}$_{\ref{lem: hypergraph degree}}$] For each $e\in E(H)$, we have $G[e]\simeq K_k$,
\item[{\rm(B2)}$_{\ref{lem: hypergraph degree}}$] for each $v\in V(G)$, we have
$d_H(v) - d_v = (t+1) m \pm 1 $.
\end{enumerate}
\end{lemma}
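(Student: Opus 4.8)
The plan is to reformulate the task as a near-uniform degree-prescription problem for $K_k$-tilings. Write $D_v := (t+1)m + d_v$ for each $v\in V(G)$, so that every target lies in $\{(t+1)m,\dots,(t+2)m\}$ and the targets are \emph{nearly uniform}: any two differ by a factor at most $1+\tfrac1{t+1}$, and $1/t\ll\sigma,1/k$. It then suffices to build a multi-$k$-graph $H$ on $V(G)$, each of whose edges induces a $K_k$ in $G$, with $d_H(v)=D_v\pm1$ for all $v$. The starting point is that $\delta(G)\ge(1-1/k)n$ with $k\mid n$, so $G$ has a $K_k$-factor (e.g.\ by the Hajnal--Szemer\'edi theorem); more generally, for every $W\subseteq V(G)$ with $k\mid|W|$ and $|W|\ge(1-k\sigma)n$ one has $\delta(G[W])\ge\delta(G)-|V(G)\setminus W|\ge(1-1/k)|W|$, so $G[W]$ has a $K_k$-factor. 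I would assemble $H$ as a union, taken with multiplicity, of $K_k$-factors of induced subgraphs of $G$, one ``factor'' for each level $j=1,\dots,(t+2)m$, arranged so that (essentially) vertex $v$ is covered at level $j$ precisely when $j\le D_v$.

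For a \emph{typical} level $j$, the level set $W_j:=\{v:D_v\ge j\}$ has size at least $(1-k\sigma)n$ — this certainly holds for all $j\le(t+1)m$, where $W_j=V(G)$ and $k\mid|W_j|$ — so I take a $K_k$-factor of $G[W_j]$, adjusting $|W_j|$ to be divisible by $k$ by attaching a bounded number of vertices of $V(G)\setminus W_j$ (which then get covered one extra time). The only genuine difficulty is the $O(m)$ levels $j\in\{(t+1)m+1,\dots,(t+2)m\}$ at which $W_j$ becomes too small; write $U$ for the resulting ``core'' of high-demand vertices (so $|U|<(1-k\sigma)n$, leaving more than $k\sigma n$ ``helper'' vertices). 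I must then cover each $v\in U$ at most $m$ extra times using $K_k$'s of $G$ that necessarily use helper vertices — either because $G[U]$ has no $K_k$-factor (which forces $|U|$ to be small, so the neighbourhood of each such $v$ still contains a dense set of helpers in which to find a $K_{k-1}$), or, when $|U|$ is large, after first extracting inside $G[U]$ a $K_k$-tiling covering all but a small residual set. The key quantitative point is that the total over-coverage forced on helpers is at most $(k-1)|U|m\le(k-1)(1-k\sigma)nm$, which spread as evenly as possible over the more than $k\sigma n$ helpers is at most about $m/\sigma$ per helper; since $1/t\ll\sigma,1/k$ this is far less than $(t+1)m$. I therefore \emph{reserve}, for each helper $w$, exactly as many units as it will actually receive from these bad levels (and from the divisibility attachments): I cover $w$ in only $D_w-o_w$ of the typical levels instead of all $D_w$ of them, where $o_w$ is this precomputed number of extra coverings. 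Because every $o_w\ll(t+1)m$, the adjusted per-vertex targets at the typical levels are still nearly uniform, so the Hajnal--Szemer\'edi step continues to apply there, and the final degrees come out as $D_v\pm1$.

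Concretely, the two-stage plan is: first fix, for each of the $O(m)$ bad levels and each divisibility attachment, exactly which helper vertices and which $K_k$'s are used, and record $o_w$ for every $w$ (this is where one spends real effort, balancing the usage across helpers and invoking the min-degree condition inside $N_G(v)$ to actually locate the required cliques); then run the typical-level $K_k$-factors against the targets $D_v$ for $v\in U$ and $D_w-o_w$ for helpers $w$, using that these remain nearly uniform. The main obstacle is precisely this bookkeeping: all the correction terms — the divisibility attachments, the extra coverage of core vertices, the helper reservations, and the $\pm1$ slack inherent in rotating which vertices are skipped or attached — must be arranged to sum to at most $1$ in absolute value \emph{simultaneously at every vertex}, rather than, say, $\pm k$ or $\pm\mathrm{polylog}\,n$. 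It is exactly here that the hierarchy $1/t\ll\sigma,1/k$ is used: every correction has an ``inefficiency'' of order $m/\sigma$ per vertex, whereas the padding $(t+1)m$ exceeds this by a factor that we may take arbitrarily large, so all corrections can be absorbed inside each vertex's budget without disturbing near-uniformity. (Alternatively, one could first establish a fractional $K_k$-cover of $G$ with degrees $D_v$ via Farkas duality using the near-uniformity, and then round; but the iterative scheme above keeps the $\pm1$ control more transparent.)
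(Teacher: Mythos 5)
Your approach is genuinely different from the paper's, and I don't believe it closes the crucial gap that you yourself flag as ``the main obstacle''.

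You propose a \emph{level-set} decomposition: pad $d_v$ to $D_v := (t+1)m + d_v$, and for $j=1,\dots,(t+2)m$ cover the level set $W_j := \{v : D_v \ge j\}$ by a $K_k$-tiling of $G$. This would give $d_H(v) = D_v$ exactly if each level's tiling covered exactly $W_j$, but divisibility attachments and the ``bad'' levels (where $|W_j| < (1-k\sigma)n$ and Hajnal--Szemer\'edi no longer applies inside $G[W_j]$) over-cover helper vertices. Your fix is to precompute the total over-coverage $o_w$ per helper and shrink its target to $D_w - o_w$ at the typical levels. The difficulty is that this is circular: the attachment contribution to $o_w$ depends on $|W'_j| \bmod k$, where $W'_j := \{v : D_v^{\mathrm{adj}} \ge j\}$ is defined in terms of the very $o_w$ you are trying to compute. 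Moreover, even setting circularity aside, the argument you give for controlling this — that each correction has ``inefficiency'' $O(m/\sigma)$ while the padding is $(t+1)m \gg m/\sigma$ — only shows the corrections fit within each vertex's \emph{budget}; it says nothing about why they sum to within $\pm 1$ at \emph{every} vertex simultaneously, which is the whole content of (B2)$_{\ref{lem: hypergraph degree}}$. A large budget makes the scheme feasible, but the $\pm 1$ precision needs an exact conservation argument that your sketch doesn't provide. (Your alternative suggestion — a fractional cover via Farkas duality followed by rounding — has the same issue: generic rounding of a fractional tiling loses much more than $\pm 1$.)

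By contrast, the paper's proof carries an explicit invariant that delivers $\pm 1$ for free. Writing $p_H(v) := d_H(v) - d_v$, it builds $H_0 \subseteq H_1 \subseteq \cdots \subseteq H_{m'-1}$ (with $m' := \max_{u,v}(d_u - d_v) \le m$) so that at each step the \emph{spread} $\max_v p_{H_\ell}(v) - \min_v p_{H_\ell}(v)$ drops by at least $1$, while $\Delta(H_\ell)$ increases by at most $t+1$. One step works by partitioning the ``excess'' set $A := \{v : p_{H_\ell}(v) > \min_u p_{H_\ell}(u)\}$ into $t+1$ pieces $A_1,\dots,A_{t+1}$ of size $\le |A|/t + k$ (each divisible by $k$, with $\le k-1$ top-valued vertices double-counted to fix divisibility), and taking a Hajnal--Szemer\'edi $K_k$-factor of each $G - A_i$; the smallness of each $A_i$ — this is where $1/t \ll \sigma$ is used — keeps the minimum degree condition intact. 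The union of the $t+1$ factors covers $V(G)\setminus A$ exactly $t+1$ times and $A$ at most $t$ times, so the spread decreases. After $m'-1$ steps the spread is $\le 1$, and appending full $K_k$-factors of $G$ translates all $p_H(v)$ into the window $[(t+1)m - 1, (t+1)m]$. The $\pm 1$ is thus a structural consequence of ``spread $\le 1$'', not of an ad hoc cancellation. I'd suggest adopting this monotone spread-reduction framing; your level-set intuition is a reasonable first picture, but without an invariant that pins the final error to $\pm 1$, the bookkeeping you correctly identify as the crux remains unresolved.
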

\begin{proof}
Let 
$$m' := \max_{u,v\in V(G)} \{d_u-d_v\}.$$ 
Then $m'\in [m]$.
For a multi-hypergraph $H$ on vertex set $V(G)$ and $v\in V(G)$, let $p_H(v):= d_H(v) - d_v$.
We will prove that for each $\ell \in [m'-1] \cup \{0\}$, there exists a hypergraph $H_\ell$ satisfying the following.
\begin{enumerate}
\item[(H1)$_{\ref{lem: hypergraph degree}}^{\ell}$] For each $e\in E(H)$, we have $G[e]\simeq K_k$,
\item[(H2)$_{\ref{lem: hypergraph degree}}^{\ell}$] $\Delta(H_{\ell}) \leq  \ell (t+1)$, 
\item[(H3)$_{\ref{lem: hypergraph degree}}^{\ell}$] $\max_{u,v \in V(G)} \{
p_{H_{\ell}}(v) - p_{H_{\ell}}(u)\} \leq m'- \ell.$

\end{enumerate}
Note that $H_{0} =\emptyset$ satisfies (H1)$_{\ref{lem: hypergraph degree}}^{0}$--(H3)$_{\ref{lem: hypergraph degree}}^{0}$.
Assume that for some $\ell \in [m'-2] \cup \{0\}$, we have already constructed $H_{\ell}$ satisfying (H1)$_{\ref{lem: hypergraph degree}}^{\ell}$--(H3)$_{\ref{lem: hypergraph degree}}^{\ell}$.
We will now construct $H_{\ell+1}.$

If $\max_{u \in V(G)} \{ p_{H_{\ell}}(u) \}- \min_{u\in V(G)} \{ p_{H_{\ell}}(u)\} \le 1$, then as $\ell\leq m'-2$, we can let $H_{\ell+1}:=H_{\ell}$, then (H1)$_{\ref{lem: hypergraph degree}}^{\ell+1}$--(H3)$_{\ref{lem: hypergraph degree}}^{\ell+1}$ hold.
Thus assume that 
\begin{align}\label{eq: difference at least two}
\max_{u \in V(G)} \{ p_{H_{\ell}}(u) \}- \min_{u\in V(G)} \{ p_{H_{\ell}}(u)\}  \geq 2.
\end{align}
Let
$$A:=\{ v\in V(G) : p_{H_{\ell}}(v) > \min_{u\in V(G)}\{p_{H_{\ell}}(u)\}\} \text{ and } A_{\max}:= \{ v\in V(G): p_{H_{\ell}}(v) = \max_{u\in V(G)}\{p_{H_{\ell}}(u)\}\}.$$
First assume that $|A| \geq k$.
Let $A'\subseteq A$ be a set of at most $k-1$ vertices such that $k$ divides $|A|+|A'|$ and $p_{H_{\ell}}(v) \geq \max_{u\in A\setminus A'} p_{H_{\ell}}(u)$ for all $v\in A'$. Note that we have either $A'\subseteq A_{\max}$ or $A_{\max}\subseteq A'$. Then we can take a collection $\cA:=\{A_1,\dots, A_{t+1}\}$ of (possibly empty) subsets of $A$ such that the following hold for each $i\in [t+1]$.
\begin{itemize}
\item $|A_i|$ is divisible by $k$,
\item $|A_i| \leq |A|/t+k$, 
\item every vertex in $A'$ belongs to exactly two sets in $\cA$ and every vertex in $A\setminus A'$ belongs to exactly one set in $\cA$.
\end{itemize}\COMMENT{Such a collection exists. 
We delete $k-|A'|$ elements from $A$ to obtain a set $A''$ whose size is divisible by $k$ and such that $A'\subseteq A''$. We partition $A''$ into $t$ sets $A'_1,\dots, A'_t$ such that each set has size divisible by $k$, and $|A'_i| -|A'_j| \in \{-k,0,k\}$ for each $i,j\in [t]$.
Let $A'_{t+1}$ consists of the elements in $A'\cup (A\setminus A'')$. Thus $|A'_{t+1}|=k$. By repeatedly moving sets of $k$ vertices (not containing any vertex in $A'$) from (some of) the $A'_1,\dots, A'_t$ to
 $A'_{t+1}$ we obtain sets $A_1,\dots, A_{t+1}$ such that each set has size divisible by $k$, and $|A_i| -|A_j| \in \{-k,0,k\}$ for each $i,j\in [t+1]$.
This is our desired collection.
}
Now, for each $i\in [t+1]$, we have
$$\delta( G-A_i) \geq \delta(G)-|A_i| \geq (1-1/k+\sigma)n - n/t-k 
\geq (1 - 1/k + \sigma -2/t) n  \geq (1-1/k) n.$$
Since $V(G)\backslash A_i$ contains at most $n$ vertices, and $|V(G)\backslash A_i|$ is divisible by $k$, the Hajnal-Szemer\'edi theorem implies that there exists a collection $\cK_i$ of copies of $K_k$ in $G$ covering all the vertices in $V(G)\backslash A_i$ exactly once.
For each $i\in [t+1]$, let $E_i:= \{V(K): K\in \cK_i \}$. 
Then $\bigcup_{i=1}^{t+1}E_i$ covers every vertex in $V(G)\backslash A$ exactly $t+1$ times, while it covers vertices in $A\setminus A'$ exactly $t$ times and vertices in $A'$ exactly $t-1$ times.
Let $H_{\ell+1}$ be the multi-$k$-graph on vertex set $V(G)$ with 
$$E(H_{\ell+1}):=H_{\ell} \cup \bigcup_{i=1}^{t+1}E_i.$$
Then the above construction with (H1)$_{\ref{lem: hypergraph degree}}^{\ell}$ implies (H1)$_{\ref{lem: hypergraph degree}}^{\ell+1}$.
Also (H2)$_{\ref{lem: hypergraph degree}}^{\ell}$ implies that 
$\Delta(H_{\ell+1}) = \Delta(H_{\ell})+ (t+1) \leq (t+1)(\ell+1)$, thus (H2)$_{\ref{lem: hypergraph degree}}^{\ell+1}$ holds.
If $A'\subsetneq A_{\max}$, then every vertex in $A_{\max}\setminus A'$ is covered exactly $t$ times by $\bigcup_{i=1}^{t+1}E_i$. Thus, by \eqref{eq: difference at least two}, we have
$$\max_{u\in V(G)} \{p_{H_{\ell+1}}(u)\} = \max_{u\in V(G)}\{ p_{H_{\ell}}(u)\} +t
\enspace \text{and} \enspace \min_{u\in V(G)} \{p_{H_{\ell+1}}(u)\} = \min_{u\in V(G)}\{ p_{H_{\ell}}(u)\} +t+1.$$
If $A_{\max} \subseteq A'$, then every vertex in $A_{\max}$ is covered exactly $t-1$ times while every vertex in $A$ is covered either $t-1$ times or $t$ times by $\bigcup_{i=1}^{t+1}E_i$. Thus, by \eqref{eq: difference at least two}, we have
$$\max_{u\in V(G)} \{p_{H_{\ell+1}}(u)\} = \max_{u\in V(G)}\{ p_{H_{\ell}}(u)\} +t-1
\text{ and } \min_{u\in V(G)} \{p_{H_{\ell+1}}(u)\} \geq \min_{u\in V(G)}\{ p_{H_{\ell}}(u)\} +t.$$
In both cases, we have
\begin{eqnarray*}
\max_{u,v \in V(G)} \left\{ p_{H_{\ell+1}}(u) - p_{H_{\ell+1}}(v) \right\} 
\leq \max_{u,v\in V(G)}  \{ p_{H_{\ell}}(u)- p_{H_{\ell}}(v) \} -1 \stackrel{\text{(H3)$_{\ref{lem: hypergraph degree}}^{\ell}$}}{\leq} m'-\ell-1.
\end{eqnarray*}
Thus (H3)$_{\ref{lem: hypergraph degree}}^{\ell+1}$ holds.

Next assume that $|A|< k$.
Then we take two sets $B$ and $C$ in $V(G)$ such that $B\cap C=A$ and $|B|=|C|=k$.
Then similarly as before, we can take two collections $E_1$ and $E_2$ of sets of size $k$ such that $E_1$ covers every vertex in $V(G)\setminus B$ exactly once, and $E_2$ covers every vertex in $V(G)\setminus C$ exactly once while $G[e]\simeq K_k$ for all $e\in E_1 \cup E_2$.
Let $H_{\ell+1}$ be the multi-$k$-graph with $E(H_{\ell+1}):=H_{\ell} \cup E_1\cup E_2$.
Then, it is easy to see that both (H1)$_{\ref{lem: hypergraph degree}}^{\ell+1}$ and (H2)$_{\ref{lem: hypergraph degree}}^{\ell+1}$ hold.
Also $E_1\cup E_2$ covers all vertices in $V(G)\setminus A$ exactly once or twice, while it does not cover the vertices in $A$. 
Then as before, by using the fact that $\max_{u \in V(G)} \{ p_{H_{\ell}}(u) \}- \min_{u\in V(G)} \{ p_{H_{\ell}}(u)\}  \geq 2$,  we can show that (H3)$_{\ref{lem: hypergraph degree}}^{\ell+1}$ holds.
\COMMENT{
\begin{eqnarray*}
\max_{u,v \in V(G)} \left\{ p_{H_{\ell+1}}(u) - p_{H_{\ell+1}}(v) \right\} &\leq& (\max_{u\in V(G)}  \{ p_{H_{\ell}}(u)\} + 0)-  (\min_{u\in V(G)}  \{p_{H_{\ell}}(u)\} +1)\\
&\leq& \max_{u,v\in V(G)}  \{ p_{H_{\ell}}(u)- p_{H_{\ell}}(v) - 1 \} \stackrel{\text{(H3)$_{\ref{lem: hypergraph degree}}^{\ell}$}}{\leq} m'-\ell-1.
\end{eqnarray*}}

Hence, this shows that there exists a hypergraph $H_{m'-1}$ which satisfies (H1)$_{\ref{lem: hypergraph degree}}^{m'-1}$--(H3)$_{\ref{lem: hypergraph degree}}^{m'-1}$.
Let $m'':= \max_{v\in V(G)} \{p_{H_{m'-1}}(v)\} $.
 Then (H2)$_{\ref{lem: hypergraph degree}}^{m'-1}$ implies that $m''\le (t+1)m$.
Also, by  (H3)$_{\ref{lem: hypergraph degree}}^{m'-1}$ every vertex $v\in V(G)$ satisfies  $p_{H_{m'-1}}(v) \in \{m'' -1, m''\}.$
Recall that $\delta(G)\geq (1-1/k)n$ and $k$ divides $n$.
Thus the Hajnal-Szemer\'edi theorem guarantees a collection $E$ of sets of size $k$ which covers every vertex of $G$ exactly once, while $G[e]\simeq K_k$ for all $e\in E$.
Thus, by adding all $e\in E$ to $H_{m'-1}$ exactly $(t+1)m - m''$ times, we obtain a multi-$k$-graph satisfying (B1)$_{\ref{lem: hypergraph degree}}$ and (B2)$_{\ref{lem: hypergraph degree}}$.
\end{proof}

\COMMENT{Only needed in proof of \ref{lem: clique extension} and only for case when t=1.
\begin{claim*}\label{lem: 2k vertices}
Let $r,k,t \in \mathbb{N}$, and $R$ be an $r$-vertex graph with $\delta(R) \geq \Big(1 - \frac{1}{k} \Big)r+t $. 
For any $2k$ (not necessarily distinct) vertices $x_1,\dots,x_{2k}$, there are at least $t$ vertices $z$ such that 
$$|\{ j\in [2k]: x_j \in N_{R}(z)\}| \geq 2k-1.$$\COMMENT{Note that we don't write $N_{R}(z)\cap \{x_1,\dots, x_{2k}\}$ since $x_i=x_j$ for some $i\neq j$ could happen.}
\end{claim*}
\begin{proof}
For each $z\in V(R)$, let $d_z:= |\{ j\in [2k]: x_j\in N_{R}(z)\}|$.
Then we have
$$\sum_{z\in V(R)} d_z = \sum_{i=1}^{2k} d_{R}(x_i) \geq (2k-2)r+ 2k t.$$
As $d_z \leq 2k$, there are at least $t$ vertices $z\in V(R)$ with $d_z \geq 2k-1$.
\end{proof}
}

The following lemma is due to Koml\'os, S\'ark\"ozy and  Szemer\'edi \cite{KSS4}.
Assertion (B3)$_{\ref{lem: clique extension}}$ is not explicitly stated in \cite{KSS4}, but follows immediately from the proof given there (see Section~3.1 in \cite{KSS4}). 
Given embeddings of graphs $H_i$ and $H_j$ into blown-up $k$-cliques $Q_i \subseteq G$ and $Q_j \subseteq G$, the `clique walks' guaranteed by Lemma~\ref{lem: clique extension} will allow us to find suitable connections between (the images of) $H_i$ and $H_j$ in $G$.

\begin{lemma}\label{lem: clique extension}
Let $r,k\in \mathbb{N}\setminus \{1\}$. Suppose that $R$ is an $r$-vertex graph with $\delta(R) \geq \big(1 - \frac{1}{k}\big)r+1$. 
Suppose that $Q_1, Q_2$ are two not necessarily disjoint subsets of $V(R)$ of size $k$ such that $Q_1 =\{x_1,\dots, x_k\}$ and $Q_2 =\{y_1,\dots, y_{k}\}$ with $R[Q_1]\simeq K_k$ and $R[Q_2] \simeq K_k$.
 Then there exists a walk $W=(z_1,\dots, z_t)$ in $R$ satisfying the following. 
\begin{enumerate}
\item[{\rm(B1)}$_{\ref{lem: clique extension}}$] $3k\leq t \leq 3 k^3$ and $k \mid t$,
\item[{\rm(B2)}$_{\ref{lem: clique extension}}$] for all $i,j \in [t]$ with $|i-j| \leq k-1$, we have $z_i z_j \in E(R)$,
\item[{\rm(B3)}$_{\ref{lem: clique extension}}$] 
for each $i\in [k]$, we have $z_i=x_i$ and $z_{t-k+i} = y_i$. 
\end{enumerate}
\end{lemma}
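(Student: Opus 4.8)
The statement is the connecting lemma of Komlós, Sárközy and Szemerédi \cite{KSS4}, so the plan is to invoke it directly and, in addition, to explain why the prescribed orderings of $Q_1$ and $Q_2$ at the two ends — assertion (B3)$_{\ref{lem: clique extension}}$, which is not literally in \cite{KSS4} — can be imposed. I would first record the one structural fact about $R$ that everything rests on: since $\delta(R)\ge(1-1/k)r+1$, every vertex has at most $r/k-1$ non-neighbours (itself included), so any set of at most $k-1$ vertices has a common neighbourhood of size at least $r-(k-1)(r/k-1)=r/k+k-1\ge 1$, and any set of at most $k$ vertices of size at least $r-k(r/k-1)=k\ge 1$. (As $\delta(R)\le r-1$ forces $r\ge 2k$, these common neighbourhoods are comfortably nonempty, and for $r$ bounded in terms of $k$ the graph $R$ is so dense that the construction only gets easier.) Hence one can build a \emph{clique walk} — a walk in which every $k$ consecutive vertices induce a $K_k$ — greedily: having placed $z_1,\dots,z_i$, choose $z_{i+1}$ in the common neighbourhood of $z_{i-k+2},\dots,z_i$; moreover there is enough slack to impose one further adjacency constraint on $z_{i+1}$ at each step, since the common neighbourhood of $k$ vertices still has size at least $k$.

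With this, I would assemble the walk $W$ from five blocks: $(x_1,\dots,x_k)$, a short ``exit path'' $(b_1,\dots,b_k)$, a ``connecting'' clique walk, a short ``approach path'' $(c_1,\dots,c_k)$, and $(y_1,\dots,y_k)$. For the concatenation to be a clique walk one only needs to inspect the overlap windows: $(x_{j+1},\dots,x_k,b_1,\dots,b_j)$ being a clique amounts to $b_j\sim x_{j+1},\dots,x_k$, and since $b_j$ also lies in the window $(b_1,\dots,b_k)$ one needs $b_j\sim b_1,\dots,b_{j-1}$; thus $b_j$ is constrained by exactly $k-1$ already-chosen vertices and can be picked greedily, building $b_1,\dots,b_k$ left to right. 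Symmetrically the clique $(c_1,\dots,c_k)$ — with $c_j\sim y_1,\dots,y_{j-1}$ — is built right to left, from $c_k$ down to $c_1$. What remains is to join the ordered clique $(b_1,\dots,b_k)$ to the ordered clique $(c_1,\dots,c_k)$ by a clique walk of bounded length, and this is precisely the lemma of \cite{KSS4}. The clique property (B2)$_{\ref{lem: clique extension}}$ is maintained throughout; the length bound (B1)$_{\ref{lem: clique extension}}$ follows from the (polynomial-in-$k$) bound in \cite{KSS4} together with a divisibility fix — cycling a window once around a clique inserts $k$ vertices without moving the window, and lengthening the exit block by a suitable number of extra vertices in $\{0,\dots,k-1\}$ corrects the residue of $t$ modulo $k$, landing $t$ in $[3k,3k^3]$ with $k\mid t$; and (B3)$_{\ref{lem: clique extension}}$ holds by construction, as $W$ literally starts with $x_1,\dots,x_k$ and ends with $y_1,\dots,y_k$.

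The hard part is the connecting lemma itself: joining two arbitrary ordered $k$-cliques of $R$ by a clique walk of length polynomial in $k$ (with the correct value modulo $k$). Greedy extension as above lets one ``bend'' the current window into the neighbourhood of a single target vertex, but steering it onto an entire prescribed $k$-clique accumulates too many simultaneous adjacency constraints to discharge in one pass; the argument of \cite{KSS4} circumvents this by routing through intermediate cliques in $O(k)$ controlled phases of $O(k^2)$ vertices each, which is where the $k^3$ comes from. I would treat this as a black box (see Section~3.1 of \cite{KSS4}) and confine the write-up to verifying that the exit and approach blocks dock correctly with $Q_1$ and $Q_2$ and that the length and divisibility come out as stated.
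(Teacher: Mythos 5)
Your overall approach — defer to the Connecting Lemma of \cite{KSS4} and supplement it with bookkeeping — is in fact the same stance the paper takes in its visible text (it cites \cite{KSS4} and remarks that {\rm(B3)}$_{\ref{lem: clique extension}}$ ``follows immediately from the proof given there''). But the specific device you propose to recover the ordering constraint does not close the gap you identify. Your exit block $(b_1,\dots,b_k)$ and approach block $(c_1,\dots,c_k)$ are built so that $(x_1,\dots,x_k,b_1,\dots,b_k)$ and $(c_1,\dots,c_k,y_1,\dots,y_k)$ are clique walks, and you then ask to ``join the ordered clique $(b_1,\dots,b_k)$ to the ordered clique $(c_1,\dots,c_k)$ by a clique walk of bounded length, and this is precisely the lemma of \cite{KSS4}.'' That, however, is exactly the statement you set out to establish: joining two $k$-cliques with \emph{prescribed orderings} at both ends. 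If the \cite{KSS4} black box already does that, your exit/approach blocks do no work — you could apply it directly to $(x_1,\dots,x_k)$ and $(y_1,\dots,y_k)$. If the black box only guarantees a walk between the two cliques \emph{as sets}, with uncontrolled starting and ending orderings, then the seams $(b_1,\dots,b_k)$ and $(c_1,\dots,c_k)$ no longer dock: you would need to reconcile the ordering the black box produces with the ordering your greedy construction committed to, and the adjacency pattern required of $b_j$ ($b_j\sim x_{j+1},\dots,x_k$) does depend on the position $j$, so a post-hoc relabelling is not free. Your ``cycling a window once around a clique'' trick produces only cyclic shifts of the window, which fixes the residue of $t$ modulo $k$ but cannot effect an arbitrary permutation of a $k$-clique.

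The paper's (commented-out) self-contained proof supplies exactly the missing ingredient: a preliminary claim showing that, for any permutation $\pi$ of a $k$-clique $\{q_1,\dots,q_k\}$, there is a clique walk of length at most $2k^2-k$ (a multiple of $k$) starting at $(q_1,\dots,q_k)$ and ending at $(q_{\pi(1)},\dots,q_{\pi(k)})$ — realized by picking a common neighbour $u$ and stepping through the transpositions decomposing $\pi$. With this in hand, the main construction grows two clique walks, one out of $Q_1$ and one out of $Q_2$, iteratively replacing a single endpoint vertex at a time to increase $\sum_{u\in U}d_{R,V}(u)$ between the two terminal $k$-sets $U,V$ until one of them is almost complete to the other; a final interleaving argument (again using the permutation claim) stitches them together and produces the stated length $3k\le t\le 3k^3$. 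If you want a write-up that is genuinely self-contained rather than a reference to \cite{KSS4}, the permutation-by-transpositions claim is the piece to add first; it also makes the exit/approach blocks unnecessary, since it lets you reorder the cliques at the ends directly.
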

\COMMENT{
\begin{proof}
First, we prove the following claim.

\begin{claim}\label{cl: permutation transposition}
Let $R[\{q_1,\dots, q_k\}]=K_k$, and $\pi:[k] \rightarrow [k]$ be a permutation.
There exists a walk $W = (w_1,\dots, w_{t}) $ satisfying the following.
\begin{enumerate}
\item[{\rm(QW$'$1)}] for each $j \in [k]$, we have $w_j = q_{j}$ and $w_{t-k+j} =q_{\pi(j)}$,

\item[{\rm(QW$'$2)}] for each $j\in [t-k]\cup \{0\}$,
 $R[\{w_{j+1},\dots, w_{j+k} \}]$ form a copy of $K_k$,

\item[{\rm(QW$'$3)}] $k$ divides $t$ and $t\leq 2 k^2-k$.
\end{enumerate}
\end{claim}
\begin{proof}
As $k\geq 2$ and $\delta(R) \geq (1-1/k)r+1$, it is easy to see that there exists a vertex $u \in N_{R}( \{q_1,\dots, q_k\})$. 
For $i,j$ let $t_{i,j} : [k]\rightarrow [k]$ be the transposition of $i,j$.
(Meaning that $t_{i,j}$ is an identity map on $[k]\setminus \{i,j\}$, $t_{i,j}(i)=j$ and $t_{i,j}(j)=i$.)

For a given permutation $\pi':[k]\rightarrow [k']$ and $i <j \in [k]$, we consider the following path of length $2k$.
$$Q(\pi',i,j):= (q_{\pi'(1)},\dots, q_{\pi'(k)}, q_{\pi'(1)},\dots, q_{\pi'(i-1)}, u, q_{\pi'(i+1)},\dots, q_{\pi'(j-1)}, q_{\pi'(i)}, q_{\pi'(j+1)}, \dots, q_{\pi'(k)}).$$

Let $\pi_0, \pi_1,\dots, \pi_{m}$ and $(i_1,j_1),\dots, (i_m, j_m)$ be sequences such that $\pi_0$ is an identity permutation, and $\pi_m = \pi$.
And $\pi_{\ell+1} = \pi_\ell \circ t_{i_{\ell+1}, j_{\ell+1}}$ with $i_{\ell} < j_{\ell}$.

 We know that any permutation is a composition of at most $k-1$ transpositions, so $m\leq k-1$.
 Moreover, consider the following walk
 $$Q(\pi_0,i_1,j_1)Q(\pi_1,i_2,j_2)\dots Q(\pi_{m-1},i_m,j_m) (q_{\pi(1)},\dots, q_{\pi(k)}).$$
It is easy to see that (QW$'$1)--(QW$'$3) hold.
\end{proof}

For each $i\in [k]$, let $y'_i := y_{k-i+1}$.

\begin{claim}\label{cl: extension path}
For each $i\in \mathbb{N}$, there exists $(a,b)$ with $a+b \leq (i+2)k$ and $k \mid (a+b)$
and two walks $W_1:=(u_1,\dots, u_{k+a})$ and $W_2:=(v_1,\dots, v_{k+b})$
 satisfying the following, where $U:=\{ u_{a+1},\dots, u_{a+k}\}$, $V:=\{v_{b+1},\dots, v_{b+k}\}$.
\begin{enumerate}
\item[{\rm(Z$'$1)}$_i$] for each $j \in [k]$, we have $u_j = x_j$ and $v_j=y'_j$,

\item[{\rm(Z$'$2)}$_i$] for each $j'\in [a], j''\in [b]$, 
both $R[\{u_{j'+1},\dots, u_{j'+k} \}]$ and $R[\{v_{j''+1},\dots, v_{j''+k} \}]$ forms a copy of $K_k$,

\item[{\rm(Z$'$3)}$_i$] either 
\begin{align*}
&\max\big\{ \left|\{ j\in [k]: d_{R,V}(u_{a+j}) \geq k-1\}\right|, \left|\{ j\in [k]: d_{R,U}(v_{b+j}) \geq k-1\}\right|\big\} = k, \text{ or }\\
&\sum_{u\in U} d_{R,V}(u)= \sum_{v\in V}d_{R,U}(v) \geq i.
\end{align*}
\end{enumerate}
\end{claim}

\begin{proof}
Note that the claim is true for $i=0$ as there exists $(a,b)$
and walks starting with $(x_1,\dots, x_k)$ and $(y'_1,\dots, y'_k)$ satisfying all of (Z$'$1)$_0$--(Z$'$3)$_0$ and $a+b = 2k$.

Assume that the claim is true for $i\in \mathbb{N}\cup \{0\}$.
Then there exists $(a',b')$ with $a'+b'\leq (i+2)k$ and $k\mid (a'+b')$ and walks $W'_1:=(u_1,\dots, u_{a'+k})$ and $W'_2:=(v_1,\dots, v_{b'+k})$ satisfying (Z$'$1)$_i$--(Z$'$3)$_i$.
Let 
\begin{align*}
&U':=\{u_{a'+1},\dots, u_{a'+k}\},  V':=\{v_{b'+1},\dots, v_{b'+k} \},\\
&A':= |\{ j\in [k] : d_{R,V'}(u_{a'+j}) \geq k -1 \}| \enspace{ and } \enspace B':= |\{ j\in [k] : d_{R,U'}(v_{b'+j}) \geq k -1 \}|.
\end{align*}
If $W'_1$ and $W'_2$ satisfy (Z$'$3)$_{i+1}$, then the claim holds as there exists $(a,b)=(a',b')$ and walks $(u_1,\dots, u_{a'+k})$ and $(v_1,\dots, v_{b'+k})$ satisfying (Z$'$1)$_{i+1}$--(Z$'$3)$_{i+1}$. Indeed, (Z$'$1)$_{i}$
and (Z$'$2)$_{i}$ imply (Z$'$1)$_{i+1}$ and (Z$'$2)$_{i+1}$ with this definition.
Thus we assume that $W'_1$ and $W'_2$ does not satisfy (Z$'$3)$_{i+1}$. Thus we assume that 
\begin{align}\label{eq: saturating degree}
A'<k, B'<k, \enspace \text{and} \sum_{u\in U'} d_{R,V'}(u)= \sum_{v\in V'}d_{R,U'}(v) = i.
\end{align}
Note that (Z$'$2)$_i$ implies that $|U'| = |V'| =k$ while $U'$ and $V'$ may not be disjoint. (However, 
same vertex does not repeat in $(u_{a'+1},\dots,u_{a'+k})$. Also same vertex does not repeat in $(v_{b'+1},\dots, v_{b'+k})$.)
By the previous claim, there exists $w\in V(R)$ such that 
$d_{R,V'}(w)+ d_{R,U'}(w) \geq 2k-1$.
Without loss of generality, assume that we have
\begin{align}\label{eq: w existence}
 U' \subseteq N_{R}(w) \enspace \text{and} \enspace d_{R,V'}(w) \geq k-1.
 \end{align}
\eqref{eq: saturating degree} implies that there exists $j_*\in [k]$ such that $d_{R,V'}(u_{a'+j_*}))< k-1$.
Then we consider $(a,b):=(a'+k,b')$ and two walks
$$W_1=(u_1,\dots, u_{a'+k}, u_{a'+1},\dots, u_{a'+ j_*-1},w, u_{a'+ j_*+1},\dots, u_{a'+k}) \enspace \text{and} \enspace 
W_2 = (v_1,\dots, v_{b+k}).$$ 
Thus $U= (U'\setminus \{u_{a'+j_*}\} )\cup \{w\}$ and $V=V'$.
Then $a+b \leq a'+b'+k \leq (i+3)k$. 
As we know $U'\subseteq N_{R}(w)$ and $R[U]\simeq K_k$, 
both (Z$'$1)$_{i+1}$ and (Z$'$2)$_{i+1}$ hold with the walks $W_1$ and $W_2$.
Moreover, as $V=V'$ with $d_{R,V}(w) \geq k-1 > d_{R,V}(u_{a'+j_*})$, we have
$$\sum_{u\in U} d_{R,V}(u) = \left(\sum_{u\in U'} d_{R,V'}(u)\right)
- d_{R,V'}(u_{a'+j_*}) + d_{R,V'}(w) \geq \sum_{u\in U'} d_{R,V'}(u) + 1 = i+1.$$
So, (Z$'$3)$_{i+1}$ holds, thus the claim holds for $i+1$.
By induction, this proves the claim holds for all $i\in \mathbb{N}$.
\end{proof}

Note that $\sum_{u\in U} d_{R,V}(u) \leq k^2$ for any sets $U,V$ with $|U|=|V|=k$, and $\sum_{u\in U} d_{R,V}(u) \geq k^2-1$ implies that 
$\left|\{ j\in [k]: N_{R,V}(u_{a+j}) \geq k-1\}\right| = k$.
Thus without loss of generality, using Claim~\ref{cl: extension path} with $k^2-1$ playing the role of $i$, we obtain two walks $W_1 = (u_1,\dots, u_{a+k}), W_2 = (v_1,\dots, v_{b+k})$ which satisfy that 
\begin{enumerate}
\item[{\rm(W1)}$_{\ref{lem: clique extension}}$] for each $j \in [k]$, we have $u_j = x_j$ and $v_j=y'_j$,

\item[{\rm(W2)}$_{\ref{lem: clique extension}}$] for each $j'\in [a], j''\in [b]$, 
both $R[\{u_{j'+1},\dots, u_{j'+k} \}]$ and $R[\{v_{j''+1},\dots, v_{j''+k} \}]$ form a copy of $K_k$,

\item[{\rm(W3)}$_{\ref{lem: clique extension}}$] $
\max\big\{ \left|\{ j\in [k]: d_{R,V}(u_{a+j}) \geq k-1\}\right|, \left|\{ j\in [k]: d_{R,U}(v_{b+j}) \geq k-1\}\right|\big\} = k,$
\item[{\rm(W4)}$_{\ref{lem: clique extension}}$] $a+b \leq k^3+k$ and $k \mid (a+b)$.
\end{enumerate}
Without loss of generality, we assume that 
\begin{align}\label{eq: assumption A k}
\left|\{ j\in [k]: d_{R,U}(v_{b+j}) \geq k-1\}\right| = k.
\end{align}
\begin{claim}\label{cl: permute}
There exists a permutation $\pi: [k]\rightarrow [k]$ such that 
\begin{enumerate}
\item[{\rm ($\Pi$)$_{\ref{lem: clique extension}}$}] for each $j\in [k]$, we have
$\{ u_{a+\pi(1)}, \dots, u_{a+\pi(k-j)} \} \subseteq  N_{R,U}(v_{b+j}).$
\end{enumerate}
\end{claim}
\begin{proof}
We use induction to prove that for each $i\in [k]\cup \{0\}$, there exists an injective map $\pi_i: [k]\setminus [k-i]\rightarrow [k]$ such that
\begin{enumerate}
\item[{\rm ($\Pi$)$_i$}] for each $j\in [k]\setminus [k-i]$, we have 
$U\setminus \{ u_{a+\pi_i(j)}, \dots, u_{a+\pi_i(k)} \} \subseteq  N_{R,U}(v_{b+k-j+1}).$
\end{enumerate}
Note that $\pi_0 :\emptyset \rightarrow \emptyset$ vacuously satisfies ($\Pi$)$_0$.
Assume that an injective map $\pi_{i}: [k]\setminus [k-i] \rightarrow [k]$ exists for some $0\leq i \leq k-1$ such that {\rm ($\Pi$)$_i$} holds. 

By \eqref{eq: assumption A k}, there is at most one index  $j_*$ such that 
$U\setminus \{u_{a+j_*}\}\subseteq  N_R(v_{b+i+1})$.
Let $j'_*$ be an arbitrary number in $[k] \setminus \pi_i([k]\setminus [k-i])$.
We define
$$\pi_{i+1}(j):= \left\{\begin{array}{ll}
\pi_i(j) & \text{ if } j>k-i, \\
j_* & \text{ if } j=k-i, j_* \text{ exists and } j_* \notin \pi_i([k]\setminus [k-i]),\\
j'_* & \text{ else}.
\end{array}\right.$$
Then this definition ensures that $\pi_{i+1} :[k]\setminus [k-i-1]\rightarrow [k]$ is an injective map extending $\pi_i$, and $\pi_{i+1}$ satisfies ($\Pi$)$_{i+1}$ since we have
  $$U\setminus \{ u_{a+\pi_{i+1}(k-i)}, \dots, u_{a+\pi_{i+1}(k)} \} \subseteq  N_{R,U}(v_{b+i+1})$$
and since  $\pi_i$ satisfies   ($\Pi$)$_{i}$.
By repeating this, we can obtain an injective map $\pi_{k}:[k]\rightarrow [k]$ satisfying ($\Pi$)$_{k}$. This proves the claim.
\end{proof}
By Claim \ref{cl: permute}, we have a permutation $\pi$ satisfying ($\Pi$)$_{\ref{lem: clique extension}}$.
For each $j\in [k]\cup \{0\}$, let $W(j)=(w^j_1,\dots, w^j_k)$ be the sequence in $V(R)$ defined by
\begin{enumerate}
\item[{\rm(W$'$1)}$_{\ref{lem: clique extension}}$] $\{w^j_1,\dots, w^j_{k-j}\} =
\{ u_{a+\pi(j')} : j' \in [k-j]  \}$,
\item[{\rm(W$'$2)}$_{\ref{lem: clique extension}}$] if $w^j_{j'} = u_{a+\pi(j_*)}, w^j_{j''}=u_{a+\pi(j^*)}$ with $j'< j'' \in [j]$, then we have $j_* < j^*$, \\
\item[{\rm(W$'$3)}$_{\ref{lem: clique extension}}$] for $j'\in [j]$, we have that $w^j_{k-j+j'} = v_{b+j-j'+1}.$
\end{enumerate}
Note that we have $W(0) = ( u_{a+\pi(1)},\dots, u_{a+\pi(k)})$, and $W(k) = (v_{b+k},\dots, v_{b+1})$. 
Let 
$$W'=(w_1,\dots, w_{k^2+k}):= W(0)W(1)\dots W(k).$$
Then (W$'$1)$_{\ref{lem: clique extension}}$ implies that for each $i\in [k^2]\cup \{0\}$, there exists $j'\in [k]\cup \{0\}$ such that
$$\{w_{i+1},\dots, w_{i+k}\} = \{ u_{a+\pi(1)},\dots, u_{a+\pi(k-j')}, v_{b+1},\dots, v_{b+j'} \}.$$
Moreover, ($\Pi$)$_{\ref{lem: clique extension}}$ with (W2)$_{\ref{lem: clique extension}}$ implies that we have 
$R[\{w_{i+1},\dots, w_{i+k}\}]\simeq K_k$.

By applying Claim~\ref{cl: permutation transposition} with the permutation $\pi$, we obtain a walk 
$W^* =(w^*_1,\dots, w^*_t)= (u_{a+1},\dots, u_{a+k},\dots, u_{a+\pi(1)},\dots, u_{a+\pi(k)} )$ such that 
$t\leq 2k^2 - k$ and $k$ divides $t$, and any $k$ consecutive vertices on the walk induces a copy of $K_k$.

As the last part of $W_1$ is $(u_{a+1},\dots, u_{a+k})$, the last part of $W_2$ is the reverse walk of $W(k)$, and the last part of $W^*$ is $W(0)$, the following walk
$$W:=(u_1,\dots,u_{a}) W^*  W(1)\dots W(k-1)( v_{b+k},\dots, v_1).$$
forms a walk satisfying (B2)$_{\ref{lem: clique extension}}$.
Moreover, (W1)$_{\ref{lem: clique extension}}$ and the definition of $y_i'$ imply that (B3)$_{\ref{lem: clique extension}}$ also holds.
 (W4)$_{\ref{lem: clique extension}}$ implies that 
$$3k\leq |W| =  a+2k^2-k + (k-1)k + b+k  \leq 
k^3 +3k^2 \leq 3k^3,$$
and $k$ divides $|W|$.
Thus (B1)$_{\ref{lem: clique extension}}$ also holds.
This proves the lemma.
\end{proof}
}

The following lemma also can be proved using a simple greedy algorithm. We omit the proof.
\begin{lemma}\label{eq: k-independent set}
Let $\Delta,k, t \in \mathbb{N}\setminus \{1\}$.
Let $H$ be a graph with $\Delta(H) \leq \Delta$ and let $X \subseteq V(H)$ be a set with $|X| \geq  \Delta^{k} t$. Then there exists a $k$-independent set $Y\subseteq X$ of $H$ with $|Y| =t$.
\end{lemma}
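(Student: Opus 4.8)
The plan is to prove Lemma~\ref{eq: k-independent set} by a straightforward greedy selection, exploiting the fact that in a graph of bounded maximum degree the ball of radius $k-1$ around any vertex is small.

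\textbf{Setup and the key bound.} First I would record the ball-size estimate: if $\Delta(H)\le\Delta$ and $v\in V(H)$, then $|N^{k-1}_H(v)|\le 1+\Delta+\Delta(\Delta-1)+\dots+\Delta(\Delta-1)^{k-2}\le \Delta^{k-1}$ (here using $\Delta\ge 2$, so that $1+\Delta+\dots+\Delta^{k-1}\le \Delta^{k}$ as well, with room to spare; one can be as crude as one likes, since we only need a polynomial-in-$\Delta$ bound and the hypothesis gives $|X|\ge\Delta^k t$). The point is that deleting, for a chosen vertex $v$, the set $N^{k-1}_H(v)$ removes at most $\Delta^{k-1}$ vertices from $X$, and guarantees that any not-yet-deleted vertex has distance at least $k$ from $v$ in $H$.

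\textbf{The greedy construction.} I would build $Y=\{y_1,\dots,y_t\}$ one vertex at a time. Maintain a set $X'\subseteq X$ of ``available'' vertices, initially $X':=X$. At step $i$ (for $i=1,\dots,t$), having already chosen $y_1,\dots,y_{i-1}$, pick any $y_i\in X'$, add it to $Y$, and update $X':=X'\setminus N^{k-1}_H(y_i)$. To see the step is always possible: after $i-1\le t-1$ steps we have removed at most $(i-1)\Delta^{k-1}\le (t-1)\Delta^{k-1}<\Delta^k t - 1$ vertices (using $\Delta\ge2$, $k\ge 2$), so $|X'|\ge |X|-(t-1)\Delta^{k-1}\ge \Delta^k t-(t-1)\Delta^{k-1}\ge 1$, hence $X'\neq\emptyset$. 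Thus the process runs for all $t$ steps.

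\textbf{Verifying $k$-independence.} It remains to check that the resulting $Y$ is $k$-independent, i.e.\ that $\mathrm{dist}_H(y_i,y_j)\ge k$ for all $i\neq j$. Suppose $i<j$. When $y_j$ was chosen it lay in $X'$, which by construction had already had $N^{k-1}_H(y_i)$ removed at step $i$; hence $y_j\notin N^{k-1}_H(y_i)$, which by definition of $N^{k-1}_H$ means $\mathrm{dist}_H(y_i,y_j)>k-1$, i.e.\ $\ge k$. Since $|Y|=t$ and $Y\subseteq X$, this is exactly the assertion. There is no real obstacle here — the only mildly delicate point is getting the ball-size arithmetic to fit inside the stated hypothesis $|X|\ge\Delta^k t$, and the bound $|N^{k-1}_H(v)|\le\Delta^{k-1}$ (valid for $\Delta\ge 2$) leaves ample slack, so the crude counting above suffices.
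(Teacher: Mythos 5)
Your greedy construction is exactly the paper's argument, and the $k$-independence verification is correct. There is, however, a numerical slip in the ball-size bound: the claim $|N^{k-1}_H(v)|\le \Delta^{k-1}$ is false. Already for $\Delta=k=2$ the ball $N^1_H(v)$ can have $1+\Delta=3>\Delta^{k-1}=2$ vertices, and more generally $1+\Delta+\Delta(\Delta-1)+\dots+\Delta(\Delta-1)^{k-2}$ exceeds $\Delta^{k-1}$ for every $\Delta,k\ge 2$ (the last summand alone is $\Delta(\Delta-1)^{k-2}$, already close to $\Delta^{k-1}$, and the remaining terms push it over). Your parenthetical remark that $1+\Delta+\dots+\Delta^{k-1}\le\Delta^k$ is the inequality you should actually have used: for $\Delta\ge 2$ one has $|N^{k-1}_H(v)|\le 1+\Delta+\dots+\Delta^{k-1}=\frac{\Delta^k-1}{\Delta-1}\le\Delta^k-1$.

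This slip is harmless. Repeating your counting with the correct bound, after $i-1\le t-1$ choices at most $(t-1)(\Delta^k-1)$ vertices have been removed from $X$, so
$|X'|\ge \Delta^k t-(t-1)(\Delta^k-1)=\Delta^k+t-1\ge 1$,
and the greedy process still completes all $t$ steps. So the proof is sound once the ball-size estimate is replaced by the correct one; the approach and the argument structure match the paper's.
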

\COMMENT{\begin{proof}
We sequentially choose vertices $v_1, v_2,\dots, v_{t}$ such that for each $i\in [t]$ we have
$v_{i} \in  X\setminus N^{k-1}_H(\{v_1,\dots, v_{i-1}\}).$
This is possible since for each $i\in [t]$, we have
$$|X \setminus N^{k-1}_H(\{v_1,\dots, v_{i-1}\})| \geq \Delta^{k}t - (i-1)(\Delta^{k-1}+ \Delta^{k-2}+ \dots + \Delta+1) > 0.$$
Then the resulting set $\{v_1,\dots, v_{t}\}$ is as required.
\end{proof}}

\begin{lemma}\label{lem: k-1 clique extend}
Let $r,k,q,s \in \mathbb{N}\setminus \{1\}$ with $0<1/r \ll 1/k, 1/q \leq 1$.
Let $R$ be an $r$-vertex graph with $\delta(R) \geq (1- \frac{1}{k}) r.$ 
Let $\cF$ be a multi-$(k-1)$-graph on $V(R)$ with $\Delta(\cF)\leq q$ and $E(\cF) = \{F_1, \dots, F_s\}$ such that $R[F_i]\simeq K_{k-1}$ for all $i \in [s]$. 
Then there exists a multi-$k$-graph $\cF^*$ on $V(R)$ with $E(\cF^*) = \{F_1^*, \dots, F_s^*\}$ and such that \begin{enumerate}
\item[{\rm(B1)}$_{\ref{lem: k-1 clique extend}}$] $\Delta(\cF^*) \leq (k+1)q$,
\item[{\rm(B2)}$_{\ref{lem: k-1 clique extend}}$] for all $i\in [s]$, we have $F_i \subseteq F^*_i$ and $R[F^*_{i}]\simeq K_k$.
\end{enumerate}
\end{lemma}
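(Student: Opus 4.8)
The plan is to construct $\cF^*$ by extending each $(k-1)$-clique $F_i$ to a $k$-clique $F^*_i:=F_i\cup\{v_i\}$, where $v_i$ is chosen from the common neighbourhood $N_R(F_i):=\bigcap_{u\in F_i}N_R(u)$, making the choices $v_1,\dots,v_s$ in a balanced way so that no vertex of $R$ is selected more than $kq$ times. Given such a selection, (B2)$_{\ref{lem: k-1 clique extend}}$ is immediate: since $v_i$ is adjacent to every vertex of $F_i$ and, as $R$ has no loops, lies outside $F_i$, the set $F^*_i$ induces a copy of $K_k$ and contains $F_i$. For (B1)$_{\ref{lem: k-1 clique extend}}$, note that for each $v\in V(R)$ the conditions $v\in F_i$ and $v=v_i$ cannot hold simultaneously, so $d_{\cF^*}(v)=d_{\cF}(v)+|\{i\in[s]:v_i=v\}|\le q+kq=(k+1)q$, using $\Delta(\cF)\le q$ and the bound on the number of selections.

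To make the balanced selection possible I would establish two elementary estimates. First, for each $i\in[s]$ and $u\in F_i$ we have $|V(R)\setminus N_R(u)|=r-d_R(u)\le r-(1-1/k)r=r/k$, so unioning over the $k-1$ vertices of $F_i$ gives $|N_R(F_i)|\ge r-(k-1)r/k=r/k$. Second, counting incidences, $(k-1)s=\sum_{i\in[s]}|F_i|=\sum_{v\in V(R)}d_{\cF}(v)\le qr$, hence $s\le qr$. I would then pick the $v_i$ greedily: processing $i=1,\dots,s$ in turn, write $c(v)$ for the number of $j<i$ with $v_j=v$, and choose $v_i\in N_R(F_i)$ with $c(v_i)<kq$. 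Such a vertex must exist, since otherwise $\sum_{v\in N_R(F_i)}c(v)\ge kq\,|N_R(F_i)|\ge kq\cdot r/k=qr\ge s>i-1=\sum_{v\in V(R)}c(v)$, a contradiction. At the end each vertex has been chosen at most $kq$ times, so setting $E(\cF^*):=\{F^*_1,\dots,F^*_s\}$ finishes the proof.

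There is really no hard step here; the only point requiring attention is that the two estimates above leave enough slack for the greedy argument, i.e. that $kq\,|N_R(F_i)|\ge qr\ge s$, which holds comfortably because each clique has about $r/k$ admissible extension vertices while only $s\le qr$ extensions are performed in total. (One could equivalently run the selection through the defect version of Hall's theorem applied to the bipartite graph between $[s]$ and $V(R)$ recording membership in the sets $N_R(F_i)$, but the greedy version is shorter.) Everything else is bookkeeping.
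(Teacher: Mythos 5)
Your proof is correct and follows essentially the same approach as the paper's: both rest on the two bounds $|N_R(F_i)|\geq r/k$ and $s\leq qr$, and both ensure that no vertex is chosen as an extension more than $kq$ times. The paper realises the selection by finding a matching saturating $E(\cF)$ in an auxiliary bipartite graph on $E(\cF)$ and $V(R)\times[kq]$; your greedy scheme is an equivalent (and slightly more elementary) way of extracting the same selection, as you yourself note.
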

\begin{proof}
Since $\cF$ is a multi-$(k-1)$-graph, we have 
$s \leq \Delta(\cF)r/(k-1)\leq qr.$
We consider an auxiliary bipartite graph $Aux$ with vertex partition $(E(\cF) ,V(R) \times [k q])$ such that $F_i$ is adjacent to $(v,j) \in V(R) \times [k q]$ if $v\in N_R(F_i)$.
For any set $X$ of $k-1$ vertices in $R$, we have
$d_{R}(X) \geq r/k.$
Thus, any vertex $F_i$ of the graph $Aux$ has degree at least 
$ k q d_{R}(F_i) \geq kq \cdot (r/k) \geq s = |E(\cF)|.$
Thus, the graph $Aux$ contains a matching $M$ covering every $F_i\in E(\cF)$.
For each $(F_i, (v,j)) \in M$, let $F^*_i:= F_i\cup \{v\}$.
Then (B2)$_{\ref{lem: k-1 clique extend}}$ holds.
On the other hand, for any vertex $v\in V(R)$, we have
$
d_{\cF^*}(v) = d_{\cF}(v) + |\{ j  \in  [kq] : d_M((v,j))=1\}|
\leq d_{\cF}(v)+ kq \leq 
(k+1)q.
$
Thus (B1)$_{\ref{lem: k-1 clique extend}}$ holds too.
\end{proof}

The final tool we will collect 
implies that a $(k,\eta)$-chromatic $\eta$-separable bounded degree graph has a small separator $S$
and a $(k+1)$-colouring in which one colour class is small and only consists of vertices far away from $S$.
\begin{lemma}\label{lem: chromatic speparable}
Suppose that $n,t,\Delta,k\in \mathbb{N}$ and $\Delta\geq 2$.
Suppose that $H$ is an $\eta$-separable $n$-vertex graph with $\Delta(H)\leq \Delta$. If $H$ admits a $(k+1)$-colouring with colour classes $W_0,\dots, W_k$ with $|W_0| \leq \eta n$, then there exists a $\Delta^{t+2} \eta$-separator $S$ of $H$ with $N^{t}_H(S)\cap W_0=\emptyset$.
\end{lemma}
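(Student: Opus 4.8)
The statement says: given an $\eta$-separable $n$-vertex graph $H$ with $\Delta(H)\le\Delta$ and a $(k+1)$-colouring $W_0,\dots,W_k$ with $|W_0|\le\eta n$, one can find a $\Delta^{t+2}\eta$-separator $S$ whose $t$-neighbourhood avoids $W_0$ entirely. The natural idea is to start with a small separator $S_0$ of $H$ (of size at most $\eta n$) and then \emph{enlarge} it by absorbing into it all of $W_0$ together with small balls around those vertices, so that after enlargement no vertex of $W_0$ lies within distance $t$ of the (new) separator. The subtlety is that $S_0$ itself may be close to $W_0$, so I cannot simply take $S:=S_0\cup W_0$; I must also thicken $S_0$ near $W_0$.

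\textbf{Step 1: thicken the separator near $W_0$.} Let $S_0$ be an $\eta$-separator of $H$. Define $S := S_0 \cup N^{t+1}_H(W_0)$, i.e. add to $S_0$ every vertex within distance $t+1$ of some vertex of $W_0$. Since $\Delta(H)\le\Delta$, each vertex of $W_0$ has at most $1+\Delta+\Delta^2+\dots+\Delta^{t+1}\le 2\Delta^{t+1}$ vertices in its closed $(t+1)$-ball (crudely $\le\Delta^{t+2}$ for $\Delta\ge 2$, absorbing the geometric-series constant). Hence $|S|\le |S_0| + |W_0|\cdot\Delta^{t+2} \le \eta n + \eta n\cdot\Delta^{t+2} \le \Delta^{t+2}\eta n$ after adjusting constants slightly (or one can be marginally more careful and still land at $\Delta^{t+2}\eta n$, possibly needing $\Delta^{t+3}$ — the exponent is cosmetic and I would match it to whatever the downstream application in Lemma~\ref{lem: separable partition} actually needs). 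Wait — I should double check: the components of $H\setminus S$ are subsets of components of $H\setminus S_0$, hence have size at most $\eta n\le\Delta^{t+2}\eta n$. So $S$ is a $\Delta^{t+2}\eta$-separator.

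\textbf{Step 2: verify $N^t_H(S)\cap W_0=\emptyset$.} Suppose some $w\in W_0$ lies in $N^t_H(S)$, i.e. $\mathrm{dist}_H(w,s)\le t$ for some $s\in S$. If $s\in S_0$ is not already in $N^{t+1}_H(W_0)$, then $\mathrm{dist}_H(w,s)\le t$ would place $s$ in $N^t_H(W_0)\subseteq N^{t+1}_H(W_0)$, contradiction. And if $s\in N^{t+1}_H(W_0)$, say $\mathrm{dist}_H(s,w')\le t+1$ for some $w'\in W_0$, that's fine — $s$ being in $S$ near $W_0$ is not a problem; what we need is that $w$ itself is not within distance $t$ of $S$. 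Hmm, so Step 1 as stated is \emph{not} quite enough: putting the $(t+1)$-ball of $W_0$ into $S$ makes $W_0\subseteq S$, so in particular $w\in S$ and trivially $w\in N^0_H(S)\subseteq N^t_H(S)$. That breaks the conclusion, since $N^t_H(S)\cap W_0=\emptyset$ forces $W_0\cap S=\emptyset$.

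\textbf{Corrected approach.} The right move is to \emph{remove} $W_0$ (and a buffer around it) from the separator rather than add it. Take $S_0$ an $\eta$-separator. Set $S := S_0 \setminus N^{t}_H(W_0)$ — but then components of $H\setminus S$ may merge badly. The genuine fix is a two-stage construction: first observe that since $W_0$ is an independent set of size $\le\eta n$, deleting $W_0$ from $H$ and then separating is cheap. Concretely: let $H' := H - W_0$; apply $\eta$-separability of $H$ to get a separator, intersect with $V(H')$, and note small balls around $W_0$ can be pushed into the separator on the $H'$ side. I would instead argue: let $S_0$ be an $\eta$-separator of $H$; set $S := (S_0\cup N^{t+1}_H(W_0))\setminus W_0$. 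Then $W_0\cap S=\emptyset$ by construction. For a $w\in W_0$: its entire neighbourhood $N_H(w)$ lies in $N^{1}_H(W_0)\subseteq S$ (since those vertices are in $N^{t+1}_H(W_0)$ and, being distinct from $w$, survive the removal of $W_0$ — here I use that $W_0$ is independent, so $N_H(w)\cap W_0=\emptyset$). Thus in $H\setminus S$, the vertex $w$ is isolated, so its component is $\{w\}$, of size $1$. Hence \emph{no} path in $H\setminus S$ from $w$ reaches any other vertex, so certainly $w\notin N^t_{H\setminus S}(\cdot)$ — but the claim is about $N^t_H(S)$, distances in $H$, not $H\setminus S$. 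So I still must rule out $\mathrm{dist}_H(w,S)\le t$. Every $H$-neighbour of $w$ is in $S$, so $\mathrm{dist}_H(w,S)\le 1\le t$: the conclusion \emph{as literally stated} cannot hold whenever $w$ has a neighbour and that neighbour is in $S$. Therefore the correct reading must be that $W_0$ is to be \emph{re-coloured}: the conclusion should really be interpreted as "there is a separator $S$ and a $(k+1)$-colouring, possibly different, whose tiny class avoids $N^t_H(S)$", i.e. I am allowed to move the few vertices near $S$ out of $W_0$ into other classes.

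\textbf{Final plan.} Let $S_0$ be an $\eta$-separator of $H$. Set $S := S_0$. The bad vertices are $B := W_0\cap N^t_H(S)$, which has size at most $|S|\cdot(1+\Delta+\dots+\Delta^t)\le \eta n\cdot\Delta^{t+1}$. Now I enlarge: put $S := S_0 \cup N^{t}_H(B)$. This adds at most $|B|\cdot\Delta^{t+1}\le \eta n\cdot\Delta^{2t+2}$ vertices, keeping $|S|\le \Delta^{2t+2}\eta n$ (again the exponent is whatever bookkeeping yields; I'd set $t$-parameters to match). The point of absorbing $N^t_H(B)$ into $S$ is that now every vertex within distance $t$ of $B$ is \emph{in} $S$. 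Finally, recolour: for each $w\in B$, since $B\subseteq S$ now and $\Delta\ge 2$, the vertex $w$ has $\le\Delta$ neighbours, all among $k+1$ colour classes; but I want to move $w$ to a class in $\{W_1,\dots,W_k\}$ not used by its neighbours — this is not always possible if $w$ has neighbours of all $k$ colours $W_1,\dots,W_k$. Instead I do \emph{not} recolour; I observe the intended statement is: there is a separator $S$ with $N^t_H(S)\cap W_0 = \emptyset$ where now I simply delete from $H$ the vertices of $W_0$ lying near $S_0$...

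I will stop refining inline and record the structure of the actual proof: \textbf{(a)} start from an $\eta$-separator $S_0$; \textbf{(b)} iteratively, while some $w\in W_0$ has $\mathrm{dist}_H(w,S_0)\le t$, note this happens for at most $\Delta^{t+1}|S_0|$ vertices $w$, and handle them by replacing $w$ in the colouring — crucially, the lemma hypothesis is about $W_0$ being deletable, and the clean statement is that after moving each such $w$ \emph{into the separator} $S$, and then re-separating the (few, now uncoloured) leftover pieces, one gets $S$ with $|S|\le\Delta^{t+2}\eta n$ and the \emph{new} $W_0':=W_0\setminus S$ avoids $N^t_H(S)$; since moving $w$ into $S$ only ever helps the separator property (components only shrink), and since each such $w$ was within distance $t$ of $S_0$ hence all of $N^t_H(w)$ gets swallowed in at most $\Delta^{t+1}$ more vertices per $w$, the bound holds. \textbf{The main obstacle}, which I expect to be the crux, is exactly this bookkeeping: one must absorb not just $W_0\cap N^t_H(S)$ but enough of a $t$-ball around each such vertex so that the new separator genuinely has no $W_0$-vertex within distance $t$, and must simultaneously keep the size bounded by $\Delta^{t+2}\eta n$ — so the enlargement has to be done in the right order (balls around $W_0$-vertices near $S_0$, not iterated ad infinitum) and the exponent $t+2$ must be shown to suffice. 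I'd expect the clean argument is: $S := S_0 \cup \bigcup_{w\in W_0\cap N^{t}_H(S_0)} N^{t}_H(\{w\})$; then $|S|\le\eta n + \Delta^{t+1}\eta n\cdot \Delta^{t+1}$... which overshoots $t+2$. So almost certainly the real proof uses a smarter/tighter count — perhaps that $W_0$ is independent and each $w$ near $S_0$ only needs $w$ itself plus nothing added, because once $w\in S$ one recolours its $\le\Delta^t$ close $W_0$-relatives, iterating $t$ times with the independence keeping the blow-up at a single factor $\Delta$ per layer, total $\Delta\cdot\Delta\cdots = \Delta^{t+1}$ on top of $\Delta\eta n$ — hence $\Delta^{t+2}\eta n$. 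That is the calculation I'd carry out in detail, and it is the heart of the lemma.
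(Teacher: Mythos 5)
Your proposal does not reach a correct proof. You correctly diagnose that the naive choices fail --- $S_0 \cup N^{t+1}_H(W_0)$ places $W_0$ inside $S$, and $(S_0\cup N^{t+1}_H(W_0))\setminus W_0$ still leaves every $W_0$-vertex adjacent to $S$ --- but you then conclude the lemma must be re-interpreted via recolouring, which is a wrong turn. Your "final plan" takes $S:=S_0\cup N^t_H(B)$ with $B=W_0\cap N^t_H(S_0)$: this still contains $B\subseteq W_0$ inside $S$, so $N^t_H(S)\cap W_0\supseteq B$ and the separation property fails, and your own count gives $\Delta^{2t+2}\eta n$, overshooting the stated $\Delta^{t+2}\eta n$.

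The missing idea is that you must remove from the separator the \emph{entire} ball $N^t_H(W_0)$, not just $W_0$, and then the sphere $N^{t+1}_H(W_0)\setminus N^t_H(W_0)$ automatically acts as the wall. Concretely, the paper sets
\[
S:=\bigl(S'\cup N^{t+1}_H(W_0)\bigr)\setminus N^{t}_H(W_0),
\]
where $S'$ is an $\eta$-separator of $H$. Then $S\cap N^t_H(W_0)=\emptyset$ by construction, so if some $s\in S$ had distance at most $t$ from a vertex $w\in W_0$, then $s$ itself would lie in $N^t_H(W_0)$, a contradiction; hence $N^t_H(S)\cap W_0=\emptyset$. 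For the size, $|S|\le |S'|+|N^{t+1}_H(W_0)|\le \eta n + \eta n\cdot(1+\Delta+\dots+\Delta^{t+1})\le \Delta^{t+2}\eta n$ using $\Delta\ge 2$. Finally, any path in $H-S$ from a vertex inside $N^t_H(W_0)$ to one outside would have to cross an edge from $N^t_H(W_0)$ to its complement, whose far endpoint lies in $N^{t+1}_H(W_0)\setminus N^t_H(W_0)\subseteq S$; so each component of $H-S$ either lies entirely in $N^t_H(W_0)$ (size at most $\Delta^{t+1}\eta n$) or avoids $N^t_H(W_0)$ entirely, in which case it also avoids $S'$ and is therefore contained in a component of $H-S'$ of size at most $\eta n$. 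The whole point is the shell construction; no recolouring, no iteration, and no double exponent is needed.
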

\begin{proof}
As $H$ is $\eta$-separable, there exists an $\eta$-separator $S'$ of $H$.
Consider $S:= (S' \cup N^{t+1}_H(W_0))\setminus N^{t}_{H}(W_0)$.
It is obvious that such a choice satisfies $N^{t}_H(S)\cap W_0=\emptyset$. Furthermore, as $|W_0|\leq \eta n$ and $\Delta\geq 2$, we have $|S|\leq \Delta^{t+2}\eta n$.
Moreover, any component of $H-S$ is either a subset of a 
a component of $H-S'$ or a subset of $N^{t}_H(W_0)$. Hence, it has size at most $\Delta^{t+2}\eta n$, and $S$ is a separator as desired.
\end{proof}

\section{Constructing an appropriate partition of a separable graph}
\label{sec: H structure}

In Section 6 we will decompose the host graph $G$ into graphs $G_t, F_t$ and $F'_t$ with $t \in [T]$ for some bounded $T$. 
We will also construct an exceptional set $V_0$ and reservoir sets $Res_t.$
We now need to partition each graph $H \in \cH$ so that this partition reflects the above decomposition of $G$. 
This will enable us to apply the blow-up lemma for approximate decompositions (Theorem~\ref{Blowup}) in Section~\ref{sec: main lemma}. 
The next lemma ensures that we can prepare each graph $H\in \cH$ in an appropriate manner. It gives a partition of $V(H)$ into $X,Y,Z, A$.
Later we will aim to embed the vertices in $A$ into $V_0$, and vertices in $Y\cup Z$ will be embedded into $Res_t$ using Lemma~\ref{lem: embedding lemma}.
Most of the vertices in $X$ will be embedded into a super-regular blown-up $K_k$-factor in $G_t$ via Theorem~\ref{Blowup}, while the remaining vertices of $X$ will be embedded into $Res_t.$ The set $Z$ will contain a suitable separator $H_0$ of $H$. 
The neighbourhoods of the exceptional vertices $a_\ell \in A$ will be allocated to $Y$.
Moreover, (A2)$_{\ref{lem: separable partition}}$ and (A3)$_{\ref{lem: separable partition}}$ ensure that we allocate them to sets corresponding to (evenly distributed) cliques of $R -$the latter enables us to satisfy the second part of (B3)$_{\ref{lem: separable partition}}$.

\begin{lemma}\label{lem: separable partition}
Suppose $n,m, r,  k, h, \Delta \in \mathbb{N}$ with
$0<1/n \ll \eta \ll \epsilon \ll 1/h \ll 1/k, \sigma, 1/\Delta< 1$ and
$0<\eta \ll 1/r<1$ such that $k \mid r$. 
Let $H$ be an $n$-vertex $(k,\eta)$-chromatic graph with $e(H)=m$ and $\Delta(H)\leq \Delta$. Let $R$ and $Q$ be graphs with $V(R)=V(Q)=[r]$ such that $Q$ is a union of $r/k$ vertex-disjoint copies of $K_k$. For $n'\in[\epsilon n]$, let 
$C_{1},\dots, C_{n'}$ be subsets of $[r]$ of size $k-1$, and $C^*_1,\dots, C^*_{n'}$ be subsets of $[r]$ of size $k$.
Let $\cF$ and $\cF^*$ be multi-hypergraphs on $[r]$ with 
$E(\cF)=\{ C_{1},\dots, C_{n'}\}$ and $E(\cF^*)=\{ C^*_1,\dots, C^*_{n'}\}$.
Suppose that $n_1,\dots, n_r$ are integers.
 Suppose the following hold.
\begin{enumerate}
\item[{\rm(A1)}$_{\ref{lem: separable partition}}$] $\delta(R) \geq (1 - \frac{1}{k}+\sigma)r$,
\item[{\rm(A2)}$_{\ref{lem: separable partition}}$] for each $\ell \in [n']$, we have
$C_\ell \subseteq C^*_\ell$ and $R[C^*_\ell]\simeq K_k$,
\item[{\rm(A3)}$_{\ref{lem: separable partition}}$]  $\Delta(\cF^*)\leq \epsilon^{2/3}n/r$,
\item[{\rm(A4)}$_{\ref{lem: separable partition}}$] for each $i\in [r]$, we have $n_i = (1\pm \epsilon^{1/2})n/r$, and $n'+\sum_{i\in [r]} n_i = n$.
\end{enumerate}
Then there exists a randomised algorithm which always returns an ordered partition $(X_1,\dots, X_r,$ $Y_1,\dots, Y_r,$ $Z_1, \dots, Z_r,$ $A)$ of $V(H)$ such that $A=\{a_1,\dots, a_{n'}\}$ is a $3$-independent set of $H$ and the following hold, where $X:=\bigcup_{i\in [r]} X_i, Y:= \bigcup_{i\in [r]} Y_i,$ and $Z:=\bigcup_{i\in [r]} Z_i$.
\begin{enumerate}
\item[{\rm(B1)}$_{\ref{lem: separable partition}}$] For each $\ell \in [n']$, we have $d_H(a_\ell ) \leq \frac{2(1+1/h)m}{n}$,
\item[{\rm(B2)}$_{\ref{lem: separable partition}}$] for each $\ell \in [n']$, we have $N_H(a_\ell) \subseteq \bigcup_{i \in C_{\ell}} Y_i \setminus N^1_H(Z)$,
\item[{\rm(B3)}$_{\ref{lem: separable partition}}$] 
$H[X]$ admits the vertex partition $(Q, X_1,\dots, X_r)$,
and $H\setminus E(H[X])$ admits the vertex partition $(R,X_1\cup Y_1\cup Z_1,\dots, X_r\cup Y_r\cup Z_r)$,

\item[{\rm(B4)}$_{\ref{lem: separable partition}}$]  for each $ij \in E(Q)$, we have $e_H(X_i, X_j) =  \frac{2m \pm \epsilon^{1/5} n}{(k-1)r}$,
\item[{\rm(B5)}$_{\ref{lem: separable partition}}$]  for each $i\in [r]$, we have  $|X_i|+|Y_i| +|Z_i| = n_i \pm \eta^{1/4} n$ and
$|Y_{i}| \leq 2 \epsilon^{1/3}n/r$,
\item[{\rm(B6)}$_{\ref{lem: separable partition}}$]  $N^{1}_H(X)\setminus X \subseteq Z$ and $|Z| \leq 4 \Delta^{3k^3} \eta^{0.9} n$.
\end{enumerate}
Moreover, the algorithm has the following additional property, where the expectation is with respect to all possible outputs.
\begin{enumerate}
\item[{\rm(B7)}$_{\ref{lem: separable partition}}$]  
For all $\ell \in [n']$ and $ i \in C_\ell$, we have $\mathbb{E}[ N_{H}(a_\ell ) \cap Y_{ i } ] \leq \frac{2(1+1/h)m}{(k-1)n}.$
\end{enumerate}
\end{lemma}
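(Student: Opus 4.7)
The construction proceeds in three stages, exploiting the separability and the explicit $(k+1)$-colouring of $H$. Since $H$ is $(k,\eta)$-chromatic, fix a proper $(k+1)$-colouring $W_0,W_1,\dots,W_k$ of the non-isolated part of $H$ with $|W_0|\le\eta n$. Apply Lemma~\ref{lem: chromatic speparable} with a suitable constant $t$ (chosen so that $\Delta^{t+2}\eta\le\Delta^{3k^3}\eta^{0.9}/2$) to obtain an $\eta'$-separator $S$ of $H$ with $N^t_H(S)\cap W_0=\emptyset$, and let $Z:=N^1_H(S)\cup W_0$, which satisfies $|Z|\le 4\Delta^{3k^3}\eta^{0.9}n$ by the hierarchy, securing the bound in (B6). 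The crucial feature of this choice is that every component of $H-Z$ has size at most $|S|$ and distinct components are pairwise disconnected in $H-Z$, so any vertex outside $Z$ has all its $H$-neighbours either in its own component of $H-Z$ or in $Z$.

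\textbf{Selecting $A$.} Let $L^\circ$ be the set of vertices $v$ with $d_H(v)\le 2(1+1/h)m/n$ lying outside $W_0\cup N^3_H(S)\cup N^2_H(W_0)$; by Markov's inequality and the size bounds on $S$ and $W_0$, $|L^\circ|\ge n/(2(h+1))$. Build a subfamily $\cI$ of components of $H-Z$ greedily, including components in decreasing order of $L^\circ$-density, until $|\bigcup_{\cC\in\cI}\cC\cap L^\circ|\ge n'\Delta^3$; averaging shows this is achievable with $Y^*:=\bigcup_{\cC\in\cI}\cC$ of size at most $\epsilon^{1/3}n$. Apply Lemma~\ref{eq: k-independent set} to $Y^*\cap L^\circ$ to extract a $3$-independent set of size at least $n'$, and take $A=\{a_1,\dots,a_{n'}\}$ to be a uniformly random $n'$-subset of it (this randomness is essential for (B7)). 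Set $Y:=Y^*\setminus A$ and $X:=V(H)\setminus(Y^*\cup Z)$. Property (B1) is immediate, (B6) follows from the disconnection of components of $H-Z$, and (B2) holds because each $a_\ell\in L^\circ$ lies at distance $\ge 4$ from $S$ and $\ge 3$ from $W_0$, so $N_H(a_\ell)\subseteq Y$ is disjoint from $N^1_H(Z)=N^2_H(S)\cup N^1_H(W_0)$.

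\textbf{Refinement into $r$ slots.} For each component $\cC\notin\cI$ of $H-Z$, independently and uniformly at random choose a $K_k$-block $B_\cC$ of $Q$ and a permutation $\pi_\cC$ of $[k]$, placing $v\in\cC\cap W_j$ in $X_{B_\cC(\pi_\cC(j))}$; since $\cC\subseteq W_1\cup\cdots\cup W_k$ is $k$-coloured by the restriction of the $(k+1)$-colouring and distinct components of $H-Z$ are disconnected, this ensures $H[X]$ admits $(Q,X_1,\dots,X_r)$. For each $\cC_\ell\in\cI$ (containing $a_\ell\in W_{j_0}$), choose a uniformly random bijection $\tau_\ell:[k]\setminus\{j_0\}\to C_\ell$ and place $v\in\cC_\ell\cap W_j\setminus A$ in $Y_{\tau_\ell(j)}$; this forces $N_H(a_\ell)\subseteq\bigcup_{i\in C_\ell}Y_i$ and, by the uniformity of $\tau_\ell$, produces (B7). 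Finally, assign each $z\in Z$ a slot by using the $(k+1)$-colouring together with a greedy $R$-respecting choice for $W_0$-vertices; the density condition $\delta(R)\ge(1-1/k+\sigma)r$ of (A1) guarantees that a feasible assignment always exists. Chernoff-type concentration via Lemma~\ref{lem: chernoff} and Lemma~\ref{star} then gives the balance conditions (B4) and (B5). The main technical obstacle is synchronising all three slot-assignments so that $H\setminus E(H[X])$ admits the partition $(R,X_1\cup Y_1\cup Z_1,\dots,X_r\cup Y_r\cup Z_r)$ in (B3): one must arrange that every $H$-edge outside $H[X]$ falls between $R$-adjacent slots, which is handled by using the $K_k$-cliques $C_\ell^*$ of $R$ from (A2) as the blocks for the $\cI$-components and exploiting $\delta(R)\ge(1-1/k+\sigma)r$ to resolve the $Z$-assignment locally.
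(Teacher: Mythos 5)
Your high-level structure (fix a $(k+1)$-colouring and a separator far from $W_0$; choose $A$ from low-degree vertices far from $S$ and $W_0$; assign components of $H$ minus the separator to random $K_k$-blocks of $Q$) matches the paper's Steps 1--3, but your treatment of the separator placement in Step "Refinement into $r$ slots" has a genuine gap that the paper resolves with an additional mechanism you are missing.

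The problem is (B3), specifically the $R$-admissibility of edges incident to $Z$. You set $Z:=N^1_H(S)\cup W_0$ and then "assign each $z\in Z$ a slot by using the $(k+1)$-colouring together with a greedy $R$-respecting choice." Consider a vertex $s\in S$ whose $H$-neighbours lie in $\Delta$ distinct components $\cC_1,\dots,\cC_\Delta$ of $H-Z$. You assign each $\cC_a$ an \emph{independent} uniformly random block $B_{\cC_a}$ and permutation $\pi_{\cC_a}$, so the neighbours of $s$ land in slots $i_1,\dots,i_\Delta$ that are essentially independent and uniform over $[r]$. For a valid $Z$-slot for $s$ you need $\bigcap_{a}N_R(i_a)\neq\emptyset$, but $\delta(R)\geq(1-1/k+\sigma)r$ only guarantees that the intersection of $a$ neighbourhoods has size at least $r(1-a(1/k-\sigma))$, which is negative once $a$ exceeds roughly $k$. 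Since $\Delta$ may be much larger than $k$, no valid slot will exist with high probability. The same obstruction propagates along edges inside $S$: adjacent separator vertices must occupy $R$-adjacent slots, so the constraints cascade across the whole separator. Your remark that "$\delta(R)\ge(1-1/k+\sigma)r$ guarantees a feasible assignment always exists" is only true for vertices of $W_0$ (which lie far from $S$ and therefore have all neighbours in a single block), not for vertices in or near $S$. The paper's resolution is qualitatively different: it puts \emph{all} of $S$ into the slots $\{1,\dots,k\}$ of a single $R$-clique $Q'_1$, and for each component $H_{t'}$ it carves out a buffer zone $N_H^{(b_{t'}-1)k}(S)\cap H_{t'}$ assigned along a "clique walk" in $R$ (Lemma~\ref{lem: clique extension}) that interpolates between $Q'_1$ and the block chosen for $H_{t'}$. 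This is the mechanism that makes (B3) hold, and it is absent from your construction.

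There is a second, independent gap in (B5). Your uniform random assignment of components to blocks produces $|X_i|$ that concentrate around a common value $|X|/r$, but (B5) requires $|X_i|+|Y_i|+|Z_i| = n_i \pm \eta^{1/4}n$ where the prescribed $n_i = (1\pm\epsilon^{1/2})n/r$ may differ from each other by up to $2\epsilon^{1/2}n/r$ --- which dwarfs both $|Y_i|, |Z_i|$ (at most $O(\epsilon^{1/3}n/r)$ and $O(\eta^{0.9}n)$ respectively) and the allowed error $\eta^{1/4}n$. A Chernoff bound cannot close a deterministic discrepancy of order $\epsilon^{1/2}n/r$. The paper's Step 4 handles this by constructing an auxiliary multi-$k$-graph $\cF^{\#}$ (via Lemma~\ref{lem: hypergraph degree}) with $d_{\cF^{\#}}(i)$ calibrated to the deficit $n_i - \tilde n$, and then routing an $\cF^\#$-prescribed number of components into the $Y_i$'s rather than the $X_i$'s; you need some analogous balancing step.
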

\rm(B1)$_{\ref{lem: separable partition}}$ and \rm(B7)$_{\ref{lem: separable partition}}$ ensure that each embedding of some $H$ in $G$ does not use too many edges incident to the exceptional set $V_0$.
\begin{proof}
Write $r':=r/k$ and $Q = \bigcup_{s=1}^{r'} Q_{s}$, where each $Q_s$ is a copy of $K_k$, and 
let $\binom{R}{K_k}=\{Q'_1,\dots, Q'_q\}$ be the collection of all copies of $K_k$ in $R$. By permuting indices if necessary, we may assume that $V(Q'_1)=\{1,\dots, k\}$. Note that $q\leq r^k$.
As $Q$ is a $K_k$-factor on $[r]$, for each $i\in [r]$, there exists a unique $j\in [r']$ such that $i \in Q_j$.
For all $s\in [r']$, $s' \in [q]$ and $k'\in [k]$, we define $q_{s}(k'), q'_{s'} (k') \in [r]$ to be the $k'$-th smallest number in $V(Q_s)$ and $V(Q'_{s'})$ respectively. 
Thus
$$V(Q_{s})= \{ q_s(1), \dots, q_{s}(k)\} \text{ and } V(Q'_{s'})= \{ q'_{s'}(1), \dots, q'_{s'}(k) \}.$$
For all $s\in [q]$ and $k'\in [k]$, let
\begin{align}\label{eq: Q d def}
Q'_{s,k'}:= Q'_s \setminus \{q'_s (k')\} \enspace \text{ and } \enspace
d_{s,k'}:= |\{ \ell \in [n'] : C^*_\ell = V(Q'_{s}) \text{ and } C_\ell = V(Q'_{s,k'}) \}|.
\end{align}
Note that for each $i\in [r]$ we have 
\begin{align}\label{eq: dellk sum}
\sum_{s\in [q]: i\in V(Q'_s)} \sum_{k'\in [k]} d_{s,k'} = d_{\cF^*}(i) \quad \text{and} \sum_{ (s,k')\in [q]\times[k]} d_{s,k'} = n'.
\end{align}
Our strategy is as follows. Consider a $(k+1)$-colouring $(W_0,\dots, W_k)$ of $H$ with $|W_0| \leq \eta n$ and an $\Delta^{3k^3+3}\eta n$-separator $S$ of $H$ guaranteed by Lemma~\ref{lem: chromatic speparable} (applied with $t=3k^3+1$).
Thus we can partition the $k$-chromatic graph $H\setminus W_0$ into $H_0,\dots, H_t$ such that each $H_{t'}$ is small, there are no edges between $H_{t'}$ and $H_{t''}$ whenever $0\notin \{t',t''\}$  and $V(H_0)=S$.
We will distribute the vertices of each graph $H_{t'}$ into $\bigcup_{i\in V(Q_{s})} X_i$ or $\bigcup_{i\in V(Q'_{s})} (Y_i\cup Z_i)$ for an appropriate $s$.
In particular, $V(H_0)$ will be allocated to $\bigcup_{i\in V(Q'_1)} Z_i= \bigcup_{i \in [k]} Z_i$.
As $Q'_s$ and $Q_s$ are copies of $K_k$ in $R$ and $Q$, respectively,
and as  $H_{t'}$ is $k$-chromatic, this would allow us to achieve (B3)$_{\ref{lem: separable partition}}$ if we ignore the edges incident to $V(H_0) \cup W_0$. 
In Steps 5 and 6 we will use `clique walks' obtained from Lemma~\ref{lem: clique extension} to connect up the $H_{t'}$ with $H_0$ in a way which respects the colour classes of $H\setminus W_0$. 
We can thus allocate the vertices in $N_H^{3k^3}(V(H_0))$ in a way that will satisfy (B3)$_{\ref{lem: separable partition}}$.
Finally, we will allocate the vertices in $W_0$. As $W_0$ is far from $V(H_0)$, each vertex in $W_0$ only has its neighbours in a single $H_{t'}$, hence it will be simple to assign each vertex in $W_0$ to some $Z_i$ with $i\in [r]$ according to where the vertices of $H_{t'}$ are assigned.
 \newline

\noindent {\bf Step 1. Separating $H$.} As $H$ is $(k,\eta)$-chromatic, applying Lemma~\ref{lem: chromatic speparable} with $t=3k^3+1$ implies that there exists a partition $(W_0,W_1,\dots, W_k)$ of $V(H)$ into independent sets and an $\eta^{0.9}$-separator $S$ such that 
\begin{align}\label{eq: chrosep}
|S|, |W_0| \leq \eta^{0.9} n \text{ and } W_0 \cap N_H^{3k^3+1}(S) =\emptyset.
\end{align}
Since $S$ is an $\eta^{0.9}$-separator of $H$, it follows that there exists a partition $S=:\widetilde{V}_0,\dots, \widetilde{V}_t$ of $V(H)$ such that the following hold, where $V_{t'}:= \widetilde{V}_{t'}\setminus W_0$
and $H_{t'}:= H[V_{t'}]$ for each $t'\in [t]\cup \{0\}$.
\begin{enumerate}[label=\text{\rm (H\arabic*)$_{\ref{lem: separable partition}}$}]
\item \label{lem H1} $ \eta^{-0.9}/2 \leq t\leq 2  \eta^{-0.9}$,
\item\label{lem H2}  $\eta^{0.9} n/2 \leq |V_{t'}| \leq 2 \eta^{0.9} n$ for $t' \in [t]$,
\item \label{lem H3} 
for $t' \neq t'' \in [t]$, we have that $E_H( \widetilde{V}_{t'}, \widetilde{V}_{t''}) =\emptyset$, and $m -2\Delta  \eta^{0.9} n \leq \sum_{t' \in [t]} e(H_{t'}) \leq m$.
\end{enumerate}
Indeed, as $S$ is an $\eta^{0.9}$-separator of $H$, $H \setminus S$ only consists of components of size at most $\eta^{0.9} n$.
By letting $\tilde{V}_0:= S$ (and thus $V_0=S$) and letting each of $\widetilde{V}_1,\dots, \widetilde{V}_{t}$ be appropriate unions of components of $H\setminus S$, we can ensure that both \ref{lem H1} and \ref{lem H2} hold. By the construction, the first part of \ref{lem H3} holds too. Since there are at most $\Delta(H)|S\cup W_0| \leq 2\Delta \eta^{0.9} n$ edges which are incident to some vertex in $W_0\cup V_0$, the second part of \ref{lem H3} holds as well. 

For each $t'\in [t]\cup \{0\}$ and $k'\in [k]$, we let
$$ W^{t'}_{k'}:=  V_{t'} \cap W_{k'}.$$ 
\vspace{0.05cm}

\noindent {\bf Step 2. Choosing the exceptional set $A$.}
Let $$L:= \{ x \in V(H) : d_{H}(x) \leq \frac{2(1+1/h) m}{n} \}.$$
$L$ contains the `low degree' vertices within which we will choose $A$ in order to satisfy (B1)$_{\ref{lem: separable partition}}$.
Note that
$2m = \sum_{x \in V(H)} d_{H}(x)
\geq  \frac{2(1+1/h)m }{n}( n-|L| )$, thus
\begin{eqnarray}\label{eq: Lqs size}
|L| \geq n/(2h).
\end{eqnarray}
For each $t'\in [t]$, let $k(t') \in [k]$ be an index such that
\begin{align}\label{eq: kt' def}
|L \cap W^{t'}_{k(t')}| \geq \frac{1}{k} |L\cap V(H_{t'})|.
\end{align}
Such a number $k(t')$ exists as $W^{t'}_1,\dots, W^{t'}_k$ forms a partition of $V_{t'}=V(H_{t'})$. 

Now, we choose a partition $\cH, \cH'_{1,1},\dots, \cH'_{1,k},\cH'_{2,1},\dots, \cH'_{q,k}$ of $\{H_1,\dots, H_{t}\}$ satisfying the following for each $(s,k')\in [q]\times [k]$. 
 \begin{enumerate}[label=\text{\rm (H{\arabic*})$_{\ref{lem: separable partition}}$}]
 \setcounter{enumi}{3}
 \item \label{lem H4}  $\displaystyle v(\cH'_{s,k'}) = \epsilon^{-1/10} d_{s,k'} + 2k \eta^{2/5}n \pm \eta^{2/5}n$ and\newline
  $\sum_{t': H_{t'} \in \cH'_{s,k'}} |V(H_{t'})\cap L| \geq \epsilon^{-1/11} d_{s,k'} + \eta^{1/2}n $.
\end{enumerate}
We will choose $A$ within the vertex sets of the graphs in  $\cH'_{1,1}, \dots, \cH'_{q,k}$. 
Moreover, we will allocate all the other vertices of the graphs in each $\cH'_{s,k'}$ to $Y\cup Z$.
\begin{claim}\label{cl: H4 exist}
There exists a partition $\cH, \cH'_{1,1},\dots, \cH'_{1,k},\cH'_{2,1},\dots, \cH'_{q,k}$ of $\{H_1,\dots, H_{t}\}$ satisfying \ref{lem H4}.
\end{claim}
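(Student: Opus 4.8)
The idea is to build each collection $\cH'_{s,k'}$ greedily, drawing only from those small graphs $H_{t'}$ that are comparatively rich in $L$-vertices, so that the second inequality of \ref{lem H4} becomes an automatic consequence of the prescribed size of $\cH'_{s,k'}$; all unused small graphs are then swept into $\cH$, which makes $\cH,\cH'_{1,1},\dots,\cH'_{q,k}$ a partition of $\{H_1,\dots,H_t\}$. To see that enough $L$-rich mass is available, call $t'\in[t]$ \emph{rich} if $|V(H_{t'})\cap L|\ge\frac1{8h}|V(H_{t'})|$. Since $V_0=S,V_1,\dots,V_t$ together with $W_0$ partition $V(H)$, and $|V(H_0)\cap L|\le|S|\le\eta^{0.9}n$ and $|W_0|\le\eta^{0.9}n$ by \eqref{eq: chrosep}, the bound $|L|\ge n/(2h)$ from \eqref{eq: Lqs size} gives $\sum_{t'\in[t]}|V(H_{t'})\cap L|\ge n/(3h)$ for $\eta$ small. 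As the $V_{t'}$ are disjoint we have $\sum_{t'\in[t]}|V(H_{t'})|\le n$, so the non-rich graphs contribute at most $\frac1{8h}\cdot n$ to that sum, whence $\sum_{t'\text{ rich}}|V(H_{t'})|\ge n/(3h)-n/(8h)\ge n/(8h)$.

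\textbf{The greedy allocation.} Put $\tau_{s,k'}:=\epsilon^{-1/10}d_{s,k'}+2k\eta^{2/5}n$. By \eqref{eq: dellk sum} we have $\sum_{(s,k')}d_{s,k'}=n'\le\epsilon n$, and $q\le r^k$, so $\sum_{(s,k')}\tau_{s,k'}\le\epsilon^{9/10}n+2k^2r^k\eta^{2/5}n\le n/(16h)$, using $\eta\ll\epsilon\ll1/h\ll1/k$ and $\eta\ll1/r$. I would then run through the pairs $(s,k')$ in a fixed order; for the current pair, repeatedly move the least-indexed unused rich graph into $\cH'_{s,k'}$ until $v(\cH'_{s,k'})\ge\tau_{s,k'}$. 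At every stage the total size of rich graphs already allocated is at most $\sum_{(s,k')}\tau_{s,k'}+2qk\eta^{0.9}n<n/(8h)$, so by the previous paragraph an unused rich graph always remains and the procedure never stalls. Finally let $\cH$ absorb the leftover graphs.

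\textbf{Verifying \ref{lem H4}.} By \ref{lem H2} each $H_{t'}$ with $t'\in[t]$ satisfies $|V(H_{t'})|\le2\eta^{0.9}n\le\eta^{2/5}n$, so when $v(\cH'_{s,k'})$ first reaches $\tau_{s,k'}$ it overshoots by at most $\eta^{2/5}n$, which gives $v(\cH'_{s,k'})=\epsilon^{-1/10}d_{s,k'}+2k\eta^{2/5}n\pm\eta^{2/5}n$. Since every graph of $\cH'_{s,k'}$ is rich,
\[
\sum_{H_{t'}\in\cH'_{s,k'}}|V(H_{t'})\cap L|\ \ge\ \tfrac1{8h}\,v(\cH'_{s,k'})\ \ge\ \tfrac1{8h}\tau_{s,k'}\ =\ \tfrac{\epsilon^{-1/10}}{8h}\,d_{s,k'}+\tfrac{k\eta^{2/5}}{4h}\,n ,
\]
and the two inequalities $\frac{\epsilon^{-1/10}}{8h}\ge\epsilon^{-1/11}$ (i.e.\ $\epsilon^{-1/110}\ge 8h$) and $\frac{k\eta^{2/5}}{4h}\ge\eta^{1/2}$ (i.e.\ $\eta^{1/10}\le k/(4h)$), both guaranteed by $\eta\ll\epsilon\ll1/h\ll1/k$, yield $\sum_{H_{t'}\in\cH'_{s,k'}}|V(H_{t'})\cap L|\ge\epsilon^{-1/11}d_{s,k'}+\eta^{1/2}n$, as required.

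\textbf{Main obstacle.} The only part requiring a genuine idea is the density bookkeeping: one cannot feed arbitrary small graphs into $\cH'_{s,k'}$, since the size condition and the $L$-count condition would then be unrelated; restricting to $L$-rich graphs (and first checking, via the averaging estimate above, that their total size comfortably exceeds $\sum_{(s,k')}\tau_{s,k'}$) is exactly what lets the size control deliver the $L$-count control. Everything else is routine estimation with the hierarchy $\eta\ll\epsilon\ll1/h\ll1/k$ and with the fact that each $H_{t'}$ has order between $\eta^{0.9}n/2$ and $2\eta^{0.9}n$.
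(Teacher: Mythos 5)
Your proof is correct, but it takes a genuinely different route from the paper's. The paper proves Claim~\ref{cl: H4 exist} probabilistically: each $H_{t'}$ is independently assigned to $\cH$ or to some $\cH'_{s,k'}$ according to a tailored probability distribution, and then Chernoff (Lemma~\ref{lem: chernoff}) together with a union bound over the $qk$ collections shows that \ref{lem H4} holds with positive probability. (The same template is then reused for Claim~\ref{cl: H5H6 exist} and for the permutations $\pi_{t'}$, so in the paper the cost of the probabilistic machinery is amortized.) You instead give a deterministic greedy construction: you restrict to the $L$-rich pieces $H_{t'}$, show by a clean double-count that their combined size exceeds $\sum_{(s,k')}\tau_{s,k'}$ (with room to absorb the $qk$ overshoots of size at most $2\eta^{0.9}n$), and then fill each $\cH'_{s,k'}$ with rich pieces until its size first reaches $\tau_{s,k'}$. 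Because each $H_{t'}$ has size at most $2\eta^{0.9}n\le \eta^{2/5}n$, the first reaching gives the size window of \ref{lem H4}, and because all pieces used are rich, the $L$-count condition is an immediate consequence of the size bound — exactly the observation you flag as the only non-routine point. Your calibrations of the hierarchy ($\epsilon^{-1/110}\ge 8h$, $\eta^{-1/10}\ge 4h/k$, $\eta^{0.9}\ll\eta^{2/5}$, $qk\eta^{0.9}\ll 1/h$, $\epsilon^{9/10}\ll 1/h$, $r^k\eta^{2/5}\ll 1/h$) are all covered by $\eta\ll\epsilon\ll 1/h\ll 1/k$ and $\eta\ll 1/r$ with $q\le r^k$. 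The upshot: your argument is more elementary and explicitly constructive, while the paper's is shorter given that the random-splitting infrastructure is needed anyway for the later claims in the same proof.
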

\begin{proof}
For each $t'\in [t]$, we choose $i_{t'}$ independently at random from $[q]\times [k] \cup \{(0,0)\}$ such that for each $(s,k')\in [q]\times [k]$ we have
$$\mathbb{P}[i_{t'} =  (s,k') ] = \frac{ \epsilon^{-1/10} d_{s,k'}}{  n} + 2k \eta^{2/5} 
\enspace \text{ and } \enspace 
 \mathbb{P}[ i_{t'} = (0,0) ] = 1- \frac{\epsilon^{-1/10} n'}{ n}-  2qk^2 \eta^{2/5}.$$

An easy calculation based on \eqref{eq: dellk sum} shows that this defines a probability distribution.\COMMENT{Since $n' \leq \epsilon n$ and $ \eta \ll \epsilon \ll 1/k$, and $\eta \ll 1/r$, we have $\mathbb{P}[ i_{t'} = (0,0) ] >0.$
Note that 
{\small $$\sum_{(s,k')\in [q]\times[k]} \mathbb{P}[i_{t'}=(s,k')]   \stackrel{\eqref{eq: dellk sum}}{=} \epsilon^{-1/10} n'/n+ 2qk^2 \eta^{2/5} .$$}}
For each $(s,k') \in [q]\times[k]$, we let
$$\cH:= \{H_{t'} : t'\in [t], i_{t'}= (0,0)\} \enspace \text{ and } \enspace \cH'_{s,k'}:=\{ H_{t'} :t'\in [t], i_{t'} = (s,k')\}.$$
Then it is easy to combine a Chernoff bound (Lemma~\ref{lem: chernoff}) with \ref{lem H1}, \ref{lem H2}, \eqref{eq: Lqs size}  and the fact that $|V(H)|=n$ to check that the resulting partition satisfies \ref{lem H4} with positive probability. 
This proves the claim.
\COMMENT{
The fact that $v(\{H_1,\dots,H_t\}) = n\pm 2\eta^{0.9} n$, and \eqref{eq: Lqs size} implies that for each $(s,k')\in [q]\times [k]$, we have
$$\mathbb{E}[ v(\cH'_{s,k'}) ] = \epsilon^{-1/10}d_{s,k'}+ 2k\eta^{2/5} n \pm \eta^{3/4} n, \text{ and }
\mathbb{E}[ \sum_{t': H_{t'} \in \cH'_{s,k'}} |V(H_{t'})\cap L| ] \geq \epsilon^{-1/10}d_{s,k'}/(2h)+ k\eta^{2/5}n/h \pm \eta^{3/4} n.$$
By using a Chernoff bound (Lemma~\ref{lem: chernoff}) with \ref{lem H1} and \ref{lem H2} we conclude that 
$$\mathbb{P}[ v(\cH'_{s,k'}) =  \epsilon^{-1/10}d_{s,k'}+ 2k\eta^{2/5} n\pm \eta^{2/5} n] \geq 1- 2\exp( -\frac{(\eta^{2/5}n/2)^2}{ 2(2\eta^{0.9}} n)^2t  ) \geq 1 - \eta,$$
$$\mathbb{P}[ \sum_{t': H_{t'} \in \cH'} |V(H_{t'})\cap L| \geq \epsilon^{-1/11} d_{s,k'} + \eta^{1/2}n ] \geq 1- 2\exp( -\frac{(\eta^{2/5}n/(2h))^2}{ 2(2\eta^{0.9} n)^2t } ) \geq 1 - \eta.$$
This with the union bound shows that there exists $\cH'$ satisfying \ref{lem H4}. 
}
\end{proof}
By permuting indices on $[t]$, we may assume that for some $t_*\in [t]$, we have
$$\cH = \{H_1,\dots, H_{t_*}\} \enspace \text{and} \enspace \bigcup_{(s,k')\in [q]\times [k]} \cH'_{s,k'} = \{H_{t_*+1},\dots, H_{t}\}.$$
For each $(s,k')\in [q]\times[k]$, let
\begin{align}\label{eq: Lsk' def}
L_{s,k'}:= \bigcup_{t' : H_{t'} \in \cH'_{s,k'} } (L\cap W^{t'}_{k(t')})  \setminus N^{3k^3+2}_H(V_0\cup W_0).
\end{align}
Then by \eqref{eq: chrosep} and \eqref{eq: kt' def} we have
\begin{eqnarray*}
|L_{s,k'} |
\geq  \hspace{-0.2cm} \sum_{t' : H_{t'} \in \cH'_{s,k'} } \hspace{-0.2cm}  \frac{1}{k}|L\cap V(H_{t'})|  - 8\Delta^{3k^3+2}\eta^{0.9} n 
 \stackrel{\ref{lem H4}}{\geq}  \epsilon^{-1/11} d_{s,k'}/k + \eta^{1/2}n/(2k) \geq  \Delta^{3} d_{s,k'}.
\end{eqnarray*}
For each $(s,k')\in [q]\times[k]$, we apply Lemma~\ref{eq: k-independent set} to $L_{s,k'}$ to obtain a subset of $L_{s,k'}$ with size exactly $d_{s,k'}$ which is $3$-independent in $H$. Write this $3$-independent set as 
\begin{align}\label{eq: pj def}
\{ a_{\ell}: \ell \in [n'], C^*_\ell = V(Q'_{s}) \text{ and } C_\ell = V(Q'_{s,k'}) \}.
\end{align}
This is possible by \eqref{eq: Q d def} and \eqref{eq: dellk sum} and defines vertices $a_1,\dots, a_{n'}$. 
Let $A:=\{a_1,\dots, a_{n'}\}$.
By \eqref{eq: Lsk' def} and \ref{lem H3}$, A$ is still a $3$-independent set in $H$.
As $a_\ell \in L$, we know that 
\begin{align}\label{eq: a ell degree}
d_{H}(a_\ell) \leq 2(1+1/h)m/n.
\end{align}
Moreover, for $\ell \in [n']$ and $t'\in [t]$, we have the following.
\begin{equation}\label{eq: pj location}
\begin{minipage}[c]{0.9\textwidth}
\em
If $a_\ell \in V_{t'}$, then $t'\in [t]\setminus [t_*]$ and $a_\ell \in W^{t'}_{k(t')} \setminus N^{3k^3+2}_H(V_0\cup W_0)$.
\end{minipage}
\end{equation}
\COMMENT{Indeed, if $H_{t'} \in \cH_{s,k'}$, then $a_\ell \in L_{s,k'} \cap V(H_{t'}) \subseteq  W^{t'}_{k(t')}$. Thus \eqref{eq: pj location} holds.}
In particular, we have $N_{H}(a_{\ell}) \cap N^{3k^3+1}_H(V_0\cup W_0) =\emptyset$. Thus if $a_{\ell} \in V_{t'}$, then 
\begin{align}\label{eq: pj neighbor location}
N_{H}(a_\ell) \subseteq \bigcup_{k'' \in [k]\setminus \{k(t')\}} W^{t'}_{k''} \setminus N^{3k^3+1}_H(V_0\cup W_0).
\end{align}
\vspace{0.05cm}

\noindent {\bf Step 3. Allocating the neighbourhood of $A$.}
We will allocate $N_H(A)$ to $Y$. 
We will achieve this by suitably allocating $V(\cH'_{s,k'})$ for each $(s,k') \in [q]\times [k]$. 
This will allocate $N_H(A)$ via \eqref{eq: pj neighbor location}.
Note that all choices until now are deterministic. Next we run the following random procedure.
\begin{equation}\label{eq: algorithm}
\begin{minipage}[c]{0.9\textwidth}
\em For each $t' \in [t]\setminus [t_*]$, let $(s,k')\in [q]\times [k]$ be such that $H_{t'} \in \cH'_{s,k'}$, and choose a permutation $\pi_{t'}$ on $[k]$ independently and uniformly at random among all permutations such that $\pi_{t'}(k') = k(t')$.
\end{minipage}
\end{equation}
(Note that this is the only place that our choice is random.)
Thus one value of $\pi_{t'}$ is fixed, while all other $k-1$ values are chosen at random. 
We choose $\pi_{t'}$ in this way because we wish to distribute $N_{H}(a_{\ell})$ to $\bigcup_{i\in C_{\ell}} Y_i$, so that later (B2)$_{\ref{lem: separable partition}}$ is satisfied. 
Setting $\pi_{t'}(k') = k(t')$ will ensure that no vertex in $N_{H}(a_{\ell})$ will be distributed to $Y_i$ with $i \in C^*_{\ell}\setminus C_{\ell}$. 
Moreover, as $\pi_{t'}$ is chosen uniformly at random,  $N_{H}(a_{\ell})$ will be distributed to $\bigcup_{i\in C_{\ell}} Y_i$ in a uniform way, which will guarantee that (B7)$_{\ref{lem: separable partition}}$ holds.

Indeed, for $\ell \in [n'], (s,k')\in [q]\times [k]$ and $t'\in [t]\setminus[t_*]$ such that $a_{\ell} \in L_{s,k'}\cap V_{t'}$, and
for any $k''\in [k]\setminus \{k'\}$, the number $\pi_{t'}(k'')$ is chosen uniformly at random among $[k]\setminus \{k(t')\}$, thus we have
\begin{eqnarray}\label{eq: expectation of the random}
\mathbb{E}[ |N_{H}(a_\ell) \cap W^{t'}_{\pi_{t'}(k'')}| ]\leq \frac{ d_{H}(a_\ell) }{k-1} \stackrel{\eqref{eq: a ell degree}}{\leq}\frac{2(1+1/h)m}{(k-1) n}.
\end{eqnarray}
For each $i\in [r]$, let 
\begin{align}\label{eq: tilde Yi def}
\tilde{Y}_i :=
\bigcup_{(s,k'): i= q'_s(k')}\bigcup_{k''\in [k]} \bigcup_{H_{t'} \in \cH'_{s,k''}} W^{t'}_{\pi_{t'}(k')} \setminus A \qquad \text{and} \qquad \tilde{Y}:=\bigcup_{i\in [r]} \tilde{Y}_i.
\end{align}
\vspace{0.05cm}

\noindent {\bf Step 4.  Allocating the remaining vertices to $X$ and $Y$.}
Later the vertices in $\tilde{Y}_i$ will be assigned to $Y_i$ (except those which are too close to $V_0$ in $H$, which will be assigned to $Z$). 
The sizes of the sets $X_i$ will be almost identical.
(Note that because of (B3)$_{\ref{lem: separable partition}}$, it is not possible to prescribe different sizes for $X_i$ and $X_j$ if $i$ and $j$ lie in the same copy of $K_k$ in $Q$.) 
Thus, in order to ensure (B5)$_{\ref{lem: separable partition}}$, we need to decide how many more vertices other than $\tilde{Y}_i$ we will assign to the set $Y_i$.
As part of this we now decide which of the $H_{t'}\in \cH$ are allocated to $X$ and which are allocated to $Y$ (again, vertices close to $V_0$ will be assigned to $Z$). 
Note that we have
\begin{eqnarray}\label{eq: tilde Yi size}
|\tilde{Y}_i| &\leq&  \sum_{(s,k'): i= q'_s(k')}\sum_{k''\in [k]}\sum_{H_{t'} \in \cH'_{s,k''}} |H_{t'}|
\stackrel{\ref{lem H4}}{\leq} \sum_{s: i\in V(Q'_s)} \sum_{ k''\in [k] } (\epsilon^{-1/10} d_{s,k''} + 3k\eta^{2/5}n)\nonumber \\
& \stackrel{\eqref{eq: dellk sum}}{\leq}& \epsilon^{-1/10} d_{\cF^*}(i) + 3k^2 q \eta^{2/5} n \stackrel{\text{(A3)$_{\ref{lem: separable partition}}$}}{\leq} \epsilon^{1/2}n/r  .
\end{eqnarray}

For each $i\in [r]$, let $\tilde{n}:= (1-2 \epsilon^{1/2})n/r$, and
\begin{eqnarray}\label{eq: tilde n}
 \tilde{n}_i &:=& n_i -\tilde{n} - |\tilde{Y}_i| \stackrel{\text{(A4)$_{\ref{lem: separable partition}}$} }{\leq}  \frac{\epsilon^{1/3}n}{(h+1)r}, \text{ then } 
 \tilde{n}_i  \stackrel{\text{(A4)$_{\ref{lem: separable partition}}$}}{\geq} \epsilon^{1/2}n/r - |\tilde{Y}_i|\stackrel{\eqref{eq: tilde Yi size} }{\geq} 0.
\end{eqnarray}
By applying Lemma~\ref{lem: hypergraph degree} with $R,h,\sigma,  \epsilon^{1/3}n/((h+1)r)$ and $\tilde{n}_i$ playing the roles of $G, t,\sigma, m$ and $d_v$, respectively, we obtain a multi-$k$-graph $\cF^{\#}$ on $[r]$ such that  for each $Q\in E(\cF^{\#})$, we have $R[Q]\simeq K_k$, and
\begin{equation}\label{eq: cF** degrees}
\begin{minipage}[c]{0.9\textwidth}\em
for each $i\in [r]$, we have $d_{\cF^{\#}}(i) = \tilde{n}_i+ \frac{\epsilon^{1/3}n}{r} \pm 1.$
\end{minipage}
\end{equation}
This implies
\begin{eqnarray}\label{eq: sum F sharp and tilde}
N&:=& \sum_{i\in [r]} ( \tilde{n} - \frac{\epsilon^{1/3}n}{r} + d_{\cF^{\#}}(i) )-|V_0\cup W_0| \stackrel{\eqref{eq: tilde n}}{=} \sum_{i\in [r]} (n_i  - |\tilde{Y}_i|  \pm 1) -|V_0\cup W_0| \nonumber \\ &\stackrel{\text{(A4)$_{\ref{lem: separable partition}}$}}{=}& n - n' - |\tilde{Y}| -|V_0\cup W_0| \pm r.
\end{eqnarray}
Note that we have
\begin{align}\label{eq: cH sizesese}
v(\cH) = |V(H) \setminus (\tilde{Y}\cup A\cup V_0\cup W_0)| = N \pm r.
\end{align}
Our target is to assign roughly $d_{\cF^{\#}}(i)$ extra vertices to $Y_i$ in addition to $\tilde{Y}_i$, and assign roughly $\tilde{n}-\frac{\epsilon^{1/3}n}{r}$ vertices to $X_i$, and a negligible amount of vertices to $Z_i$. Then $|X_i|+|Y_i| +|Z_i|$ will be close to $n_i$ as required in (B5)$_{\ref{lem: separable partition}}$.

To achieve this, we partition $\cH = \{H_1,\dots, H_{t_*}\}$ into 
$\cH_1,\dots, \cH_{r'}, \cH^{\#}_1,\dots,\cH^{\#}_q$ satisfying the following for all $i \in [r']$ and $s \in [q]$.
 \begin{enumerate}[label=\text{\rm (H{\arabic*})$_{\ref{lem: separable partition}}$}]
 \setcounter{enumi}{4}
\item \label{lem H5}  $\displaystyle v(\cH_i) = k \tilde{n} - \frac{k\epsilon^{1/3}n}{r} \pm \eta^{2/5} n$ and $\displaystyle e(\cH_i) = \frac{k(m \pm \epsilon^{2/7}n)}{r}$,
\item\label{lem H6} $v(\cH^{\#}_{s}) = k \cdot \text{mult}_{\cF^{\#}}(V(Q'_s)) \pm \eta^{2/5} n.$
\end{enumerate}
(Recall that $\text{mult}_{\cF^{\#}}(V(Q'_s))$ denotes the multiplicity of the edge $V(Q'_s)$ in $\cF^{\#}$.)
Indeed, such a partition exists by the following claim. 
\begin{claim}\label{cl: H5H6 exist}
There exists a partition $\cH_1,\dots, \cH_{r'}, \cH^{\#}_1,\dots,\cH^{\#}_q$ of $\{H_1,\dots, H_{t_*}\}$ satisfying \ref{lem H5}-- \ref{lem H6}.
\end{claim}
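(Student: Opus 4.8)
\medskip

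The plan is to prove Claim~\ref{cl: H5H6 exist} by a single random assignment of the `small' graphs $H_1,\dots,H_{t_*}$ to the $r'+q$ classes $\cH_1,\dots,\cH_{r'},\cH^{\#}_1,\dots,\cH^{\#}_q$, followed by a Chernoff bound (Lemma~\ref{lem: chernoff}), exactly in the spirit of the proof of Claim~\ref{cl: H4 exist}. Set $P:=v(\cH)=\sum_{t'\in[t_*]}|V(H_{t'})|$ and $P':=\sum_{i\in[r']}(k\tilde n-k\epsilon^{1/3}n/r)+\sum_{s\in[q]}k\cdot\text{mult}_{\cF^{\#}}(V(Q'_s))$. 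Since $\{V(Q'_s):s\in[q]\}$ lists all copies of $K_k$ in $R$ and every edge of $\cF^{\#}$ is such a clique, we have $\sum_{s\in[q]}\text{mult}_{\cF^{\#}}(V(Q'_s))=|E(\cF^{\#})|$ and $\sum_{i\in[r]}d_{\cF^{\#}}(i)=k|E(\cF^{\#})|$, so $P'=r\tilde n-\epsilon^{1/3}n+\sum_{i\in[r]}d_{\cF^{\#}}(i)$; by \eqref{eq: sum F sharp and tilde} and \eqref{eq: cH sizesese}, $P'=P\pm(r+|V_0\cup W_0|)=(1\pm O(\eta^{0.9}))P$, and $P\ge n/2$ because $|\tilde Y|\le\epsilon^{1/2}n$ by \eqref{eq: tilde Yi size}, $n'\le\epsilon n$ and $|V_0\cup W_0|\le 2\eta^{0.9}n$ by \eqref{eq: chrosep}. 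I would then assign each $H_{t'}$ with $t'\in[t_*]$ independently at random to $\cH_i$ with probability $(k\tilde n-k\epsilon^{1/3}n/r)/P'$ and to $\cH^{\#}_s$ with probability $k\cdot\text{mult}_{\cF^{\#}}(V(Q'_s))/P'$; these are nonnegative (using $\tilde n=(1-2\epsilon^{1/2})n/r>\epsilon^{1/3}n/r$) and, by the definition of $P'$, sum to $1$.

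Next I would compute the expectations and verify they lie well inside the windows of \ref{lem H5}--\ref{lem H6}. For vertices, $\Exp[v(\cH_i)]=(k\tilde n-k\epsilon^{1/3}n/r)\,P/P'=(k\tilde n-k\epsilon^{1/3}n/r)(1\pm O(\eta^{0.9}))$, which lies in $(k\tilde n-k\epsilon^{1/3}n/r)\pm\eta^{2/5}n/2$ since $k\tilde n\le n$ and $\eta\ll\epsilon,1/r$; similarly $\Exp[v(\cH^{\#}_s)]=k\cdot\text{mult}_{\cF^{\#}}(V(Q'_s))\pm\eta^{2/5}n/2$, using $\text{mult}_{\cF^{\#}}(V(Q'_s))\le\min_{i\in V(Q'_s)}d_{\cF^{\#}}(i)\le 2\epsilon^{1/3}n/r$ from \eqref{eq: cF** degrees} and \eqref{eq: tilde n}. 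For edges, \ref{lem H3} gives $\sum_{t'\in[t]}e(H_{t'})=m\pm 2\Delta\eta^{0.9}n$, whereas by \ref{lem H4} and \eqref{eq: dellk sum}, $\sum_{t'\in[t]\setminus[t_*]}e(H_{t'})\le\tfrac{\Delta}{2}\sum_{(s,k')\in[q]\times[k]}v(\cH'_{s,k'})\le\tfrac{\Delta}{2}\big(\epsilon^{-1/10}n'+3qk^2\eta^{2/5}n\big)\le\Delta\epsilon^{9/10}n$, so $e(\cH)=m\pm\tfrac14\epsilon^{2/7}n$ since $\epsilon\ll 1/\Delta$. Writing $\theta:=\big(\sum_{i\in[r]}d_{\cF^{\#}}(i)\big)/(r\tilde n-\epsilon^{1/3}n)=O(\epsilon^{1/3})$, one has $(k\tilde n-k\epsilon^{1/3}n/r)/P'=(k/r)(1+\theta)^{-1}=(k/r)(1\pm O(\epsilon^{1/3}))$, hence $\Exp[e(\cH_i)]=e(\cH)\,(k\tilde n-k\epsilon^{1/3}n/r)/P'=(k/r)(m\pm\tfrac12\epsilon^{2/7}n)$, provided $\epsilon$ is small enough in terms of $\Delta$ that $\epsilon^{1/3}m\le\tfrac12\Delta\epsilon^{1/3}n\le\tfrac14\epsilon^{2/7}n$.

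Finally, by \ref{lem H1}--\ref{lem H2} there are $t_*\le 2\eta^{-0.9}$ graphs $H_{t'}$ in play, each with $|V(H_{t'})|\le 2\eta^{0.9}n$ and hence $e(H_{t'})\le\Delta\eta^{0.9}n$, so each of the $r'+q=O(r^k)$ quantities $v(\cH_i),v(\cH^{\#}_s)$ is a sum of $t_*$ independent contributions bounded by $2\eta^{0.9}n$, and each $e(\cH_i)$ a sum of contributions bounded by $\Delta\eta^{0.9}n$; Lemma~\ref{lem: chernoff} then shows that each deviates from its mean by more than $\eta^{2/5}n/2$ (respectively $k\epsilon^{2/7}n/(2r)$, for the $e(\cH_i)$) with probability that is an arbitrarily small constant once $\eta$ is chosen small enough in terms of $\epsilon,1/r,1/k,1/\Delta$. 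A union bound over these $O(r^k)$ events yields an outcome satisfying \ref{lem H5}--\ref{lem H6}, and a positive-probability event is non-empty, so the required partition exists. The only genuine difficulty is the bookkeeping in the paragraph above --- choosing the assignment probabilities so that the vertex-count and edge-count expectations land in their windows \emph{simultaneously}; this works because the target ratio $(km/r):(kn/r)$ equals $m:n$, which matches $e(\cH):v(\cH)$ up to lower-order terms, and because the construction in Step~4 was arranged (through \eqref{eq: sum F sharp and tilde}, \eqref{eq: cH sizesese} and \eqref{eq: cF** degrees}) precisely so that the discrepancies $\sum_{i}d_{\cF^{\#}}(i)$, $|V_0\cup W_0|$ and the edge-mass carried by $\bigcup_{s,k'}\cH'_{s,k'}$ are all negligible against the error terms $\eta^{2/5}n$ and $k\epsilon^{2/7}n/r$.
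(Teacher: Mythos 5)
Your proof is correct and takes essentially the same route as the paper: assign each $H_{t'}$ with $t'\in[t_*]$ independently at random to one of the $r'+q$ classes with probabilities proportional to the target sizes, verify the expectations land inside the windows of \ref{lem H5}--\ref{lem H6}, and finish with a Chernoff bound (Lemma~\ref{lem: chernoff}) and a union bound over the $O(r^k)$ events. The only difference from the paper is cosmetic: the paper normalizes by $N$ from \eqref{eq: sum F sharp and tilde} and absorbs the $|V_0\cup W_0|$ correction into the numerator of $\mathbb{P}[i_{t'}=(0,i)]$, whereas you normalize by $P'=N+|V_0\cup W_0|$ and leave the numerator unadjusted; since $P'=(1\pm O(\eta^{0.9}))P$ with $P\ge n/2$, the two choices of distribution agree to within the error tolerance, and your bookkeeping (including the bound $e(\cH)=m\pm\Delta\epsilon^{9/10}n$, obtained by subtracting the edge mass carried by $\bigcup_{(s,k')}\cH'_{s,k'}$ rather than by directly bounding edges incident to $A\cup\tilde Y\cup V_0\cup W_0$ as the paper does) reaches the same conclusion.
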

\begin{proof}
For each $t'\in [t_*]$, we choose $i_{t'}$ independently at random from $\{(0,1),\dots,(0,r'), (1,1),\dots, (1,q)\}$ such that for each $i \in [r']$ and $s\in [q]$:
$$ \mathbb{P}[ i_{t'} = (0,i) ] = \frac{ k \tilde{n} - \frac{k\epsilon^{1/3}n}{r} - \frac{k|V_0\cup W_0|}{r}}{N} \enspace \text{ and } \enspace \mathbb{P}[i_{t'} =  (1,s) ] = \frac{k\cdot \text{mult}_{\cF^{\#}}(V(Q'_s))}{N} .$$
Since $\sum_{s\in [q]} k \cdot \text{mult}_{\cF^{\#}}(V(Q'_s)) = k |E(\cF^{\#})| = \sum_{i\in [r]} d_{\cF^{\#}}(i)$, an easy calculation based on \eqref{eq: sum F sharp and tilde} shows that this defines a probability distribution.
For all $i \in [r']$ and $s \in [q]$, we let
$$\cH_i:= \{H_{t'} : t'\in [t_*], i_{t'}= (0,i) \} \enspace \text{ and } \enspace \cH^{\#}_{s}:=\{ H_{t'} :t'\in [t_*], i_{t'} = (1,s)\}.$$
Then it is easy to combine a Chernoff bound (Lemma~\ref{lem: chernoff}) with \ref{lem H1}, \ref{lem H2} and \eqref{eq: cH sizesese} to check that the resulting partition satisfies \ref{lem H5} and \ref{lem H6} with positive probability.
 This proves the claim.
\COMMENT{
For each $i \in [r']$ and $s\in [q]$, we have 
{\small
\begin{eqnarray*}
\mathbb{E}[ v(\cH_i) ] &=& \frac{k \tilde{n} - \frac{k\epsilon^{1/3}n}{r} - \frac{k|V_0\cup W_0|}{r}}{N} \sum_{t'\in [t_*]} |V(H_{t'})| \stackrel{\eqref{eq: cH sizesese}}{=} k \tilde{n} - \frac{k\epsilon^{1/3}n}{r} \pm 4\eta^{0.9} n,\\
\mathbb{E}[ e(\cH_i) ] &=&\frac{k \tilde{n} - \frac{k\epsilon^{1/3}n}{r} - \frac{k|V_0\cup W_0|}{r}}{N} \sum_{t'\in [t_*]} |E(H_{t'})| \\
&\stackrel{\eqref{eq: cH sizesese}}{=}&  \frac{k \tilde{n} - \frac{k\epsilon^{1/3}n}{r} - \frac{k|V_0\cup W_0|}{r}}{N} (m \pm \Delta|A| \pm \Delta|\tilde{Y}| \pm \Delta|V_0|  \pm \Delta|W_0| ) \\
&\stackrel{\eqref{eq: tilde Yi size},\eqref{eq: sum F sharp and tilde},\ref{lem H2}}{=}& \frac{k(1\pm 4\epsilon^{1/3})n/r }{ n- n' \pm \epsilon^{1/2}n \pm 2\eta n \pm r}(m \pm 2\Delta \epsilon^{1/2}n) 
=  \frac{ (1\pm 4\epsilon^{1/3}) (kn/r)(m\pm 2\Delta\epsilon^{1/2} n)  }{ (1\pm 2\epsilon^{1/2}) n}, \\
\mathbb{E}[ v(\cH^{\#}_s) ] &=&  \frac{k\cdot \text{mult}_{\cF^{\#}}(V(Q'_s))}{N} \sum_{t'\in [t_*]} |V(H_{t'})| \stackrel{\eqref{eq: cH sizesese}}{=} k \text{mult}_{\cF^{\#}}(V(Q'_s)) \pm 3\eta n.
\end{eqnarray*}}
By using Chernoff's inequality (Lemma~\ref{lem: chernoff}) with \ref{lem H1} and \ref{lem H2} we conclude that 
{\small
\begin{eqnarray*}
\mathbb{P}[ v(\cH_i) =   k \tilde{n} - \frac{k\epsilon^{1/3}n}{r} \pm \eta^{2/5} n]
=1 - 2\exp( -\frac{(\eta^{2/5}n/2)^{2}}{3t_*(2\eta^{0.9} n)^2} ) \geq 1- \eta \\
\mathbb{P}[ e(\cH_i) = \frac{ k(m \pm \epsilon^{2/7} n)}{r}] \geq 
1- 2\exp( - \frac{ (\epsilon^{2/7}n/(2r))^2 }{3t_*(2\Delta \eta^{0.9} n)^2} ) \geq 1- \eta \\
\mathbb{P}[v(\cH^{\#}_{s}) = k \text{mult}_{\cF^{\#}}(V(Q'_s)) \pm \eta^{2/5} n]
=1 - 2\exp( -\frac{(\eta^{2/5}n/2)^{2}}{3t_*(2\eta^{0.9} n)^2} ) \geq 1- \eta.
\end{eqnarray*}}
Since $\eta \ll 1/r, 1/k$ and $q\leq r^k$, by taking the union bound, $\cH_1,\dots, \cH_{r'},$ $\cH^{\#}_{1},\dots, \cH^{\#}_{q}$ satisfies \ref{lem H5} and \ref{lem H6} with positive probability. Thus there exist a desired partition.}
\end{proof}

By permuting indices on $[t_*]$, we may assume that for some $t^*\in [t_*]$ we have
$$\bigcup_{i\in [r']} \cH_{i} = \{H_1,\dots, H_{t^*}\} \enspace \text{and} \enspace \bigcup_{s\in [q]} \cH^{\#}_{s} = \{H_{t^*+1},\dots, H_{t_*}\}.$$

In order to obtain (B3)$_{\ref{lem: separable partition}}$--(B5)$_{\ref{lem: separable partition}}$, we need to distribute vertices of the graphs in $\cH_{i}$ into $\{ X_j : j\in V(Q_i)\}$ and vertices of the graphs in $\cH^{\#}_s$ into $\{ Y_j : j\in V(Q'_s)\}$
so that the resulting vertex sets and edge sets are evenly balanced.
 For this, we define a permutation $\pi_{t'}$ on $[k]$ for each $t'\in [t_*]$ which will determine how we will distribute these vertices.
We will choose these permutations $\pi_1,\dots, \pi_{t_*}$ such that the following hold for all $i\in [r']$, $s\in [q]$ and $k'\neq k'' \in [k]$.
 \begin{enumerate}[label=\text{\rm (H{\arabic*})$_{\ref{lem: separable partition}}$}]
 \setcounter{enumi}{6}
\item \label{lem H7}  {
$\displaystyle \hspace{-0.3cm} \sum_{t': H_{t'} \in \cH_{i}} \hspace{-0.2cm}   | W^{t'}_{\pi_{t'}(k')}| =  \tilde{n} - \frac{\epsilon^{1/3}n}{r} \pm \eta^{2/5} n \enspace \text{ and }\hspace{-0.1cm} 
\sum_{t': H_{t'} \in \cH_{i}} \hspace{-0.2cm}  |E_H(W^{t'}_{\pi_{t'}(k')}, W^{t'}_{\pi_{t'}(k'')}) |  =\frac{2m \pm \epsilon^{1/4}n} {(k-1)r},$
}
\item \label{lem H8}  {
$\displaystyle \hspace{-0.3cm} \sum_{t': H_{t'} \in \cH_{s}^{\#}} \hspace{-0.2cm}   | W^{t'}_{\pi_{t'}(k')}| = \text{mult}_{\cF^{\#}}(V(Q'_s)) \pm \eta^{2/5} n.$}
\end{enumerate}
To see that such permutations exist we consider for each $t' \in [t_*]$ a permutation $\pi_{t'}:[k]\rightarrow [k]$ chosen independently and uniformly at random.\COMMENT{
Then for all $i\in [r']$ and $k'\neq k'' \in [k]$ and $s\in [q]$, we have
{\small 
\begin{eqnarray*}
\mathbb{E}[ \hspace{0.1cm} \sum_{t': H_{t'} \in \cH_{i}} \hspace{0.1cm}  |W^{t'}_{\pi_{t'}(k')}| ]
&=&  \frac{1}{k}  \sum_{t': H_{t'} \in \cH_{i}} |V(H_{t'})| \stackrel{\ref{lem H5}}{=}  \tilde{n} - \frac{\epsilon^{1/3}n}{r} \pm \eta^{2/5}n/k ,\\
\mathbb{E}[\hspace{-0.2cm} \sum_{t': H_{t'} \in \cH_{i}} \hspace{-0.3cm}  |E_H(W^{t'}_{\pi_{t'}(k')}, W^{t'}_{\pi_{t'}(k'')}) | ] &=& \frac{2}{k(k-1)} \sum_{t': H_{t'} \in \cH_{i}} |E(H_{t'})| \stackrel{\ref{lem H5}}{=} \frac{2(m\pm \epsilon^{2/7}n)}{(k-1)r},\\
\mathbb{E}[\hspace{-0.2cm} \sum_{t': H_{t'} \in \cH_{s}^{\#}} \hspace{-0.1cm}   |W^{t'}_{\pi_{t'}(k')}|  &=& \frac{1}{k}  \sum_{t': H_{t'} \in \cH^{\#}_{s}} |V(H_{t'})| \stackrel{\ref{lem H6}}{=} \text{mult}_{\cF^{\#}}(V(Q'_s)) \pm \eta^{2/5} n/k,\\
\end{eqnarray*}}
By using union bound, and Chernoff's inequality(Lemma~\ref{lem: chernoff}) with \ref{lem H1}, \ref{lem H2} we conclude that 
\begin{eqnarray*}
\mathbb{P}[ \text{\ref{lem H7}, \ref{lem H8} hold} ]
& \geq & 1 - 2 k t_* \exp( - \frac{(\eta^{2/5}n/2)^2}{3t (2 \eta^{0.9} n)^2}) -2k^2t_*\exp( - \frac{(\epsilon^{1/4}n/(2(k-1)r))^2}{3t (2\Delta \eta^{0.9} n)^2 }) 
\geq 1- \eta.
\end{eqnarray*}}
Then, by a Chernoff bound (Lemma~\ref{lem: chernoff}) combined with \ref{lem H1} and \ref{lem H2}, it is easy to check that $\pi_1,\dots, \pi_{t_*}$ satisfy \ref{lem H7}  and \ref{lem H8} with positive probability. \newline 

\noindent {\bf Step 5. Clique walks.}
Recall that $V_0$ is a separator of both $H$ and $H\setminus W_0$.
The vertices in $V_0$ will be allocated to the sets $Z_1, \dots, Z_k$ which initially correspond to the clique $Q_1' \subseteq R$ (recall that $V(Q'_1)= \{1,\dots, k\}$).
We now identify an underlying structure in $R$ that will be used in Step 6 to ensure that while allocating $V(H)\backslash (V_0\cup W_0\cup  A)$ to $X$, $Y$ and $Z$, we do not violate the vertex partition admitted by $R$ (c.f.~ (B3)$_{\ref{lem: separable partition}}$). (This is a particular issue when considering edges between separator vertices and the rest of the partition.) 

To illustrate this, let $s\in S$ be a separator vertex allocated to $Z_{k'}$. 
Let $x$ be some vertex in some $H_{t'}$ with $xs \in E(H)$.
Suppose $H_{t'}$ is assigned to some clique $Q_i\subseteq Q$ and that this would assign $x$ to some set $X_{i'}$, where $i'\in V(Q_i)$.
Furthermore, suppose $i'k'$ is not an edge in $R$.
We cannot simply reassign $x$ to another set $X_j$ to obey the vertex partition admitted by $R$ without also considering the neighbourhood of $x$ in $H_{t'}$.
To resolve this, we apply Lemma~\ref{lem: clique extension} to obtain a suitable `clique walk' $P$ between $Q_1'$ and $Q_i$,
i.e.~the initial sement of $P$ is $V(Q_1')$, its final segment is $V(Q_i)$ and each segment of $k$ consecutive vertices in $P$ corresponds to a $k$-clique in $R$.
We initially assign $x$ to a set $Z_{k''}$ for some $k'' \in [k] \setminus  \{ k' \}$.
We then assign the vertices which are close to $x$ to some $Z_{k'''}$, where 
the choice of $k''' \in [r]$ is determined by $P$. (In order to connect $Y$ to $V_0$, we also choose similar clique walks starting  with $Q'_1$ and ending with $Q'_s$ for each $s\in [q]$.)

To define the clique walks formally, for each $t'\in [t]$, let 
\begin{align}\label{eq: F sub t' def}
P_{t'}:= \left\{\begin{array}{ll}
Q_{i} & \text{ if } H_{t'} \in \cH_{i} \text{ for some } i\in [r'], \\
Q'_{s} & \text{ if } H_{t'} \in \cH^{\#}_{s} \text{ for some } s \in [q],\\
Q'_{s} & \text{ if } H_{t'} \in \cH'_{s,k'} \text{ for some } (s, k') \in [q]\times [k],
\end{array}\right. \text{ and }
\left.\begin{array}{l}
\{ p_{t'}(1), \dots, p_{t'}(k) \}:=P_{t'}, \\ 
\text{where } p_{t'}(1)< \dots < p_{t'}(k).
\end{array}\right.
\end{align}
By using (A1)$_{\ref{lem: separable partition}}$, we can apply Lemma~\ref{lem: clique extension} for each $t'\in [t]$ with $V(Q'_1)$ and $V(P_{t'})$ playing the roles of $Q_1$ and $Q_2$  in order to obtain\COMMENT{ 
{\small
\noindent
{ 
\begin{tabular}{c|c|c|c|c}
object/parameter & $V(Q'_1)$ & $V(P_{t'})$ & $\pi_{t'}(k'): k'\in [k]$ & $p_{t'}(k') : k'\in [k]$  \\ \hline
playing the role of & $Q_1$ & $Q_2$ & $x_{k'} : k'\in [k]$ & $y_{k'} : k'\in [k]$ 
\end{tabular}
}}\vspace{0.2cm}}
a walk $j(t',1),\dots, j(t', b_{t'} k )$ in $R$ such that
\begin{equation}\label{eq: ell property}
\begin{minipage}[c]{0.9\textwidth}\em
for all distinct $i,i' \in [b_{t'}k ]$ with $|i-i' |\leq k-1$, we have $j(t',i)j(t',i') \in E(R)$, and 
for each $k'\in [k]$ we have $j(t',k')= \pi_{t'}(k')$ and $j(t',(b_{t'}-1)k+k') = p_{t'}(k')$.
\end{minipage}
\end{equation}
Moreover, for each $t'\in [t]$, we have
\begin{align}\label{eq: bt size}
3\leq b_{t'} \leq 3k^2.
\end{align}
As described above we will later distribute some vertices of $V_{t'}\cap N^{(b_{t'}-1)k}(V_0)$ to $\bigcup_{k'\in [(b_{t'} -1)k]} Z_{j(t',k')}$ so that we can ensure  (B3)$_{\ref{lem: separable partition}}$ and  (B6)$_{\ref{lem: separable partition}}$ hold. \newline

\noindent {\bf Step 6. Iterative construction of the partition.}
Now, we will distribute the vertices of each $H_{t'}$ into $X_1,\dots, X_r, Y_1,\dots, Y_r, Z_1,\dots, Z_r$ in such a way that (B1)$_{\ref{lem: separable partition}}$--(B7)$_{\ref{lem: separable partition}}$ hold. (In particular, as discussed earlier, we will have $\widetilde{Y}_i\subseteq Y_i$.) To achieve this,
for each $t' =0,1,\dots, t$, we iteratively define sets $X^{t'}_1,\dots, X^{t'}_r,$ $ Y^{t'}_{1},\dots, Y^{t'}_{r},$ $Z^{t'}_1,\dots, Z^{t'}_{r}.$
First, for each $k'\in [k]$, let
$Z^{0}_{k'}:= W^{0}_{k'}$
and for all $i\in [r]$ and $i'\in [r]\setminus [k]$, let 
$$X^{0}_i:=\emptyset, \enspace Y^{0}_i:=\emptyset \enspace \text{and} \enspace 
 Z^{0}_{i'}:=\emptyset.$$
We will write
$$V^{t'}:=\bigcup_{t''=0}^{t'} V_{t''}, \quad X^{t'}:= \bigcup_{i \in [r]} X^{t'}_i, \quad Y^{t'}:=\bigcup_{i\in [r]} Y^{t'}_i \text{\quad  and \quad } Z^{t'}:=\bigcup_{i\in [r]} Z^{t'}_i .$$
Assume that for some $t'\in [t]$, we have already defined a partition $X^{t'-1}_1,\dots, X^{t'-1}_r,$
$Y^{t'-1}_{1},\dots, Y^{t'-1}_{r},$ $Z^{t'-1}_1,\dots, Z^{t'-1}_{r}$ of $V^{t'-1}$ satisfying the following. \newline
\begin{enumerate}[label=\text{\rm (Z{\arabic*})$^{t'-1}_{\ref{lem: separable partition}}$}]
\item  For all $i'\in [r']$ and $i\in V(Q_{i'})$, let $k'$ be so that $i= q_{i'}(k')$.
Then we have
$$\bigcup_{ t'' \in [t'-1]: H_{t''} \in \cH_{i'}} \hspace{-0.2cm} W^{t''}_{\pi_{t''}(k')}  \setminus N^{ (b_{t''}-1)k}_{H}(V_0) \subseteq X^{t'-1}_{i}
\subseteq \hspace{-0.2cm} \bigcup_{ t'' \in [t'-1]: H_{t''} \in \cH_{i'}} \hspace{-0.2cm} W^{t''}_{\pi_{t''}(k')},$$
\item for each $i\in [r]$, we have
$$ \bigcup_{k'\in[k]} \bigcup_{\enspace \substack{ t''\in [t'-1]\setminus[t^*] : \\ p_{t''}(k')=i} }   W^{t''}_{\pi_{t''}(k')} \setminus N^{(b_{t''}-1)k}_{H}(V_0)  \subseteq Y^{t'-1}_{i}\subseteq \bigcup_{k'\in[k]} \bigcup_{\enspace \substack{ t''\in [t'-1]\setminus[t^*]  : \\ p_{t''}(k')=i} }   W^{t''}_{\pi_{t''}(k')},$$
\item for all $ij\notin E(Q)$, we have 
$e_H(X^{t'-1}_i, X^{t'-1}_j) = 0$,
\item for all $ij \notin E(R)$, we have $e_H(X^{t'-1}_i, Z^{t'-1}_j) = e_H(Y^{t'-1}_i, Z^{t'-1}_j) = e_H(Y^{t'-1}_i, Y^{t'-1}_j)= e_H(Z^{t'-1}_i, Z^{t'-1}_j)= 0,$
\item $N^{1}_H(X^{t'-1}) \setminus X^{t'-1} \subseteq Z^{t'-1} \subseteq N^{3k^3}_H( V_0)$,
\item for each $k'\in [k]$, we have $W^0_{k'} \subseteq Z^{t'-1}_{k'}$,
\item for each $t''\in [t'-1]$, we have
$|\{ i\in [r]:  (X_i^{t'-1}\cup Y_i^{t'-1}) \cap V_{t''} \neq \emptyset \}| \leq k.$
\end{enumerate}
Using that $Q'_1$ is a copy of $K_k$ in $R$ and $V(Q'_1) =\{1,\dots, k\}$, it is easy to see that (Z1)$^{0}_{\ref{lem: separable partition}}$--(Z7)$^{0}_{\ref{lem: separable partition}}$ hold with the above definition of $X^{0}_i, Y^{0}_i, Z^{0}_i$.
We now distribute the vertices of $H_{t'}$ by setting
{
\begin{align*}
X^{t'}_{i}&:= \left\{ \begin{array} {ll}
X^{t'-1}_{i} \cup \left(W^{t'}_{\pi_{t'}(k')} \setminus N^{(b_{t'} -2)k+k'}_{H}(V_0) \right) & \text{ if }t'\in [t^*] \text{ and } i=p_{t'}(k') \text{ for some } k'\in [k],\\
X^{t'-1}_{i} & \text{ otherwise,}
\end{array}\right.\\
Y^{t'}_{i}&:= \left\{ \begin{array} {ll}
Y^{t'-1}_{i} \cup \left(W^{t'}_{\pi_{t'}(k')} \setminus N^{(b_{t'} -2)k+k'}_{H}(V_0) \right) & \text{ if } t'\in [t]\setminus[t^*]  \text{ and } i = p_{t'}(k') \text{ for some } k'\in [k],\\
Y^{t'-1}_{i} & \text{ otherwise,}
\end{array}\right.\\
Z^{t'}_{i}&:= 
Z^{t'-1}_{i} \cup \bigcup_{ \substack{(b,k') \in [b_{t'}-1]\times[k] : \\ i=j(t',(b-1)k+k')}} \left(W^{t'}_{\pi_{t'}(k')} \cap \left( N^{(b-1)k+k' }_{H}(V_0)\setminus N^{(b-2)k+k'}_{H}(V_0) \right)\right).
\end{align*}}
Let $H':=H \setminus W_0$. Recall that $N^{3k^3+1}_H(V_0)$ does not contain any vertex in $W_0$ (see \eqref{eq: chrosep}).
Hence $N^i_{H}(V_0)=N^i_{H'}(V_0)$ for any $i\leq 3k^3+1$.

Note that the above definition of $X^{t'}_{i}, Y^{t'}_{i}, Z^{t'}_{i}$ uniquely distributes all vertices of $V^{t'}$.
Indeed, first note that either $Y^{t'}_i = Y^{t'-1}_i$ for all $i\in [r]$ or
$X^{t'}_i= X^{t'-1}_i$ for all $i\in [r]$ depending on whether $H_{t'}\in \cH_{c}$ for some $c\in [r']$ (in which case $t'\in [t^*]$)
or $H_{t'}\in \cH^{\#}_s$ for some $s\in [q]$ or $H_{t'}\in \cH'_{s,k'}$ for some $(s,k')\in [q]\times [k]$  (in the latter two cases we have $t'\in [t]\setminus [t^*]$).
Now, consider $W^{t'}_{k''}\cap (N^{a}_H(V_0) \setminus N_H^{a-1}(V_0))$ for $k''\in [k]$ and $a\in \mathbb{N}$. 
Note $k'' = \pi_{t'}(k')$ for some $k'\in [k]$. 
Then either $a >(b_{t'}-2)k+k'$ or $a\in [(b'-1)k+k']\setminus [(b'-2)k+k']$ for some unique $b' \in [b_{t'} -1]$.
 Thus indeed every vertex of $V^{t'}$ belongs to exactly one of $X^{t'}_i$ or $Y^{t'}_i$ or $Z^{t'}_i$.

It is easy to see that the above definition with \eqref{eq: bt size},
(Z1)$^{t'-1}_{\ref{lem: separable partition}}$ and (Z2)$^{t'-1}_{\ref{lem: separable partition}}$ implies (Z1)$^{t'}_{\ref{lem: separable partition}}$ and (Z2)$^{t'}_{\ref{lem: separable partition}}$.
Also, (Z7)$^{t'}_{\ref{lem: separable partition}}$ is obvious from the construction.
Moreover, (Z3)$^{t'-1}_{\ref{lem: separable partition}}$ and \ref{lem H3} imply (Z3)$^{t'}_{\ref{lem: separable partition}}$ while 
(Z6)$^{t'-1}_{\ref{lem: separable partition}}$ implies (Z6)$^{t'}_{\ref{lem: separable partition}}$.
Similarly, we have $e_H(Y_i^{t'}, Y_j^{t'}) = 0$ if $ij \notin E(R).$
We now verify the remaining assertions of (Z4)$^{t'}_{\ref{lem: separable partition}}$.
First suppose that
$$E_H(X^{t'}_i,Z^{t'}_{i'})\setminus E_H(X^{t'-1}_i,Z^{t'-1}_{i'}) \neq \emptyset \enspace \text{or} \enspace
E_H(Y^{t'}_i,Z^{t'}_{i'})\setminus E_H(Y^{t'-1}_i,Z^{t'-1}_{i'}) \neq \emptyset.$$
Then by \ref{lem H3}, 
we have $i= p_{t'}(k')$ for some $k'\in [k]$ and 
$i' = j(t',(b-1)k+k'')$ for some $k''\in [k]$ and $b\in [b_{t'}-1]$, and $H$ contains an edge between 
$$W^{t'}_{\pi_{t'}(k')} \setminus N^{(b_{t'}-2)k+k'}_H(V_0) \text{ and }
W^{t'}_{\pi_{t'}(k'')} \cap N^{(b-1)k+k''}_H(V_0).$$ This means that
$(b_{t'}-2)k+k' \leq (b-1)k + k''$.
Thus $b= b_{t'}-1$ and $k' \leq k''$.
Moreover, since $W^{t'}_{\pi_{t'}(k')}$ is an independent set of $H$, we have $k'\neq k''$. 
Since \eqref{eq: ell property} implies that 
$i= p_{t'}(k') = j(t',(b_{t'}-1)k+k')$ and $i' = j(t', (b_{t'}-2)k+k'')$ with 
$0<  (b_{t'}-1)k+k' - ( (b_{t'}-2)k+k'' ) < k$,
again this with \eqref{eq: ell property} implies that $ii' \in E(R)$.
Now suppose that
$$xy \in E_H(Z^{t'}_i, Z_{i'}^{t'})\setminus E_H(Z_i^{t'-1}, Z_{i'}^{t'-1}) \text{ with } x,y \notin V_0.$$
Then by \ref{lem H3}, we have
$i=  j(t',(b-1)k+ k')$ and $i'= j(t',(b'-1)k + k'')$ for some $b,b'\in [b_{t}-1]$ and
$k'\neq k''\in [k]$. 
However, the definition of $Z_{i}^{t'}$ implies that such an edge only exists when  $|((b-1)k+k') - ((b'-1)k+k'')| \leq k-1$. In this case, \eqref{eq: ell property} implies that $ii' \in E(R)$. 
Finally, suppose that $$xy \in E_H(Z^{t'}_i, Z_{i'}^{t'})\setminus E_H(Z_i^{t'-1}, Z_{i'}^{t'-1}) \text{ with } x \in V_0\cap Z^{t'}_i.$$
Then the definition of $Z_{i}^{t'}$ implies that $i \in [k]$, $x \in W_i^0$ and
$i' = j(t', k')$ for some $k' \in [k]$.
 \eqref{eq: ell property} implies that $j(t',k')=\pi_{t'}(k')$.
As $W^{0}_{\pi_{t'}(k')}\cup W^{t'}_{\pi_{t'}(k')}$ is an independent set of $H$, we have $i \ne \pi_{t'}(k')$.  
However, as $R[[k]] = R[V(Q'_1)]\simeq K_k$, we know that $ii'\in E(R)$.
Thus (Z4)$^{t'}_{\ref{lem: separable partition}}$ holds. 
By the definition of $X^{t'}_{i}$ and $Z^{t'}_{i}$ with \eqref{eq: bt size}, it is obvious that  (Z5)$^{t'}_{\ref{lem: separable partition}}$ holds too.

Thus, by repeating this, we obtain a partition $X^{t}_1,\dots, X^{t}_r, Y^{t}_1,\dots, Y^{t}_r, Z^{t}_{1},\dots, Z^{t}_{r}$ of $V(H)\setminus W_0$ satisfying (Z1)$^{t}_{\ref{lem: separable partition}}$--(Z7)$^{t}_{\ref{lem: separable partition}}$. For each $i\in [r]$, let
$$X_i:= X^{t}_i, \enspace X:= X^{t}, \enspace Y_i:= Y^{t}_i\setminus A, \enspace Y:= Y^{t} \setminus A, \enspace {Z'_i}:=Z^{t}_i  \enspace \text{and} \enspace Z':=Z^{t}.$$
Note that $A \subseteq Y^t$ by \eqref{eq: pj location} and (Z2)$^{t}_{\ref{lem: separable partition}}$. 
Moreover, $X,Y,Z',A$ forms a partition of $V(H)\setminus W_0$. Now we consider the vertices in $W_0$. 
For each $w\in W_0$, let 
$$I_w:=\{ i\in [r]: N_H(w)\cap (X_i\cup Y_i)\neq \emptyset \}.$$
By \eqref{eq: chrosep}, we have $W_0\cap V_0=\emptyset$. Hence, for each vertex $w\in W_0$, there exists $t'\in [t]$ such that $w \in \widetilde{V}_{t'}$. As $W_0$ is an independent set, \eqref{eq: chrosep} with \ref{lem H3} implies $N_H(w) \subseteq V_{t'}$.
This with (Z7)$^{t}_{\ref{lem: separable partition}}$ implies that 
$|I_w|\leq k$. As $|N_R(I_w)|>0$ by (A1)$_{\ref{lem: separable partition}}$, we can assign $w$ to $Z'_i$ for some $i\in N_R(I_w)$.
Let $Z_1,\dots,Z_r,Z$ be the sets obtained from $Z'_1,\dots,Z'_r,Z'$ by assigning all vertices in $W_0$ in this way.
By \eqref{eq: chrosep}, \eqref{eq: pj location} and (Z5)$^{t}_{\ref{lem: separable partition}}$ for each $w\in W_0$ we have $N_H(w)\subseteq X\cup Y$. Thus
 
\begin{equation}\label{eq: W0 Z}
\begin{minipage}{0.9\textwidth} \em
for all $i\in [r]$, $w\in W_0\cap Z_i$ and $x\in N_H(w)$, we have $x\in X_j\cup Y_j$ for some $j\in N_{R}(i)$.
\end{minipage}
\end{equation}
The sets $X, Y, Z, A$ now form a partition of $V(H)$.
\newline

\noindent {\bf Step 7. Checking the properties of the partition.}
We now verify that this partition satisfies (B1)$_{\ref{lem: separable partition}}$-(B7)$_{\ref{lem: separable partition}}$.
Note that \eqref{eq: a ell degree} implies (B1)$_{\ref{lem: separable partition}}$. Consider any $\ell\in [n']$, and let $t'\in [t]\setminus [t_*]$ and $(s,k')\in [q]\times[k]$ be such that $a_{\ell} \in H_{t'} \in \cH'_{s,k'}$. Then
\begin{eqnarray*}
N_H(a_{\ell}) \hspace{-0.3cm} &\stackrel{\eqref{eq: pj neighbor location}}{\subseteq}& \hspace{-0.3cm}  \bigcup_{k''\in [k]\setminus \{k(t')\}} W^{t'}_{k''} \setminus N^{3k^3+1}_H(V_0\cup W_0)  \stackrel{\eqref{eq: algorithm}}{=}  \bigcup_{k''\in [k]\setminus \{k'\} } W^{t'}_{\pi_{t'}(k'')} \setminus  N^{3k^3+1}_H(V_0\cup W_0)  \\
\hspace{-0.3cm} &\stackrel{\text{(Z2)$^{t}_{\ref{lem: separable partition}}$, (Z5)$^{t}_{\ref{lem: separable partition}}$}}{\subseteq}& \hspace{-0.3cm} \bigcup_{k''\in [k]\setminus \{k'\} } Y_{p_{t'}(k'')} \setminus N^{1}_H(Z) \stackrel{\eqref{eq: Q d def},\eqref{eq: F sub t' def}}{=} \bigcup_{i \in V(Q'_{s,k'}) } Y_{i}\setminus N^{1}_H(Z) \stackrel{\eqref{eq: pj def}}{=} \bigcup_{i\in C_\ell} Y_{i} \setminus N^{1}_H(Z).
\end{eqnarray*}
This proves (B2)$_{\ref{lem: separable partition}}$.
Moreover, whenever $\ell,t'$ and $(s,k')$ are as in the proof of (B2)$_{\ref{lem: separable partition}}$,
for each $j' \in C_\ell$, we have $j'= p_{t'}(k'')$ for some $k''\in [k]\setminus \{k'\}$. 
Thus by \eqref{eq: pj neighbor location} and (Z2)$^{t}_{\ref{lem: separable partition}}$, we have
$$\mathbb{E}[ |N_H(a_\ell)\cap Y_{j'}|] 
\leq \mathbb{E}[ |N_H(a_\ell)\cap W^{t'}_{\pi_{t'}(k'')}| ] \stackrel{\eqref{eq: expectation of the random}}{\leq} \frac{2(1+1/h)m}{(k-1)n}.$$
This proves (B7)$_{\ref{lem: separable partition}}$.

Properties (Z3)$^{t}_{\ref{lem: separable partition}}$, (Z4)$^{t}_{\ref{lem: separable partition}}$, (Z5)$^{t}_{\ref{lem: separable partition}}$ and \eqref{eq: W0 Z}  imply (B3)$_{\ref{lem: separable partition}}$.

For each $ij \in E(Q)$, let $s\in [r']$ and $ k',k''\in [k]$ be such that $i=q_s(k')$ and $j=q_s(k'')$. Thus 
\begin{eqnarray*}
e_H(X_i,X_j) \hspace{-0.2cm} &\stackrel{\ref{lem H3},\text{(Z1)$^{t}_{\ref{lem: separable partition}}$}  }{=}&\hspace{-0.4cm} \sum_{t'\in  [t^*]: H_{t'}\in \cH_{s} }|E_H(W^{t'}_{\pi_{t'}(k')}, W^{t'}_{\pi_{t'}(k'')} )| \pm \Delta |N^{3k^3}_H(V_0)| 
\stackrel{\ref{lem H2}, \ref{lem H7}}{=}\frac{2m \pm \epsilon^{1/5} n}{(k-1)r}.
\end{eqnarray*}
Thus \rm(B4)$_{\ref{lem: separable partition}}$ holds.
Moreover, given $i\in [r]$, let $s\in [r']$ and $k'\in [k]$ be such that $i=q_s(k')$. Then
\begin{eqnarray*}
|X_i| \hspace{-0.2cm}&\stackrel{\text{(Z1)$^{t}_{\ref{lem: separable partition}}$}  }{=}&\hspace{-0.5cm} \sum_{t'\in [t^*]: H_{t'}\in \cH_{s} }\hspace{-0.3cm} |W^{t'}_{\pi_{t'}(k')}| \pm |N^{3k^3}_H(V_0)| 
\stackrel{\ref{lem H7}}{=} \tilde{n}- \epsilon^{1/3}n/r \pm \eta^{1/3}n.
\end{eqnarray*}
Similarly, for $i\in [r]$, since by \eqref{eq: pj location} the vertices of $A$ only belong to $V(H_{t'})$ for $t'\in [t]\setminus [t_*]$,
\begin{eqnarray*}
|Y_{i}| \hspace{-0.2cm} &\stackrel{\text{(Z2)$^{t}_{\ref{lem: separable partition}}$}  }{= }&\hspace{-0.5cm}
\sum_{(t',k') : p_{t'}(k')=i, t'\in [t]\setminus [t^*] } |W^{t'}_{\pi_{t'}(k')} \setminus A|
  \pm |N^{3k^3}_H(V_0)| \\
&\stackrel{\eqref{eq: F sub t' def}}{=}	& \hspace{-0.5cm} 
  \sum_{ (s,k') : q'_{s}(k')= i} \sum_{~ t': H_{t'}\in \cH^{\#}_s}  \hspace{-0.3cm}  |W^{t'}_{\pi_{t'}(k')}|
+ \sum_{ (s,k') : q'_{s}(k')= i} \sum_{k''\in [k]} \sum_{~ t': H_{t'} \in \cH'_{s,k''}} \hspace{-0.3cm}  |W^{t'}_{\pi_{t'}(k')}\setminus A|
  \pm  \eta^{1/2}n  \\
  &\stackrel{\ref{lem H8},\eqref{eq: tilde Yi def}}{= }& \hspace{-0.5cm} 
\sum_{ (s,k') : q'_{s}(k')= i}  \text{mult}_{\cF^{\#}}(V(Q'_s))
+ |\tilde{Y}_i|   \pm 2q \eta^{2/5}n = d_{\cF^{\#}}(i) + |\tilde{Y}_i| \pm 2q\eta^{2/5}n \\
 &\stackrel{ \eqref{eq: tilde n},\eqref{eq: cF** degrees}}{=}&   n_i -\tilde{n} + \epsilon^{1/3}n/r \pm \eta^{1/3} n.
\end{eqnarray*}
Together with \eqref{eq: chrosep}, (Z5)$^{t}_{\ref{lem: separable partition}}$ and \ref{lem H2},\COMMENT{and the fact that $\eta \ll 1/r$ and $q\leq r^k$} this now implies that for each $i\in [r]$
$$|X_i|+|Y_i|+|Z_i| = n_i \pm \eta^{1/4}n.$$ 
Also, the definition of $\tilde{n}$ with (A4)$_{\ref{lem: separable partition}}$ implies that $|Y_i| \leq 2\epsilon^{1/3}n/r$.
Thus (B5)$_{\ref{lem: separable partition}}$ holds.
Finally, \eqref{eq: chrosep} and
(Z5)$_{\ref{lem: separable partition}}$ imply (B6)$_{\ref{lem: separable partition}}$. 
\end{proof}

\section{Packing graphs into a super-regular blow-up}\label{sec: main lemma}

In this section, we prove our main lemma. 
Roughly speaking, this lemma says the following. 
Suppose we have disjoint vertex sets $V$, $Res_t$ and $V_0$ and suppose that we have a super-regular $K_k$-factor blow-up $G[V]$ on vertex set $V$, and suitable graphs $G[Res_t],$ $G[V,Res_t],$ $F[V,Res_t]$ and $F'[Res_t,V_0]$ are also provided. 
Then we can pack an appropriate collection $\cH$ of graphs into $G\cup F\cup F'$.
 Here $V_0$ is the exceptional set obtained from an application of Szemer\'edi's regularity lemma and $Res_t$ is a suitable `reservoir' set where $V_0$ is much smaller than $Res_t$, which in turn is much smaller than $V$. 
The $k$-cliques provided by the multi-$k$-graph $\cC^*_t$ below will allow us to find a suitable embedding of the neighbours of the vertices mapped to $V_0$.
When we apply Lemma \ref{embedone} in Section 6, the reservoir set $Res_t$ will play the role of the set $U\cup U_0$ below.
$U_0$ will correspond to a set of exceptional vertices in $Res_t$.
{\rm (A9)}$_{\ref{embedone}}$ will allow us to embed the neighbours of the vertices mapped to $U_0$.

Note that the packing $\phi$ is designed to cover most of the edges of the blown-up $K_k$-factor $G[V]$, but only covers a small proportion of the edges of $G$ incident to $U.$
{\rm (A7)}$_{\ref{embedone}}$ provides the edges incident to the vertices mapped to $V_0$, and {\rm (A8)}$_{\ref{embedone}}$ allows us to embed the neighbourhoods of these vertices.

\begin{lemma}
\label{embedone}
Suppose $n,n',k,\Delta, r, T \in \mathbb{N}$ with 
$0 < 1/n, 1/n' \ll \eta \ll   \epsilon \ll 1/T \ll  \alpha \ll d \ll 1/k, \sigma, \nu,1/\Delta< 1$ and $\eta \ll 1/r \ll \sigma$ and
$k \mid r$.
Suppose that $R$ and $Q$ are graphs with $V(R)=V(Q)=[r]$ such that  
$Q$ is a union of $r/k$ vertex-disjoint copies of $K_k$.
Suppose that $V_0,\dots, V_r, U_0, \dots, U_r$ is a partition of a set of $n$ vertices
such that $|V_0|\leq \epsilon n$, $|U_0|\leq \epsilon n$ and for all $i\in[r]$
$$n' = |V_i| = \frac{(1-1/T \pm 2\epsilon)n }{r} \enspace \text{and} \enspace |U_i| = \frac{(1\pm 2\epsilon)n}{Tr}.$$
Let $V:= \bigcup_{i\in [r]} V_i$ and $U:= \bigcup_{i\in [r]} U_i.$
Suppose that $G, F, F'$ are edge-disjoint graphs such that 
$V(G)= V\cup U\cup U_0$, $F$ is a bipartite graph with vertex partition $(V,U)$, and $F'$ is a bipartite graph with vertex partition $(V_0,U)$ such that $F'=\bigcup_{t\in [T]} \bigcup_{v\in V_0} F'_{v,t}$, where all the $F'_{v,t}$ are pairwise edge-disjoint stars with centre $v$.

Suppose that  $\mathcal{H}$ is a collection of $(k,\eta)$-chromatic $\eta$-separable graphs on $n$ vertices, and for each $t\in [T]$ we have a multi-$(k-1)$-graph $\mathcal{C}_t $ on $[r]$ and a multi-$k$-graph $\mathcal{C}^*_t$ on $[r]$ with $E(\mathcal{C}_t) = \{ C_{v,t}  : v \in V_0 \}$ and
$E(\mathcal{C}^*_t) = \{ C^*_{v,t} : v\in V_0\}$.
Assume the following hold. 
\begin{enumerate}
\item[{\rm (A1)}$_{\ref{embedone}}$] For each $H\in \cH$, we have  $\Delta(H)\leq \Delta$ and $e(H)\geq n/4$, 

\item[{\rm (A2)}$_{\ref{embedone}}$] $n^{7/4} \le e(\mathcal{H}) \le (1-\nu)(k-1)\alpha n^2/(2r),$

\item[{\rm (A3)}$_{\ref{embedone}}$] $G[V]$ is $( T^{-1/2},\alpha)$-super-regular with respect to the vertex partition $(Q,V_1,\dots, V_r)$,
\item[{\rm (A4)}$_{\ref{embedone}}$] 
for each $ij\in E(R)$, the graphs $G[V_i, U_j]$ and $G[U_i, U_j]$ are both 
 $(\epsilon^{1/50},(d^3))^{+}$-regular,

\item[{\rm (A5)}$_{\ref{embedone}}$] $\delta(R) \geq (1 - 1/k+\sigma)r$,

\item[{\rm (A6)}$_{\ref{embedone}}$]  for all $ij \in E(Q)$ and $u\in U_i$, we have
$d_{F,V_j}(u) \geq d^3 n'$,

\item[{\rm (A7)}$_{\ref{embedone}}$] 
for all $v\in V_0$ and $t\in [T]$ and $i\in C_{v,t}$, we have
$d_{F'_{v,t},U_i}(v) \geq (1-d)\alpha |U_i|,$

\item[{\rm (A8)}$_{\ref{embedone}}$] for all $v\in V_0$ and $t\in [T]$, 
we have $C_{v,t} \subseteq C^*_{v,t}$,
$R[C^*_{v,t}] \simeq K_{k}$, and $\Delta(\cC^*_t) \leq \frac{ \epsilon^{3/4} n}{r}$,

\item[{\rm (A9)}$_{\ref{embedone}}$] for each $u\in U_0$, we have
$$|\{ i\in [r] : d_{G, V_j}(u) \geq d^3 n' \text{ for all } j\in N_{Q}(i) \}|> \epsilon^{1/4}r.$$

\end{enumerate}
Then there exists a packing $\phi$ of $\mathcal{H}$ into $G \cup F \cup F'$ such that 
\begin{enumerate}
\item[{\rm (B1)}$_{\ref{embedone}}$] $\Delta(\phi(\mathcal{H})) \le 4 k \Delta \alpha n/r,$ 
\item[{\rm (B2)}$_{\ref{embedone}}$] for each $u\in U$, we have
$d_{\phi(\mathcal{H})\cap G}(u) \le 2\Delta \epsilon^{1/8}n/r,$
\item[{\rm (B3)}$_{\ref{embedone}}$] for each $i \in [r]$, we have $e_{\phi(\mathcal{H})\cap G}(V_i, U\cup U_0) < \epsilon^{1/2} n^2/r^2.$
\end{enumerate}

\end{lemma}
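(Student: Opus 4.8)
The plan is to combine the partition lemma for separable graphs (Lemma~\ref{lem: separable partition}) with the two packing tools from \cite{KKOT} (Lemma~\ref{Pack} and Theorem~\ref{Blowup}), using the embedding lemma (Lemma~\ref{lem: embedding lemma}) to glue the two parts of each embedding together. First I would partition $\cH$ into $s$ subcollections $\cH_1,\dots,\cH_s$ of roughly equal total size, where $s$ is chosen so that each subcollection has total number of edges close to $(1-3\xi)qn'\cdot(\text{number of }K_k\text{-pairs})$ for an appropriate auxiliary multiplicity parameter $q$ and slack $\xi$; here $s \ll n$ is bounded in terms of the other constants. The point of this splitting is that Lemma~\ref{Pack} and Theorem~\ref{Blowup} require the combined edge-usage on each regular pair to be a prescribed almost-constant value, and (A2)$_{\ref{embedone}}$ with $e(H)\ge n/4$ guarantees the collection is neither too sparse nor too dense to make this balancing possible.

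Next, for each $H$ in a given subcollection I would apply Lemma~\ref{lem: separable partition} to obtain a partition $(X_1,\dots,X_r,Y_1,\dots,Y_r,Z_1,\dots,Z_r,A)$ of $V(H)$. The set $A=\{a_1,\dots,a_{n'}\}$ (with $|A|=|V_0|$, after padding $\cH$ or the partition appropriately so the counts match) will be embedded into $V_0$: vertex $a_\ell$ goes to the $\ell$-th vertex of $V_0$, and by (B2)$_{\ref{lem: separable partition}}$ its neighbourhood lies in $\bigcup_{i\in C_{v,\ell}}Y_i$, which we will embed into $U$ using the star $F'_{v,t}$ via (A7)$_{\ref{embedone}}$. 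The sets $Y\cup Z$ will be embedded into $U\cup U_0$ using Lemma~\ref{lem: embedding lemma} applied to $G\cup F$ restricted to the clusters $U_i$ (and the extra vertices in $U_0$ handled via (A9)$_{\ref{embedone}}$, whose clique structure $\cC^*_t$ plays the role of the cliques $C^*_\ell$ in the embedding lemma): here (A4)$_{\ref{embedone}}$, (A6)$_{\ref{embedone}}$ supply the regularity and degree conditions, and the multi-hypergraph $\cM$ in Lemma~\ref{lem: embedding lemma} is taken to record the neighbourhoods in $X$ of the boundary vertices $Z$, so that (B2)$_{\ref{lem: embedding lemma}}$ yields large target sets $A_w$ for those boundary vertices when we later embed $X$. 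Finally the bulk $X$ of each $H$ — which by (B3)$_{\ref{lem: separable partition}}$ is internally structured with respect to $(Q,X_1,\dots,X_r)$ — is packed into $G[V]$: we first apply Lemma~\ref{Pack} to turn the $\{H[X]\}$ into internally $q$-regular graphs and then Theorem~\ref{Blowup} to pack those into the $(T^{-1/2},\alpha)$-super-regular blow-up $G[V]$, feeding the target sets $A_w$ from the previous step into (A3)$_{\ref{Blowup}}$ and a conflict graph $\Lambda$ into (A4)$_{\ref{Blowup}}$ that records the (few) vertices already pinned down by the $Y\cup Z$-embedding. Consistency of $\phi_*$ (the embedding into $U\cup U_0\cup V_0$) and $\phi$ (the embedding into $V$) is exactly what (B1)$_{\ref{Blowup}}$–(B3)$_{\ref{Blowup}}$ and (B1)$_{\ref{lem: embedding lemma}}$ are designed to enforce.

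To control the degree and edge-usage conclusions (B1)$_{\ref{embedone}}$–(B3)$_{\ref{embedone}}$ I would use probabilistic allocation throughout: when choosing which regular pairs of $G$ each $\cH_j$ is assigned to, and when choosing the target sets, I would randomise (appealing to Lemma~\ref{star}, Lemma~\ref{Reservoir2} and Lemma~\ref{lem: choice}) so that no vertex of $U$ or edge between $V_i$ and $U\cup U_0$ is over-used; a Chernoff bound (Lemma~\ref{lem: chernoff}) then gives (B2)$_{\ref{embedone}}$ and (B3)$_{\ref{embedone}}$ with room to spare, since each $H$ uses only $O(\epsilon n/r)$ vertices near $U\cup U_0$ and the stars $F'_{v,t}$ are only lightly loaded because the $a_\ell$ are low-degree by (B1)$_{\ref{lem: separable partition}}$ and (B7)$_{\ref{lem: separable partition}}$. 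The bound (B1)$_{\ref{embedone}}$ on $\Delta(\phi(\cH))$ follows because each vertex of $V$ receives one vertex from each $H\in\cH$ and $\Delta(H)\le\Delta$, while a vertex of $V_0$ or $U$ receives only low-degree vertices and only from a controlled fraction of the $H$'s.

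The main obstacle I expect is the simultaneous bookkeeping: one has to choose the parameters $q$, $s$, $\xi$ and the split of $\cH$ so that (i) the edge counts on every $K_k$-pair of $Q$ match what Lemma~\ref{Pack} and Theorem~\ref{Blowup} demand (this is where (B4)$_{\ref{lem: separable partition}}$ giving $e_H(X_i,X_j)=\frac{2m\pm\epsilon^{1/5}n}{(k-1)r}$ is essential), (ii) the target sets $A_w$ coming out of the $Y\cup Z$-embedding are large enough — at least $d_0 n'$ — to satisfy (A3)$_{\ref{Blowup}}$, which requires that the boundary set $Z$ stays genuinely small (guaranteed by (B6)$_{\ref{lem: separable partition}}$), and (iii) the conflict graph $\Lambda$ in (A4)$_{\ref{Blowup}}$ has maximum degree below $(1-\alpha)d_0n'$, which needs the number of pre-embedded vertices per cluster to be $o(n')$. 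Getting all three to hold at once, while also leaving enough edges of $G[V]$ unused to satisfy the $s\le \frac{d}{q}(1-\alpha/2)n'$ hypothesis of Theorem~\ref{Blowup}, is the delicate heart of the argument; everything else is a routine (if lengthy) verification of regularity and degree inequalities from the hypotheses (A3)$_{\ref{embedone}}$–(A9)$_{\ref{embedone}}$.
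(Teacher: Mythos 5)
Your high-level structure is sound — Lemma~\ref{lem: separable partition} for the partition, Lemma~\ref{lem: embedding lemma} for the reservoir/exceptional embedding, Lemma~\ref{Pack} then Theorem~\ref{Blowup} for the bulk — and you correctly identify where the regularity hypotheses get fed in. But there is a genuine gap in the accounting of sizes that your sketch does not survive. You propose to embed $Y\cup Z\cup A$ into $U\cup U_0\cup V_0$ and all of $X$ into $V$. By (B5)$_{\ref{lem: separable partition}}$ and (B6)$_{\ref{lem: separable partition}}$, however, $|Y^H\cup Z^H|$ is $O(\epsilon^{1/3}n+\eta^{0.9}n)$ — an order of magnitude smaller than $|U\cup U_0|\approx n/T$ — and correspondingly $|X^H|\approx n-|A^H|-|Y^H\cup Z^H|$ exceeds $|V|=(1-1/T\pm 2\epsilon)n$ by about $n/T$. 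So the bulk $X$ simply does not fit inside the blow-up $G[V]$, and Theorem~\ref{Blowup} (which requires $|X_i|=|V_i|=n'$ exactly) cannot be applied to $H[X]$ as written. The missing idea is to carve out of each $X^H$ a $3$-independent subset $W^H$ of size exactly $|U\cup U_0|-|Y^H\cup Z^H|$, chosen far from $Z^H$, embed $W^H$ into $U\cup U_0$ alongside $Y^H\cup Z^H$, and only feed $X^H\setminus W^H$ into Lemma~\ref{Pack} and Theorem~\ref{Blowup}. This is a substantial step — roughly a third of the paper's proof — because $W^H$ has to be split into three parts (one covering $U_0$, one covering most of $U$ via $F$, and one small "correction" part), and the embedding of those parts has to leave large target sets inside $V$ for the neighbours of $W^H$ in $X^H\setminus W^H$.

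A second, related gap: even after introducing $W^H$, the class sizes must match exactly — one needs $|X_i^H\setminus W_i^H|=n'$ for every $i$ simultaneously and for every $H\in\cH$, whereas Lemma~\ref{lem: separable partition} only gives $|X_i^H|+|Y_i^H|+|Z_i^H|$ correct up to an additive $\eta^{1/4}n$ error that moreover varies from $H$ to $H$. The paper resolves this with an auxiliary digraph $D$ on $[r]$ with $\vec{ij}\in E(D)$ whenever $N_Q(i)\subseteq N_R(j)$; directed paths in $D$ route the small surpluses from classes $i\in I^*$ to deficient classes, and the corresponding part $W^{H,D}$ of $W^H$ is mapped to a set $U_j^D(i)\subseteq U_j$ of vertices with large degree into each $V_{i'}$, $i'\in N_Q(i)$. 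Your plan has no mechanism for this exact balancing, and probabilistic allocation alone will not deliver equality (it gives concentration, not the precise cardinality match Theorem~\ref{Blowup} demands). Finally, a small mix-up: the clique structure $C^*_{v,t}$ from $\cC^*_t$ arises from (A8)$_{\ref{embedone}}$ and constrains where $N_H(a_\ell)$ is placed; (A9)$_{\ref{embedone}}$ is used for a quite different purpose, namely to partition $U_0$ into pieces $U_i'$ each having good degree into the appropriate blown-up $k$-clique so that $W^{H,U'}_i$ can cover them.
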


Roughly, the proof of Lemma~\ref{embedone} will proceed as follows.
In Step 1 we define a partition of $U_0$ and an auxiliary digraph $D$.
In Step 2 we define a partition of each $H \in \cH.$
For each graph $H \in \cH$ we apply Lemma \ref{lem: separable partition} to  partition $V(H)$ into $X^H, Y^H, Z^H, A^H$.
We will embed $A^H$ into $V_0$ and the remainder of $H$ into $V \cup U \cup U_0.$
In Step 3, we apply Lemma~\ref{lem: embedding lemma} to find an appropriate function $\phi'$ packing $\{ H[Y^H\cup Z^H\cup A^H] : H\in \cH\}$ into $G[U]\cup F'$. 
Guided by the auxiliary digraph $D$, in Step 4 we modify the partition by removing a suitable $W^H$ from $X^H$ (so that we can later embed $X^H \backslash W^H$ into $V$).
We will also find a function $\phi''$ packing $\{H[W^H] : H\in \cH\}$ into $G[U]$ in an appropriate way, which ensures that later we can also pack $\{ H[X^H\setminus W^H, W^H] : H \in\cH\}$ into $F[V,U]\cup G[V,U]$.
In Step 5 we will partition $\cH$ into subcollections $\cH_{1,1},\dots, \cH_{T,w}$ and use Lemma~\ref{Pack} to pack $\{H[X^H\setminus W^H]: H\in \cH_{t,w'}\}$ into an internally $q$-regular graph $H_{t,w'}$ (for some suitable $q$). 
Finally, in Step 6 we apply the blow-up lemma for approximate decompositions (Theorem~\ref{Blowup}) to pack $\{H_{t,w'}: t\in [T], w'\in [w]\}$ into $G[V]$ such that the packing obtained is consistent with $\phi'\cup \phi''$.

\begin{proof}
Let $r':=r/k$ and $Q_1,\dots, Q_{r'}$ be the copies of $K_k$ in $Q$.
Let $n_0:= |V_0|$ and $V_0=:\{ v_1,\dots, v_{n_0}\}.$
By (A1)$_{\ref{embedone}}$, for each $H\in \cH$, we have
\begin{align}\label{eq: number of edges in H}
e(H) \leq \Delta n.
\end{align}
Moreover,
\begin{eqnarray}\label{eq: kappa def}
\kappa:= |\cH| \stackrel{\text{(A1)$_{\ref{embedone}}$},\text{(A2)$_{\ref{embedone}}$}}{\leq} 2(1-\nu)(k-1)\alpha n/r.
\end{eqnarray}
\vspace{0.05cm}

\noindent {\bf Step 1. Partition of $U_0$ and the construction of an auxiliary digraph $D$.}
In Step 2, we will find a partition of each $H \in \cH$ which closely reflects the structure of $G$.
However we need the partitions to match up exactly.
The following auxiliary graph will enable us to carry out this adjustment in Step~4.
Let $D$ be the directed graph with $V(D) =[r]$ and 
\begin{align}\label{eq: def of digraph D}
E(D) = \{ \vec{ij} : i\neq j \in [r],  N_{Q}(i) \subseteq N_{R}(j)  \}.
\end{align}
For each $ij\in E(R)$, we let 
$$U_i(j):= \{ u\in U_i: d_{G,V_j}(u) \geq (d^3-\epsilon^{1/50})n'\}.$$
Then (A4)$_{\ref{embedone}}$ with Proposition~\ref{prop: super} implies that $|U_i(j)|\geq (1-2\epsilon^{1/50})|U_i|.$
For each $\vec{ij} \in E(D)$, we define 
\begin{align}\label{eq: UDji def}
U_{j}^D(i):=\bigcap_{i'\in N_{Q}(i)} U_j(i'),
\end{align}
then we have 
\begin{align}\label{eq: UDji size}
|U_j^D(i)| \geq (1 - 2(k-1) \epsilon^{1/50})|U_j| \geq n/(2Tr).
\end{align}
In Step 4 we will map some vertices $x \in V(H)$ whose `natural' image would have been in $V_i$ to $U_j^D(i)$ instead, in order to `balance out' the vertex class sizes. 

\begin{claim}\label{cl: path from i* to j}
There exists a set $I^* = \{ i^*_1,\dots,i^*_{k} \} \subseteq  [r]$ of $k$ distinct numbers such that for any $k'\in [k]$ and $j\in [r]$, there exists a directed path $P(i^*_{k'},j)$ from $i^*_{k'}$ to $j$ in $D$.
\end{claim}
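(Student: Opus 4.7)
The plan is to show that the condensation of $D$ has a unique source strongly connected component $S^*$ of size at least $k$; once this is established, every vertex of $[r]$ is reachable from $S^*$ in the condensation DAG, so every $v \in S^*$ reaches every $j \in [r]$ in $D$, and we take $I^* \subseteq S^*$ with $|I^*| = k$.

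I first establish degree bounds in $D$. Since $\vec{ij} \in E(D)$ iff $i \neq j$ and $j \in \bigcap_{v \in N_{Q}(i)} N_{R}(v)$, by (A5) together with a union bound the intersection has size at least $r - (k-1)(r/k - \sigma r) = r/k + (k-1)\sigma r$; hence every vertex has out-degree at least $r/k + (k-1)\sigma r - 1$ in $D$, and a symmetric calculation yields the same lower bound on in-degrees. As a consequence, every closed set in $D$ (i.e.~one with no edges leaving) has size at least $r/k + (k-1)\sigma r$, and dually every source SCC has size at least $r/k + (k-1)\sigma r$, which is much larger than $k$ since $1/r \ll \sigma$.

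The crux is to rule out two distinct source SCCs. Suppose for contradiction $S_1, S_2$ are such, and set $V_0 := [r] \setminus (S_1 \cup S_2)$. The key structural observation is that whenever $V(Q_s) \subseteq N_{R}(j)$ for some $j \in S_\ell$, every vertex of $V(Q_s)$ is an in-neighbour of $j$ in $D$, so $V(Q_s) \subseteq S_\ell$; by (A5) the common $R$-neighbourhood of each $V(Q_s)$ has size at least $k\sigma r$, so this has nontrivial consequences for every $Q$-component. In the subcase $V_0 = \emptyset$, each $Q_s$ is wholly contained in exactly one of $S_1, S_2$, and for each $j \in S_\ell$ and $Q_s \subseteq S_{\ell'}$ with $\ell \neq \ell'$ the non-existence of $\vec{ij} \in E(D)$ for all $i \in V(Q_s)$ forces $d_{R, V(Q_s)}(j) \leq k-2$; summing yields $d_R(j) \leq (2/k)|S_\ell| + (k-2)r/k - 1$, which combined with $\delta(R) \geq (1 - 1/k + \sigma)r$ gives $|S_\ell| \geq r/2 + k\sigma r/2 + k/2$ for each $\ell$, so $|S_1| + |S_2| > r$, a contradiction. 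In the subcase $V_0 \neq \emptyset$, $V_0$ is itself closed so $|V_0| \geq r/k + (k-1)\sigma r$ alongside the same lower bounds on $|S_1|, |S_2|$; analogous edge counting applied to $j \in S_1$ using the $d_{R, V(Q_s)}(j) \leq k-2$ bound for every $Q_s$ wholly contained in $S_2 \cup V_0$ (and symmetrically for $j \in S_2$) gives bounds of the form $a_{S_2 \cup V_0}, a_{S_1 \cup V_0} \leq (r/k - \sigma r - 1)/2$ on the numbers of wholly-contained $Q$-components in those unions, and combining these with the partition identity $|S_1| + |S_2| + |V_0| = r$, the lower bounds on each region, and a careful accounting of partial $Q$-components (each spanning at most $k-1$ vertices per region) will force a contradiction.

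The main obstacle will be the $V_0 \neq \emptyset$ subcase: unlike the $V_0 = \emptyset$ case where $Q$-components partition cleanly among the two sources, here $Q$-components can split among $S_1, S_2, V_0$ and partial components only admit the weaker $k-1$ bound, so extracting the contradiction requires a more delicate combinatorial bookkeeping linking $a_{S_2 \cup V_0}, a_{S_1 \cup V_0}$ to the size lower bounds $|S_1|, |S_2|, |V_0| \geq r/k + (k-1)\sigma r$. With uniqueness of $S^*$ in hand, choosing $I^*$ as any $k$ vertices of $S^*$ completes the proof.
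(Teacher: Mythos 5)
Your approach is genuinely different from the paper's: you frame the problem in terms of the condensation of $D$ and try to prove uniqueness of the source strongly connected component via a size/edge-counting argument, whereas the paper proves directly that \emph{any two} vertices $i,j$ of $D$ have a common in-neighbour, which immediately gives a single vertex $i^*_1$ from which everything is reachable (by maximizing $|A(i)|$, the reachable set), and then finds a $Q$-clique $Q_s$ entirely inside $N_R(i^*_1)$ so that $V(Q_s)\subseteq N^-_D(i^*_1)$ supplies $i^*_2,\dots,i^*_k$.

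Your degree bounds in $D$, the conclusion that closed sets and source SCCs have size at least $r/k+(k-1)\sigma r$, and the entire $V_0=\emptyset$ subcase are correct. But the $V_0\neq\emptyset$ subcase has a genuine gap, not just bookkeeping to fill in. The constraints you derive there — $a_{S_2\cup V_0},a_{S_1\cup V_0}\le (r/k-\sigma r-1)/2$ together with $|S_1|,|S_2|,|V_0|\ge r/k+(k-1)\sigma r$ and $|S_1|+|S_2|+|V_0|=r$ — are mutually consistent. For instance with $k=4$ one can take every $Q$-clique to straddle all three sets with $1$ vertex in $V_0$ and $1.5$ on average in each of $S_1,S_2$ (so that no clique is wholly contained in $S_1\cup V_0$ or $S_2\cup V_0$, and $|S_1|=|S_2|\approx 3r/8$, $|V_0|\approx r/4$); all your inequalities hold. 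So the contradiction does not follow from the counting you set up, even allowing for ``careful accounting of partial $Q$-components''. Nor does simply adding the three lower bounds help: $|S_1|+|S_2|+|V_0|\ge 3\bigl(r/k+(k-1)\sigma r\bigr)$ only exceeds $r$ when $k\le 3$.

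The missing idea is exactly the paper's first step: for any $i\ne j$, $|N_R(\{i,j\})|\ge 2\delta(R)-r\ge(1-2/k+2\sigma)r$, so (by a short counting argument over the $r/k$ cliques of $Q$) at least $\sigma r\ge 3$ cliques $Q_s$ satisfy $|N_{R,V(Q_s)}(\{i,j\})|\ge k-1$, hence some such $Q_s$ avoids $\{i,j\}$; choosing $j'\in V(Q_s)$ with $V(Q_s)\setminus\{j'\}\subseteq N_R(\{i,j\})$ gives $\vec{j'i},\vec{j'j}\in E(D)$. Once you have this, your SCC framework closes immediately and uniformly: if $S_1\neq S_2$ were distinct source SCCs, take $i\in S_1$, $j\in S_2$ and a common in-neighbour $w$; then $w\in N^-_D(i)\cap N^-_D(j)\subseteq S_1\cap S_2=\emptyset$, a contradiction. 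This replaces the entire $V_0$ case analysis and is the step you should add.

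One more remark on the final step: taking $I^*$ to be any $k$ vertices of the unique source SCC $S^*$ is fine since $|S^*|\ge r/k+(k-1)\sigma r\gg k$. The paper instead takes $i^*_1$ plus $k-1$ vertices from a clique $Q_s\subseteq N_R(i^*_1)$, which is a slightly different but equally valid choice.
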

\begin{proof}
First, we claim that all $i\neq j\in [r]$ satisfy that $N^{-}_D(i)\cap N^{-}_D(j)\neq \emptyset$.
Indeed, as $|N_{R}( \{i,j\} )| \geq 2\delta(R)- r\geq (1- 2/k+2\sigma)r$, we have that 
$$|\{ s\in [r'] : |N_{R,V(Q_s)}( \{i,j\})| \geq k-1\}| \geq \sigma r \geq 3.$$
\COMMENT{
$(k-2)r/k + 2|\{ s\in [r'] : |N_{R,Q_s}( \{i,j\})| \geq k-1\}| = (k-2)|\{ s\in [r'] : |N_{R,Q_s}(\{i,j\})| \leq k-2\}| + k|\{ s\in [r'] : |N_{R,Q_s}(\{i,j\})| \geq k-1\}|  \geq d_{R}(i,j) \geq (1-2/k+2\sigma)r$.
Thus $|\{ s\in [r'] : |N_{R,Q_s}(\{i,j\})| \geq k-1\}| \geq \sigma r$.}
Thus there exists $s\in [r']$ such that 
$i,j\notin V(Q_s)$ while $|N_{R,V(Q_s)}(\{i,j\})| \geq k-1$. We choose $j' \in V(Q_s)$ such that $Q_s\setminus \{j'\} \subseteq N_{R}(\{i,j\})$, then \eqref{eq: def of digraph D} implies that $i,j \in N^+_{D}(j')$.

Now, we consider a number $i\in [r]$ which maximizes $|A(i)|$, where
$$A(i) = \{j\in [r]: \text{ there exists a directed path from }i \text{ to } j \text{ in }D\}.$$
If there exists $j\in [r]$ such that $j\notin A(i)$, then by the above claim, there exists $j'\in [r]$ such that $i, j\in N^+_D(j')$.
Then%
\COMMENT{any directed path from $i$ to $j'' \in [r]$ together with $\vec{j'i}$ forms a directed path from $j'$ to $j''$ in $D$.} $A(i)\cup \{j\} \subseteq A(j')$, which is a contradiction to the maximality of $A(i)$. Thus, 
we have $A(i) = [r]$. Let $i^*_1:=i$.

Since $d_{R}(i^*_1) \geq \delta(R) \geq  (1-1/k+\sigma)r$ by (A5)$_{\ref{embedone}}$, we have
$|\{ s\in [r'] : N_{R,V(Q_s)}(i^*_1) =k \}|\geq \sigma r.$
Thus, there exists $s \in [r']$ such that $V(Q_s) \subseteq N_R(i^*_1)$, and this with \eqref{eq: def of digraph D} implies that $V(Q_s) \subseteq N_D^{-}(i^*_1)$.
We let $i^*_2,\dots, i^*_{k}$ be $k-1$ arbitrary numbers in $V(Q_s)$.
 Then for all $k'\in [k]$ and $j\in [r]$, there exists a directed path from $i^*_{k'}$ to $i^*_1$ and a directed path from $i^*_1$ to $j$ in $D$.
 Thus there exists a directed path from $i^*_{k'}$ to $j$ in $D$.
 This proves the claim.
\end{proof}

We will now determine the approximate class sizes $\tilde{n}_i$ that our partition of $H$ will have.
For this, we first partition $U_0$ into $U'_1,\dots, U'_{r}$ in such a way that the vertices in $U_i'$ are `well connected' to the blow-up of the $k$-clique in $Q$ to which $i$ belongs.
\begin{equation}\label{eq: U' properties}
\begin{minipage}[c]{0.9\textwidth}\em
For all $i\in [r], u\in U'_i$ and $j\in N_{Q}(i)$, we have $d_{G,V_j}(u) \geq d^3 n'$ and $|U'_i| \leq 2\epsilon^{3/4}n/r.$
\end{minipage}
\end{equation}
Indeed, it is easy to greedily construct such a partition by using the fact that $|U_0|\leq \epsilon n$ and (A9)$_{\ref{embedone}}$.%
\COMMENT{
We enumerate vertices in $U_0$ into $u_1,\dots, u_{|U_0|}$.
For some $i\in [|U_0|-1]\cup \{0\}$, assume that we have a partition $U'^i_1,\dots, U'^i_r$ of $\{ u_1,\dots, u_i\}$ such that
\begin{itemize}
\item for all $j\in [r]$, we have $|U'^{i}_j| \leq  \lceil \epsilon^{3/4}n/r\rceil$ and 
\item for all $u\in U'^{i}_j$ and $j'\in N_{Q}(j)$, we have $d_{G,V_{j'} }(u) \geq d^3 n'$.
\end{itemize}
Now we consider $u_{i+1}$, and let 
$$A:= \{ j\in [r]: d_{G,V_{j'}}(u_{i+1}) \geq d^3 n \text{ for all } j'\in N_{Q}(j)\}.$$
Then
$$|\bigcup_{j\in A} U'^{i}_j | \leq  |U_0| \leq \epsilon n.$$
Since  (A9)$_{\ref{embedone}}$ implies that  $|A| > \epsilon^{1/4} r$,
there exists $j_* \in A$ such that 
$|U'^{i}_{j_*}|<\lceil \epsilon^{3/4} n/r \rceil$.
We let $U'^{i+1}_{j_*} := U'^{i}_{j_*} \cup \{u_{i+1}\}$ and 
for all $i \in [r]\setminus \{j_*\}$, we let $U'^{i+1}_{j} := U'^{i}_{j}$.
Then we have sets $U'^{i+1}_j$ such that
\begin{itemize}
\item for all $j\in [r]$, we have $|U'^{i+1}_j| \leq  \epsilon^{3/4}n/r$ and 
\item for all $u\in U'^{i+1}_j$ and $j'\in N_{Q}(j)$, we have $d_{G,V_{j'} }(u_i) \geq d^3 n'$.
\end{itemize}
By repeating this, we obtain the desired sets $U'_j := U'^{|U_0|}_j$ for each $j\in [r]$.
}

For $i \in I^*$, we will slightly increase the partition class sizes (cf.~\eqref{eq: tilde ni def}
and \ref{main lem X5}) as this will allow us to subsequently move any excess vertices from classes corresponding to $I^*$ to another arbitrary class via the paths provided by Claim \ref{cl: path from i* to j}.
For each $i\in [r]$, we let 
\begin{align}\label{eq: ni def}
n_i:= n' + |U_i| + |U'_i| = |V_i| + |U_i|+ |U'_i|,
\end{align}
 then we have
\begin{align}\label{eq: size n_i}
n_i = (1-1/T\pm 2\epsilon)n/r + (1\pm 2\epsilon)n/(Tr) \pm 2\epsilon^{3/4}n/r = (1\pm \epsilon^{2/3}/2)n/r \text{ and }\sum_{i\in [r]} n_i = n- n_0.
\end{align}
For each $i\in [r]$ we let
\begin{align}\label{eq: tilde ni def}
 \tilde{n}_i:= \left\{ \begin{array}{ll}
n_i + (r'-1)\eta^{1/5}n & \text{ if } i\in I^*,\\
n_i - \eta^{1/5}n & \text{ if } i\in [r]\setminus I^*.\\
\end{array}\right.
\end{align}
This with \eqref{eq: size n_i} implies that for each $i\in [r]$,
\begin{align}\label{eq: size tilde ni}
\tilde{n}_i = \frac{(1\pm \epsilon^{2/3})n}{r} \enspace \text{and} \enspace \sum_{i\in [r]} \tilde{n}_i = \sum_{i\in [r]} n_i = n- n_0. 
\end{align}
\vspace{0.06cm}

\noindent {\bf Step 2. Preparation of the graphs in $\cH$.}
First, we will partition $\cH$ into $T$ collections $\cH_{1},\dots, \cH_{T}$. 
Later we will pack each $\cH_{t}$ into $G\cup F \cup \bigcup_{v\in V_0} F'_{v,t}$.
(Recall that the $F'_{v,t}$ form a decomposition of $F'$.)
As $G\cup F\cup F'$ has vertex partition $V_0,\dots, V_r, U_1,\dots, U_r, U'_1,\dots, U'_r$, for each $H\in \cH$, we also need a suitable partition of $V(H)$ which is compatible with the partition of the host graph $G\cup F\cup F'$.
To achieve this, we will apply Lemma~\ref{lem: separable partition} to each graph $H\in \cH_{t}$ with the hypergraphs $\mathcal{C}_{t}$ and $\mathcal{C}^*_t$ to find the desired partition of $V(H)$.

   By \eqref{eq: number of edges in H} we can partition $\cH$ into $\cH_{1},\dots, \cH_{T}$ such that for each $t\in [T]$, 
\begin{eqnarray}\label{eq: edges in cHi}
e(\cH_{t}) \hspace{-0.2cm} &=& \hspace{-0.2cm}  e(\cH)/T \pm \Delta n \stackrel{\text{(A2)$_{\ref{embedone}}$} }{\leq}  (1-2\nu/3)\alpha(k-1) n^2/(2Tr), \text{ and }\nonumber\\
|\cH_{t}|  \hspace{-0.2cm}  &\stackrel{\text{(A1)$_{\ref{embedone}}$}}{\leq}&  \hspace{-0.2cm}  4e(\cH_t)/n \leq 2\alpha(k-1) n/(Tr). 
\end{eqnarray}
For each $t \in [T]$, we wish to apply the randomised algorithm given by Lemma~\ref{lem: separable partition} with the following objects and parameters independently for all $H \in \cH_t$.\newline

{\small
\noindent
{ 
\begin{tabular}{c|c|c|c|c|c|c|c|c|c|c|c|c|c|c|c}
object/parameter &$H$ & $R$ & $Q$  & $\cC_t$ & $\cC^*_t$ & $n_0$ & $C_{v_\ell,t}$ & $ C^*_{v_\ell,t}$ & $\lceil 3/d \rceil$ & $\eta$ & $\epsilon$ & $k$ & $\Delta$ & $r$ & $\tilde{n}_i$ \\ \hline
playing the role of & $H$ & $R$ & $Q$  & $\cF$ & $\cF^*$ & $n'$ & $C_\ell$ & $C^*_\ell$ & $h$ & $\eta$ & $\epsilon$ & $k$ & $\Delta$ & $r$ & $n_i$
\end{tabular}
}\newline \vspace{0.2cm}
}

Indeed, 
(A5)$_{\ref{embedone}}$, (A8)$_{\ref{embedone}}$ imply that (A1)$_{\ref{lem: separable partition}}$, (A2)$_{\ref{lem: separable partition}}$ and (A3)$_{\ref{lem: separable partition}}$ hold with the above objects and parameters, respectively. Moreover, \eqref{eq: size tilde ni} implies that (A4)$_{\ref{lem: separable partition}}$ holds too.
Thus we obtain a partition $X^H_1,\dots, X^H_r,$ $Y^H_1,\dots, Y^H_r,$ $Z^H_r, \dots, Z^H_r, A^H$ of $V(H)$ such that  $A^H= \{ a^H_1, \dots, a^H_{n_0}\}$ is  a $3$-independent set of $H$ and the following hold, where
$X^H:=\bigcup_{i\in [r]} X^H_i,\enspace Y^H:= \bigcup_{i\in [r]} Y^H_i,$ and $Z^H:=\bigcup_{i\in [r]} Z^H_i.$

\begin{enumerate}[label=\text{\rm (X{\arabic*})$_{\ref{embedone}}$}]
\item\label{main lem X1} For each $\ell \in [n_0]$, we have $d_H(a^H_\ell) \leq \frac{(2+d)e(H)}{n}$,
\item\label{main lem X2} for each $\ell \in [n_0]$, we have $N_H(a^H_\ell) \subseteq \bigcup_{i\in C_{v_\ell,t}} Y_i^H \setminus N^1_H(Z^H)$,
\item\label{main lem X3}  $H[X^H]$ admits the vertex partition $(Q, X^H_1,\dots, X^H_r)$, and
$H\setminus E(H[X^H])$ admits the vertex partition $(R,X^H_1\cup Y^H_1\cup Z^H_1,\dots, X^H_r\cup Y^H_r\cup Z^H_r)$,
\item\label{main lem X4}  for each $ij \in E(Q)$, we have
$e_H(X^H_i, X^H_j) =  \frac{2 e(H) \pm \epsilon^{1/5} n}{(k-1)r}$,
\item\label{main lem X5} for each $i\in [r]$, we have 
$|Y^H_i|\leq 2\epsilon^{1/3}n/r$ and
$ |X^H_i|+ |Y^H_i| +|Z^H_i| = \tilde{n}_i \pm \eta^{1/4}n$;
in particular, this with \eqref{eq: tilde ni def} implies that
for each $i\in [r]$, we have
$$\widehat{n}^H_i:= |X^H_i|+|Y^H_i|+|Z^H_i| \in \left\{\begin{array}{ll}
\left[ n_i, n_i+ \eta^{1/6}n\right] & \text{ if } i\in I^*, \\
\left[n_i - \eta^{1/6} n, n_i \right] & \text{ otherwise,}
\end{array}\right.$$
\item\label{main lem X6}  $N^{1}_H(X^H) \setminus X^H \subseteq Z^H$, and $|Z^H| \leq 4 \Delta^{3k^3} \eta^{0.9} n$,
\item\label{main lem X7} for all $\ell \in [n_0]$ and $i \in C_{v_\ell,t}$, we have
$\mathbb{E}[ N_{H}(a^H_\ell ) \cap Y^H_{i} ] \leq \frac{(2+d)e(H)}{(k-1)n}.$
\end{enumerate}

By applying this randomised algorithm independently for each $H\in \cH_1\cup\dots\cup \cH_{T}$, we obtain that for all $t \in [T]$, $\ell \in [n_0]$ and $i \in C_{v_\ell,t}$, we have
$\mathbb{E}[ \sum_{H\in \cH_t} |N_{H}(a^{H}_\ell ) \cap Y^H_{i}| ]
\leq \frac{(2+d)e(\cH_{t})}{(k-1) n}.$
Note that for each $H\in \cH_t$, we have
$|N_{H}(a^{H}_\ell) \cap Y^H_{i}| \leq \Delta$. 
As our applications of the randomised algorithm are independent for all $H\in \cH_{t}$, a Chernoff bound (Lemma~\ref{lem: chernoff}) together with (A2)$_{\ref{embedone}}$ implies that for all $t \in [T]$, $\ell \in [n_0]$ and $i \in C_{v_{\ell},t}$, we have
$$\mathbb{P}\Big[ \sum_{H\in \cH_t} |N_{H}(a^{H}_{\ell}) \cap Y^H_{i}| 
\geq \frac{2(1+d)e(\cH_{t})}{(k-1) n}\Big] \leq 2\exp( -\frac{d^2 e(\cH_t)^2 / ((k-1)^2 n^2)}{2\Delta^2 |\cH_{t}|} ) \stackrel{\eqref{eq: edges in cHi}, \rm (A2)_{\ref{embedone}} }{\leq}  e^{-n^{1/3}}.$$
\COMMENT{
$$2\exp( -\frac{d^2 e(\cH_t)^2 / ((k-1)^2 n^2)}{2\Delta^2 |\cH_{t}|} )
\leq 2\exp( - \frac{ d^2 n^{7/2}/ ((k-1)^2 n^2) }{ 2\Delta^2 n })
\leq  e^{- n^{1/3}}.$$
}
By taking a union bound over all $t\in [T], \ell \in [n_0]$ and $i \in C_{v_\ell,t}$,
we can show that the following property \ref{main lem X8} holds with probability at least $1 - k T n_0 e^{-n^{1/3}} > 0$. 
\begin{enumerate}[label=\text{\rm (X{\arabic*})$_{\ref{embedone}}$}]
 \setcounter{enumi}{7}
\item\label{main lem X8} For all $t\in [T]$, $\ell\in [n_0]$ and $i \in C_{v_\ell,t}$, we have
$\sum_{H\in \cH_{t} } |N_{H}(a^H_\ell) \cap Y^H_{i}|  \leq \frac{2(1+d)e(\cH_t)}{(k-1)n}.$
\end{enumerate}
Thus we conclude that for all $H\in \cH$ there exist partitions $X^H_1,\dots, X^H_r,$ $Y^H_1,\dots, Y^H_r,$ $Z^H_r, \dots, Z^H_r$, $A^H$ of $V(H)$ such that $A^H= \{ a^H_1, \dots, a^H_{n_0}\}$ is a $3$-independent set of $H$ and such that \ref{main lem X1}--\ref{main lem X6} and \ref{main lem X8} hold. 

Note that $\sum_{i\in [r]} \widehat{n}^H_i = |V(H)|-|A^H| = n-n_0$. This with \eqref{eq: size n_i} implies that for each $H\in \cH$, we have
\begin{align}\label{eq: hat n sum n sum}
\sum_{i\in I^*} (\widehat{n}^H_{i}- n_i) = \sum_{i\in [r]\setminus I^*} (n_i - \widehat{n}^H_{i}).
\end{align}

The following claim determines the number of vertices that we will redistribute via $D$. 
\begin{claim}\label{cl: D f exists}
For each $H\in \cH$, there exists a function $f^H:E(D) \rightarrow [\eta^{1/7}n]\cup \{0\}$ such that 
for each $i\in [r]$, we have
$$\sum_{j\in N^+_{D}(i)} f^H(\vec{ij}) - \sum_{j\in N^-_{D}(i)} f^H(\vec{ji}) = \widehat{n}^H_{i} - n_i.$$
\end{claim}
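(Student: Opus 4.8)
The plan is to view this as a flow problem on the auxiliary digraph $D$. First I would set up the relevant quantities: for each vertex $i \in [r]$ write $b^H_i := \widehat{n}^H_i - n_i$, which is the "excess" (positive) or "deficit" (negative) at $i$. By \eqref{eq: hat n sum n sum} we have $\sum_{i \in I^*} b^H_i = \sum_{i \in [r]\setminus I^*}(-b^H_i)$, and by \ref{main lem X5} (and the definition \eqref{eq: tilde ni def} of $\tilde{n}_i$) we know $b^H_i \in [0, \eta^{1/6}n]$ for $i \in I^*$ and $b^H_i \in [-\eta^{1/6}n, 0]$ for $i \notin I^*$. So all the "sources" (vertices with positive excess) lie in $I^*$ and all the "sinks" (vertices with negative excess, i.e.\ deficit) lie in $[r]\setminus I^*$, and the total excess equals the total deficit. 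What we need is an integer-valued function $f^H$ on $E(D)$, with each value in $[0,\eta^{1/7}n]$, realising these excesses/deficits as net out-flow.

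The key tool is Claim~\ref{cl: path from i* to j}: for every $k' \in [k]$ and every $j \in [r]$ there is a directed path $P(i^*_{k'}, j)$ from $i^*_{k'}$ to $j$ in $D$, and $I^* = \{i^*_1,\dots,i^*_k\} \subseteq [r]$. Since $I^*$ has exactly $k$ elements and every source is in $I^*$, I would route the flow as follows: for each source $i^*_{k'} \in I^*$ (with excess $b^H_{i^*_{k'}} \ge 0$) and each sink $j \in [r]\setminus I^*$ (with deficit $-b^H_j \ge 0$), send an amount $g(k',j) \ge 0$ of flow along the path $P(i^*_{k'}, j)$, where the $g(k',j)$ are chosen so that $\sum_j g(k',j) = b^H_{i^*_{k'}}$ for each $k'$ and $\sum_{k'} g(k',j) = -b^H_j$ for each $j$. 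Such a nonnegative matrix $(g(k',j))$ exists precisely because the row sums and column sums are nonnegative and have the same total (a standard transportation-polytope / bipartite-flow fact); one can even take it greedily. Then define $f^H(\vec{uv})$ to be the total amount of flow passing through the edge $\vec{uv}$, i.e.\ $f^H(\vec{uv}) = \sum_{k',j} g(k',j)\cdot (\text{number of times } \vec{uv} \text{ appears on } P(i^*_{k'},j))$. By construction, for every $i \in [r]$ the net out-flow $\sum_{j \in N^+_D(i)} f^H(\vec{ij}) - \sum_{j \in N^-_D(i)} f^H(\vec{ji})$ equals exactly the net amount of flow originating minus terminating at $i$, which is $b^H_i = \widehat{n}^H_i - n_i$ (sources contribute their full excess, sinks absorb their full deficit, and internal vertices of paths have zero net contribution).

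It remains to check the bound $f^H(\vec{uv}) \le \eta^{1/7}n$. The total flow sent is $\sum_{k'} b^H_{i^*_{k'}} = \sum_{i \in I^*} b^H_i \le k \cdot \eta^{1/6}n$ by \ref{main lem X5}. Each edge of $D$ can lie on at most (total number of paths used) $\le k(r - k) \le kr$ of the paths $P(i^*_{k'},j)$, but more simply $f^H(\vec{uv})$ is at most the total flow times the maximum multiplicity with which a single edge appears across the whole routing; since $r$ is bounded and $1/r \ll \eta^{1/6}/\eta^{1/7}$-style inequalities hold (here I would invoke $\eta \ll 1/r$, so $k r \cdot k \eta^{1/6} n \le \eta^{1/7}n$ once $n$ is large), we get $f^H(\vec{uv}) \le kr \cdot k\eta^{1/6}n \le \eta^{1/7}n$. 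One should also ensure $f^H$ is integer-valued; since all $\widehat{n}^H_i$ and $n_i$ are integers, the transportation matrix $(g(k',j))$ can be chosen integral (integrality of the transportation polytope), so $f^H$ is integral as well.

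The main obstacle, and the only genuinely delicate point, is bounding the multiplicity with which a fixed edge $\vec{uv}$ can appear on the chosen system of paths, since a priori a single directed path in $D$ could revisit an edge and there are up to $k(r-k)$ paths in play; but because $r$ and $k$ are bounded and the hierarchy gives $\eta \ll 1/r$, the crude bound "total flow $\times$ (number of paths) $\times$ (length of each path)" — all of whose factors except the total flow are bounded in terms of $r$ and $k$ — comfortably fits under $\eta^{1/7}n$. (Alternatively one can choose the paths $P(i^*_{k'},j)$ to be of bounded length, which is automatic since $D$ has at most $r$ vertices, so each path has length $< r$.) Everything else is bookkeeping.
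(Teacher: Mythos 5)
Your proof is correct and takes essentially the same approach as the paper. Both route the per-vertex excess/deficit along directed paths in $D$ supplied by Claim~\ref{cl: path from i* to j}, and both define $f^H(\vec{ij})$ as the (weighted) number of paths traversing $\vec{ij}$. The paper works directly at the unit level: it fixes a bijection $g^H$ from $\bigcup_{i\in I^*}\{i\}\times[\widehat{n}^H_i-n_i]$ to $\bigcup_{i\notin I^*}\{i\}\times[n_i-\widehat{n}^H_i]$, picks one path $P_{i,m}$ per unit pair, and sets $f^H(\vec{ij})$ to be the number of these paths containing $\vec{ij}$; this immediately yields $f^H(\vec{ij})\le k\eta^{1/6}n\le\eta^{1/7}n$ and makes the integrality and edge-multiplicity concerns vacuous. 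Your transportation-matrix phrasing $(g(k',j))$ is a bundled version of the same idea; it works, and your cruder bound $k r\cdot k\eta^{1/6}n\le\eta^{1/7}n$ is still comfortably within the hierarchy $\eta\ll 1/r\ll 1/k$, but note that by choosing the paths simple (as you suggest at the end) you recover the tighter bound: each unit of flow then contributes at most $1$ to any given $f^H(\vec{uv})$, so $f^H(\vec{uv})\le\sum_{i\in I^*}(\widehat{n}^H_i-n_i)\le k\eta^{1/6}n$, matching the paper.
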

\begin{proof}
By \ref{main lem X5}, for each $i\in I^*$, we have $\widehat{n}^H_{i}- n_i\geq 0$ 
and for each $i\in [r]\setminus I^*$, we have $n_i-\widehat{n}^H_i \geq 0.$
Thus by \eqref{eq: hat n sum n sum}, there exists a bijection $g^H$ from 
$$\bigcup_{i\in I^*} \{ i\} \times [\widehat{n}^H_{i}- n_i] \enspace\text{to}\enspace 
 \bigcup_{i\in  [r]\setminus I^* } \{i\} \times [n_i-\widehat{n}^H_i].$$ 
For all $i\in I^*$ and $m\in   [\widehat{n}^H_{i}- n_i]$, 
let $g^H(i,m) =: (g^H_1(i,m), g^H_2(i,m))$ and
 let $P_{i,m}$ be a directed path from $i$ to $g^H_1(i,m)$ in $D$, which exists by Claim~\ref{cl: path from i* to j}.
 As $g^H$ is a bijection, for each $i\in [r]$, we have
 \begin{align}\label{eq: g-1 size}
 |(g^H_1)^{-1}(i)| = \left\{ \begin{array}{ll}
 0 & \text{ if } i\in I^*,\\
n_i - \widehat{n}_i^H & \text{ otherwise.}
 \end{array}\right.
 \end{align}
For each $\vec{ij} \in E(D)$, we let
$$f^H(\vec{ij}) := |\{ (i',m) : i'\in I^*,m\in [\widehat{n}^H_{i'}- n_{i'}] \enspace\text{and} \enspace \vec{ij} \in E( P_{i',m}) \}|.$$
Then for each $\vec{ij}\in E(D)$, we have
$$f^H(\vec{ij}) \leq \Big|\bigcup_{i'\in I^*} \{ i'\} \times [\widehat{n}^H_{i'}- n_{i'}]\Big| \stackrel{\ref{main lem X5}}{\leq } k \eta^{1/6} n \leq \eta^{1/7}n.$$
Note that for any $i\in I^*$ and $m\in [\widehat{n}^H_{i}- n_{i}]$,  the path $P_{i,m}$ starts from a vertex in $I^*$ and ends at $[r]\setminus I^*$.
Thus for each $i\in [r]$ we have
\begin{align*}
& \sum_{j\in N_D^{+}(i)} f^H(\vec{ij}) - \sum_{j\in N_{D}^{-}(i)} f^H(\vec{ji}) \\
&= |\{(i',m) :  m\in [\widehat{n}^H_{i'}- n_{i'}], i = i' \in I^* \}|   - |\{(i',m) : i'\in I^*, m\in [\widehat{n}^H_{i'}- n_{i'}], g^H_1(i',m) = i \}| \\
&= \left\{ \begin{array}{ll}
(\widehat{n}^H_{i}- n_i) - 0= \widehat{n}^H_i - n_i & \text{ if } i\in I^*, \\
 0 - (g^H_1)^{-1}(i) \stackrel{\eqref{eq: g-1 size}}{=} \widehat{n}^H_i-n_i & \text{ otherwise}.
 \end{array}\right. 
\end{align*}
This proves the claim.
\end{proof}
For each $H\in \cH$, we fix a function $f^H$ satisfying Claim~\ref{cl: D f exists}. For each $\vec{ij}\notin E(D)$, it will be convenient to set $f^H(\vec{ij}):=0$. 

We aim to embed vertices in $X^H_i\cup Y^H_i \cup Z_i^H$ into $V_i\cup U_i\cup U'_i$. As $|V_i\cup U_i\cup U'_i| =n_i$, by \eqref{eq: ni def}, it would be ideal if $|X^H_i\cup Y^H_i \cup Z_i^H| =n_i$ and $|X^H_i|=n'$.
 However, \ref{main lem X5} only guarantees that this is approximately true. 
In order to deal with this, we will use $D$ and $f^H$ to assign a small number of `excess' vertices $u\in X^H_i$ into $U_j$ when $\vec{ij}\in E(D)$. 
The definition of $D$ will ensure that the image of $u$ still has many neighbours in $V_{i'}$ for all $i'\in N_{Q}(i)$. \newline

\noindent {\bf Step 3. Packing the graphs $H[Y^H\cup Z^H\cup A^H]$ into $G[U]\cup F'$.} 
Now, we aim to find a suitable function $\phi'$ which packs $\{H[Y^H\cup Z^H\cup A^H] : H\in \cH\}$ into $G[U]\cup F'$. 
In order to find $\phi'$, we will use Lemma~\ref{lem: embedding lemma}. 
Moreover, we choose $\phi'$ in such a way that we can later extend $\phi'$ into a packing of the entire graphs $H\in \cH$. 
One important property we need to ensure is the following: 
for any vertex $x\in X^{H}_j$ which is not embedded by $\phi'$, and any vertices $y_1,\dots, y_i \in N_H(x)\cap (Y^H\cup Z^H)$ which are already embedded by $\phi'$, we need $N_G( \phi'( \{y_1,\dots, y_i\}))\cap V_j$ to be large, so that $x$ can be later embedded into $N_{G}( \phi'(\{y_1,\dots, y_{i}\}))\cap V_j$. For this, we will introduce a hypergraph $\mathcal{N}_{H}$ which encodes information about the set $N_H(x)\cap (Y^H\cup Z^H )$ for each vertex $x\in X^{H}$. 
In order to describe the structure of $G$ and $H$ more succinctly, we also introduce a graph $R'$ on $[2r]$ such that 
$$E(R') = \left\{ ij : (i-r)(j-r) \in E(R) \text{ or } i(j-r) \in E(R) \right\}.$$ 
For all $i\in [r]$ and $H\in \cH$, let $V_{i+r} :=  U_i \text{ and } X^{H}_{i+r} := Y^{H}_{i} \cup Z^{H}_{i}.$
Note that  \ref{main lem X3} and \rm (A4)$_{\ref{embedone}}$ imply that for each $H\in \cH$,
\begin{equation}\label{eq: admits partition}
\begin{minipage}[c]{0.9\textwidth} \em
$H[Y^H\cup Z^H]$ admits  the vertex partition $(R',\emptyset,\dots,\emptyset,X^H_{r+1},\dots, X^H_{2r})$, and \\
$G$ is $(\epsilon^{1/50},(d^3))^+$-regular with respect to the partition $(R', V_1,\dots, V_{2r})$.
\end{minipage}
\end{equation}
For all $H\in \cH$ and $x\in X^H$, let 
$$e_{H,x}:= N_H(x) \setminus X^H \stackrel{\ref{main lem X6} }{=} N_H(x) \cap Z^H.$$
Let $\mathcal{N}_H$ be a multi-hypergraph on vertex set $Z^H$ with
 \begin{eqnarray}\label{eq: NH def}
E(\mathcal{N}_H):= \{ e_{H,x} :  x\in N^1_{H}( Z^H)\cap X^H \}, 
\end{eqnarray}
and let $f_{H}:  E(\mathcal{N}_H) \rightarrow [r]$ be a function such that for all $x\in X^H$, we have that $x \in X^H_{f_H(e_{H,x})}$.
Then $\Delta(\mathcal{N}_H)\leq \Delta$ and
$\mathcal{N}_H$ has edge-multiplicity at most $\Delta$.
Note that, as $\mathcal{N}_H$ is a multi-hypergraph, there could be two distinct vertices $x\neq x' \in X^H$ such that $e_{H,x}$ and $e_{H,x'}$ consists of exactly the same vertices while $f_H(e_{H,x})\neq f_H(e_{H,x'})$.

Our next aim is to construct a function $\phi'$ which packs $\{ H[Y^H\cup Z^H\cup A^H]: H\in \cH\}$ into $G[U] \cup F'$ in such a way that
the following hold for all $H\in \cH$.
\begin{enumerate}[label=\text{\rm($\Phi'${\arabic*})$_{\ref{embedone}}$}]
\item \label{main lem phi' 1} For each $e \in E(\mathcal{N}_{H})$, we have $|N_G(\phi'(e)) \cap V_{f_{H}(e)}| \geq d^{5\Delta}|V_{f_{H}(e)}|$,  
\item \label{main lem phi' 2} for each $v\in V(G)$,
we have $|\{ H\in \cH : v\in \phi'(H[Y^H\cup Z^H]) \}| \leq \epsilon^{1/8}n/r$, 
\item \label{main lem phi' 3} for all $i\in [r]$ and $H\in \cH$, 
we have $\phi'(Y^{H}_i\cup Z^{H}_i) \subseteq U_{i}$, and
\item \label{main lem phi' 4} $\phi'(A^H) = V_0$.
\end{enumerate}

\begin{claim}\label{eq: phi' exists}
There exists a function $\phi'$ packing $\{H[Y^H\cup Z^H\cup A^H]: H\in\cH\}$ into $G[U]\cup F'$ which satisfies \ref{main lem phi' 1}--\ref{main lem phi' 4}.
\end{claim}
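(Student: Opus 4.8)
The plan is to build $\phi'$ by a two-phase argument. First I would handle the vertices of $A^H$ that must land in $V_0$, and then I would use Lemma~\ref{lem: embedding lemma} to embed the graphs $H[Y^H\cup Z^H]$ into $G[U]$ while respecting the target sets forced by the first phase. The key observation is that $A^H$ is a $3$-independent set of $H$, so the images of $a^H_1,\dots,a^H_{n_0}$ must be $v_1,\dots,v_{n_0}$ (property \ref{main lem phi' 4}), and the only constraint this imposes is that the neighbours of $a^H_\ell$, which by \ref{main lem X2} all lie in $\bigcup_{i\in C_{v_\ell,t}} Y^H_i$, must be embedded into $\bigcup_{i\in C_{v_\ell,t}} U_i$ using the edges of the star $F'_{v_\ell,t}$. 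Concretely, for each such neighbour $y\in N_H(a^H_\ell)\cap Y^H_i$ I would set its target set $A_y$ to be $N_{F'_{v_\ell,t},U_i}(v_\ell)$, which by \rm (A7)$_{\ref{embedone}}$ has size at least $(1-d)\alpha|U_i|$, hence linear in $|U_i|$; for every other vertex $y\in Y^H_i\cup Z^H_i$ I would set $A_y:=U_i$. Since $a^H_\ell$ is $3$-independent, distinct $a^H_\ell$ have disjoint neighbourhoods, so these target-set assignments are well-defined.

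Next I would set up the application of Lemma~\ref{lem: embedding lemma} for the whole collection simultaneously, processing the graphs $H\in\cH$ one at a time in some order and updating the host graph as we go. For a fixed $H$, the partition $(R',V_1,\dots,V_{2r})$ from \eqref{eq: admits partition} together with the vertex partition $X^H_{r+1},\dots,X^H_{2r}$ of $H[Y^H\cup Z^H]$ provides (A1)$_{\ref{lem: embedding lemma}}$ and (A2)$_{\ref{lem: embedding lemma}}$, where I restrict attention to the clusters $U_1,\dots,U_r$ (relabelled as $V_1,\dots,V_r$ in that lemma); the relevant $\beta$ is of order $1/(Tr)$ since $|U_i|=(1\pm2\epsilon)n/(Tr)$. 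For the role of $\cM$ I would take $\cN_H$ together with $f_H$: by \ref{main lem X3} and \eqref{eq: NH def}, whenever $f_H(e)=i$ and $e$ meets $X^H_{j+r}=Y^H_j\cup Z^H_j$ we have $ij\in E(R)$, which gives (A4)$_{\ref{lem: embedding lemma}}$ (after noting $R[C^*_\ell]\simeq K_k$ and the way $H[X^H]$ sits inside $Q$). Bounded degree of $H$ and the construction of $\cN_H$ give (A3)$_{\ref{lem: embedding lemma}}$, and the target sets above give (A5)$_{\ref{lem: embedding lemma}}$ with $\alpha$ of order $\alpha d$. The conclusion yields an embedding $\phi'_H$ with $\phi'_H(y)\in A_y$ for all $y$ — this gives \ref{main lem phi' 3} (from $A_y\subseteq U_i$) and, combined with the target sets coming from $F'_{v_\ell,t}$, also \ref{main lem phi' 4} — and with $|N_G(\phi'_H(e))\cap V_{f_H(e)}|\ge (d^3/2)^\Delta |V_{f_H(e)}|\ge d^{5\Delta}|V_{f_H(e)}|$ for each $e\in E(\cN_H)$, which is \ref{main lem phi' 1} (here I use $V_{f_H(e)}$ with $f_H(e)\in[r]$, so these are the original clusters, and the regularity of $G[U_i,V_j]$ for $ij\in E(R)$ from \rm (A4)$_{\ref{embedone}}$).

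The genuinely delicate point is maintaining \ref{main lem phi' 2}, the bound that no vertex of $G$ is used by more than $\epsilon^{1/8}n/r$ of the graphs. This does not follow from a single application of Lemma~\ref{lem: embedding lemma}; instead I would prove it by induction on the graphs processed so far, at each stage forbidding (for the current $H$) the `overloaded' vertices of each $U_i$ — those already used by close to $\epsilon^{1/8}n/r$ previous graphs. Since $\sum_{H'}|Y^{H'}\cup Z^{H'}|\le |\cH|\cdot(2\epsilon^{1/3}n + 4\Delta^{3k^3}\eta^{0.9}n)\cdot r$ is much smaller than $\epsilon^{1/8}n/r\cdot |U|$ by \ref{main lem X5}, \ref{main lem X6} and \eqref{eq: kappa def}, the overloaded set in each $U_i$ always has size at most, say, $\epsilon^{1/20}|U_i|$; deleting it from every target set $A_y$ only shrinks $A_y$ by a negligible fraction, so (A5)$_{\ref{lem: embedding lemma}}$ still holds with a slightly worse constant, and the induction goes through. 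One subtlety is that the target sets $N_{F'_{v_\ell,t},U_i}(v_\ell)$ must also be shrunk in this way, and there the star $F'_{v_\ell,t}$ must still retain enough available edges; this is guaranteed because \ref{main lem X1} and \ref{main lem X8} bound how many times any fixed $v_\ell$ receives an incident edge across all of $\cH_t$, and the $F'_{v,t}$ are edge-disjoint across $t$. Assembling $\phi'$ as the union of the $\phi'_H$ (their images of $Y^H\cup Z^H$ may overlap, but edge-disjointness of the packing comes from the fact that each $\phi'_H$ uses only $G[U]\cup F'_{\cdot,t}$ edges and the $\phi'_H$ are built greedily to be edge-disjoint, using the standard argument that at each step we avoid the linearly-many vertices whose use would create a repeated edge — this is where the factor $\epsilon^{1/8}$ versus $\epsilon^{1/3}$ slack is spent), we obtain the claim.
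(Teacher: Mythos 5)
Your proposal reproduces the paper's architecture almost exactly: a deterministic first phase $a^H_\ell \mapsto v_\ell$, then an iterative application of Lemma~\ref{lem: embedding lemma} over $H\in\cH$, with target sets coming from the stars $F'_{v_\ell,t}$ for neighbours of $A^H$ and from $U_i$ otherwise, the inductive maintenance of the $\epsilon^{1/8}n/r$ usage bound by excluding `overloaded' vertices (the paper's sets $V^{\rm bad}_i$), the use of \ref{main lem X8} to keep the stars from being exhausted, and $\cN_H$ with $f_H$ for the multi-hypergraph so that common neighbourhoods in the $V_j$ with $j\in[r]$ remain large. (Your `restrict attention to $U_1,\dots,U_r$' phrasing understates matters: the lemma is applied with all $2r$ classes $V_1,\dots,V_r,U_1,\dots,U_r$ of $R'$, with $X_i=\emptyset$ for $i\le r$; you implicitly use this when reading off (B2) on the original $V_j$.)

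The one step you have not got right is the mechanism for edge-disjointness. `At each step we avoid the linearly-many vertices whose use would create a repeated edge' is not a well-posed move: which vertices would cause a repeated edge depends on where the already-embedded neighbours of the current vertex landed, so there is no single small vertex set to forbid. What the paper actually does is embed $H_{s+1}[Y^{H_{s+1}}\cup Z^{H_{s+1}}]$ into the leftover graph $G(s):=G\setminus\phi'_s(\cH^s)$, and then verifies that $G(s)$ is still $(\epsilon^{1/60},d^3)^+$-regular on the relevant pairs: by the invariant $(\Phi'3)^s_{\ref{embedone}}$ the number of deleted edges in each $G[V_i,V_j]$ is at most $\Delta\epsilon^{1/8}n|V_i|/r$, and Proposition~\ref{prop: reg subgraph} absorbs this loss. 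Similarly the stars' already-used edges $N_{\phi'_s(\cH^s)}(v_\ell)$ are subtracted from $B_x$ directly, not just `avoided'. You should replace the vague vertex-avoidance sentence with this edge-deletion step plus the regularity check; without it the lemma you invoke has no guarantee of producing an edge-disjoint packing. (The spurious factor of $r$ in your estimate of $\sum_{H'}|Y^{H'}\cup Z^{H'}|$ is a small algebra slip, not a structural problem.)
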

\begin{proof}
Let $\phi'_0:\emptyset \rightarrow \emptyset$ be an empty packing. 
Let $H_1,\dots, H_{\kappa}$ be an enumeration of $\cH$. 
For each $s\in [\kappa]$, let 
$$\cH^{s} := \{ H_{s'}[Y^{H_{s'}} \cup Z^{H_{s'}}\cup A^{H_{s'}}]: s'\in [s]\}.$$
Our aim is to successively extend $\phi'_0$ into $\phi'_1,\dots, \phi'_{\kappa}$ in such a way that each $\phi'_{s}$ satisfies the following. 

\begin{enumerate}[label=\text{\rm($\Phi'${\arabic*})$^{s}_{\ref{embedone}}$}]
\item \label{main lem Phi 1} $\phi'_{s}$ packs $\cH^{s}$ into $G[U] \cup F'$, 
\item \label{main lem Phi 2} for all $s'\in [s]$ and $e \in E(\mathcal{N}_{H_{s'}})$, we have $|N_G(\phi'_{s}(e)) \cap V_{f_{H_{s'}}(e)}| \geq d^{5\Delta}|V_{f_{H_{s'}}(e)}|$,  
\item \label{main lem Phi 3} for each $v\in V(G)$, we have
$|\{ s'\in [s]: v\in \phi'_{s}(H_{s'}[Y^{H_{s'}}\cup Z^{H_{s'}}]) \}| \leq \epsilon^{1/8}n/r$, 
\item \label{main lem Phi 4} for all $i\in [2r]\setminus[r]$ and $s'\in [s]$, we have
$\phi'_{s}(X^{H_{s'}}_i) \subseteq V_{i}$,
\item \label{main lem Phi 5} for all $s'\in [s]$ and $\ell \in [n_0]$, we have $\phi'_{s}(a^{H_{s'}}_\ell) = v_\ell$,
\item \label{main lem Phi 6} for all $s'\in [s]$, $t\in [T]$ with $H_{s'}\in \cH_t$, we have $\phi'_{s}(H_{s'}[Y^{H_{s'}}\cup Z^{H_{s'}} \cup A^{H_{s'}} ]) \subseteq G[U] \cup \bigcup_{v\in V_0} F'_{v,t}.$
\end{enumerate}

Note that $\phi'_0$ vacuously satisfies ($\Phi'$1)$^{0}_{\ref{embedone}}$--($\Phi'$6)$^{0}_{\ref{embedone}}$. 
Assume we have already constructed $\phi'_{s}$ satisfying \ref{main lem Phi 1}--\ref{main lem Phi 6} for some $s\in [\kappa-1]\cup \{0\}$. 
We will show that we can construct $\phi'_{s+1}$.
Let 
$$G(s):=G\setminus \phi'_{s}(\cH^{s}).$$
For all $\ell \in [n_0]$ and $a^{H_{s+1}}_\ell \in A^{H_{s+1}}$, we first let
\begin{align}\label{eq: def phi' pj}
\psi(a^{H_{s+1}}_\ell):= v_\ell.
\end{align}
For each $i\in [2r]\setminus [r]$, let 
$$V^{\rm bad}_{i} := \Big\{ v\in V_i : |\{s'\in [s] : v\in \phi'_{s'}(H_{s'}[Y^{H_{s'}}\cup Z^{H_{s'}}])\}| \geq \frac{\epsilon^{1/8}n}{r} -1 \Big\}.$$
Note that 
\begin{eqnarray}\label{eq: size bad Wi}
|V^{\rm bad}_{i}| \stackrel{\ref{main lem Phi 4}}{\leq} \frac{\sum_{s'\in [s] } |Y^{H_{s'}}_{i-r}\cup Z^{H_{s'}}_{i-r} |}{\frac{\epsilon^{1/8}n}{r} -1} \stackrel{\ref{main lem X5},\ref{main lem X6}}{\leq}   3\epsilon^{1/3-1/8} \kappa \stackrel{\eqref{eq: kappa def}}{\leq} \frac{\epsilon^{1/5} n}{r}. 
\end{eqnarray}
Let $t \in [T]$ be such that $H_{s+1} \in \cH_{t}$.
For all $i\in [2r]\setminus [r]$ and $x\in X^{H_{s+1}}_{i}$, we let 
$$B_x:= \left\{\begin{array}{ll}
N_{F'_{v_\ell,t}, V_i}(v_\ell) \setminus (N_{\phi'_{s}(\cH^{s})}(v_\ell) \cup V^{\rm bad}_i) & \text{ if } x\in N_{H_{s+1}}(a^{H_{s+1}}_\ell)\cap X^{H_{s+1}}_{i} \text{ for some $\ell \in [n_0]$}, \\
V_{i} \setminus V^{\rm bad}_{i} & \text{ otherwise.}
\end{array}\right.$$
We will later embed $x$ into $B_x$. Note that if $x\in N_{H_{s+1}}(a^{H_{s+1}}_{\ell})$, then 
$x \notin N_{H_{s+1}}(a^{H_{s+1}}_{\ell'})$ for any $\ell' \in [n_0]\setminus \{\ell\}$ as $A^{H_{s+1}}$ is a $3$-independent set in $H_{s+1}$.
Also, if $x \in N_{H_{s+1}}(a^{H_{s+1}}_\ell)\cap X^{H_{s+1}}_{i}$, then
by \ref{main lem X2} we have $i-r \in C_{v_\ell,t}$. Thus in this case
\begin{eqnarray*}
|B_x| \hspace{-0.2cm} &\geq&  \hspace{-0.2cm}  d_{F'_{v_\ell,t},V_i }(v_\ell) - d_{\phi'_{s}(\cH^{s})\cap F'_{v_\ell,t}, V_i}(v_\ell) - |V^{\rm bad}_{i}|  \\
&\stackrel{\text{\rm (A7)$_{\ref{embedone}}$},\eqref{eq: size bad Wi}}{\geq}& (1-d)\alpha|U_{i-r}|   - d_{\phi'_{s}(\cH^{s})\cap F'_{v_\ell,t}, V_i}(v_\ell)  - \epsilon^{1/5}n/r \nonumber \\
 \hspace{-0.2cm}&\stackrel{ \substack{\ref{main lem X2},  \ref{main lem Phi 4}, \\ \ref{main lem Phi 5}, \ref{main lem Phi 6}}}{\geq}&  \hspace{-0.2cm} (1-d)\alpha|U_{i-r}| - \sum_{s'\in [s], H_{s'}\in \cH_t}|N_{H_{s'}}(a_\ell^{H_{s'}})\cap Y_{i-r}^{H_{s'}}| - \epsilon^{1/5}n/r \nonumber \\
 \hspace{-0.2cm} &\stackrel{\ref{main lem X8}}{\geq }&  \hspace{-0.2cm} (1-d)\alpha|U_{i-r}| - \frac{2(1+d)e(\cH_t)}{(k-1)n} - \epsilon^{1/5} n/r \nonumber \\
 \hspace{-0.2cm}&\stackrel{\eqref{eq: edges in cHi}}{\geq} & \hspace{-0.2cm} (1-d)\alpha |U_{i-r}| - \frac{(1+d)(1-2\nu/3)\alpha n}{Tr} -  \epsilon^{1/5} n/r \geq \alpha^2 |U_{i-r}| = \alpha^2 |V_{i}|.
\end{eqnarray*}
If $x \notin N_{H_{s+1}}(a^{H_{s+1}}_\ell )$ for any $\ell \in [n_0]$, then $|B_x| \geq |V_{i}|- |V^{\rm bad}_{i}| \geq (1-\epsilon^{1/10})|V_{i}|.$ So, for all $i\in [2r]\setminus[r]$ and $x\in X^{H_{s+1}}_i$, we have
\begin{eqnarray}\label{eq: size Ax}
B_x\subseteq V_i, \text{ and } |B_x| \geq  \alpha^2 |V_{i}|.
\end{eqnarray}

For each $i\in [r]$, let $P_i:=\emptyset$, and for each $i\in [2r]\setminus [r]$, let $P_i:= X_i^{H_{s+1}}$. 
We wish to apply Lemma~\ref{lem: embedding lemma} with $H[ Y^{H_{s+1}}\cup Z^{H_{s+1}}]$ playing the role of $H$ and with the following objects and parameters.
\newline

{\small
\noindent
{ 
\begin{tabular}{c|c|c|c|c|c|c|c|c|c|c|c|c|c}
object/parameter & $G(s)$ & $R'$    & $V_i$ & $P_i$ & $\epsilon^{1/60}$ &  $\Delta$ & $n'$ &  $\alpha^2$ & $d^3$ & $\mathcal{N}_{H_{s+1}}$ & $f_{H_{s+1}}$ &  $1/(2T)$ & $B_x$ 
\\ \hline
playing the role of & $G$ &  $R$  & $V_i$ & $X_i$ & $\epsilon$& $\Delta$ & $n$ &  $\alpha$ & $d$ & $\mathcal{M}$ & $f$ & $\beta$ & $A_x$   
\end{tabular}
}}\newline \vspace{0.2cm}

Let us first check that we can indeed apply Lemma~\ref{lem: embedding lemma}.
Note that for each $ij\in E(R')$ with $i \in [2r]\setminus [r]$,
\begin{eqnarray*}
e_{G(s)}(V_i, V_j)  & \ge & e_{G}(V_i,V_j) - \Delta\sum_{v\in V_i} |\{ s'\in [s] : v\in \phi'_{s}(H_{s'}[Y^{H_{s'}}\cup Z^{H_{s'}}])\}| \nonumber \\ 
&\stackrel{\ref{main lem Phi 3}}{\ge}& e_{G}(V_i,V_j) - \Delta \epsilon^{1/8} n|V_i| /r
\stackrel{\text{(A4)$_{\ref{embedone}}$} }{\geq} (1- \epsilon^{1/9})e_{G}(V_i,V_j).
\end{eqnarray*}
Thus \eqref{eq: admits partition} with Proposition~\ref{prop: reg subgraph} implies that  (A1)$_{\ref{lem: embedding lemma}}$ of Lemma~\ref{lem: embedding lemma} holds.
Again \eqref{eq: admits partition} implies that (A2)$_{\ref{lem: embedding lemma}}$ holds.
Conditions (A3)$_{\ref{lem: embedding lemma}}$ and  (A4)$_{\ref{lem: embedding lemma}}$ are obvious from (A1)$_{\ref{embedone}}$, \ref{main lem X3} and the definition of $\mathcal{N}_{H_{s+1}}$. Moreover, \eqref{eq: size Ax} implies that (A5)$_{\ref{lem: embedding lemma}}$ also holds.
 Thus by Lemma~\ref{lem: embedding lemma}, we obtain an embedding $\psi': H_{s+1}[ Y^{H_{s+1}}\cup Z^{H_{s+1}}] \rightarrow G(s)[U]$ satisfying the following. 
\begin{enumerate}
\item[(P1)$^{s+1}_{\ref{embedone}}$] For each $x\in  Y^{H_{s+1}}\cup Z^{H_{s+1}}$, we have $\psi'(x)\in B_x$,
\item[(P2)$^{s+1}_{\ref{embedone}}$] for each $e\in E(\mathcal{N}_{H_{s+1}})$, we have
$|N_{G}( \psi'(e)) \cap V_{f_{H_{s+1}}(e)} | \geq  (d^3/2)^{\Delta} |V_{f_{H_{s+1}}(e)}|$.
\end{enumerate}
Let $\phi'_{s+1} := \phi_{s} \cup \psi \cup \psi'.$  By \eqref{eq: def phi' pj} with the definitions of $G(s)$ and $B_x$,
this implies ($\Phi'$1)$^{s+1}_{\ref{embedone}}$ and ($\Phi'$6)$^{s+1}_{\ref{embedone}}$. As $d\ll 1$, (P2)$^{s+1}_{\ref{embedone}}$ implies ($\Phi'$2)$^{s+1}_{\ref{embedone}}$, and the definitions of $B_x$ and $V^{\rm bad}_i$ with (P1)$^{s+1}_{\ref{embedone}}$ and \ref{main lem Phi 3} imply ($\Phi'$3)$^{s+1}_{\ref{embedone}}$.
Property (P1)$^{s+1}_{\ref{embedone}}$ and \eqref{eq: size Ax} imply that  ($\Phi'$4)$^{s+1}_{\ref{embedone}}$ holds.  ($\Phi'$5)$^{s+1}_{\ref{embedone}}$ is obvious from \eqref{eq: def phi' pj}.
By repeating this for each $s \in [\kappa-1]$, we can obtain our desired packing $\phi':= \phi'_{\kappa}$. Since ($\Phi'$1)$^{\kappa}_{\ref{embedone}}$--($\Phi'$5)$^{\kappa}_{\ref{embedone}}$
imply that $\phi'$ is a packing of $\cH^{\kappa}$ into $G[U]\cup F'$ satisfying \ref{main lem phi' 1}--\ref{main lem phi' 4}, this proves the claim.
\end{proof}
\vspace{0.1cm}

\noindent {\bf Step 4. Packing a $3$-independent set $W^H  \subseteq X^H$ into $U \cup U_0$.} 
In the previous step, we constructed a function $\phi'$ packing $\{H[Y^H\cup Z^H\cup A^H]: H\in \cH\}$ into $G[U]\cup F'$. However, for each graph $H\in \cH$, 
the set $\phi'(H)$ only covers a small part of $U$.
 Eventually we need to cover every vertex of $G$ with a vertex of $H$.
 Hence, for each $H\in\cH$ we will choose a subset $W^H \subseteq X^H$ of size exactly $|U\cup U_0| - |Y^H\cup Z^H|$, and we will construct a function $\phi''$ which packs $\{H[W^H] : H\in \cH\}$ into $G[U \cup U_0]$.
 As later we will extend $\phi'\cup \phi''$ into a packing of $\cH$ into $G\cup F \cup F'$,
we again have to make sure that for any $x\in X^H_i\setminus W^H$ with neighbours in $W^H$, there is a sufficiently large set of candidates to which $x$ can be embedded.
In other words, the set $V_i\cap N(\phi''(N_H(x)\cap W^H))$ needs to be reasonably large. 
To achieve this, we choose $W^H$ to be a $3$-independent set, so $|N_H(x)\cap W^H|\leq 1$, and we will map each vertex $y\in N_H(x)\cap W^H$ into a vertex $v$ which has a large neighbourhood in $V_i$.

Accordingly, for all $H\in \cH$ and $i\in [r]$, we choose a subset $W^H_{i} \subseteq X^{H}_{i}$ satisfying the following:
\begin{enumerate}[label=\text{\rm(W{\arabic*})$_{\ref{embedone}}$}]
\item \label{main W 1} $\bigcup_{i\in  [r]} W^H_i$ is a $3$-independent set of $H$,
\item \label{main W 2} for each $i\in [r]$, we have
$$|W^H_i| =
 |X^H_i| - n' \stackrel{\ref{main lem X5}}{=} n_i - n'- |Y^H_i|- |Z^H_i| \pm \eta^{1/6}n
\stackrel{\eqref{eq: ni def},\eqref{eq: U' properties}, \ref{main lem X5}}{=} \frac{(1\pm \epsilon^{1/4})n}{Tr} . $$
\item \label{main W 3} $\bigcup_{i\in  [r]} W^H_i \cap N^2_H(Z^{H}) = \emptyset.$
\end{enumerate}
Indeed, the following claim ensures that there exist such sets $W^H_i$.
\begin{claim}\label{cl: W exists}
For all $H\in \cH$ and $i\in [r]$, there exists $W^{H}_i\subseteq X^H_i$ such that \ref{main W 1}--\ref{main W 3} hold.
\end{claim}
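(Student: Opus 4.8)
The plan is to construct each $W^H_i$ by a greedy/random selection inside $X^H_i$, while avoiding the small forbidden region $N^2_H(Z^H)$. First I would estimate $|X^H_i \setminus N^2_H(Z^H)|$. By \ref{main lem X6} we have $|Z^H| \le 4\Delta^{3k^3}\eta^{0.9}n$, and since $\Delta(H)\le \Delta$ by \rm(A1)$_{\ref{embedone}}$, the set $N^2_H(Z^H)$ has size at most $(\Delta^2+\Delta+1)|Z^H| \le 5\Delta^{3k^3+2}\eta^{0.9}n \le \eta^{1/6}n$ (using $\eta \ll 1/k$). On the other hand, by \ref{main lem X5} and \eqref{eq: tilde ni def}, \eqref{eq: size n_i}, \eqref{eq: ni def} we have $|X^H_i| \ge n_i - n' - |Y^H_i| - |Z^H_i| - \eta^{1/6}n$, and since $n_i - n' = |U_i| + |U'_i| = (1\pm 3\epsilon)n/(Tr)$ while $|Y^H_i|, |Z^H_i|, \eta^{1/6}n$ are all $o(n/(Tr))$ after accounting for $1/T \ll 1$... actually more carefully: $|Y^H_i| \le 2\epsilon^{1/3}n/r$ by \ref{main lem X5} and $|Z^H_i| \le |Z^H| \le \eta^{1/6}n$, so the target value $|X^H_i| - n'$ from \ref{main W 2} is of order $n/(Tr)$ and is certainly at most $|X^H_i|$ (since $n' = |V_i| \le |X^H_i|$ would need checking, but \ref{main W 2} gives $|W^H_i| = |X^H_i| - n'$, so we need $|X^H_i| \ge n'$, which follows from \ref{main lem X5} combined with $n_i - |Y^H_i| - |Z^H_i| \ge n' $ since $n_i = n' + |U_i| + |U'_i|$ and $|U_i| + |U'_i| \gg |Y^H_i| + |Z^H_i| + \eta^{1/4}n$ once we use $\epsilon \ll 1/T$ — wait, $|U'_i|$ can be as large as $2\epsilon^{3/4}n/r$ and $|U_i| = (1\pm 2\epsilon)n/(Tr)$, and $|Y^H_i| \le 2\epsilon^{1/3}n/r$ which is much larger than $|U_i|$; so this needs $\epsilon^{1/3}/r < 1/(Tr)$, i.e. $T < \epsilon^{-1/3}$, which holds since $\epsilon \ll 1/T$).

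So the counting works out: the target size $m_i := |X^H_i| - n'$ satisfies $0 \le m_i \le |X^H_i| - |N^2_H(Z^H) \cap X^H_i| $ with plenty of room. Now within $X^H_i \setminus N^2_H(Z^H)$, which has size at least, say, $m_i + \eta^{1/7}n$, I would greedily pick vertices one at a time to form a $3$-independent set of $H$: each chosen vertex forbids at most $\Delta^2 + \Delta \le \Delta^2+\Delta$ further vertices (those within distance $2$) from being chosen, so after picking $j$ vertices at most $j(\Delta^2+\Delta)$ additional vertices are excluded; since $m_i(\Delta^2+\Delta) = o(n)$ and we have surplus $\ge \eta^{1/7}n \gg m_i\Delta^2$, the process never gets stuck and produces a $3$-independent set $W^H_i$ of exactly the required size $m_i$ avoiding $N^2_H(Z^H)$. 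This gives \ref{main W 2} and \ref{main W 3} directly, and \ref{main W 1} follows because each $W^H_i$ is $3$-independent, but we also need the union $\bigcup_i W^H_i$ to be $3$-independent; this I would handle by doing the greedy selection jointly across all $i \in [r]$ — process $i = 1, 2, \dots, r$ in turn, and when selecting inside $X^H_i$ also avoid $N^2_H(\bigcup_{i' < i} W^H_{i'})$. Since $\sum_i m_i \le |X^H| \le n$ and the total forbidden set built up this way has size at most $(\Delta^2+\Delta)\sum_i m_i$, this is fine provided $X^H_i$ is large enough, but we have only the bound $|X^H_i| \approx n/(Tr)$ and the forbidden set from earlier classes could be as large as $(\Delta^2+\Delta)n \gg |X^H_i|$.

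\textbf{Main obstacle.} The genuine difficulty is precisely this last point: making $\bigcup_i W^H_i$ globally $3$-independent while each $W^H_i$ must meet a prescribed size inside $X^H_i$. The resolution I would use is that $H$ has bounded degree, so a $3$-independent set is just a $2$-independent set in $H^2$ (the square), which still has bounded degree $\le \Delta^2$. By \eqref{eq: kappa def}-type considerations this is not needed; rather, the clean fix is Lemma~\ref{eq: k-independent set}: apply it with $H$, $\Delta$, $3$ playing the roles of $H$, $\Delta$, $k$, and $X^H_i \setminus N^2_H(Z^H)$ playing the role of $X$. But Lemma~\ref{eq: k-independent set} produces an independent set inside a \emph{single} set $X$, not a union respecting class sizes. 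So instead I would phrase it as: let $X := \bigcup_{i\in[r]} (X^H_i \setminus N^2_H(Z^H))$; we want a $3$-independent set $W^H$ with $|W^H \cap X^H_i| = m_i$ for each $i$. Run the greedy algorithm of Lemma~\ref{eq: k-independent set} but, when it is time to fill class $i$ (processing classes in order), restrict the candidate pool to $X^H_i$ minus the already-forbidden vertices; the forbidden set has size $\le 3(\Delta^2)\sum_{i' \le i} m_{i'}$, wait — we need $|X^H_i| > m_i + 3\Delta^2 \sum_{i'<i} m_{i'}$, which fails. The actual correct argument (and the one the authors intend, "simple greedy algorithm, we omit the proof") is: $|X^H_i| \ge m_i + n'$, and $n' = |V_i| = (1-1/T\pm 2\epsilon)n/r$, which is a constant fraction of $n/r$ — and $\sum_{i'<i} m_{i'} \le \sum_{i'} m_{i'} = |U\cup U_0| - |Y^H\cup Z^H| \le |U| + |U_0| \le 2\epsilon n$; hence the forbidden set from earlier classes has size $\le 3\Delta^2 \cdot 2\epsilon n = 6\Delta^2\epsilon n$, which since $\epsilon \ll 1/r$ (hmm, we have $\eta \ll 1/r$ but $\epsilon$ versus $1/r$: from the hierarchy $\epsilon \ll 1/T \ll \alpha \ll d \ll 1/k$ and $\eta \ll 1/r \ll \sigma$, so $1/r \ll \sigma$ but no stated relation $\epsilon$ vs $1/r$ — however $\epsilon \ll 1/T$ and $T$ is free, and $1/r$ is also essentially free with $\eta \ll 1/r$, so we may assume $\epsilon \ll 1/r$). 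Thus $6\Delta^2 \epsilon n < n'/2 \le |X^H_i|/2$, and the greedy selection inside $X^H_i$ always has room. Combining: process $i=1,\dots,r$; at step $i$, greedily pick $m_i$ vertices of $X^H_i$, each within distance $\ge 3$ in $H$ from all previously picked vertices and from $N^2_H(Z^H)$-forbidden vertices; the pool never empties by the estimate above. The resulting $W^H := \bigcup_i W^H_i$ satisfies \ref{main W 1}, \ref{main W 2}, \ref{main W 3}. I would then remark that the routine verification is omitted, matching the paper's style.
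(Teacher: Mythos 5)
Your overall strategy (process $i=1,\dots,r$ in order, pick a $3$-independent set of the right size inside each $X^H_i$ avoiding $N^2_H(Z^H)$ and the $2$-neighbourhoods of earlier choices, delegating the final step to Lemma~\ref{eq: k-independent set}) is the right one, and matches the paper's. But the crucial structural observation that makes the counting work is missing, and your attempted numerical fix does not hold up.

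First, the arithmetic: you claim $\sum_{i'} m_{i'} = |U\cup U_0| - |Y^H\cup Z^H| \le 2\epsilon n$. But $|U| = \sum_{i\in[r]}|U_i| = (1\pm 2\epsilon)n/T$, and since $\epsilon \ll 1/T$ this is \emph{much larger} than $\epsilon n$; so $\sum_{i'} m_{i'} \approx n/T$, not $O(\epsilon n)$. Consequently the global forbidden set from earlier classes is $\Theta(\Delta^2 n/T)$, which dwarfs $|X^H_i| \approx n/r$ (in the application $r\gg T$; note also that your hoped-for relation $\epsilon \ll 1/r$ is actually false — in the proof of Theorem~\ref{thm:main} one has $r \ge M'$ with $1/M' \ll \epsilon$, so $\epsilon \gg 1/r$). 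So the naive greedy pool does empty out.

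What rescues the argument — and this is the idea you need and the paper uses — is that \emph{not all earlier classes can contaminate $X^H_i$}. By \ref{main lem X3}, $H[X^H]$ admits the vertex partition $(Q, X^H_1,\dots,X^H_r)$, and by \ref{main lem X6}, $N^1_H(X^H)\setminus X^H \subseteq Z^H$. Hence any length-$\le 2$ path in $H$ from a vertex of $X^H_i$ to a vertex of $W^H_{i'}\subseteq X^H_{i'}$ either passes through $Z^H$ (and the endpoint in $X^H_i$ then lies in $N^1_H(Z^H)$, a set of size $O(\Delta^{3k^3+1}\eta^{0.9}n)$) or stays inside $X^H$, forcing $i'\in N_Q(i)$ because $Q$ is a $K_k$-factor. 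So at most $k-1$ earlier classes $i'$ can contribute via paths inside $X^H$, giving a forbidden set of size at most $O(k\Delta^2 n/(Tr))$ inside $X^H_i$ — small compared to $|X^H_i|\approx n/r$ since $1/T\ll 1$. This is exactly the inclusion
$$\bigcup_{i'\in[i-1]}N^2_H(W^H_{i'})\cap X^H_i \subseteq N^1_H(Z^H)\cup\bigcup_{i'\in N_Q(i)\cap[i-1]}N^2_H(W^H_{i'})$$
that the paper's proof uses before invoking Lemma~\ref{eq: k-independent set}. Without it your pool estimate is off by a factor of order $r/k$, and the greedy construction as you describe it does not go through.
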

\begin{proof}
We fix $H\in \cH$.
Assume that for some $i\in  [r]$, we have already defined $W^H_{1},\dots, W^H_{i-1}$ satisfying the following.
\begin{enumerate}[label=\text{\rm(W$'${\arabic*})$^{i-1}_{\ref{embedone}}$}]
\item $\bigcup_{i'\in  [i-1]} W^H_{i'}$ is a $3$-independent set of $H$,
\item for each $i'\in  [i-1]$, we have $|W^H_{i'}| = |X^H_{i'}| - n'=  \frac{(1\pm \epsilon^{1/4})n}{Tr}$,
\item $\bigcup_{i'\in  [i-1]} W^H_{i'} \cap N^2_H(Z^{H}) = \emptyset.$
\end{enumerate}
Consider 
$W'^{H}_i:= X^{H}_i \setminus ( \bigcup_{i'\in  [i-1]} N^{2}_H(W^{H}_{i'}) \cup N^2_H(Z^{H}) ).$ Note that \ref{main lem X6} implies that 
$|N^2_H(Z^{H}) |\leq 8\Delta^{3k^3+2} \eta^{0.9}n$.
Also, \ref{main lem X3} with \ref{main lem X6} implies that 
$$\bigcup_{i'\in [i-1]} N^{2}_H(W^{H}_{i'}) \cap  X^{H}_i
\subseteq N^1_H(Z^{H})  \cup \bigcup_{ i'\in N_{Q}(i) \cap [i-1]} N^{2}_H(W^{H}_{i'}).$$
Thus 
\begin{eqnarray*}
|W'^{H}_i| &\geq& |X^{H}_{i}| - |N^2_H(Z^{H})| - \sum_{ i'\in N_Q(i)\cap [i-1]}|N^{2}_{H}(W^{H}_{i'})|  \\
&\stackrel{\text{(W$'$2)$^{i-1}_{\ref{embedone}}$}}{\geq }&
|X_i^H| - 8\Delta^{3k^3+2}\eta^{0.9} n  - \frac{2k\Delta^2 n}{Tr} \stackrel{\ref{main lem X5},\eqref{eq: size tilde ni}}{\geq}  \Delta^3( |X^H_i|-n') .
\end{eqnarray*}
Thus, by Lemma~\ref{eq: k-independent set}, $W'^{H}_i$ contains a $3$-independent set $W^{H}_i$ of size $|X^H_{i}| - n'$.
Then, by the choice of $W^{H}_i$, (W$'$1)$^{i}_{\ref{embedone}}$--(W$'$3)$^{i}_{\ref{embedone}}$ hold. 
By repeating this for all $i\in [r]$ in increasing order, we obtain $W^{H}_i$ satisfying 
(W$'$1)$^{r}_{\ref{embedone}}$--(W$'$3)$^{r}_{\ref{embedone}}$, and thus  satisfying \ref{main W 1}--\ref{main W 3}. This proves the claim.
\end{proof}
For all $H\in \cH$ and $i\in  [r]$, let 
$W^{H}:=\bigcup_{i'\in  [r]} W^H_{i'} \text{ and } W_i:=\bigcup_{H\in \cH} W^{H}_i,$ where we consider the sets $V(H)$ to be disjoint for different $H\in \cH$. 
Note that for all $H\in \cH$ and $i\in [r]$, Claim~\ref{cl: D f exists} implies that
$0\leq \sum_{j\in N_{D}^+(i)} f^H(\vec{ij}) \leq r \eta^{1/7} n.$
For all $H\in \cH$ and $i\in [r]$, we choose a partition $W^{H,F}_i, W^{H,U'}_i, W^{H,D}_i$  of $W^H_i$ such that
\begin{align}\label{eq: W sizes sizes}
|W^{H,U'}_i| = |U'_i| \enspace \text{and} \enspace |W^{H,D}_i| =\sum_{j\in N_{D}^+(i)} f^H(\vec{ij})\leq r\eta^{1/7}n.
\end{align}
Such partitions exist by \eqref{eq: U' properties}, \ref{main W 2} and the fact that $\eta \ll \epsilon \ll 1/T$. 
For each $S\in \{F,D,U'\}$, we let $W^{H,S}:= \bigcup_{i\in [r]} W_i^{H,S}$.

We now construct a function $\phi''$ which maps all the vertices of $W^H$ into $U_0\cup (U\setminus \phi'( Y^{H}\cup Z^{H})) $ for each $H\in \cH$. 
(In Step 6 we will then apply Theorem~\ref{Blowup} to embed all the vertices of $X^{H}\setminus W^{H}$ into $V$.)
We will define $\phi''$ separately for $W^{H,F}, W^{H,D}$ and $W^{H, U'}.$
We first cover the `exceptional' set $U_0$ with $W^{H,U'}.$
\eqref{eq: W sizes sizes} implies that for all $H\in \cH$ and $i\in [r]$,  there exists a bijection $\phi''^{H}_{U',i}$ from $W^{H,U'}_i$ to $U'_i$.
We let $\phi''_{U'} := \bigcup_{H\in \cH}\bigcup_{i\in [r]} \phi''^{H}_{U',i}$.
Then \eqref{eq: U' properties} implies the following.
\begin{equation}\label{eq: phi'' 1}
\begin{minipage}[c]{0.9\textwidth}\em
For all $i\in [r]$ and $H\in \cH$, the function $\phi''_{U'}$ is bijective between $W^{H,U'}_i$ and $U'_i$. Moreover, for all $x\in W^{H,U'}_i$ and $j\in N_{Q}(i)$, we have $d_{G,V_j}(\phi''_{U'}(x))\geq d^3 n'.$
\end{minipage}
\end{equation}
We intend to embed the neighbours of $W_i^H$ into $\bigcup_{j \in N_Q(i)} V_j.$
Thus it is natural to embed $W_i^H$ into $U_i$ and make use of {\rm (A6)}$_{\ref{embedone}}$. 
This is in fact what we will do for $W_i^{H,F}.$
However, the vertices of $W_i^{H,D}$ will first be mapped to a suitable set of vertices in $U^D_j(i) \subseteq U_j$ for $j \in N_D^+(i)$. 
The definition of $D$ and $f^H$ will ensure that the remaining uncovered part of each $U_j$ matches up exactly with the size of each $W_j^{H,F}.$

By \eqref{eq: UDji size}, for all $\vec{ij}\in E(D)$ and $H\in \cH$, we have 
$$|U^D_j(i)\setminus \phi'(Y^H\cup Z^H)| \geq n/(2Tr) - |Y_j^H\cup Z_j^H| \stackrel{\ref{main lem X5},\ref{main lem X6}}{\geq}  |U_j|/3.$$
For $i\in [r]$ and $H\in \cH$, we let
$$b^H_{i} := \sum_{j\in N_D^{-}(i) } f^H(\vec{ji}) \stackrel{{\rm Claim \text{ }}\ref{cl: D f exists}}{\leq} r \eta^{1/7} n\leq \eta^{1/10} |U_i|.$$
Thus, for each $i\in [r]$, we can apply Lemma~\ref{lem: choice} with the following objects and parameters. \newline

{\small
\noindent
{ 
\begin{tabular}{c|c|c|c|c|c|c|c|c|c|c}
object/parameter& $\kappa$ & $r$&  $H \in \cH$ & $U_i$ & $j\in [r]$ & $ U_i^D(j)\setminus  \phi'(Y^H\cup Z^H) $ & $\eta^{1/10}$ &  $ f^{H}(\vec{ji}) $ & $b^{H}_i$ & $1/3$  \\ \hline
playing the role of & $s$ & $r$ & $ i\in [s]$ & $ A$ & $j\in [r]$ & $ A_{i,j}$ & $\epsilon$  & $m_{i,j} $ & $\sum_{j\in [r]} m_{i,j}$ & $d$ 
\end{tabular}
}}\newline \vspace{0.2cm}

(Recall that $f^H(\vec{ji})= 0$ if $\vec{ji}\notin E(D)$.)
Then we obtain sets $U^H_{i,j}\subseteq U_i$ satisfying the following for each $i\in [r]$, where  $U_i^H := \bigcup_{j\in [r]} U_{i,j}^H$.
\begin{enumerate}[label=\text{\rm(U{\arabic*})$_{\ref{embedone}}$}]
\item\label{main lem U1} For each $j \in [r]$ and $H\in \cH$, we have $|U^H_{i,j}|= f^H(\vec{ji})$ and $U^H_{i,j}\subseteq U_i^{D}(j)\setminus  \phi'(Y^H\cup Z^H)$,
\item\label{main lem U2} for $j\neq j' \in [r]$ and $H \in \cH$, we have $U^H_{i,j}\cap U^H_{i,j'}=\emptyset$,
\item\label{main lem U3} for each $v\in U_i$, we have $|\{ H\in \cH: v\in U_i^{H}\}| \leq \eta^{1/20} |\cH| \stackrel{\eqref{eq: kappa def}}{\leq} \eta^{1/20} n$.
\end{enumerate}
Now for all $H\in \cH$ and $i\in [r]$, we partition 
$W_i^{H,D}$ into $W_{i,1}^{H,D},\dots, W_{i,r}^{H,D}$ in such a way that 
$|W_{i,j}^{H,D}| = f^H(\vec{ij})$. Clearly, this is possible by \eqref{eq: W sizes sizes}. 
Thus \ref{main lem U1} implies that for all $(i,j)\in [r]\times [r]$ and $H\in \cH$, we have
$|W_{i,j}^{H,D}| = f^H(\vec{ij}) = |U^H_{j,i}| $.
Thus there exists a bijection $\phi''^H_{D,i,j}: W_{i,j}^{H,D}\rightarrow U^H_{j,i}$.
Let $\phi''_{D} := \bigcup_{ (i,j)\in [r]\times[r]}\bigcup_{H\in \cH} \phi''^{H}_{D,i,j}$.
Then, for $\vec{ij}\in E(D), H\in \cH$ and $y \in W_{i,j}^{H,D}$, we have that 
$$\phi''_{D}(y) \in U^H_{j,i} \subseteq U_j^D(i)\setminus \phi'(Y^H\cup Z^H).$$
Thus, \eqref{eq: UDji def} with  \ref{main lem U1} and \ref{main lem U2} implies the following. 
\begin{equation}\label{eq: phi'' 2}
\begin{minipage}[c]{0.9\textwidth}\em
For each $H\in \cH$, the function $\phi''_{D}$ is bijective between $\bigcup_{i\in[r]} W_{i}^{H,D} =W^{H,D}$ and $\bigcup_{i\in [r]} U_i^H$. Moreover, for all $x\in W_{i}^{H,D}$ and $j'\in N_{Q}(i)$, we have $d_{G,V_{j'}}(\phi''_{D}(x))\geq d^3 n'/2.$
\end{minipage}
\end{equation}
Now, for all $H\in \cH$ and $i\in [r]$
\begin{eqnarray*}
|W^{H,F}_i| &=& |W^{H}_i| - |W^{H,U'}_i| - |W^{H,D}_i| \stackrel{\eqref{eq: W sizes sizes},\ref{main W 2}}{=} 
(|X_i^H| - n') - |U'_i| - \sum_{j\in N_{D}^+(i)} f^H(\vec{ij}) \\
&\stackrel{\ref{main lem X5}}{=}&
\widehat{n}^H_i  - \sum_{j\in N_{D}^+(i)} f^H(\vec{ij}) - |Y^H_i|-|Z^H_i| - n' - |U'_i|  \\
&\stackrel{{\rm Claim\text{ } }\ref{cl: D f exists}}{=}&
n_i- \sum_{j\in N_{D}^{-}(i)} f^H(\vec{ji})  - |Y^H_i|-|Z^H_i| - n' - |U'_i|  \\
&\stackrel{\eqref{eq: ni def},\ref{main lem U1}}{=}&
|U_i|- |Y^H_i| - |Z^H_i| - \sum_{j\in N_{D}^{-}(i)} |U^H_{i,j}|
\stackrel{\ref{main lem phi' 3}}{=} |U_i \setminus (\phi'(Y^H_i\cup Z^H_i) \cup U^H_{i} )|. \\
\end{eqnarray*}
Thus, there exists a bijection $\phi''^{H}_{F,i}$ from $W^{H,F}_i$ to $U_i \setminus (\phi'(Y^H_i\cup Z^H_i) \cup U^H_{i} )$. 
Let $\phi''_F:=\bigcup_{H\in \cH}\bigcup_{i\in [r]}  \phi''^{H}_{F,i}.$
Then (A6)$_{\ref{embedone}}$ implies the following.
\begin{equation}\label{eq: phi'' 3}
\begin{minipage}[c]{0.9\textwidth}\em
For all $H\in \cH$ and $i\in [r]$, the function $\phi''_{F}$ is bijective between $W_i^{H,F}$ and $U_i \setminus (\phi'(Y^H_i\cup Z^H_i) \cup U^H_{i} )$. Moreover,
for all $x\in W^{H,F}_i$ and $j\in N_{Q}(i)$, we have $d_{F,V_{j}}(\phi''_{F}(x))\geq d^3 n'.$
\end{minipage}
\end{equation}
We define 
\begin{equation} \label{hello}
\phi'' :=  \phi''_{U'} \cup \phi''_D  \cup \phi''_F \enspace \text{and} \enspace \phi_*:= \phi'\cup \phi''.
\end{equation}
Then \eqref{eq: phi'' 1}, \eqref{eq: phi'' 2} and \eqref{eq: phi'' 3} imply that
$\phi''$ is bijective between $W^H$ and $(U\cup U_0) \setminus \phi'(Y^H\cup Z^H)$, when restricted to $W^H$ for each $H\in \cH$. 
Thus, we know that 
\begin{equation}\label{eq: phi* injective}
\begin{minipage}[c] {0.8\textwidth}
$\phi_*$ is bijective between $W^H \cup Y^H \cup Z^H \cup A^H$ and $U\cup U_0\cup V_0$ for each $H\in \mathcal{H}$.
\end{minipage}
\end{equation}
Moreover, \eqref{eq: phi'' 1}, \eqref{eq: phi'' 2} and \eqref{eq: phi'' 3} imply that the following hold for all $i\in [r]$ and $H\in \cH$.
\begin{enumerate}[label=\text{\rm($\Phi_*${\arabic*})$_{\ref{embedone}}$}]
\item \label{main lem Phi*1} If $x\in W_i^{H,F}$, then $\phi_*(x)\in U$ and, for each $j\in N_{Q}(i)$, we have $d_{F,V_j}(\phi_*(x))\geq d^3 n'$,
\item \label{main lem Phi*2} if $x\in W_i^{H,D}$, then $\phi_*(x)\in U$ and, for each $j\in N_{Q}(i)$, we have $d_{G,V_j}(\phi_*(x))\geq d^3 n'/2$,
\item \label{main lem Phi*3} if $x\in W_i^{H,U'}$, then $\phi_*(x) \in U_0$ and, for each $j\in N_{Q}(i)$, we have $d_{G,V_j}(\phi_*(x))\geq d^3 n'$.
\end{enumerate}
Furthermore, \ref{main lem phi' 2} with \ref{main lem U3} implies that
\begin{enumerate}[label=\text{\rm($\Phi_*${\arabic*})$_{\ref{embedone}}$}]
 \setcounter{enumi}{3}
\item \label{main lem Phi*4} for $u\in U$, we have 
$|\{H\in \cH : u\in \phi_*(Y^H\cup Z^H \cup W^{H,D})\}|\leq 2\epsilon^{1/8}n/r.$
\end{enumerate}
\vspace{0.1cm}

\noindent {\bf Step 5. Packing the graphs $H[X^{H}\setminus W^H]$ into internally regular graphs.} 
Note that (X6)$_{\ref{embedone}}$ and (W3)$_{\ref{embedone}}$ together imply that $N_H(W^H)\cap (Y^H\cup Z^H \cup A^H)=\emptyset$ for each $H\in \cH$. 
This implies that $\phi_*$ is a function packing $\{ H[Y^H\cup Z^H\cup W^H\cup A^H] : H\in \cH\}$ into $G[U \cup U_0]\cup F'$.
We wish to pack the remaining part $H[X^{H}\setminus W^H]$ of each $H\in \cH$ into $G[V]$ by using Theorem~\ref{Blowup}. 
In order to be able to apply Theorem~\ref{Blowup}, we first need to pack suitable subcollections of $\cH$ into internally $q$-regular graphs. 
More precisely, for each $t\in [T]$, we will partition $\cH_{t}$ into $\cH_{t,1},\dots, \cH_{t,w}$ and apply Lemma~\ref{Pack} to the unembedded part of each graph in $\cH_{t,w'}$ to pack all these parts into a graph $H_{t,w'}$ on $|V|$ vertices which is internally $q$-regular.
We can then use Theorem~\ref{Blowup} to pack all the $H_{t,w'}$ into $G[V]$ in Step 6.

For this purpose, we choose an integer $q$ and a constant $\xi$ such that $1/T \ll 1/q \ll \xi \ll \alpha$ and let 
\begin{align}\label{eq: w upper bound}
w:= \frac{ e(\cH) }{ (1-3\xi) T (k-1)q n/2} \stackrel{\text{(A2)$_{\ref{embedone}}$}}{\leq}   \frac{(1-\nu/2)\alpha n'}{ qT}.
\end{align}
By using \eqref{eq: number of edges in H} and \eqref{eq: edges in cHi}, for each $t\in [T]$, we can further partition $\cH_t$ into 
$\cH_{t,1},\dots, \cH_{t,w}$ such that for each $(t,w') \in [T]\times [w]$, we have
\begin{eqnarray}\label{eq: edgesum}
e(\mathcal{H}_{t,w'}) = (1-3\xi) (k-1) qn/2 \pm 2 \Delta n
= (1 - 3\xi \pm \xi /2)(k-1)qn/2.
\end{eqnarray}
By (A1)$_{\ref{embedone}}$, we have
\begin{eqnarray}\label{eq: q upper bound} 
 |\mathcal{H}_{t,w'}| \le 2(k-1) q \le (q \xi)^{3/2}. 
\end{eqnarray} 
For all $H \in \mathcal{H}$ and $i\in  [r]$, let 
$\widetilde{X}^{H}_i:= X_i^H\setminus W_i^H \text{ and } \widetilde{X}^H := \bigcup_{i\in  [r]} \widetilde{X}^H_i.$ 
Thus, by \ref{main W 2} we have $|\widetilde{X}^H_i|=n'$ for all $H\in \cH$ and $i\in [r]$. Moreover, for all $t\in [T]$, $w'\in [w]$ and $ij\in E(Q)$, we have
\begin{eqnarray}\label{eq: number or HH edge}
\hspace{-0.5cm} \sum_{H\in \cH_{t,w'}} e(H[\widetilde{X}^H_i, \widetilde{X}^H_{j}]) \hspace{-0.4cm} &=&  \hspace{-0.4cm} \sum_{H\in \cH_{t,w'}} (e(H[X^H_i, X^H_{j}]) \pm \Delta(|W_i^H|+|W_{j}^{H}|) \nonumber \\\hspace{-0.4cm}  & \stackrel{\ref{main lem X4}, \ref{main W 2}}{=} & \hspace{-0.4cm} \sum_{H\in \cH_{t,w'}}\hspace{-0.2cm} \left( \frac{2e(H) \pm \epsilon^{1/5}n }{(k-1)r} \pm \frac{3\Delta n}{Tr} \right) \stackrel{\eqref{eq: edgesum}}{=}  (1- 3\xi \pm \xi) q n'.
\end{eqnarray}
When packing $H[\tilde{X}^H]$ and $H'[\tilde{X}^{H'}]$ (say) into the same graph $H_{t,w'}$, we need to make sure that the `attachment sets' of $H[\tilde{X}^H]$ and $H'[\tilde{X}^{H'}]$ are not mapped to the same vertex sets in $H_{t,w'}.$
The attachment set for $H[\tilde{X}^H]$ contains those vertices of $\tilde{X}^H$ which have a neighbour in $W^H \cup Y^H \cup Z^H \cup A^H$ (more precisely, a neighbour in $W^H \cup Z^H$) and is defined in \eqref{eq: def NW NZ}.
Keeping these attachment sets disjoint in  $H_{t,w'}$ ensures that we can make the embedding of each $\tilde{X}^H$ consistent with the existing partial embedding of $H$ without attempting to use an edge of $F$ or $G$ twice.
For all $i\in  [r]$ and $H\in \mathcal{H}$, we let
\begin{align}\label{eq: def NW NZ}
N_{i}^{H,F} := \bigcup_{i'\in N_{Q}(i)} N_{H}^1(W_{i'}^{H,F}) \cap \widetilde{X}^{H}_i \enspace \text{ and }
N_i^{H,G} := N^1_H( Z^H \cup W^{H,D} \cup \bigcup_{i'\in N_{Q}(i)}W^{H,U'}_{i'} )\cap \widetilde{X}^H_i.
\end{align}
Note that \ref{main W 1}, \ref{main W 3} and the fact that $W^{H,F}, W^{H,D}, W^{H,U'}$ form a partition of $W^H$ implies that 
\begin{align}\label{eq NW NZ disjoint}
N_{i}^{H,F} \cap N_i^{H,G}=\emptyset.
\end{align}
Moreover, if $x\in N_i^{H,F}$ then $x$ has a unique neighbour in $W^{H,F}$. 
Similarly, if $x\in N_i^{H,G}$, then either $x$ has a unique neighbour in $W^{H,D}\cup W^{H,U'}$ or $x$ has at least one neighbour in $Z^H$ (but not both). 
Note that for $i\in  [r]$ and $H\in \mathcal{H}$,

\begin{eqnarray}\label{eq: NWH size}
|N_{i}^{H,F} \cup N_i^{H,G}|  \hspace{-0.3cm} &\leq&  \hspace{-0.3cm}
\sum_{i'\in N_{Q}(i)} \Delta( |W_{i'}^{H,F}|+|W_{i'}^{H,U'}|) + \Delta ( |Z^H| + |W^{H,D}|) \nonumber \\
 \hspace{-0.3cm} &\stackrel{\substack{\ref{main lem X6},\\ \ref{main W 2},\eqref{eq: W sizes sizes}}}{\leq} &  \hspace{-0.3cm}  \frac{2\Delta k n}{Tr}  + 4\Delta^{3k^3+1}\eta^{0.9} n +\Delta r^2 \eta^{1/7}n 
  \leq T^{-2/3} n'.
\end{eqnarray}

For each $i\in [r]$, we consider a set $\widehat{X}_i$ with $|\widehat{X}_i|=n'$ such that $\widehat{X}_1,\dots,\widehat{X}_r$ are pairwise vertex-disjoint. For each $(t,w')\in [T]\times [w]$, let $\cH_{t,w'}=:\{ H_{t,w'}^{1},\dots, H_{t,w'}^{h(t,w')}\}$.
Then, by \eqref{eq: q upper bound}, \eqref{eq: number or HH edge}, \eqref{eq: NWH size} and \ref{main lem X3}, we can apply Lemma~\ref{Pack} with the following objects and parameters for each $(t,w') \in [T]\times [w]$.\newline

{\small
\noindent
{ 
\begin{tabular}{c|c|c|c|c|c|c|c|c|c|c}
object/parameter &$H_{t,w'}^{j}[\widetilde{X}^{H_{t,w'}^{j}}]$ & $\widetilde{X}^{H_{t,w'}^{j}}_i$ & $\widehat{X}_i$ & $n'$ & $q$ &  $ \xi $ &  $ T^{-2/3} $ & $ h(t,w') $  & $N_i^{H_{t,w'}^{j},F}\cup N_i^{H_{t,w'}^{j},G}$ & $Q$  \\ \hline
playing the role of & $L_j$ & $ X_i^j$ &$V_i$ & $n$ & $q$ & $ \xi$  &  $ \epsilon$ & $ s $ & $W_i^j$ & $R$  
\end{tabular}
}}\newline \vspace{0.2cm}

Then for each $(t,w')\in [T]\times [w]$, we obtain a function $\Phi_{t,w'}$ packing $\{ H[\widetilde{X}^H] : H\in \cH_{t,w'}\}$ into some graph $H_{t,w'}$ which is internally $q$-regular with respect to the vertex partition $(Q,\widehat{X}_1,\dots, \widehat{X}_r)$.
Moreover, for all $i\in  [r]$ and $H \in \mathcal{H}_{t,w'}$ we have $\Phi_{t,w'}(\widetilde{X}^H_i) = \widehat{X}_i$ and for distinct $H, H'\in \mathcal{H}_{t,w'}$ and $i\in  [r]$, we have
\begin{eqnarray}\label{eq: NW disjoint}
\Phi_{t,w'}(N_i^{H,F} \cup N_i^{H,G}) \cap \Phi_{t,w'}(N_{i}^{H',F}\cup N_i^{H',G}) = \emptyset.
\end{eqnarray} 
Note that for all $(t,w')\in [T]\times [w]$, the graphs $H_{t,w'}$ have same vertex set $\bigcup_{i\in [r]} \widehat{X}_i$.
For all $i\in [r]$ and $(t,w') \in [T]\times [w]$, we let
\begin{align}\label{eq: hat W hat Z def}
L_i^{t,w'}:= \bigcup_{H\in \mathcal{H}_{t,w'} } \Phi_{t,w'}(N_i^{H,F}) \enspace \text{ and } \enspace M_i^{t,w'}:= \bigcup_{H\in \mathcal{H}_{t,w'} } \Phi_{t,w'}(N_i^{H,G}).
\end{align}
Then by \eqref{eq NW NZ disjoint} and \eqref{eq: NW disjoint} we have
\begin{align}\label{eq: hat W hat Z disjoint}
L_i^{t,w'}\cup M_i^{t,w'} \subseteq \widehat{X}_i \enspace \text{ and } \enspace
L_i^{t,w'}\cap M_i^{t,w'}=\emptyset.
\end{align}
By \eqref{eq: q upper bound} and \eqref{eq: NWH size},  for all $(t,w')\in[T]\times [w]$ and $i\in [r]$
 \begin{eqnarray} \label{eq: Yij size}
 |L_i^{t,w'}\cup M_i^{t,w'}| \le q^{3/2} T^{-2/3} n'\leq T^{-1/2}n'. 
 \end{eqnarray}
 \vspace{0.05cm}

\noindent {\bf Step 6. Packing the internally regular graphs $H_{t,w'}$ into $G[V]$.} In the previous step, we constructed a collection $\widehat{\mathcal{H}} := \{H_{1,1}, \dots, H_{T,w}\}$ of internally $q$-regular graphs on $|V|$ vertices.
 We now wish to apply Theorem \ref{Blowup}  to pack $\widehat{\mathcal{H}}$ into $G[V]$.
 However, our packing needs to be consistent with the packing $\phi_*$. 
Note that for each $H\in \cH$ the set $W^{H}\cup Y^{H}\cup Z^{H}\cup A^H$ consists of exactly those vertices of $H$ which are already embedded by $\phi_*$.
Thus by \ref{main lem X3}, \ref{main lem X6}, \eqref{eq: def NW NZ} and \eqref{eq: hat W hat Z def}, it follows that whenever $x \in \widehat{X}_i$ is a vertex of $H_{t,w'}$ such that the set $\Phi^{-1}_{t,w'}(x)$ of pre-images of $x$ contains a neighbour of some vertex which is already embedded by $\phi_*$, then $x \in L_i^{t,w'}\cup M_i^{t,w'}$.
 Thus in order to ensure that our packing of $\widehat{\cH}$ is consistent with $\phi_*$, for each $i\in [r]$, each $(t,w')\in [T]\times[w]$ and each $y\in L_i^{t,w'}\cup M_i^{t,w'}$ we will choose a suitable target set $A_y^{t,w'}$ of vertices of $G[V]$ and will map $y$ into this set.

For all $(t,w')\in [T]\times [w]$, $i\in  [r]$ and any vertex $y \in  L_i^{t,w'}\cup M_i^{t,w'}$, 
\eqref{eq: NW disjoint} implies that there exists a unique graph $H^{t,w'}_y\in \mathcal{H}_{t,w'}$ and a unique vertex $x^{t,w'}_y \in N_{i}^{H^{t,w'}_y,F}\cup N_i^{H^{t,w'}_y,G}$ such that $y = \Phi_{t,w'}(x^{t,w'}_y)$.  Let 
$$J^{t,w'}_y:= N_{H^{t,w'}_y}(x^{t,w'}_y) \cap  ( W^{H^{t,w'}_{y}}\cup Z^{H^{t,w'}_{y}} )=N_{H^{t,w'}_y}(x^{t,w'}_y) \cap (W^{H^{t,w'}_y}\cup Y^{H^{t,w'}_y} \cup Z^{H^{t,w'}_y}\cup A^{H^{t,w'}_y}).$$
The final equality follows from \ref{main lem X6}.
For all $(t,w')\in [T]\times [w]$, $i\in  [r]$ and any vertex $y \in  L_i^{t,w'}\cup M_i^{t,w'}$, we define the target set
$$A^{t,w'}_y:=  \left\{ \begin{array}{ll}
N_{F}(\phi_*(J^{t,w'}_y))\cap V_i & \text{ if } x^{t,w'}_y \in N_{i}^{H^{t,w'}_y,F},\\
N_G( \phi_*(J^{t,w'}_y))\cap V_i & \text{ if } x^{t,w'}_y\in N_{i}^{H^{t,w'}_y,G}.
\end{array}\right.$$
Note that $A^{t,w'}_y$ is well-defined as \eqref{eq NW NZ disjoint} implies that exactly one of the above cases holds. 
Moreover, the following claim implies that these target sets are sufficiently large.
\begin{claim}\label{cl: Aqwy size}
For all $(t,w')\in [T]\times [w]$, $i\in  [r]$ and any vertex $y \in  L_i^{t,w'}\cup M_i^{t,w'}$, we have
$$|A^{t,w'}_y|\geq d^{5\Delta} |V_i|.$$
\end{claim}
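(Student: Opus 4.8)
\textbf{Proof plan for Claim~\ref{cl: Aqwy size}.}
The plan is to verify the claimed lower bound separately in the two cases defining $A^{t,w'}_y$, namely according to whether $x^{t,w'}_y \in N_i^{H^{t,w'}_y,F}$ or $x^{t,w'}_y \in N_i^{H^{t,w'}_y,G}$. Throughout, write $H:=H^{t,w'}_y$, $x:=x^{t,w'}_y$ and $J:=J^{t,w'}_y$; recall $\phi_*=\phi'\cup\phi''$ and that $\phi_*$ is injective on $W^H\cup Y^H\cup Z^H\cup A^H$ by \eqref{eq: phi* injective}. In both cases the key point is that $J$ is small: since $\Delta(H)\le\Delta$ by (A1)$_{\ref{embedone}}$ we have $|J|\le\Delta$, and moreover by \ref{main W 1} and \ref{main W 3} (resp.\ the structure of the attachment sets in \eqref{eq: def NW NZ}) the set $J$ decomposes into at most one vertex of $W^H$ and a bounded number of vertices of $Z^H$, each of whose images under $\phi_*$ controls a large common neighbourhood in the relevant $V_j$.

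\textbf{Case 1: $x\in N_i^{H,F}$.} Here $A^{t,w'}_y=N_F(\phi_*(J))\cap V_i$. By the definition of $N_i^{H,F}$ in \eqref{eq: def NW NZ}, $x$ has a (unique, by \ref{main W 1}) neighbour $w$ in $W_{i'}^{H,F}$ for some $i'\in N_Q(i)$, and by \ref{main W 3} the only neighbours of $x$ in $W^H\cup Y^H\cup Z^H\cup A^H$ lie in $W^{H,F}$; in fact $w$ is the only such neighbour, so $J=\{w\}$. Then $\phi_*(w)=\phi''_F(w)\in U_{i'}$, and \ref{main lem Phi*1} gives $d_{F,V_i}(\phi_*(w))\ge d^3 n'$ since $i\in N_Q(i')$. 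As $|V_i|=n'$ and $d\ll 1/k$, this yields $|A^{t,w'}_y|=d_{F,V_i}(\phi_*(w))\ge d^3 n'\ge d^{5\Delta}|V_i|$.

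\textbf{Case 2: $x\in N_i^{H,G}$.} Here $A^{t,w'}_y=N_G(\phi_*(J))\cap V_i$. By \eqref{eq: def NW NZ}, $x\in N_i^{H,G}$ means $x$ has a neighbour either in $W^{H,D}$, or in $\bigcup_{i'\in N_Q(i)}W^{H,U'}_{i'}$, or in $Z^H$ (and by \ref{main W 1}, \ref{main W 3} and \eqref{eq NW NZ disjoint} not in $W^{H,F}$, and not simultaneously a $W$-neighbour and a $Z$-neighbour). If $x$ has its unique such neighbour $w\in W^{H,D}_{i'}$ (necessarily $i'\in N_Q(i)$), then $J=\{w\}$ and \ref{main lem Phi*2} gives $d_{G,V_i}(\phi_*(w))\ge d^3 n'/2$; similarly if $w\in W^{H,U'}_{i'}$ with $i'\in N_Q(i)$, then $J=\{w\}$ and \ref{main lem Phi*3} gives $d_{G,V_i}(\phi_*(w))\ge d^3 n'$. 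In either subcase $|A^{t,w'}_y|\ge d^3 n'/2\ge d^{5\Delta}|V_i|$. Otherwise $J\subseteq Z^H$ with $1\le|J|\le\Delta$; here I invoke \ref{main lem phi' 1}: writing $e:=e_{H,x}=N_H(x)\cap Z^H\in E(\mathcal N_H)$ (cf.\ \eqref{eq: NH def}) and noting $f_H(e)=i$ since $x\in X_i^H=\widetilde X_i^H\cup W_i^H$ but in fact $x\in\widetilde X_i^H$, property \ref{main lem phi' 1} gives $|N_G(\phi'(e))\cap V_i|\ge d^{5\Delta}|V_i|$. Since $\phi_*$ restricted to $Z^H$ equals $\phi'$, and $J\supseteq e$ would only shrink the common neighbourhood — but in fact $J=e$ here as $J=N_H(x)\cap(W^H\cup Z^H)$ and we are in the subcase with no $W$-neighbour — we get $A^{t,w'}_y=N_G(\phi_*(e))\cap V_i=N_G(\phi'(e))\cap V_i$ of size at least $d^{5\Delta}|V_i|$, as claimed.

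\textbf{Main obstacle.} The delicate point is the bookkeeping in Case~2 ensuring that $J$ really consists \emph{either} of a single $W$-vertex \emph{or} of a subset of $Z^H$ but not a mix, so that the appropriate one of \ref{main lem phi' 1}, \ref{main lem Phi*2}, \ref{main lem Phi*3} applies cleanly; this is exactly what \ref{main W 1}, \ref{main W 3}, \eqref{eq NW NZ disjoint} and the disjointness of $W^{H,F},W^{H,D},W^{H,U'}$ were set up to guarantee, together with the remark recorded just after \eqref{eq NW NZ disjoint}. Apart from tracking these disjointness properties, the argument is a direct case check and the constant $d^{5\Delta}$ is chosen so that $d^3/2\ge d^{5\Delta}$ with room to spare.
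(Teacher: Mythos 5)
Your proposal is correct and follows essentially the same case analysis as the paper: Case~1 uses \ref{main lem Phi*1} for a single $W^{H,F}$-neighbour, and Case~2 splits into the subcases of a unique $W^{H,D}$- or $W^{H,U'}$-neighbour (handled via \ref{main lem Phi*2}, \ref{main lem Phi*3}) and the subcase $J\subseteq Z^H$ (handled via \ref{main lem phi' 1} with $e_{H,x}$ and $f_H(e_{H,x})=i$). The bookkeeping you identify as the key obstacle---that \ref{main W 1}, \ref{main W 3}, (X6) and \eqref{eq NW NZ disjoint} force $J$ to be either a single $W$-vertex or a subset of $Z^H$, never a mix---is exactly the observation the paper's proof relies on.
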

\begin{proof}
We fix $(t,w')\in [T]\times [w]$, $i\in [r]$ and a vertex $y\in L_i^{t,w'}\cup M_i^{t,w'}$.
For simplicity, we write $H:= H^{t,w'}_y$, $x:= x^{t,w'}_y$ and $J:= J^{t,w'}_y$. Then \eqref{eq NW NZ disjoint} implies that exactly one of the following two cases holds.

\noindent {\bf Case 1.} $x \in N_{i}^{H,F} $.
In this case, \ref{main W 1} and \ref{main W 3} imply that
\begin{align*}
 J = N_{H}(x) \cap W^{H,F} \stackrel{\ref{main lem X3}}{=} N_{H}(x) \cap \bigcup_{i'\in N_{Q}(i)} W_{i'}^{H,F} \enspace \text{ and } \enspace |J|= 1.
\end{align*}
Then by \ref{main lem Phi*1}, we know $\displaystyle |A^{t,w'}_y| \geq d^{3}|V_i|.$

\noindent {\bf Case 2.} $x\in N_{i}^{H,G}$.
In this case, by \eqref{eq: def NW NZ} and \ref{main W 3}, we have exactly one of the following cases.

\noindent {\bf Case 2.1} $x\in N_{H}^{1}(Z^{H})$.
In this case, $N_{H}(x)\cap W^{H} =\emptyset$ by \ref{main W 3}.
Thus we have $J = N_{H}(x) \cap Z^{H}$.
Then \eqref{eq: NH def} and \ref{main lem phi' 1}  imply that
$\displaystyle |A^{t,w'}_y| = |N_G(\phi'( e_{H,x})) \cap V_{f_{H}(e_{H,x})} |  \geq d^{5\Delta}|V_i|.$

\noindent {\bf Case 2.2} $x\in N_{H}^{1}(W^{H,D}\cup W^{H,U'})$.
In this case, again \ref{main W 1}, \ref{main W 3}   and \ref{main lem X3} imply that
\begin{align*}
 J = N_{H}(x) \cap (W^{H,D}\cup W^{H,U'}) = N_{H}(x) \cap \bigcup_{i' \in N_{Q}(i)}  (W_{i'}^{H,D} \cup W_{i'}^{H,U'} )\enspace \text{ and } \enspace
|J| = 1.
\end{align*}
Thus \ref{main lem Phi*2} or \ref{main lem Phi*3} imply that $\displaystyle |A^{t,w'}_y| \geq d^{3}|V_i|/2.$
This proves the claim.
\end{proof}

Let $\mathbf{S}:= [T]\times [w]$.  Let $\Lambda$ be the graph with 
$$V(\Lambda) := \{(\vec{s},y): \vec{s}\in \mathbf{S}, y\in \bigcup_{\vec{s}\in \mathbf{S}, i\in [r]} L_i^{\vec{s}}\cup M_i^{\vec{s}} \}$$ and 
\begin{eqnarray*}
E(\Lambda) := \left\{(\vec{s},y)(\vec{t},y')  : \vec{s} \neq\vec{t}\in \mathbf{S}, i\in [r],
(y,y') \in  (L_i^{\vec{s}} \times  L_i^{\vec{t}} )\cup  (M_i^{\vec{s}}\times  M_i^{\vec{t}})
\text{ and } \phi_*( J^{\vec{s}}_y) \cap \phi_*(J^{\vec{t}}_{y'} )\neq \emptyset \right\}.
\end{eqnarray*}
Note that $\Lambda$ is the graph indicating possible overlaps of images of distinct edges when we extend $\phi_*$.
Indeed, if $(\vec{s},y)$ and $(\vec{t},y')$ are adjacent in $\Lambda$, there are $z\in N_{H^{\vec{s}}_y}(x^{\vec{s}}_y)$ and $z'\in N_{H^{\vec{t}}_{y'}}(x^{\vec{t}}_{y'})$ such that $\phi_*(z)=\phi_*(z')$. 
If we embed $y$ and $y'$ onto the same vertex, then the two edges $x^{\vec{s}}_yz$ and $x^{\vec{t}}_{y'}z'$ would be embedded onto the same edge of $G\cup F$.
Thus we need to ensure that $\phi(y)\neq \phi(y')$. 
\COMMENT{
Note that if $y\in L_i^{\vec{s}}$ and $y'\in M_i^{\vec{t}}$, then $(\vec{s},y)(\vec{t},y') \notin E(\Lambda)$ in our definition of $\Lambda$.
Suppose $y\in L_i^{\vec{s}}$ and $y'\in M_i^{\vec{t}}$
and suppose $\phi_*(J^{\vec{s}}_{y})\cap \phi_*(J^{\vec{t}}_{y'})\neq \emptyset$.
Then for $v\in \phi_*(J^{\vec{s}}_{y})\cap \phi_*(J^{\vec{t}}_{y'})$, we know that 
$A^{\vec{s}}_{y} \subseteq N_{F}(v)$ and 
$A^{\vec{t}}_{y'} \subseteq N_{G}(v)$. As $G$ and $F$ are edge-disjoint graphs, $\phi(y)$ is never same with $\phi(y')$ as long as $\phi(y)\in A^{\vec{s}}_y$ and $\phi(y') \in A^{\vec{t}}_{y'}$. Thus edge $(\vec{s},y)(\vec{t},y')$ is not necessary}

Note that for all $(\vec{s},y)\in V(\Lambda)$ and $\vec{t}\in \mathbf{S}$, we have {
\begin{eqnarray}\label{eq: q2 upperbound}
|\{ (\vec{t},y') \in N_{\Lambda}((\vec{s},y)) \}| \hspace{-0.2cm} &\leq& \hspace{-0.2cm} 
|\{ y' : H^{\vec{t}}_{y'}\in \cH_{\vec{t}},  \phi_*(J^{\vec{t}}_{y'})\cap \phi_*(J^{\vec{s}}_y)\neq \emptyset \}|\nonumber \\
& \leq&  \hspace{-0.2cm}  \sum_{ v\in \phi_*(J^{\vec{s}}_y) } |\{y': H^{\vec{t}}_{y'}\in \cH_{\vec{t}},  v\in \phi_*(J^{\vec{t}}_{y'})   \}| \nonumber \\
&\leq& \hspace{-0.2cm} 
 \sum_{ v\in \phi_*(J^{\vec{s}}_y) } \sum_{H \in \cH_{\vec{t}}} |\{ x  \in V(H): v\in \phi_*(N_{H}(x))   \}|  \nonumber \\
& \leq &  \sum_{ v\in \phi_*(J^{\vec{s}}_y) } \sum_{H \in \cH_{\vec{t}}} |\{ x \in N_H(x') : v= \phi_*(x'), x'\in V(H) \}| \nonumber \\
&\stackrel{\eqref{eq: phi* injective}}{\leq} & \sum_{ v\in \phi_*(J^{\vec{s}}_y) } \sum_{H \in \cH_{\vec{t}}} \Delta
\leq \Delta^2|\mathcal{H}_{\vec{t}}|  
\stackrel{\eqref{eq: q upper bound} }{\leq} \Delta^2 (\xi q)^{3/2} \leq q^2.
\end{eqnarray}}
(Here the third inequality holds by the definition of $J^{\vec{t}}_{y'}$ and the definition of $x^{\vec{t}}_{y'}$, the fifth inequality holds since \eqref{eq: phi* injective} implies that there is at most one $x'\in V(H)$ with $\phi_*(x')=v$, and the sixth inequality holds since $|J_y^{\vec{s}}|\leq |N_{H_y^{\vec{s}}}(x_{y}^{\vec{s}})|\leq \Delta$.)

Consider any $(\vec{s},y)\in V(\Lambda)$. Then similarly as above we have
\begin{eqnarray*}
d_{\Lambda}((\vec{s},y)) \leq  \sum_{v \in \phi_*(J^{\vec{s}}_y) } 
\sum_{H\in \cH} |\{ x\in N_{H}(x'): v = \phi_*(x'), x'\in V(H) \}| 
\leq  \Delta^2 |\cH| \stackrel{\eqref{eq: kappa def}}{\leq} \alpha^{1/2} n'.
\end{eqnarray*}  
This shows that
 \begin{align}\label{eq: Delta(F)}
 \Delta(\Lambda) \leq \alpha^{1/2} n'< d^{5\Delta} n'/2.
 \end{align}
We can now apply the blow-up lemma for approximate decompositions (Theorem~\ref{Blowup}) with the following objects and parameters. \newline

{\small
\noindent
{ 
\begin{tabular}{c|c|c|c|c|c|c|c|c}
object/parameter& $G[V]$ & $V_i$ & $\widehat{X}_i $ & $H_{t,w'}$ & \textbf{S}$=[T]\times [w]$ &$q$& $T^{-1/2}$ & $Q$    \\ \hline
playing the role of &$ G$ & $V_i$ & $X_i$ &  $H_i$ &$[s]$ &$q$& $\epsilon$  & $R$  \\ \hline\hline
object/parameter & $r$ & $L_i^{t,w'}\cup M_i^{t,w'}$ & $A_y^{t,w'}$  & $\alpha$& $d^{5\Delta}$ & $\nu$ & $\Lambda$ &  $n'$ \\ \hline
playing the role of  & $r$ & $W_i^j$ &$A_w^j$& $d$ & $d_0$ & $\alpha$ & $\Lambda$ &  $n$
\end{tabular}
}}\newline \vspace{0.2cm}

\noindent Indeed, (A3)$_{\ref{embedone}}$ implies that (A1)$_{\ref{Blowup}}$ holds, and (A2)$_{\ref{Blowup}}$ holds by the definition of $H_{t,w'}$.
 Claim~\ref{cl: Aqwy size} and \eqref{eq: Yij size} imply that (A3)$_{\ref{Blowup}}$ holds, and 
\eqref{eq: Yij size}, \eqref{eq: q2 upperbound}  and \eqref{eq: Delta(F)} imply that (A4)$_{\ref{Blowup}}$ holds. 
Moreover, \eqref{eq: w upper bound} implies that the upper bound on $s$ in the assumption of Theorem~\ref{Blowup} holds.

Thus by Theorem \ref{Blowup} we obtain a function $\phi^*$ that packs $\{H_{\vec{s}}: \vec{s}\in \mathbf{S} \}$ into $G[V]$ and satisfies the following, where $\phi^*_{\vec{s}}$ denotes the restriction of $\phi^*$ to $H_{\vec{s}}.$
\begin{enumerate}[label=\text{\rm ($\Phi^*$\arabic*)$_{\ref{embedone}}$}]
\item\label{Phi*1}  for each $\vec{s}\in$ \textbf{S} and
$y \in \bigcup_{i\in [r]} L_i^{\vec{s}}\cup M_i^{\vec{s}}$, we have $\phi^*_{\vec{s}}(y) \in A_y^{\vec{s}}$,
\item\label{Phi*2} for any $(\vec{s},y)(\vec{t},y') \in E(\Lambda),$ we have that $\phi^*_{\vec{s}}(y) \ne \phi^*_{\vec{t}}(y').$
\end{enumerate}
We let 
$$\phi:= \phi^*(\bigcup_{\vec{s}\in \mathbf{S}}\Phi_s) \cup \phi_*.$$
Recall from Step 3 and~\eqref{hello} that $\phi_*=\phi'\cup \phi''$, and that $\phi'$ packs
 $\{H[Y^H\cup Z^H\cup A^H]: H\in \cH\}$ into $G[U]\cup F'$.  Since
 each $\Phi_{\vec{s}}$ is a packing of $\{ H[X^{H}\setminus W^{H}] : H\in \mathcal{H}_{\vec{s}}\} $ into $H_{\vec{s}}$ and $\phi^*$ is a packing of $\{H_{\vec{s}}: \vec{s}\in \mathbf{S}\}$ into $G[V]$,
we know that $\phi$ packs $\{H[X^H\setminus W^H]: H\in \cH\}$ into $G[V]$.
Moreover, \ref{Phi*1}, \ref{Phi*2} with the definitions of $A^{\vec{s}}_y$ and $\Lambda$ imply that $\phi$ packs $\{H[X^H\setminus W^H, W^{H,F}]: H\in \cH\}$ into $F$, and 
$\phi$ packs $\{H[X^H\setminus W^H,  W^{H,U'}]: H\in \cH\}$ into $G[V,U_0]$, and
$\phi$ packs $\{H[X^H\setminus W^H, W^{H,D}\cup Z^H]: H\in \cH\}$ into $G[V,U]$.
Thus, we have the following.
\begin{align}\label{eq: main lem phi}
&\phi(\bigcup_{H\in \cH} E_H( Y^H\cup Z^H\cup A^H)) \subseteq E_G(U)\cup E(F'), \qquad \phi(\bigcup_{H\in \cH} E_H( X^{H}\setminus W^H)) \subseteq  E_G(V), \nonumber \\
&\phi(\bigcup_{H\in \cH} E_H(X^H\setminus W^H, W^{H,F})) \subseteq E_F(V,U), \qquad
\phi(\bigcup_{H\in \cH} E_H(X^H\setminus W^H, W^{H,U'})) \subseteq E_G(V,U_0),   \enspace  \nonumber \\ 
&\phi(\bigcup_{H\in \cH}  E_H(X^H \backslash W^H, W^{H,D}\cup Z^H))\subseteq E_G(V,U).
\end{align}

Also, it is obvious that the restriction of $\phi$ to $V(H)$ is injective for each $H\in \cH$.
As $G[U]\cup F', G[V], F, G[V,U_0]$ and $G[V,U]$ are pairwise edge-disjoint, we conclude that $\phi$ packs $\mathcal{H}$ into $G\cup F\cup F'$.
Moreover, by \eqref{eq: kappa def} we have
$\Delta(\phi(\mathcal{H})) \leq \Delta|\mathcal{H}|  \leq 4 k\Delta \alpha n/r,$
thus (B1)$_{\ref{embedone}}$ holds.  By \eqref{eq: main lem phi} and \ref{main lem phi' 4}, for $u\in U$, we have \vspace{-0.2cm}
 $$d_{\phi(\cH)\cap G}(u) \leq \Delta |\{H \in \cH : u\in \phi_*(Y^H\cup Z^H\cup W^{H,D})\}|
\stackrel{\ref{main lem Phi*4}}{\leq} \frac{ 2\Delta\epsilon^{1/8}n}{r}.$$
Thus (B2)$_{\ref{embedone}}$ holds.


Finally, for $i\in [r]$, by \ref{main lem X3}, \ref{main lem X6}, \eqref{eq: main lem phi} we have
\begin{eqnarray*}
e_{\phi(\mathcal{H})\cap G}(V_{i}, U\cup U_0) &\leq& \sum_{H \in \mathcal{H} } 
\Delta \Big( |Z^H| + \sum_{j\in N_{Q}(i)} |W^{H,U'}_j| +  \sum_{j\in N_Q(i)} |W^{H,D}_j| \Big)\\
&\stackrel{\substack{ \eqref{eq: kappa def},\ref{main lem X6},
\\ \eqref{eq: U' properties}, \eqref{eq: W sizes sizes} } }{\leq} & 
\frac{2k\Delta \alpha n}{r} \left( 4\Delta^{3k^3} \eta^{0.9} n + 2(k-1) \epsilon^{3/4} n/r + (k-1)r \eta^{1/7} n \right) \\
&\leq& \frac{\epsilon^{1/2}n^2}{r^2},
\end{eqnarray*}
 which shows that (B3)$_{\ref{embedone}}$ holds.

\end{proof}


\section{Proof of Theorem~\ref{thm:main}}\label{sec: main proof}

The proof of Theorem \ref{thm:main} proceeds in three steps.
In the first step we will apply the results of Section 3 to construct suitable edge-disjoint subgraphs $G_{t,s}, G^*_t,  F_{t,s}$ and $F'_{t}$ of $G$, where $G_{t,s}$ is a $K_k$-factor blow-up spanning almost all vertices
while $G^*_t,  F_{t,s}$ and $F'_{t}$ are comparatively sparse.
In the (straightforward) second step, we simply partition $\cH$ into collections $\cH_{t,s}$ such that the $e(\cH_{t,s})$ are approximately equal to each other.
Finally, in the third step we will pack each $\cH_{t,s}$ into $G_{t,s} \cup G_t^* \cup F_{t,s} \cup F'_t$ via Lemma \ref{embedone}. 

\begin{proof}[Proof of Theorem~\ref{thm:main}] 
Let $\sigma := \delta - \max\{1/2,\delta^{\rm reg}_k\}>0$.
By \eqref{eq: deltak 1-1/k}, we have $\delta \geq 1-1/k + \sigma$ for any $k\geq 2$. Without loss of generality, we may assume that $\nu< \sigma/2$.\COMMENT{We need $\nu < \sigma/2$ when we apply Lemma \ref{lem: factor decomposition}}
For given $\nu, \sigma>0$ and $\Delta$, $k\in \mathbb{N}\backslash \{1\}$, we choose constants $n_0, \xi, \eta , M, M',$ $\epsilon,T,  q, d$ such that $q \mid T$ and 
\begin{align}\label{eq: hierarchy}
0 <1/n_0 \ll \eta \ll 1/M \ll 1/M'  \ll \epsilon\ll 1/T \ll 1/q \ll \xi \ll d  \ll \nu, \sigma, 1/\Delta, 1/k  \leq 1/2.
\end{align}
Suppose  $n \ge n_0$ and 
let $G$ be an $n$-vertex graph satisfying condition (i) of Theorem~\ref{thm:main}. 
Furthermore, suppose $\mathcal{H}$ is a collection of $(k,\eta)$-chromatic $\eta^2$-separable graphs satisfying conditions (ii) and (iii) of Theorem~\ref{thm:main}. We will show that $\cH$ packs into $G$.
Note that we assume $\cH$ to consist of $\eta^2$-separable graphs here (instead of $\eta$-separable graphs). This is more convenient for our purposes, but still implies Theorem 1.2.
\newline

\noindent {\bf Step 1. Decomposing $G$ into host graphs.} 
In this step, we apply Szemer\'edi's regularity lemma to $G$ and then apply Lemma~\ref{Reservoir2} to obtain a partition of $V(G) \backslash V_0$ into $T$ reservoir sets $Res_t$, where $V_0$ is the exceptional set obtained from Szemer\'edi's regularity lemma. 
We use Lemma~\ref{lem: factor decomposition} to obtain an approximate decomposition of the reduced multi-graph $R'_{\rm multi}$ of $G$ into almost $K_k$-factors and partition these factors into $T$ collections.
Each such almost $K_k$-factor $Q$ gives us an $\epsilon$-regular $Q$-blow-up $G_{t,s}$ in $G$, and we modify it into a super-regular $Q$-blow-up.
We also put aside several sparse `connection graphs' $F_{t,s}$ and $F'_t$, which will be used to link vertices in the reservoir and exceptional set with vertices in the rest of the graph.
These connection graphs will play the roles of $F$ and $F'$ in Lemma~\ref{embedone}. We also put aside a
further sparse connection graph $G^*_t$ which provides additional connections within $V(G)\setminus V_0$.

We apply Szemer\'edi's regularity lemma (Lemma \ref{Szemeredi})  with $(\epsilon^2, d)$ playing the role of $(\epsilon, d)$ to obtain a partition $V_0', \dots, V_{r'}'$ of $V(G)$ and a spanning subgraph $G' \subseteq G$ such that 
\begin{enumerate}[label=\text{\rm (R\arabic*)}]
\item \label{R1} $M' \le r' \le M,$
\item \label{R2} $|V_0'| \le \epsilon^2 n,$
\item \label{R3} $|V_i'| = |V_j'| = (1\pm \epsilon^2)n/r'$ for all $i, j \in [r'],$
\item \label{R4} for all  $v \in V (G)$ we have $d_{G'}(v) > d_G(v)  - 2dn$,
\item \label{R5} $e(G'[V'_i]) = 0$ for all $ i \in [r'],$
\item \label{R6} for any $i, j$ with $1 \le i \le j \le r'$, the graph $G' [V'_i, V'_j ]$ is either empty or $(\epsilon^2, d_{i,j})$-regular for some $d_{i,j} \in [d,1]$.
\end{enumerate}
Let $R'$ be the graph with 
$$V(R')= [r'] \qquad \text{ and } \qquad E(R'):=\{ ij: e_{G'}(V'_i,V'_j)>0 \}.$$
Note that for $i,j\in [r']$, $ij\in E(R')$ if and only if $G'[V'_i,V'_j]$ is $(\epsilon^2, d_{i,j})$-regular with $d_{i,j}\geq d$.
Now, we let $R'_{\rm multi}$ be a multi-graph with 
$V(R'_{\rm multi}) = [r']$ and 
with exactly 
\begin{align}\label{eq: tij def}
q_{i,j}:= \lfloor (1-6d)d_{i,j}q \rfloor
\end{align}
edges between $i$ and $j$ for each $ij\in E(R')$. Note that $R'_{\rm multi}$ has edge-multiplicity at most $q$.
For each $i\in [r']$, we have {\small
\begin{eqnarray}\label{eq: sum dij}
d_{R'_{\rm multi}}( i) \hspace{-0.4cm} &=& \hspace{-0.3cm} \sum_{j \in N_{R'}(i)} \lfloor (1-6d)q(\frac{e_{G'}(V'_i,V'_j)}{|V'_i||V'_j|} \pm \epsilon^2)\rfloor  \stackrel{\ref{R3},\ref{R5}}{=} \frac{\sum_{v \in V'_i} (1-6d) qd_{G', V(G)\setminus V_0}(v) }{|V'_i|^2} \pm \epsilon^2 q r' \pm r' \nonumber \\
&\stackrel{\ref{R2},\ref{R4}}{=}& \hspace{-0.3cm}  \frac{q}{|V'_i|^2} \sum_{v\in V'_i} (d_{G}(v) \pm 10d n) \pm 2r' 
\stackrel{(i)}{=} \frac{ (\delta \pm 11d)q n }{|V'_i| }  \pm 2 r' \stackrel{\ref{R3}}{=} (\delta \pm d^{3/4})q r'. 
\end{eqnarray}}
We apply Lemma~\ref{lem: factor decomposition} with $R'_{\rm multi},r',\epsilon^2,k,\sigma,d^{3/4},\nu/5, T$ and $q$ playing the roles of $G$,$n$,$\epsilon,k,\sigma,\xi,\nu, T$ and $q$, respectively.
Then, by permuting indices in $[r']$ if necessary,\COMMENT{The lemma gives us the set $V'$, we can assume $V'= \{r+1,\dots, r'\}$.}
we obtain $R_{\rm multi}\subseteq R'_{\rm multi}$ and a collection $\cQ:=\{ Q_{1,1},\dots, Q_{1,\kappa/T}, Q_{2,1}, \dots, Q_{T,\kappa/T}\}$ of edge-disjoint subgraphs of $R_{\rm multi}$ such that the following hold.
\begin{enumerate}[label=\text{\rm (Q\arabic*)}]
\item\label{Q1} $R_{\rm multi} = R'_{\rm multi}[ [r] ]$ with $(1-\epsilon^2)r' \leq r\leq r'$, and $k \mid r$,
\item\label{Q2} $\kappa = \frac{(\delta - \nu/5 \pm \epsilon^2)q r'}{k-1} = \frac{(\delta - \nu/5 \pm \epsilon )q r}{k-1}$ and $T\mid \kappa$,
\item\label{Q3} for each $(t,s)\in [T]\times [\kappa/T]$, $Q_{t,s}$ is a vertex-disjoint union of at least $(1-\epsilon)r/k$ copies of $K_k$,
\item\label{Q4} for each $i\in [r]$, we have $|\{ (t,s) \in [T]\times [\kappa/T]  : i \in V(Q_{t,s}) \}| \geq \kappa - \epsilon r$.
\item\label{Q5} for all $t\in [T]$ and $i,j\in [r]$, we have 
$|\{ s\in [\kappa/T] : j\in N_{Q_{t,s}}(i)\}|\leq 1$.
\end{enumerate}
For each $t\in [T]$, let $\cQ_t:= \{ Q_{t,1},\dots, Q_{t,\kappa/T}\}.$
We define $R:= R'[[r]]$ to be the induced subgraph of $R'$ on $[r]$. Note that each $Q_{t,s}\in \cQ$ can be viewed as a subgraph of $R$.
Moreover, for fixed $t\in [T]$, \ref{Q5} implies that the graphs $Q_{t,1},\dots, Q_{t,\kappa/T}$ are pairwise edge-disjoint when viewed as subgraphs of $R$.
Also, we have \vspace{-0.2cm}
\begin{eqnarray}\label{eq: R min deg}
\delta(R) \geq q^{-1}\delta(R'_{\rm multi}) - (r'-r) \stackrel{\eqref{eq: sum dij},\ref{Q1}}{\geq} (\delta - d^{1/2}) r.
\end{eqnarray}

We need to modify the sets $V'_i$ later to ensure that we obtain appropriate super-regular $Q_{t,s}$-blow-ups. 
For this, we need to move some `bad' vertices in $V'_i$ into $V'_0$. 
For each $i \in [r]$ and each $j\in N_{R}(i)$, we define 
\begin{eqnarray}\label{super1}
U_i(j):= \{ v\in V_i': d_{G',V'_{j}}(v) \neq (d_{i,j}\pm \epsilon^2)|V'_j|\} \enspace \text{and} \enspace
U_i': = \{v \in V_i': |\{j: v \in U_i(j)\}| > \epsilon r\}.
\end{eqnarray}
By Proposition~\ref{prop: super} and \ref{R6}, for any $i\in [r]$ and $j\in N_R(i)$ we have
\begin{eqnarray}\label{eq: Uij property}
|U_i(j)| \le 5\epsilon^2 n/r \qquad \text{ and }\qquad |U_i'| \leq (\epsilon r)^{-1} \sum_{j \in N_R(i)}|U_i(j)| \leq  5\epsilon n/r.
\end{eqnarray}
For each $i\in [r]$, we let 
$ V_i := V_i' \setminus U_i' \enspace \text{ and } \enspace V_0 := V_0' \cup \bigcup_{i=1}^r U_i' \cup \bigcup_{i=r+1}^{r'} V'_i.$

By \ref{R2} and \ref{R3}, for each $i \in [r]$, we have
\begin{align}\label{eq: Vi sizes}
(1 - 6\epsilon)n/r \le |V_i| \le n/r \qquad \text{ and } \qquad |V_0| \le 6\epsilon n.
\end{align}

We apply Lemma~\ref{Reservoir2} with $G', V(G)\backslash V_0, \{V_i\}_{i=1}^{r}$ and $T$ playing the roles of $G, V, \{V_i\}_{i=1}^{r}$ and $t$ to obtain a partition $\{Res_1,\dots, Res_{T}\}$ of $V(G) \backslash V_0$ satisfying the following, where we define $V_i^{t}:= V_i\cap Res_t$.
 \begin{enumerate}[label={\rm(Res\arabic*)}]
\item \label{Res1} For all $t\in [T]$ and $v\in V(G)$, we have
$d_{G', V_i^{t}}(v) = \frac{1}{T} d_{G',V_i}(v) \pm n^{2/3},$
\item \label{Res2} for all $t\in [T]$ and $i\in [r]$, we have
$|V_i^{t}| =  (\frac{1}{T} \pm \epsilon^2)|V_i| \stackrel{\eqref{eq: Vi sizes}}{=} \frac{(1\pm 7\epsilon) n}{Tr}, $
\item \label{Res3} for all $t\in [T]$, we have $|Res_t| \in \{  \lfloor \frac{n-|V_0|}{T}\rfloor ,  \lfloor \frac{n-|V_0|}{T}\rfloor+1  \}$.
\end{enumerate}

Next, we partition the edges in $G'\setminus V_0$ into $L_1,\dots, L_7$ which will be the building blocks for the graphs $G, F$ and $F'$ in Lemma~\ref{embedone}. 
Let $p_1:= 1-6d$ and $p_j:=d$ for $2\leq j\leq 7$. 
Apply Lemma~\ref{star} with $G'\setminus V_0$, $\{V_i^t: i\in [r], t\in [T]\}$, $\{(V_i, V_j) : ij \in E(R) \}$ and $7$ playing the roles of $G$, $\mathcal{U}$, $\mathcal{U}'$ and $s.$
Then we obtain a decomposition $L_1, \dots, L_7$ of $G'\backslash V_0 $ satisfying the following for all $t\in [T]$, $i\in [r]$, $\ell \in [7]$ and $v\in V(G)\setminus V_0$: 
\begin{enumerate}[label=\text{\rm (L\arabic*)}]
\item \label{L1}  $d_{L_\ell, V_i^t}(v)  = p_\ell  d_{G', V_i^t}(v) \pm n^{2/3},$
\item \label{L2}  for each $ij \in E(R)$, we have that $L_\ell[V_i, V_{j}]$ is $(4\epsilon^2, d_{i,j} p_\ell)$-regular.
\end{enumerate}
Let $G'':= L_1$.
For each $t \in [T]$, let $G^*_t$, $F_t$ and $F^*_t$ be the graphs on vertex set $V(G)\setminus V_0$ with 
\begin{align} 
E(G^*_t)&:= \bigcup_{t'=1}^{t-1} E(L_2[Res_{t},Res_{t'}]) \cup  \bigcup_{t'=t+1}^{T} E(L_3[Res_{t},Res_{t'}]) \cup  L_2[Res_{t}],\label{eq: G*t def}\\
E(F_t) &:= \bigcup_{t'=1}^{t-1} E(L_4[Res_{t},Res_{t'}]) \cup  \bigcup_{t'=t+1}^{T} E(L_5[Res_{t},Res_{t'}]), \nonumber \\
E(F^*_t) &:= \bigcup_{t'=1}^{t-1} E(L_6[Res_{t},Res_{t'}]) \cup  \bigcup_{t'=t+1}^{T} E(L_7[Res_{t},Res_{t'}]). \nonumber
\end{align}
For each $t\in [T]$, we let $F_{t,1},\dots, F_{t,\kappa/T}$ be subgraphs of $F_t$ such that for all $s\in [\kappa/T]$
\begin{align}\label{eq: Fts definition}
 F_{t,s} := \bigcup_{i\in V(Q_{t,s})} \bigcup_{j\in N_{Q_{t,s}}(i)} F_t[V_{i}^{t}, V_j\setminus Res_t].
 \end{align}
Note that \ref{Q5} implies that for $s\neq s'\in [\kappa/T]$, the graphs $F_{t,s}$ and $F_{t,s'}$ are edge-disjoint.
Thus $G'', G^*_1,\dots, G^*_{T}, F_{1,1},\dots, F_{T,\kappa/T}, F^*_1,\dots, F^*_T$ form edge-disjoint subgraphs of $G'\setminus V_0$.
The edges in $G^*_t$ will be used to satisfy condition {\rm(A4)}$_{\ref{embedone}}$ when applying Lemma~\ref{embedone}. The graphs $F_{t,s}$ will play the role of $F$ in Lemma~\ref{embedone}. The graphs $F^*_t$ will be
used in the construction of the graph $F'_t$, which will play the role of $F'$ in Lemma~\ref{embedone}.

We will now further partition the edges in $G''=L_1$.
Note that for each $ij\in E(R)$, by \eqref{eq: tij def} we have $q_{i,j}= \lfloor d_{i,j}p_1 q\rfloor.$
To further partition $G''$, we apply Lemma~\ref{star} for each $ij \in E(R)$ with the following objects and parameters.\newline

{\small
\noindent
{ 
\begin{tabular}{c|c|c|c|c|c|c}
object/parameter& $G''[V_i, V_{j}]$ & $\{V_i^t, V_{j}^{t}: t\in [T]\}$ & $ \{(V_i, V_{j}) \}$ & $q_{i,j}+1 $ &  $  1/(d_{i,j}p_1 q) $ &   $1 - q_{i,j}/(d_{i,j}p_1 q) $ \\ \hline
playing the role of &  $G$ & $\mathcal{U}$ &  $\mathcal{U}'$ & $s$  &$p_{i}: i<s$ & $p_s$ 
\end{tabular}
}}\newline \vspace{0.2cm}

Then by \ref{L2}, for each $ij\in E(R)$, we obtain edge-disjoint subgraphs $E^1_{i,j}, \dots, E^{q_{i,j}+1}_{i,j} $
of $G''[V_i,V_{j}]$ satisfying the following for all $t\in [T]$  and $\ell \in [q_{i,j}]$:
\begin{enumerate}[label=\text{\rm (E\arabic*)}]
\item \label{E1} for each $v\in V_i$, we have
 $d_{ E^\ell_{i,j}, V_{j}^t }(v)  = \frac{1}{d_{i,j}p_1 q} d_{G'', V_{j}^t}(v) \pm n^{2/3}$,
\item \label{E2} $E^{\ell}_{i,j}$ is $(8\epsilon^2, 1/q)$-regular.
\end{enumerate}
Recall that we have chosen a collection $\cQ=\{Q_{1,\kappa/T},\dots, Q_{T,\kappa/T}\}$ of edge-disjoint subgraphs of $R_{\rm multi}$ satisfying \ref{Q1}--\ref{Q5}. Let $\psi: E(R_{\rm multi}) \rightarrow \mathbb{N}$ be a function such that
$$\psi( E_{R_{\rm multi}}(i,j)) = [q_{i,j}].$$
For all $ij \in E(R')$, there are exactly $q_{i,j}$ edges between $i$ and $j$ in $R_{\rm multi}$, so such a function $\psi$ exists.
Now, for all $t\in [T]$, $s\in [\kappa/T]$, we let 
\begin{equation} \label{defGts}
G_{t,s}:= \bigcup_{ ij \in E(Q_{t,s}) } E^{\psi(ij)}_{i,j}.
\end{equation}
Since $\cQ$ is a collection of edge-disjoint subgraphs of $R_{\rm multi}$ and $E^1_{i,j}, \dots, E^{q_{i,j}+1}_{i,j} $ are edge-disjoint subgraphs of $G''$, the graphs
$G_{1,1},\dots, G_{T,\kappa/T}$ form edge-disjoint subgraphs of $G''$.

We would like to use $G_{t,s}\setminus Res_t$ and $Res_t$ to play the roles of $G[\bigcup_{i\in [r]} V_i]$ and $U$ in Lemma~\ref{embedone}, respectively. 
However, $E_{i,j}^{\ell} \setminus Res_t$ is not necessarily super-regular and the sizes of $V_i \setminus Res_t$ are not necessarily the same for all $i\in [r]$. 
To ensure this, we will now choose an appropriate subset $V_i^{t,s}$ of $V_i$ which can play the role of $V_i$ in Lemma~\ref{embedone}.

For all $t \in [T]$, $i \in [r]$ and $s \in [\kappa/T]$, let\COMMENT{if $i\in [r]\setminus V(Q_{t,s})$ then we view $N_{Q_{t,s}}(i)$ to be $\emptyset$. So $V_{i}(t,s)$ is still well-defined. (For the application of Lemma~\ref{lem: choice} below, it is more convenient to define $V_{i}(t,s)$ for all $i\in [r]$.}
\begin{eqnarray}\label{Vits}
V_{i}(t,s):= V_i \setminus (Res_t \cup \bigcup_{ j\in N_{ Q_{t,s}}(i)} U_i(j) )\enspace \text{ and } \enspace
m:= \frac{(T-1)n}{Tr} - \frac{10\epsilon n}{r}.\end{eqnarray}
Then by \eqref{eq: Uij property}, \eqref{eq: Vi sizes} and \ref{Res2}, we have
\begin{align}\label{eq: V m sizes}
0\leq |V_i(t,s)| - m \leq 15 \epsilon n/r.
\end{align}
For all $t\in [T]$ and $i\in [r]$, we apply Lemma~\ref{lem: choice} with the following objects and parameters.\newline

{\small
\noindent
{ 
\begin{tabular}{c|c|c|c|c|c|c|c|c}
object/parameter& $\kappa/T$ & $1$  & $s \in [\kappa/T]$ & $V_i\setminus Res_t$ & $ V_i(t,s) $ & $20 \epsilon$  & $ |V_i(t,s)|-m $ & $d$ \\ \hline
playing the role of  & $s$ & $r$ & $i \in [s]$ & $ A$ & $A_{i,1}$ & $\epsilon$  & $m_{i,1}$ & $1/2$ 
\end{tabular}
}}\newline \vspace{0.2cm}

Then we obtain sets $W_i(t,1),\dots, W_i(t,\kappa/T)$ such that 
$W_i(t,s) \subseteq V_i(t,s)$ with $|V_i(t,s)\setminus W_i(t,s)|=m$
and for any $v\in V_i\setminus Res_t$, we have 
\begin{align}\label{eq: v in Wi s ell}
|\{ s \in [\kappa/T] : v\in W_i(t,s)\}| \leq 10\epsilon^{1/2} \kappa/T.
\end{align}
For all $t\in [T]$, $s\in [\kappa/T]$ and $i\in V(Q_{t,s})$, let $V_i^{t,s}:= V_i(t,s)\setminus W_i(t,s).$ Let
\begin{eqnarray*}
V_0^{t,s}:= V_0\cup   \bigcup_{i\in [r]} \bigcup_{ j\in N_{Q_{t,s}}(i)}  \hspace{-0.2cm} ( U_i(j)\setminus Res_t) \cup \bigcup_{i\in [r]} W_i(t,s) \cup \hspace{-0.3cm} \bigcup_{ i\in [r]\setminus V(Q_{t,s})}  \hspace{-0.2cm} (V_i \setminus Res_t).
\end{eqnarray*}
Then the sets $ V_0^{t,s},\{V_i^{t,s}: i\in V(Q_{t,s})\}, Res_t$ form a partition of $V(G)$, and for each $i\in V(Q_{t,s})$
\begin{eqnarray}\label{eq: Vi size}
|V_i^{t,s}|&=& m := \frac{(T-1)n}{Tr} - \frac{10\epsilon n}{r}, \text{ and }  \\ 
|V_0^{t,s}| &\stackrel{\substack{\eqref{eq: Uij property},\\ \eqref{eq: Vi sizes}, \eqref{eq: V m sizes}}}{\leq}& 6\epsilon n + (k-1)r(5\epsilon^2n/r) + 15\epsilon n +(r- |V(Q_{t,s})|)n/r \nonumber\\\label{eq: V0 size}
&\stackrel{ \ref{Q3}}{ \leq}& 25 \epsilon n.   
\end{eqnarray}

We now further modify $V_i^t$ into $U_i^{t,s}$ which can play the role of $U_i$ in Lemma~\ref{embedone}.
For all $(t,s)\in [T]\times [\kappa/T]$ and $i\in V(Q_{t,s})$, we define
$$U_i^{t,s}:= V_i^{t}\setminus \bigcup_{j\in N_{Q_{t,s}}(i)} U_i(j) \enspace \text{and}\enspace
U_0^{t,s}:= \bigcup_{i\in [r]\setminus V(Q_{t,s})} V_i^{t} \cup \bigcup_{i\in V(Q_{t,s})} \bigcup_{j\in N_{Q_{t,s}}(i)} U_i(j).$$
Note that for each $(t,s)\in [T]\times [\kappa/T]$, the sets
$\{U_0^{t,s}\}\cup  \{ U_i^{t,s}: i\in V(Q_{t,s})\}$ form a partition of $Res_t$.
By \eqref{eq: Uij property}, for all $(t,s)\in [T]\times [\kappa/T]$ and $i\in V(Q_{t,s})$, we have 
\begin{eqnarray}\label{eq: Uits sizess}
|U_i^{t,s}| = |V_i^{t}| \pm 5k\epsilon^2 n/r \stackrel{\ref{Res2}}{=} 
\frac{(1\pm 8\epsilon)n}{Tr} \enspace \text{and}\enspace
|U_0^{t,s}| \stackrel{\eqref{eq: Uij property}}{\leq} \sum_{i\in [r]\setminus V(Q_{t,s})} |V_i^{t}| + 5k\epsilon^2 n \stackrel{\ref{Q3}}{\leq} 2\epsilon n.
\end{eqnarray}
 Note that 
for all $(t,s)\in [T]\times[\kappa/T]$ and $i\in V(Q_{t,s})$, we have $U_i^{t,s}, V_i^{t,s} \subseteq V_i$. 
Thus Proposition~\ref{prop: reg smaller} with \eqref{eq: Vi size}, \eqref{eq: Uits sizess}, \ref{L2} and the definition of $p_{\ell}$ implies that for all $(t,s)\in [T]\times [\kappa/T]$, $ij\in E(R[V(Q_{t,s})] )$ and $i'j'\in E(Q_{t,s})$, we have
\begin{equation}\label{eq: F* e-regular}
\begin{minipage}[c]{0.8\textwidth}
$G^*_t[ U_i^{t,s} , U_j^{t,s}], G^*_{t}[V_i^{t,s}, U_j^{t,s}]$ and $F_{t,s}[V_{i'}^{t,s}, U_{j'}^{t,s}]$ are $(\epsilon, (d^2))^+$-regular. 
\end{minipage}
\end{equation}
Moreover, for all $(t,s)\in [T]\times[\kappa/T]$, $ij\in E(Q_{t,s})$ and $u \in U_{i}^{t,s}$, we have
\begin{eqnarray}\label{eq: Ft degree v}
d_{F_{t,s},V_j^{t,s}}(u)  &\stackrel{\eqref{eq: Fts definition},\eqref{eq: Vi size},\ref{Res2}}{\geq}&
d_{F_{t}[V_i^{t}, V_j\backslash Res_t]}(u) - n/(Tr)
\stackrel{\ref{L1},\ref{Res1}}{\geq}  d\cdot d_{G',V_j}(u) -  3n/(Tr) \nonumber \\ 
&\stackrel{\eqref{super1}, \eqref{eq: Uij property}}{\geq }& d\cdot (d_{i,j}- \epsilon^2)|V_j| -   4n/(Tr)
\stackrel{\ref{Res2}}{\geq} (2d^2/3)|V_j\setminus Res_t|.
\end{eqnarray}\COMMENT{For second equality, use $d_{F_{t}[V_i^{t}, V_j\backslash Res_t]}(u) = \sum_{t' \in [T]\backslash\{t\}} d_{F_t, V_j^{t'}}(u) $}
We obtain the third inequality from the definition of $U_{i}^{t,s}$ and the fact that $ij\in E(Q_{t,s})$.

\begin{claim}\label{cl: Hisell super-regular}
For all $t \in [T], s\in [\kappa/T]$ and $ij \in E(Q_{t,s})$,
the graph $G_{t,s}[ V_i^{t,s}, V_{j}^{t,s}]$ is $(\epsilon^{1/2}, 1/q)$-super-regular. 
\end{claim}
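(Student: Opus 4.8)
\textbf{Proof proposal for Claim~\ref{cl: Hisell super-regular}.}

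The plan is to trace back the construction of $G_{t,s}$ and show that the passage from the $\epsilon^2$-regular graphs produced by Szemer\'edi's regularity lemma through the various applications of Lemma~\ref{star} only degrades regularity by small polynomial factors, and then to argue that restricting to the subsets $V_i^{t,s}\subseteq V_i$ preserves regularity (via Proposition~\ref{prop: reg smaller}) and that the degree of every vertex of $V_i^{t,s}$ into $V_j^{t,s}$ is as required for super-regularity (via the fact that we deleted the `bad' vertices $U_i(j)$ and $U_i'$ into the exceptional set). First I would recall that by \eqref{defGts} we have $G_{t,s}[V_i,V_j] = E^{\psi(ij)}_{i,j}$ for each $ij\in E(Q_{t,s})$, and by \ref{E2} the graph $E^{\psi(ij)}_{i,j}$ is $(8\epsilon^2, 1/q)$-regular on $(V_i, V_j)$. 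Since $V_i^{t,s}\subseteq V_i$ and $V_j^{t,s}\subseteq V_j$, I would use \eqref{eq: Vi size} together with \eqref{eq: Vi sizes} to note that $|V_i^{t,s}|/|V_i| = m/|V_i| \geq (1-O(\epsilon))(T-1)/T \geq 1/(2T)$ say, and similarly for $j$, so Proposition~\ref{prop: reg smaller} (with $\delta$ there equal to $1/(2T)$) gives that $G_{t,s}[V_i^{t,s},V_j^{t,s}]$ is $(16T\epsilon^2, 1/q)$-regular, which is certainly $(\epsilon^{1/2}, 1/q)$-regular since $\epsilon \ll 1/T$.

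For the super-regularity (the degree condition), fix $ij\in E(Q_{t,s})$ and a vertex $v\in V_i^{t,s}$. By definition $V_i^{t,s} = V_i(t,s)\setminus W_i(t,s) \subseteq V_i(t,s) \subseteq V_i \setminus \bigcup_{j'\in N_{Q_{t,s}}(i)} U_i(j')$, and in particular $v\notin U_i(j)$ (using $j\in N_{Q_{t,s}}(i)$). Moreover $v\in V_i\subseteq V_i'\setminus U_i'$. By the definition of $U_i(j)$ in \eqref{super1}, $v\notin U_i(j)$ means $d_{G',V_j'}(v) = (d_{i,j}\pm\epsilon^2)|V_j'|$. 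Now I need to pass from degrees in $G'$ into $V_j'$ to degrees in $G_{t,s} = E^{\psi(ij)}_{i,j}$ into $V_j^{t,s}$. The first step is \ref{E1}: for $v\in V_i$ and any $t'\in[T]$, $d_{E^{\psi(ij)}_{i,j}, V_j^{t'}}(v) = \frac{1}{d_{i,j}p_1 q}d_{G'',V_j^{t'}}(v) \pm n^{2/3}$, and summing over all $t'\in[T]$ with $V_j\cap Res_{t'}=V_j^{t'}$ gives $d_{E^{\psi(ij)}_{i,j}, V_j}(v) = \frac{1}{d_{i,j}p_1 q}d_{G'',V_j}(v) \pm Tn^{2/3}$. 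Next, by \ref{L1} applied with $\ell=1$ (so $G''=L_1$, $p_1=1-6d$) and summed over $t'$, $d_{G'',V_j}(v) = (1-6d)d_{G',V_j}(v)\pm Tn^{2/3}$. Combining, $d_{E^{\psi(ij)}_{i,j},V_j}(v) = \frac{(1-6d)}{d_{i,j}(1-6d)q}d_{G',V_j}(v)\pm 3Tn^{2/3} = \frac{1}{d_{i,j}q}d_{G',V_j}(v)\pm 3Tn^{2/3}$. Since $V_j\subseteq V_j'$ with $|V_j'\setminus V_j| = |U_j'|\leq 5\epsilon n/r$ by \eqref{eq: Uij property}, we get $d_{G',V_j}(v) = d_{G',V_j'}(v)\pm 5\epsilon n/r = (d_{i,j}\pm\epsilon^2)|V_j'| \pm 5\epsilon n/r = d_{i,j}|V_j|\pm 7\epsilon n/r$ using $|V_j|,|V_j'| = (1\pm O(\epsilon))n/r$. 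Hence $d_{E^{\psi(ij)}_{i,j},V_j}(v) = \frac{1}{q}|V_j|\pm 8\epsilon n/(dr)$. Finally I must remove from $V_j$ the vertices not in $V_j^{t,s}$: we have $V_j\setminus V_j^{t,s} \subseteq Res_t \cup \bigcup_{j'\in N_{Q_{t,s}}(j)} U_j(j') \cup W_j(t,s)$, which by \ref{Res2}, \eqref{eq: Uij property} and $|V_j(t,s)\setminus W_j(t,s)| = m$ has size $(1/T + O(\epsilon))n/r$. This is a problem: removing a $\Theta(1/T)$ fraction of $V_j$ could change $v$'s degree by a $\Theta(1/T)$ fraction, which is too large relative to $\epsilon^{1/2}$ when we only want error $\epsilon^{1/2}|V_j^{t,s}|$.

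The resolution, and the step I expect to be the main obstacle, is that we cannot just bound $|V_j\setminus V_j^{t,s}|$ crudely; we must use \ref{Res1} and the choice of $W_j(t,s)$ to control $d_{G', V_j\setminus V_j^{t,s}}(v)$ precisely. Specifically, $d_{G', Res_t\cap V_j}(v) = d_{G',V_j^t}(v) = \frac{1}{T}d_{G',V_j}(v)\pm n^{2/3}$ by \ref{Res1}, so removing $Res_t\cap V_j$ removes a controlled $\frac{1}{T}$-fraction of the $G'$-degree; the set $W_j(t,s)$ has size only $O(\epsilon n/r)$ by \eqref{eq: V m sizes} (since $|V_j(t,s)|-m\leq 15\epsilon n/r$), contributing error $O(\epsilon n/r)$; and $\bigcup_{j'\in N_{Q_{t,s}}(j)}U_j(j')$ has size $O(k\epsilon^2 n/r)$ by \eqref{eq: Uij property}, also negligible. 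Transferring these to $E^{\psi(ij)}_{i,j}$-degrees via \ref{E1} and \ref{L1} exactly as above (each application introduces only $\pm O(Tn^{2/3})$ and $\pm O(\epsilon n/r)$ type errors, plus the multiplicative $\frac{1}{d_{i,j}q}$), we obtain $d_{E^{\psi(ij)}_{i,j}, V_j^{t,s}}(v) = \frac{1}{q}(|V_j|\cdot\frac{T-1}{T})\pm O(\epsilon n/(dr)) = \frac{1}{q}|V_j^{t,s}|\pm O(\epsilon n/(dr))$, where the last equality uses $|V_j^{t,s}| = m = \frac{(T-1)n}{Tr} - \frac{10\epsilon n}{r} = \frac{T-1}{T}|V_j|\pm O(\epsilon n/r)$. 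Since $O(\epsilon n/(dr)) \leq \epsilon^{2/3}n/r \leq \epsilon^{1/2}|V_j^{t,s}|$ (as $\epsilon\ll d$ and $|V_j^{t,s}| = \Theta(n/(Tr))$ with $\epsilon\ll 1/T$), this gives $d_{G_{t,s},V_j^{t,s}}(v) = (1/q\pm\epsilon^{1/2})|V_j^{t,s}|$ for every $v\in V_i^{t,s}$, and symmetrically for $v\in V_j^{t,s}$. Together with the $(\epsilon^{1/2},1/q)$-regularity established above, this proves that $G_{t,s}[V_i^{t,s}, V_j^{t,s}]$ is $(\epsilon^{1/2}, 1/q)$-super-regular, as claimed.
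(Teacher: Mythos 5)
Your proof is correct and, after the ``false start'' where you diagnose the difficulty, the resolution you give is essentially identical to the paper's: both pass through \ref{E1}, \ref{L1}, and \ref{Res1} to convert $E^{\psi(ij)}_{i,j}$-degrees into $V_j^{t,s}$ to $G'$-degrees into $V_j'$, then invoke $v\notin U_i(j)$ from \eqref{super1}. The only slip is a typo near the end, where you state $|V_j^{t,s}| = \Theta(n/(Tr))$; in fact $|V_j^{t,s}| = m = \Theta(n/r)$ (it is the sets $U_j^{t,s}$ that have size $\Theta(n/(Tr))$), but since this makes the required inequality $\epsilon^{2/3}n/r \le \epsilon^{1/2}|V_j^{t,s}|$ only easier, the conclusion is unaffected.
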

\begin{proof}
Let $\ell\in [q_{i,j}]$ be such that $G_{t,s}[V_i, V_j] = E_{i,j}^{\ell}$.
Such an $\ell$ exists by the definition of $G_{t,s}$ and the assumption that $ij\in E(Q_{t,s})$. 
Note that for $i'\in \{i,j\}$ we have $V_{i'}^{t,s} \subseteq V_{i'}$ with $|V_{i'}^{t,s}|= m > \frac{1}{2}|V_{i'}|$ by \eqref{eq: Vi size}. 
Thus Proposition~\ref{prop: reg smaller} with \ref{E2} implies that $G_{t,s}[V_i^{t,s}, V_{j}^{t,s}] = E_{i,j}^{\ell}[V_i^{t,s}, V_{j}^{t,s}]$ is $(16\epsilon^2, 1/q)$-regular.

Consider $v\in V_i^{t,s}$. By the definition of $V_i^{t,s}$, we have $v\notin U_i(j)$. Thus
\begin{eqnarray*}
 \hspace{-0.4cm}d_{G_{t,s}, V_j^{t,s} } (v) \hspace{-0.2cm} &\stackrel{\eqref{eq: Uij property},\eqref{eq: V m sizes}}{=}&  \hspace{-0.2cm}d_{E_{i,j}^{\ell}, V_j\setminus Res_t}(v) \pm \frac{16\epsilon n}{r}
= \sum_{t'\in [T]\setminus \{t\}} d_{E_{i,j}^{\ell}, V_j^{t'}}(v) \pm \frac{16\epsilon n}{r} \\
 \hspace{-0.2cm}&\stackrel{\ref{E1}}{=}&  \hspace{-0.2cm}\sum_{t'\in [T]\setminus \{t\}} \frac{1}{d_{i,j}p_1 q  } d_{G'', V_{j}^{t'}}(v) \pm \frac{17\epsilon n}{r}
\stackrel{\ref{L1}}{=} \sum_{t'\in [T]\setminus \{t\}}\frac{1}{d_{i,j} q} d_{G', V_{j}^{t'}}(v) \pm \frac{18\epsilon n}{r} \\
 \hspace{-0.2cm}&\stackrel{\ref{Res1}}{=}&  \hspace{-0.2cm}\frac{(T-1)}{d_{i,j} q T} d_{G', V_{j}}(v) \pm \frac{19\epsilon n}{r} 
\stackrel{\eqref{super1}}{=}  \frac{(T-1)}{d_{i,j} q T} ( (d_{i,j}\pm \epsilon^2) |V'_j| \pm |U'_j|)  \pm \frac{19\epsilon n}{r}\\
 \hspace{-0.2cm} &\stackrel{\eqref{eq: Uij property}}{=}& \hspace{-0.2cm} \frac{(T-1) n }{q Tr}  \pm \frac{30\epsilon n}{r}  \stackrel{\eqref{eq: Vi size}}{=} (\frac{1}{q}\pm \epsilon^{1/2})|V_j^{t,s}|. 
\end{eqnarray*}
Similarly, for $v\in V_j^{t,s}$, we have 
$d_{G_{t,s}, V_i^{t,s} } (v) = (\frac{1}{q} \pm \epsilon^{1/2})|V_i^{t,s}|.$ 
Thus $G_{t,s}[ V_i^{t,s}, V_{j}^{t,s}]$ is $(\epsilon^{1/2}, 1/q)$-super-regular. 
This proves the claim.
\end{proof}


For all $t\in [T]$, $v\in Res_t$ and $s\in [\kappa/T]$, we know that 
\begin{eqnarray*}
d_{G^*_{t},V_i^{t,s}}(v) & = &d_{G^*_{t},V_i}(v)  \pm |V_i\setminus V_i^{t,s}|  \stackrel{\ref{L1}}{=} \sum_{\ell \in [T]} (d \cdot d_{G',V_i^\ell}(v)\pm n^{2/3}) \pm 
|V_i\setminus V_i^{t,s}| \nonumber \\
&\stackrel{\ref{Res1},\eqref{eq: Vi size}}{=}&  d \cdot d_{G',V_i}(v) \pm 2 n/(Tr).
\end{eqnarray*}
This implies that
\begin{align}\label{eq: v degree good in G*t}
&|\{ i \in V(Q_{t,s}) : d_{G^*_{t}, V_i^{t,s}}(v) \geq d^2 m/2\}|
\geq |\{ i \in V(Q_{t,s}) : d_{G',V_i}(v) \geq d |V_i| \}| \nonumber \\ 
&\geq \frac{d_{G'}(v) -|V_0| -dn }{\max_{i\in [r]} |V_i| }  - |[r]\setminus V(Q_{t,s})|
 \stackrel{\eqref{eq: Vi sizes}, \ref{Q3}}{\geq} (1- 1/k +\sigma/2)r.
\end{align}
We obtain the final inequality since $\delta(G') \geq (\delta -\xi- 2d)n \geq (1- 1/k + 3\sigma/4)n$ by (i) and \ref{R4}.
This together with \eqref{eq: F* e-regular} and Claim~\ref{cl: Hisell super-regular} will ensure that $G_{t,s}\cup G^*_{t}$ can play the role of $G$ in Lemma~\ref{embedone}, and \eqref{eq: Ft degree v} shows that $F_{t,s}$ can play the role of $F$ in Lemma~\ref{embedone}.

The remaining part of this step is to construct a graph which can play the role of $F'$ in Lemma~\ref{embedone}.
$F'$ needs to contain suitable stars centred at $v$ whenever $v \in V_0^{t,s}.$ (For each $t$, the number of
stars we will need for $v$ in order to deal with all $s\in [\kappa/T]$ is bounded from above
by (\ref{boundkappav}).)
 For all $t\in [T]$, $s\in [\kappa/T]$, $v\in V(G)$ and $u\in Res_t$, let 
\begin{eqnarray}\label{Itv}
I_t(v):=\{ s' \in [\kappa/T] : v\in V_0^{t,s'}\} \qquad \text{and} \qquad i_t^{s}(v):= |I_t(v)\cap [s]|, \nonumber \\
J_t(u):= \{ s'\in [\kappa/T] : u\in U_0^{t,s'}\} \qquad \text{and} \qquad j_t^{s}(u):= |J_t(u)\cap [s]|.
\end{eqnarray}
Note that if $v\in V_0$, then $I_t(v) = [\kappa/T]$.
If $v\in V_i\backslash Res_t$ for some $i \in [r]$, then $s \in I_t(v)$ means 
$v\in W_i(t,s) \cup \bigcup_{ j\in N_{Q_{t,s}}(i)} U_i(j) \cup \bigcup_{i' \in [r]\setminus V(Q_{t,s})} V_{i'}.$
Together with the fact that $U'_i \subseteq V_0$ and so $v \notin U'_i$, this implies\COMMENT{In both cases for $I_t(v)$ and $J_t(v)$, we get the term $ |\{j\in [r] : v\in U_i(j)  \}|$ since for each $ij\in E(R)$, there are only one $s\in [\kappa/T]$ with $ij \in E(Q_{t,s})$ by \ref{Q5}.  }
\begin{eqnarray}\label{eq: Lsv size}
 |I_t(v)| \hspace{-0.2cm}&\stackrel{\ref{Q5}}{\leq}&  \hspace{-0.2cm}|\{s \in [\kappa/T]: v\in W_i(t,s)\}| + |\{ j\in [r]: v\in U_i(j)\}| 
+ |\{s \in [\kappa/T] : i \notin V(Q_{t,s})  \}| \nonumber \\
 \hspace{-0.2cm}&\stackrel{\eqref{super1},\eqref{eq: v in Wi s ell},\ref{Q4}}{\leq}& \hspace{-0.2cm} 10\epsilon^{1/2} \kappa/T + \epsilon r + \epsilon r \stackrel{\ref{Q2}}{\leq} 20 \epsilon^{1/2}r.
\end{eqnarray}
Similarly, for $u\in V_i^t$, we have
\begin{eqnarray}\label{eq: Jsv size}
 |J_t(u)| \hspace{-0.2cm}&\leq&  \hspace{-0.2cm} |\{ j\in [r]: u\in U_i(j)\}| 
+ |\{s \in [\kappa/T] : i \notin V(Q_{t,s})  \}|\stackrel{\eqref{super1},\ref{Q4}}{\leq} \hspace{-0.2cm} \epsilon r + \epsilon r \leq 2 \epsilon r.
\end{eqnarray}
For each $v\in V(G)\setminus Res_t$, let
\begin{equation}\label{boundkappav}
\kappa_v:= \left\{\begin{array}{ll} (1+ d)\kappa & \text{ if } v \in V_0, \\ 
\lceil r/(2k) \rceil & \text{ if } v \notin V_0.\end{array}\right.
\end{equation}
$\kappa_v$ is the overall number of stars centred at $v$ that we will construct for given~$t$.
Note that for all $t\in [T]$ and $s\in [\kappa/T]$, no edge of $E(G'[V_0,Res_t])$ belongs to any of the graphs $G_{t,s}, G^*_t, F_t, F^*_t$. 
Now for each $t\in [T]$, we use these edges and edges in $F^*_t$ to construct stars $F'_t(v,s)$ centred at $v$, and subsets $C^t_{v,s}$, $C^{*,t}_{v,s}$ of $[r]$ for all $v\in V(G)\setminus Res_t$ and $s\in [\kappa_v]$, in such a way that the following hold for all $t\in [T]$ and $v\in V(G)\setminus Res_t$.
 \begin{enumerate}[label={\text{\rm(F$'$\arabic*)}}]
\item \label{F'1} For each $s\in [\kappa_v]$, we have $C^{t}_{v,s}\subseteq C^{*,t}_{v,s}$, $|C^{t}_{v,s}|=k-1$, $|C^{*,t}_{v,s}|=k$ and $R[C^{*,t}_{v,s}]\simeq K_{k}$,
\item \label{F'2} for each $i \in [r]$, we have $|\{ s\in [\kappa_v] : i\in C^{*,t}_{v,s}\}| \leq (k+1)q,$
\item \label{F'3} for each $s\in [\kappa_v]$, if $i\in C^{t}_{v,s}$, then $d_{F_t'(v,s), V_i^{t}}(v) \geq \frac{|V_i^{t}|}{q}$.
\end{enumerate}

\begin{claim}\label{cl: F' construction}
For all $t\in [T]$, $v\in V(G)\setminus Res_t$ and $s \in [\kappa_v]$, there exist edge-disjoint stars $F'_t(v,s) \subseteq G'[V_0,Res_t] \cup F_t^*$ centred at $v$, and subsets $C^t_{v,s}$, $C^{*,t}_{v,s}$ of $[r]$ which satisfy \ref{F'1}--\ref{F'3}.
\end{claim}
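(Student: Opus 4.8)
\textbf{Proof strategy for Claim~\ref{cl: F' construction}.}
The plan is to construct the stars $F'_t(v,s)$ one at a time, for each $v\in V(G)\setminus Res_t$, making them edge-disjoint by a simple degree-counting/greedy argument and picking the cliques $C^{*,t}_{v,s}$ evenly so that \ref{F'2} holds. We treat the two cases $v\in V_0$ and $v\notin V_0$ essentially uniformly, the only difference being the ambient degree of $v$ into $Res_t$ and the number $\kappa_v$ of stars we must produce.

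First I would record the relevant degree estimates. For $v\in V_0$, we use the edges of $G'[V_0,Res_t]$: since $\delta(G')\geq(1-1/k+3\sigma/4)n$ by (i) and \ref{R4}, and $|V_0|\le 6\epsilon n$ and $|Res_t|=(1\pm 2\epsilon)n/T$ by \ref{Res3} and \eqref{eq: Vi sizes}, a counting argument shows that $v$ has at least, say, $(1-1/k+\sigma/2)|Res_t|$ neighbours in $Res_t$, and moreover for at least $(1-1/k+\sigma/2)r$ indices $i\in[r]$ we have $d_{G'[V_0,Res_t],V_i^t}(v)\ge \frac12(1-1/k+\sigma/4)|V_i^t|$ (here one uses \ref{Res1} to pass from $d_{G',V_i}(v)$ to $d_{G',V_i^t}(v)$ and \eqref{eq: Uits sizess}/\ref{Res2} for the cluster sizes). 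Call such $i$ \emph{good for $v$}. For $v\notin V_0$, say $v\in V_i\setminus Res_t$, we instead use $F_t^*$: by \ref{L1} and \ref{Res1}, $d_{F_t^*,V_j^t}(v)=\sum_{t'\ne t}(d\cdot d_{G',V_j^{t'}}(v)\pm n^{2/3})$, and since $d_{G'}(v)\ge(1-1/k+3\sigma/4)n$ one gets that for at least $(1-1/k+\sigma/2)r$ indices $j$, $d_{F_t^*,V_j^t}(v)\ge d^2|V_j^t|/2$; again call these $j$ good for $v$. In both cases the set $G_v$ of good indices has size $\ge(1-1/k+\sigma/2)r$, so by \ref{R min deg} and the fact that $\delta(R)\ge(\delta-d^{1/2})r\ge(1-1/k+\sigma/2)r$, the subgraph $R[G_v]$ still has minimum degree at least $(1-1/k+\sigma/4)|G_v|$ (over $[r]$), hence by a Turán-type/Hajnal--Szemerédi-flavoured argument it contains many copies of $K_k$; concretely one can apply Lemma~\ref{lem: hypergraph degree} to $R[G_v]$ with $d_w:=0$ and a suitable bound $m$ to obtain a multi-$k$-graph $\cF_v^*$ on $G_v$ with $R[e]\simeq K_k$ for every edge, $\Delta(\cF_v^*)\le(k+1)q$ (or any bound that comfortably exceeds $\kappa_v$ divided by $|G_v|/k$ times a constant), and $|E(\cF_v^*)|\ge\kappa_v$. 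Enumerating $\kappa_v$ of these edges as $C^{*,t}_{v,1},\dots,C^{*,t}_{v,\kappa_v}$ and deleting one (arbitrary) vertex of each gives $C^t_{v,s}$; then \ref{F'1} holds by construction and \ref{F'2} holds because $\Delta(\cF_v^*)\le(k+1)q$ bounds how often each $i$ appears. (One must double-check that $\kappa_v\le|E(\cF_v^*)|$: for $v\in V_0$, $\kappa_v=(1+d)\kappa\le(1+d)(\delta-\nu/5+\epsilon)qr/(k-1)\le qr$ by \ref{Q2}, while $|G_v|/k\cdot(k+1)q\ge\frac{(1-1/k+\sigma/2)r}{k}(k+1)q\ge qr$; for $v\notin V_0$, $\kappa_v=\lceil r/(2k)\rceil$ is far smaller, so this is immediate.)

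Next I would build the stars greedily, processing the pairs $(v,s)$ in any fixed order. Suppose $F'_t(v',s')$ has been defined for all previously processed pairs; we must define $F'_t(v,s)$ as a star centred at $v$ with $d_{F'_t(v,s),V_i^t}(v)\ge|V_i^t|/q$ for every $i\in C^t_{v,s}$, using only edges of $G'[V_0,Res_t]\cup F_t^*$ not yet used. The total number of edges at $v$ already consumed by earlier stars is at most $\sum_{s''}\sum_{i\in C^t_{v,s''}}(\text{something like }2|V_i^t|/q)$ — more precisely at most $\kappa_v\cdot k\cdot\max_i|V_i^t|\cdot(\text{const}/q)$, which by \ref{F'2} is in fact at most $(k+1)q\cdot k\cdot|V_i^t|\cdot(\text{const}/q)=O(k^2|V_i^t|)$ edges into each fixed cluster $V_i^t$. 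This is much smaller than the good-degree bound $\ge d^2|V_i^t|/2$ (resp. $\ge\frac12(1-1/k+\sigma/4)|V_i^t|$) available at $v$ into each good cluster, using $1/q\ll d$. Hence for each $i\in C^t_{v,s}\subseteq G_v$ there remain at least $|V_i^t|/q$ unused edges from $v$ to $V_i^t$, and we take $F'_t(v,s)$ to be the union over $i\in C^t_{v,s}$ of exactly $\lceil|V_i^t|/q\rceil$ such unused edges. This gives \ref{F'3}, and edge-disjointness is automatic since we only ever chose unused edges. Finally, the $F'_t(v,s)$ for fixed $t$ and varying $v$ are automatically pairwise edge-disjoint because distinct centres $v\ne v'$ in $V(G)\setminus Res_t$ force the stars to live on disjoint edge sets (an edge of a star at $v$ is incident to $v$, and at most one endpoint of an edge of $G'[V_0,Res_t]\cup F_t^*$ lies outside $Res_t$ when $v\in V_0$; when $v\notin V_0$ one checks the $F_t^*$-edges at $v$ and $v'$ cannot coincide — and if both $v,v'\notin V_0$ an edge $vv'$ could in principle be shared, so one should additionally orient/allocate each $F_t^*$-edge to at most one of its endpoints, which is a harmless bookkeeping step absorbed into the greedy order).

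The main obstacle I anticipate is purely the bookkeeping of \emph{which} edge pool belongs to which centre and ensuring the accounting genuinely leaves enough room: one has to be careful that the per-cluster consumption at a vertex $v$ over all its $\kappa_v$ stars — bounded via \ref{F'2} by roughly $(k+1)q$ stars each using about $|V_i^t|/q$ edges into $V_i^t$, i.e.\ $\approx(k+1)|V_i^t|$ edges — does not exceed the available good degree. For $v\notin V_0$ the good degree is only $\ge d^2|V_i^t|/2$, and $(k+1)|V_i^t|$ would \emph{exceed} this; the resolution is that for $v\notin V_0$ we need only $\kappa_v=\lceil r/(2k)\rceil$ stars, and moreover each cluster index $i$ is used in at most $O(1)$ of these (again by the $\Delta(\cF_v^*)$ bound, which for this smaller $\cF_v^*$ can be taken to be an absolute constant), so the consumption into each $V_i^t$ is $O(|V_i^t|/q)\ll d^2|V_i^t|/2$. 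So the two cases really do need the two different values of $\kappa_v$ and slightly different degree thresholds, and the proof should present them in parallel with this distinction made explicit. Modulo this care, every inequality used is one of the already-established estimates \eqref{super1}, \eqref{eq: Uij property}, \eqref{eq: Vi sizes}, \ref{Res1}, \ref{Res2}, \ref{Res3}, \ref{L1}, \ref{Q2}--\ref{Q5}, \eqref{eq: R min deg}, combined with the hierarchy $1/q\ll\xi\ll d\ll\sigma,1/k$ from \eqref{eq: hierarchy}, so no new tool beyond Lemma~\ref{lem: hypergraph degree} (or a direct greedy clique-packing argument in $R$) is required.
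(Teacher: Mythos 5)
There is a genuine gap in the greedy accounting for the case $v\in V_0$. You observe, correctly, that for $v\notin V_0$ the consumption $(k+1)|V_i^t|$ would exceed the available degree, and you fix it by noting $\kappa_v=\lceil r/(2k)\rceil$ is small. But the same overshoot occurs, unresolved, for $v\in V_0$: with $\kappa_v=(1+d)\kappa\approx\delta qr/(k-1)$ stars and the multiplicity bound $\Delta(\cF_v^*)\le(k+1)q$ from \ref{F'2}, a fixed cluster $i$ can appear in up to $(k+1)q$ of the sets $C^t_{v,s}$, and each star takes $\ge|V_i^t|/q$ edges from $v$ into $V_i^t$. The total is then up to $(k+1)q\cdot|V_i^t|/q=(k+1)|V_i^t|$, which exceeds the \emph{entire} degree of $v$ into $V_i^t$ (at most $|V_i^t|$), regardless of how ``good'' the cluster is. Your proposed good-degree lower bound $\tfrac12(1-1/k+\sigma/4)|V_i^t|$ is of order $|V_i^t|$, not $(k+1)|V_i^t|$, so the greedy cannot close. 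The degree-counting framing simply cannot work here: the bound $(k+1)q$ on repeat uses of a cluster is what \ref{F'2} provides, but with $|V_i^t|/q$ edges consumed per use this is too much.

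The idea you are missing is the blow-up auxiliary graph. For $v\in V_0$, the paper defines $q_{v,i}^t:=\lfloor q\,d_{G',V_i^t}(v)/|V_i^t|\rfloor$ and pre-splits $E_{G'}(\{v\},V_i^t)$ into $q_{v,i}^t$ \emph{pairwise disjoint} sub-stars $E_{v,i}^t(1),\dots,E_{v,i}^t(q_{v,i}^t)$, each of size $|V_i^t|/q$. It then builds $R_v^t$ whose vertices are the pairs $(i,q')$, $q'\in[q_{v,i}^t]$, with $(i,q')\sim(j,q'')$ iff $ij\in E(R)$. Because $q_{v,i}^t|V_i^t|/q\le d_{G',V_i^t}(v)$, the sub-stars associated to the copies of $i$ genuinely fit inside the available edges, so \emph{any} collection of vertex-disjoint $K_{k-1}$'s in $R_v^t$ automatically yields edge-disjoint stars, with no counting argument needed. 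The Hajnal--Szemerédi theorem applied once to $R_v^t$ (whose minimum degree is shown to exceed $(1-1/(k-1)+\sigma)|V(R_v^t)|$, and whose order $\approx\delta qr$ is large enough) then produces $\ge\kappa_v$ such cliques in one shot. The cliques are only of size $k-1$, and they are afterwards extended to $k$-cliques $C^{*,t}_{v,s}$ by Lemma~\ref{lem: k-1 clique extend}; the extension vertex need not be a ``good'' cluster for $v$, which is why the degree condition needed in $R_v^t$ (or $R[S_v^t]$ for $v\notin V_0$) is only the $K_{k-1}$-factor threshold $1-1/(k-1)$. By contrast, your plan to apply Lemma~\ref{lem: hypergraph degree} directly to $R[G_v]$ requires the stronger $K_k$-factor threshold $\delta(R[G_v])\ge(1-1/k+\sigma')|G_v|$, and as a quick calculation shows this can fail when $\sigma$ is small (e.g.\ when $k=2$ and $\delta$ is close to $1/2$, so $R[G_v]$ has minimum degree only $\approx\sigma r$ while $|G_v|/2\approx r/4$). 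Finding $(k-1)$-cliques first and only then extending to $k$-cliques in the ambient graph $R$ is exactly what sidesteps both this threshold problem and the edge-accounting problem.

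One minor remark in your favour: your worry about an $F_t^*$-edge $vv'$ with both $v,v'\notin V_0$ being claimed by two stars does not arise, because $F_t^*$ is bipartite between $Res_t$ and $V(G)\setminus(V_0\cup Res_t)$, so no such edge exists; edge-disjointness across distinct centres is automatic, as in the paper.
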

When applying Lemma \ref{embedone} in Step 3 to pack $\cH_{t,s}$, we will only make use of those stars $F_t'(v,s)$ with $v \in V_0^{t,s}$, but it is slightly more convenient to define them for all $v \in V(G)\backslash Res_t.$
\begin{proof}
First, consider $t\in [T]$ and $v\in V_0$. Then we have
\begin{eqnarray}\label{eq: G' Rest degree}
d_{G',Res_t}(v) = \sum_{i\in [r]} d_{G',V_i^{t}} (v) \stackrel{\ref{Res1}}{=}
 \frac{1}{T} \sum_{i\in [r]} d_{G',V_i}(v) \pm r n^{2/3} 
 \stackrel{ \substack{(i),\ref{R4},\\ \ref{Res3},\eqref{eq: Vi sizes}}}{=} (\delta  \pm 3d) |Res_t|. 
\end{eqnarray}
For all $v\in V_0$, $t \in [T]$ and $i\in [r]$, let  $q_{v,i}^t:=  \lfloor \frac{q\cdot d_{G',V_i^t}(v)  }{|V_i^t|}   \rfloor.$
Consider edge-disjoint subsets $E_{v,i}^{t}(1),\dots, E_{v,i}^{t}(q_{v,i}^t)$ of $E_{G'}(\{v\}, V_i^t)$ such that $|E_{v,i}^{t}(q')| = \frac{1}{q}|V_i^t|$ for each $q'\in [q_{v,i}^t]$.
Let $R^t_v$ be an auxiliary graph such that 
$$V(R^t_v) := \{ (i, q' ) : i\in [r], q' \in [q_{v,i}^t] \} \enspace \text{ and } \enspace E(R^t_v):= \{ (i,q')(j,q'') : ij\in E(R), q'\in [q_{v,i}^t], q''\in [q_{v,j}^t] \}.$$
Note that each $(i,q')$ corresponds to the star $E_{v,i}^{t}(q')$ centred at $v$.
We aim to find a collection of vertex-disjoint cliques of size $k-1$ in $R^t_v$, which will give us edge-disjoint stars in $E_{G'}(\{v\},Res_t)$.
From the definition, we have 
\begin{eqnarray}\label{eq: Rv size}
|V(R^t_v)| = \sum_{i\in [r]} q_{v,i}^{t} \hspace{-0.07cm} \stackrel{\ref{Res2}}{=} \hspace{-0.07cm}  \frac{(1\pm 10\epsilon) d_{G',Res_t}(v) q }{ n/(Tr)} \pm r  \stackrel{\eqref{eq: G' Rest degree}}{=} \frac{(\delta \pm 4d) q |Res_t|}{n/(Tr)}  \stackrel{\ref{Res3}}{=}  (\delta \pm 5d)q r.
\end{eqnarray}
Then, for $(i,q') \in V(R^t_v)$, we have
\begin{eqnarray}
\hspace{-0.2cm} d_{R^t_v}(( i,q'))  \hspace{-0.2cm} &\ge&\hspace{-0.2cm} q \sum_{ j\in N_{R}(i) } d_{G',V_j^t}(v) |V_j^{t}|^{-1}  - d_{R}(i) \stackrel{\ref{Res2}}{\geq }  \frac{Tqr}{(1+7\epsilon)n} \sum_{ j\in N_{R}(i) } d_{G',V_j^t}(v) - r \nonumber \\
\hspace{-0.2cm}&\geq& \hspace{-0.2cm}\frac{Tqr}{(1+7\epsilon) n}\Big(
 \sum_{j\in N_R(i)}|V_j^t| - \sum_{j\in [r]}(|V_j^t| - d_{G',V^t_j}(v)) \Big) - r \nonumber \\
\hspace{-0.2cm}&\stackrel{\substack{\eqref{eq: R min deg}, \eqref{eq: G' Rest degree},\\ \ref{Res2}, \ref{Res3}}}{\ge}&\hspace{-0.2cm} (  2\delta - 2d^{1/2} -1 )qr  -r \stackrel{\eqref{eq: Rv size}}{\geq } ( 1- \frac{1}{k-1} + \sigma ) |V(R_v^t)|.
\end{eqnarray}\COMMENT{To go from line 2 to line 3 above: $\frac{Tqr}{(1\pm 7\epsilon)n}\Big( (\delta - d^{1/2})\frac{(1 \pm 7\epsilon)n}{T} + \frac{(1\pm 6\epsilon)n}{T}\Big((\delta \pm 3d) - 1 \Big)\Big).$}
Here, the final inequality follows from \eqref{eq: deltak 1-1/k}.
By the Hajnal-Szemer\'edi theorem, $R^t_v$ contains at least 
$$|V(R^t_v)|/(k-1) -1 \stackrel{\eqref{eq: Rv size}}{\geq } (\delta - 5d)qr/(k-1) - 1 \stackrel{\ref{Q2}}{\geq} (1+d)\kappa \stackrel{(\ref{boundkappav})}{=} \kappa_v$$
 vertex-disjoint copies of $K_{k-1}$.
Let $C^{t}_{v}(1), \dots, C^{t}_v(\kappa_v)$ be such vertex-disjoint copies of $K_{k-1}$ in $R^t_v$. 
For each 
$s\in [\kappa_v]$, we let 
$$F'_t(v,s):= \bigcup_{(i,q') \in V(C^{t}_{v}(s))} E_{v,i}^{t}(q')\qquad \text{ and } \qquad
C^{t}_{v,s} := \{ i : (i,q') \in V(C^{t}_v(s) ) \text{ for some }q'\in [q^t_{v,i}] \}.
$$
By construction $|C^t_{v,s}| = k-1$ and $R[C^t_{v,s}] \simeq K_{k-1}$. Moreover, the maximum degree of the multi-($k-1$)-graph $\{C^{t}_{v,s} : s\in [\kappa_v]\}$ is at most $q$.
Thus we can apply Lemma~\ref{lem: k-1 clique extend} with $\{C^{t}_{v,s} : s\in [\kappa_v]\}$, $R$, $q$ and $k$ playing the roles of $\cF, R, q$ and $k$. 
 Then we obtain sets $C^{*,t}_{v,s}$ satisfying the following for all $s\in [\kappa_v]$ and $i\in [r]$:
\begin{align}\label{eq: max degree C*}
C^{t}_{v,s}\subseteq C^{*,t}_{v,s}, \enspace R[C^{*,t}_{v,s}]\simeq K_k, \enspace \text{and} \enspace |\{ s\in [\kappa_v] : i\in C^{*,t}_{v,s}\}|\leq (k+1)q.
\end{align}
It is easy to see that for all $s \in [\kappa_v]$ the sets $C_{v,s}^t$, $C_{v,s}^{*,t}$ and the stars $F'_t(v,s)$ satisfy \ref{F'1}--\ref{F'3}.

Now, we consider $t\in [T]$ and $v\in V_i\setminus Res_t$ with $i\in [r]$.
Let $S^t_{v}:=N_R(i) \setminus \{ j : v\in U_i(j)\}$, and 
for each $j\in S^{t}_{v}$, let $E_{v,j}^{t}$ be a subset of $E_{F^*_t}(\{v\},V_j^{t})$ with $|E_{v,j}^{t}| = \frac{1}{q}|V_j^{t}|$.
We can choose such a star as there exists $\ell\in \{6,7\}$ such that
$$d_{F^*_t,V_j^{t}}(v) = d_{L_{\ell},V_j^{t}}(v) \stackrel{\ref{L1}}{=} d\cdot d_{G',V_j^t}(v) \pm n^{2/3} \stackrel{\ref{Res1},\ref{Res2}}{=}  (1\pm 10\epsilon) d \cdot d_{i,j}|V_j^{t}| > \frac{1}{q}|V_j^{t}|.$$
Here, the third equality follows since $v\notin U_i(j)$.
By \eqref{eq: R min deg}, \eqref{super1} and the fact that $v\notin U'_i$, we have $|S^{t}_{v}|\geq (\delta - 2d^{1/2})r$.
Thus 
$$\delta( R[S^t_{v}] )\geq |S^{t}_{v}| - (r- \delta(R)) \stackrel{\eqref{eq: R min deg}}{\geq} ( 1- \frac{1}{k-1})|S^{t}_{v}|.$$
Again, by the Hajnal-Szemer\'edi theorem, $R[S^{t}_{v}]$ contains (at least)  
$\kappa_v =\lceil r/(2k) \rceil$ vertex-disjoint copies of $K_{k-1}$. 
Denote their vertex sets by $C^{t}_{v,1}, \dots, C^{t}_{v,\kappa_v}$. 
We apply Lemma~\ref{lem: k-1 clique extend} with $\{C^{t}_{v,s} : s\in [\kappa_v]\}$, $R$, $1$ and $k$ playing the roles of $\cF, R, q$ and $k$ respectively, to extend each $C^{t}_{v,s}$ into a $C^{*,t}_{v,s}$ with $R[C^{*,t}_{v,s}]\simeq K_k$ and such that $|\{s\in [\kappa_v] : i\in C^{*,t}_{v,s}\}| \leq k+1$ for each $i\in [r]$.
For each $s\in [\kappa_v]$,
let $F'_t(v,s):= \bigcup_{ j\in C^{t}_{v,s}} E_{v,j}^{t}.$
Again, it is easy to see that for all $s \in [\kappa_v]$ the sets $C_{v,s}^t$, $C_{v,s}^{*,t}$ and the stars $F'_t(v,s)$ satisfy \ref{F'1}--\ref{F'3}.
This proves the claim.
\end{proof}

Altogether we will apply Lemma \ref{embedone} $\kappa$ times in Step 3.
In each application, we want the leaves of the stars that we use to be evenly distributed (see condition~{\rm (A8)}$_{\ref{embedone}}$).
This will be ensured by Claim $13$.
More precisely, for each $v\in V(G)\setminus Res_t$, our aim is to  choose a permutation
$\pi^t_v : [\kappa_v] \rightarrow [\kappa_v]$ satisfying the following. 
\begin{enumerate}[label={\text{\rm(F$'$\arabic*)}}]
 \setcounter{enumi}{3}
\item\label{F'4} For all $t\in [T]$, $i\in [r]$ and $s\in [\kappa/T]$, we have $C(t,s,i)\leq \epsilon^{4/5}n/r$, where
$C(t,s,i):= |\{ v \in V_0^{t,s} : i \in C^{*,t}_{v,\pi^t_v(s')} \text{ for some $s'$ with } 
(i^s_t(v)-1)T +1 \leq s' \leq i^{s}_t(v) T\}|,$
\item\label{F'5} for all $t\in [T]$, $s\in [\kappa/T]$ and $t'\in [T]$, we have that \newline
$\bigcup_{v\in V_0^{t,s} }C^{*,t}_{v,\pi^t_v((i^s_t(v)-1)T+t')} \subseteq V(Q_{t,s})$.\COMMENT{If $v\in V_0^{t,\ell}$, then $i^{\ell}_t(v) = i^{\ell-1}_{t}(v)+1$, so $\{(i^{\ell}_t(v)-1)T+t': t'\in [T]\} = [i^{\ell}_{t}(v)T]\setminus [i^{\ell-1}_{t}(V)T]$.
}
\end{enumerate}
Recall from \eqref{Itv} that $i^s_t(v)$ counts the number of $s'\in [s]$ for which $v \in V_0^{t,s'}$. 
The number $C(t,s,i)$ is well-defined because $i^{s}_t(v) \leq \kappa_v/T$ for all $v\in V(G)\setminus Res_t$ by \eqref{eq: Lsv size}.\COMMENT{\eqref{eq: Lsv size} is only about the case when $v\notin V_0$. But if $v\in V_0$, then $i^s_t(v) \leq \kappa/T < (1+d)\kappa/T = \kappa_v/T$. }

\begin{claim}
For each $t\in [T]$ and each $v\in V(G)\setminus Res_t$,
there exists a permutation $\pi^t_{v} : [\kappa_v]\rightarrow [\kappa_v]$ satisfying \ref{F'4}--\ref{F'5}.
\end{claim}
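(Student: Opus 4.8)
The plan is to choose each permutation $\pi^t_v$ \emph{independently and uniformly at random} among all permutations of $[\kappa_v]$, and to verify that \ref{F'4} and \ref{F'5} hold simultaneously with positive probability. First I would observe that \ref{F'5} is, for a given $t$ and $s$, a statement about only those $v$ with $v\in V_0^{t,s}$, i.e.\ about the indices $s'$ in the block $[(i^s_t(v)-1)T+1,\,i^s_t(v)T]$ of size $T$; since $|V_0^{t,s}|\le 25\epsilon n$ by \eqref{eq: V0 size} and since for each such $v$ the relevant cliques $C^{*,t}_{v,\pi^t_v(s')}$ must all lie inside $V(Q_{t,s})$, I would want to argue that \ref{F'5} can actually be arranged \emph{deterministically} first and then show the random permutations can be taken to respect it. Concretely, for each $v$ and each $s\in I_t(v)$ I reserve the block of $T$ positions $\{(i^s_t(v)-1)T+1,\dots,i^s_t(v)T\}$ of $\pi^t_v$ for cliques contained in $V(Q_{t,s})$; this is possible because, by \ref{F'2} and \ref{F'1}, the number of indices $s''\in[\kappa_v]$ with $C^{*,t}_{v,s''}\subseteq V(Q_{t,s})$ is large enough (each of the $\ge(1-\epsilon)r/k$ copies of $K_k$ in $Q_{t,s}$ together with \eqref{eq: Rv size}-type counting gives at least $T$ such indices available to be assigned to this block, and the blocks for distinct $s\in I_t(v)$ are disjoint since $|I_t(v)|\le 20\epsilon^{1/2}r$ by \eqref{eq: Lsv size}, so $\sum_{s\in I_t(v)}T \le 20\epsilon^{1/2}rT \le \kappa_v$). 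After fixing which indices $s''$ go into which block, I choose $\pi^t_v$ uniformly at random subject to these block constraints; then \ref{F'5} holds by construction.

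For \ref{F'4}, fix $t\in[T]$, $i\in[r]$ and $s\in[\kappa/T]$. For each $v\in V_0^{t,s}$ the contribution of $v$ to $C(t,s,i)$ is the indicator of the event that $i$ appears in one of the $T$ cliques $C^{*,t}_{v,\pi^t_v(s')}$ for $s'$ ranging over the (now fixed) block associated to $s$. By \ref{F'2} the total number of $s''\in[\kappa_v]$ with $i\in C^{*,t}_{v,s''}$ is at most $(k+1)q$, so the probability that a uniformly random position in the block of size $T$ lands on such an $s''$ is $O(kq/\kappa_v)=O(kq/r)$ per position, hence $O(kqT/r)$ over the whole block; since $1/T\ll 1/q\ll 1$ and $\epsilon\ll 1/T$ this probability is at most, say, $\epsilon^{1/2}$. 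Therefore $\mathbb{E}[C(t,s,i)]\le \epsilon^{1/2}|V_0^{t,s}|\le 25\epsilon^{3/2}n \le \tfrac12\epsilon^{4/5}n/r$ (using $r\le M$ and $\epsilon\ll 1/M$). The events whose indicators sum to $C(t,s,i)$ are independent across $v$ (the permutations $\pi^t_v$ are chosen independently), and each indicator is bounded by $1$, so a Chernoff bound (Lemma~\ref{lem: chernoff}) gives
\[
\mathbb{P}\bigl[C(t,s,i) > \epsilon^{4/5}n/r\bigr] \le 2\exp\!\bigl(-c\,\epsilon^{8/5}n^2/(r^2 n)\bigr) \le e^{-n^{1/2}}
\]
for a suitable constant $c$ and $n$ large. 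Taking a union bound over the at most $T\cdot(\kappa/T)\cdot r \le \kappa r \le n^2$ triples $(t,s,i)$ shows that \ref{F'4} fails with probability at most $n^2 e^{-n^{1/2}}<1$. Hence with positive probability the random permutations satisfy both \ref{F'4} and \ref{F'5}, which proves the claim.

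The main obstacle I anticipate is the bookkeeping needed to carry out the deterministic pre-assignment of index blocks for \ref{F'5} while leaving enough freedom that the randomness argument for \ref{F'4} still goes through: one must check that, for each $v$, the $T$ positions reserved for a given $s\in I_t(v)$ can indeed be filled with indices $s''$ whose cliques $C^{*,t}_{v,s''}$ lie in $V(Q_{t,s})$, and that these reservations across all $s\in I_t(v)$ do not conflict. This is where the hypotheses $|I_t(v)|\le 20\epsilon^{1/2}r$ (from \eqref{eq: Lsv size}), $\kappa_v\ge(1+d)\kappa$ for $v\in V_0$ and $\kappa_v=\lceil r/(2k)\rceil$ otherwise (from \eqref{boundkappav}), together with the fact from the construction in Claim~\ref{cl: F' construction} that the cliques $C^{*,t}_{v,s''}$ are built from the $\kappa_v$ vertex-disjoint $K_{k-1}$'s of $R^t_v$ (resp.\ $R[S^t_v]$), must be combined; a small counting argument shows the number of available indices in each block exceeds $T$, so a greedy assignment suffices. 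Everything else is routine concentration.
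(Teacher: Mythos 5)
Your high-level strategy — arrange \ref{F'5} structurally and then use independence across $v$ plus a Chernoff bound for \ref{F'4} — is the right one, and is essentially the approach of the paper. However, as written there are real gaps in the execution.

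First, the structure of the argument is internally inconsistent. You propose to first \emph{fix} deterministically (``a greedy assignment suffices'') which indices $s''$ occupy each block, and then choose $\pi^t_v$ uniformly at random ``subject to these block constraints.'' But if the set of indices mapped into a given block is already fixed, permuting within the block does not randomize $C(t,s,i)$ at all, since $C(t,s,i)$ only depends on which cliques appear in the block, not on their order. So the Chernoff argument has no randomness to work with. The paper avoids this by doing the selection \emph{itself} at random: it iterates over $s=0,1,\dots,\kappa/T-1$ and at step $s{+}1$ chooses, for each $v\in V_0^{t,s+1}$, a uniformly random $T$-subset $I_v$ of the available set $A_v:=\{s'\in[\kappa_v]\setminus \pi^t_{v,s}([i_t^s(v)T]) : C^{*,t}_{v,s'}\subseteq V(Q_{t,s+1})\}$. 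This builds in \ref{F'5} and injectivity by construction, while leaving genuine randomness for the Chernoff step.

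Second, even granting a random selection, your probability estimate has the wrong denominator. Once you constrain the block to lie inside the admissible set (to enforce \ref{F'5}), the relevant denominator is the size of that admissible set, not $\kappa_v$. For $v\in V_0$ one has $\kappa_v=(1+d)\kappa\approx qr$, whereas the admissible set has size only about $r/(4k)$; the paper shows $|A_v|\ge r/(4k)$ and uses $\mathbb{P}[i\in\bigcup_{s'\in I_v}C^{*,t}_{v,s'}]\le (k+1)qT/|A_v|\le 10qk^2T/r$. Your bound $O(kq/\kappa_v)$ per position is smaller than the true bound by a factor of roughly $kq$ for $v\in V_0$, i.e.\ it is an overestimate of the randomness you have. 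The conclusion happens to survive because the constants have slack, but the intermediate reasoning is incorrect and you would not discover the correct lower bound $|A_v|\ge r/(4k)$ (which is where \ref{F'2}, \eqref{eq: Lsv size}, \ref{Q3} and \eqref{boundkappav} are actually used) without tracking the admissible set carefully.

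Third, your feasibility check ``$\sum_{s\in I_t(v)}T \le 20\epsilon^{1/2}rT \le \kappa_v$'' is wrong for $v\in V_0$: in that case $I_t(v)=[\kappa/T]$, so $\sum_{s\in I_t(v)}T=\kappa$, not $\le 20\epsilon^{1/2}rT$. What saves the day is precisely the choice $\kappa_v=(1+d)\kappa$ in \eqref{boundkappav}, which gives the slack $\kappa_v-\kappa=d\kappa$ and feeds into the bound $|A_v|\ge \min\{d\kappa,\,r/(2k)-20T\epsilon^{1/2}r\}-(k+1)q\epsilon r\ge r/(4k)$. You need to handle $v\in V_0$ and $v\notin V_0$ as genuinely separate cases here, and the ``greedy assignment'' you gesture at is in fact the nontrivial counting estimate $|A_v|\ge r/(4k)$, which you do not carry out. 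The cleanest way to do both feasibility and the Chernoff step together is the paper's iterative construction.
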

\begin{proof}
We fix $t\in [T]$. We claim that for each $s\in [\kappa/T]\cup \{0\}$ the following hold.
For each $v\in V(G)\setminus Res_t$, there exists an injective map $\pi^t_{v,s}: [i_t^{s}(v) T] \rightarrow [\kappa_v]$ satisfying the following.
\begin{enumerate}
\item[(F$'$4)$_s^t$] 
For all $i\in [r]$ and $\ell \in [s]$, we have 
$$ |\{ v \in V_0^{t,\ell} : i \in C^{*,t}_{v,\pi^t_{v,s} (s')} \text{ for some $s'$ with } 
(i^\ell_t(v)-1)T +1 \leq s' \leq i^{\ell}_t(v) T\}| \leq \epsilon^{4/5}n/r,$$

\item[(F$'$5)$_s^t$] for all $\ell \in [s]$ and $t'\in [T]$, we have that 
$\bigcup_{v\in V_0^{t,\ell} }C^{*,t}_{v,\pi^t_{v,s}((i^\ell_t(v)-1)T+t')} \subseteq V(Q_{t,\ell})$.
\end{enumerate}
Note that both (F$'$4)$_0^t$ and (F$'$5)$_0^t$ hold by letting $\pi^t_{v,0}:\emptyset\rightarrow \emptyset$ be the empty map for all $v\in V(G)\setminus Res_t$.
Assume that for some $s\in [\kappa/T-1]\cup \{0\}$ we have already constructed injective maps $\pi^t_{v,s}$ for all $v\in V(G)\setminus Res_t$ which satisfy (F$'$4)$_s^t$ and (F$'$5)$_s^t$.
For each $v \in V_0^{t,s+1}$, we consider the set 
$$A_v:= \{ s' \in  [\kappa_v]\setminus \pi^t_{v,s}( [i_t^{s}(v)T]) : C^{*,t}_{v,s'}\subseteq V(Q_{t,s+1}) \}.$$
Then we have
\begin{eqnarray}\label{eq: AvAv sizes}
|A_v| &\stackrel{\ref{F'2}}{\geq}& \kappa_v - i_t^s(v)T - (k+1)q (r- |V(Q_{t,s+1})| ) \nonumber\\
&\stackrel{\eqref{eq: Lsv size},\ref{Q3}}{\geq}&  \min\{ d\cdot \kappa , r/(2k) - 20T\epsilon^{1/2} r\} -(k+1)q\epsilon r \geq r/(4k).
\end{eqnarray}
We choose a subset $I_v\subseteq A_v$ of size $T$ uniformly at random.
Then \ref{F'2} implies that for each $i\in V(Q_{t,s+1})$ we have
$$\mathbb{P}[ i\in \bigcup_{s'\in I_v} C^{*,t}_{v,s'} ] \leq (k+1)qT/|A_v| \leq 10qk^2T/r.$$
Thus 
$$\mathbb{E}[ |\{ v\in V_0^{t,s+1} : i\in \bigcup_{s'\in I_v} C^{*,t}_{v,s'} \}| ]
\leq 10qk^2T|V_0^{t,s+1}|/r \stackrel{\eqref{eq: V0 size}}{\leq} \epsilon^{4/5} n/(2r).$$ 
A Chernoff bound (Lemma~\ref{lem: chernoff}) gives us that for each $i\in V(Q_{t,s+1})$
$$\mathbb{P}\Big[ |\{ v\in V_0^{t,s+1} : i\in \bigcup_{s'\in I_v} C^{*,t}_{v,s'}\}|  
\geq \epsilon^{4/5}n/r  \Big] \leq \exp( -\frac{(\epsilon^{4/5} n/(2r))^2}{ 2|V_0^{t,s+1}| } )\stackrel{\eqref{eq: V0 size}}{\leq} e^{-n/r^3}.
$$ 
Since $1- |V(Q_{t,s+1})| e^{-n/r^3}>0$, 
 the union bound implies that there exists a choice of $I_v$ for each $v\in V_0^{t,s+1}$ such that for all $i\in V(Q_{t,s+1})$, we have that
\begin{align}\label{eq: Cts Iv}
 |\{ v\in V_0^{t,s+1} : i\in \bigcup_{s'\in I_v} C^{*,t}_{v,s'}\}|   \leq \epsilon^{4/5}n/r.\end{align}
If $v\in V(G)\setminus (Res_t \cup V_0^{t,s+1})$ (and thus $i^{s+1}_t(v)=i^s_t(v)),$ we let $\pi^t_{v,s+1} := \pi^{t}_{v,s}$.
For each $v\in V_0^{t,s+1}$, we extend $\pi^t_{v,s}$ into $\pi^t_{v,s+1}$ by defining $\pi^t_{v,s+1}: [i_t^{s+1}(v)T]\setminus [i_t^{s}(v)T] \rightarrow I_v$ in an arbitrary injective way.
Then, by the choice of $I_v$, we have that $\pi^t_{v,s+1}$ is an injective map from $[i_t^{s+1}(v)T]$ to $[\kappa_v]$ satisfying (F$'$5)$_{s+1}^t$. Moreover, \eqref{eq: Cts Iv} implies that for any $i\in V(Q_{t,s+1})$, we have 
\begin{align*}
 &|\{ v \in V_0^{t,s+1} : i \in C^{*,t}_{v,\pi^t_{v,s+1} (s')} \text{ for some $s'$ with }
(i^{s+1}_t(v)-1)T +1 \leq s' \leq i^{s+1}_t(v) T\}| \\
 &=  |\{ v\in V_0^{t,s+1} : i\in \bigcup_{s'\in I_v} C^{*,t}_{v,s'}\}|  \stackrel{\eqref{eq: Cts Iv}}{\leq } \epsilon^{4/5}n/r.
\end{align*}
This with (F$'$4)$_{s}^t$ implies (F$'$4)$_{s+1}^t$.
By repeating this, we obtain injective maps $\pi^t_{v,\kappa/T}$ satisfying both (F$'$4)$_{\kappa/T}^t$ and (F$'$5)$_{\kappa/T}^t$.
For each $v\in V(G)\setminus Res_t$, we extend $\pi^t_{v,\kappa/T}$ into a permutation $\pi^t_{v} : [\kappa_v] \rightarrow [\kappa_v]$ by assigning arbitrary values for the remaining values in the domain.
It is easy to see that (F$'$4)$_{\kappa/T}^t$ implies \ref{F'4} and (F$'$5)$_{\kappa/T}^t$ implies \ref{F'5}. We can find such permutations for all $t\in [T]$. Thus such collection satisfies both \ref{F'4} and \ref{F'5}.
\end{proof}

For each $t\in [T]$, let
$$G_t:= G^*_t \cup \bigcup_{s\in [\kappa/T]} G_{t,s}
  \enspace \text{and}\enspace
F'_t:= \bigcup_{v\in V(G)\setminus Res_t} \bigcup_{s\in [\kappa_v] }F'_t(v,s).$$
Then $G_1,\dots, G_{T}, F_{1},\dots, F_{T}, F'_1,\dots, F'_{T}$ form edge-disjoint subgraphs of $G$. 
(Recall that $G_t^*$ was defined in \eqref{eq: G*t def},
$G_{t,s}$ in~\eqref{defGts} and $F'_v(t,s)$ in Claim~\ref{cl: F' construction}.)
\newline

\noindent {\bf Step 2. Partitioning $\cH$.}
Now we will partition $\cH$.
Recall that the graphs in $\cH$ are $\eta^2$-separable. By packing several graphs from $\cH$ with less than $n/4$ edges suitably into a single graph in a way that no edges from distinct graphs intersect each other, we can assume that all but at most one graph in $\cH$ have at least $n/4$ edges, and that all graphs in $\cH$ are $(k,\eta)$-chromatic, $\eta$-separable and have maximum degree at most $\Delta$.%
\COMMENT{We can do this greedily. Choose graphs $H_1,\dots,H_a$ in $\cH$ such that $e(H_i)<n/4$ and $\sum e(H_i)<n/2$, and such that subject to these conditions $a$ is maximal. Let $H'_i$ be obtained from $H_i$ by deleting all its isolated vertices. Thus $\sum |H'_i|\le \sum 2e(H_i) <n$. Let $H$ be the $n$-vertex graph obtained from the vertex disjoint union of all the $H'_i$ by adding $n-\sum |H'_i|$ isolated vertices. Remove $H_1,\dots,H_a$ from $\cH$ and add $H$ instead. We claim that $H$ is $\eta$-separable. To see this, note that at most $1/\eta$ of the $H'_i$ have size at least $\eta n$, and each of these has a separator 
of size at most $\eta^2 n$. So altogether we get a separator of size $\eta n$. Moreover, $H$ is  $(k,\eta)$-chromatic. Indeed, let $H'$ be obtained from $H$ by deleting all its isolated vertices.  Each $H'_i$ has a $(k+1)$-colouring with the smallest colour class of size at most $\eta |H'_i|$, so we get a $(k+1)$-colouring of $H'$ with the smallest colour class of size $\sum \eta |H'_i|=\eta |H'|$. Now by the maximality of $a$ either $e(H)\ge n/4$ and we can continue with the process, or all remaining graphs in 
$\cH$ have at least $n/4$ edges.}
By adding at most $n/4$ edges to at most one graph if necessary, we can then assume that all graphs in $\cH$ have at least $n/4$ edges.
Moreover, if $e(\cH)$ is too small, we can add some copies of $n$-vertex paths 
to $\cH$ to assume that 
\begin{align*}
\epsilon n^2 \leq e(\cH)\stackrel{(iii)}{\leq} (1-\nu) e(G) + n/4.
\end{align*}
We partition $\mathcal{H}$ into $\kappa$ collections $\mathcal{H}_{1,1},\dots, \mathcal{H}_{T,\kappa/T}$ such that for all $t \in [T]$ and $s \in [\kappa/T]$, we have 
\begin{align}\label{size1}
n^{7/4} \stackrel{\ref{Q2}}{\leq} \frac{\epsilon n^2}{\kappa} -\Delta n \leq e(\mathcal{H}_{t,s}) < \frac{1}{\kappa}(1-\nu)e(G) + 2\Delta n \stackrel{(i), \ref{Q2}}{\leq} \frac{(1-2\nu/3) (k-1) n^2}{ 2 qr}.
\end{align}
Indeed, this is possible since $e(H)\leq \Delta n$ for all $H\in \cH$. Now, we are ready to construct the desired packing. \newline

\noindent {\bf Step 3. Construction of packings into the host graphs.} 
As $G_1,\dots, G_{T}, F_1,\dots, F_{T}, F'_1,\dots, F'_{T}$ are edge-disjoint subgraphs of $G$, and  $\mathcal{H}_{1,1},\dots, \mathcal{H}_{T,\kappa/T}$ is a partition of $\mathcal{H}$, 
it suffices to show that for each $t\in [T]$, we can pack 
$\mathcal{H}_t:= \bigcup_{s=1}^{\kappa/T}\mathcal{H}_{t,s}$ into $G_t\cup\bigcup_{s\in [\kappa/T]} F_{t,s}\cup F'_{t}$. (Recall from \eqref{eq: Fts definition} that $F_{t,1},\dots, F_{t,\kappa/T}$ are edge-disjoint subgraphs of $F_t$.)
We fix $t\in [T]$ and will apply Lemma~\ref{embedone} $\kappa/T$ times to show that such a packing exists.

Assume that for some $s$ with $0\leq s\leq \kappa/T-1$, we have already defined a function $\phi_{s}$ packing $\bigcup_{s'=1}^{s} \mathcal{H}_{t,s'}$ into $G_t\cup F_t\cup F'_t$ and satisfying the following,
where $\Phi^{s} :=  \bigcup_{s'=1}^{s} \phi_{s}(\mathcal{H}_{t,s'})$ and $j_t^s(u)$ is defined in \eqref{Itv} and $G^*_t$ is defined in \eqref{eq: G*t def}.
\begin{enumerate}[label={\text{\rm(G\arabic*)$_s$}}]
\item \label{G1} For each $u\in Res_t$, we have 
$d_{\Phi^{s}\cap G^*_t}(u) \leq \frac{4 k \Delta j_t^s(u) n}{qr} + \frac{\epsilon^{1/9} s n}{r}$, 
\item\label{G2} for each $i\in [r]$, we have 
$e_{\Phi^{s}\cap G^*_t}(V_i\backslash V_i^t, Res_t) \leq \frac{\epsilon^{1/3} s n^2}{r^2}$,

\item\label{G3} for $s'\in [\kappa/T]\setminus [s]$, we have $E(\Phi^{s})\cap (E(G_{t,s'})\cup E(F_{t,s'})) =\emptyset$,
\item \label{G4} for $v\in V(G)\setminus Res_t$, $s''\in [\kappa_v]$ with $s''>i_t^s(v)\cdot T$, we have
$E(\Phi^{s}) \cap F'_t(v,\pi_v^t(s'')) =\emptyset$.
\end{enumerate}
Note that (G1)$_{0}$--(G4)$_{0}$ trivially hold with an empty packing $\phi_0: \emptyset\rightarrow \emptyset$.
For each $t' \in [T]$ and $v\in V(G)\setminus Res_t$, let $\ell(v,t'):= \pi^t_v(  (i_t^{s+1}(v)-1)T+ t')$.
(Note that $\ell(v, t')$ is well-defined since $(i_t^{s+1}(v)-1)T + t' \le \kappa_v$ by \eqref{eq: Lsv size}.) 
Let
\begin{align}\label{eq: def V*G*}
V &:= \bigcup_{i\in V(Q_{t,s+1}) } V_{i}^{t,s+1}, \enspace \enspace U:= \bigcup_{i\in V(Q_{t,s+1})} U_{i}^{t,s+1}, \\
\hat{G}&:= (G_{t,s+1}[V]\cup G^*_t[V\cup Res_t]) \setminus E(\Phi^{s}), \enspace \text{and} \enspace
\hat{F'}:=\hspace{-0.3cm} \bigcup_{v\in V_0^{t,s+1}}\bigcup_{t'\in [T]}F'_{t}(v, \ell(v,t'))[\{v\},U].
\label{eq: Def G hat F hat}
\end{align}
Note that \ref{G3} implies that $E(F_{t,s+1}) \cap E(\Phi^{s}) =\emptyset$.
Let $\hat{R}$ be the graph on vertex set $V(Q_{t,s+1})$ with 
$$E(\hat{R}):= \{ ij \in E(R[V(Q_{t,s+1})] ): |E_{G^*_t} (V_i, V_j) \cap E(\Phi^{s})| < \epsilon^{1/10} n^2 /r^2 \}.$$

We wish to apply Lemma~\ref{embedone} with the following objects and parameters.\newline

{\small
\noindent
{ 
\begin{tabular}{c|c|c|c|c|c|c|c|c}
object/parameter& $\hat{G}$ & $F_{t,s+1}[V,U]$ & $\hat{F'}$ & $V^{t,s+1}_0$ & $U^{t,s+1}_0$ & $V^{t,s+1}_i$ & $U_i^{t,s+1}$ & $\hat{R}$ 
\\ \hline
playing the role of & $G$ & $F$ & $F'$ & $V_0$ & $U_0$ & $V_i$ & $U_i$ & $R$  \\ \hline \hline
object/parameter  & $1/q$ & $\cH_{t,s+1}$ & $d$ & $C^{*,t}_{v,\ell(v,t')}$ & $C^t_{v,\ell(v,t')}$ & $F'_{t}(v,\ell(v,t'))[\{v\},U]$  & $k$ & $\Delta$ 
\\ \hline
playing the role of & $\alpha$ & $\cH$  & $d$  & $C^*_{v,t}$ & $C_{v,t}$ & $F'_{v,t}$ & $k$ & $\Delta$ \\ \hline \hline
object/parameter & $Q_{t,s+1}$ & $\eta$ & $25 \epsilon$ & $\sigma/2$ &  $T$ & $\nu/2$ & $m$
\\ \hline
playing the role of & $Q$ & $\eta$ & $\epsilon$ & $\sigma$ & $T$& $\nu$  & $n'$
\end{tabular}
}}\newline \vspace{0.2cm}

Thus $Res_t\setminus U_0^{t,s+1}$ plays the role of $U = \bigcup_{i=1}^r U_i$ in Lemma \ref{embedone}, and $t' \in [T]$ stands for $t \in [T].$
By \eqref{eq: hierarchy}, \eqref{eq: Vi size}, \eqref{eq: V0 size},  \eqref{eq: Uits sizess}, \ref{Q3} and \ref{F'5} 
we have appropriate objects and parameters as well as the hierarchy of constants required in Lemma~\ref{embedone}.    
 Now we show that (A1)$_{\ref{embedone}}$--(A9)$_{\ref{embedone}}$ hold. 
(A1)$_{\ref{embedone}}$ is obvious from Theorem~\ref{thm:main}~(ii) and our assumption in Step 2.
(A2)$_{\ref{embedone}}$ holds by \eqref{size1}.
(A3)$_{\ref{embedone}}$ follows from  Claim~\ref{cl: Hisell super-regular} and \ref{G3}.\COMMENT{$G^*_t[V \cup Res_t]$ plays no role for (A3)$_{\ref{embedone}}$ because every edge of $G^*_t$ has $\ge 1$ endpoint in $Res_t$ and so $e_{G^*_t}(V^{t, s+1}_i, V^{t, s+1}_j) = 0$}
Consider $ij \in E(\hat{R})$, then 
$\hat{G}[U^{t,s+1}_i, U^{t,s+1}_j] = G^*_t[U^{t,s+1}_i,U^{t,s+1}_j] \setminus E(\Phi^s).$
Since $U^{t,s+1}_i \subseteq V_i $ and $U^{t,s+1}_j\subseteq V_j $,
the properties \eqref{eq: Uits sizess}, \eqref{eq: F* e-regular} and the definition of $\hat{R}$ imply that $$e_{G^*_{t}}(U^{t,s+1}_i,U^{t,s+1}_j) - e_{\Phi^s\cap G_t^*}(V_i, V_j)
 \geq  (1- \epsilon^{1/15}) e_{G^*_{t}}(U^{t,s+1}_i,U^{t,s+1}_j).$$
Thus, Proposition~\ref{prop: reg subgraph} with \eqref{eq: F* e-regular}  implies that $\hat{G}[U^{t,s+1}_i, U^{t,s+1}_j]$ is $(\epsilon^{1/50},(d^2))^+$-regular.
The calculation for $\hat{G}[V_i^{t,s+1}, U_j^{t,s+1}]$ is similar.\COMMENT{Similarly, since $V^{t,s+1}_i \subseteq V_i $,
by \eqref{eq: Vi size}, \eqref{eq: F* e-regular},  \ref{Res2}, and \ref{Q2}, we have 
$$e_{G^*_{t}}(V^{t,s+1}_i,U^{t,s+1}_j) - e_{\Phi^s\cap G_t^*}(V^{t,s+1}_i, U^{t,s+1}_j)
\geq e_{G^*_{t}}(V^{t,s+1}_i,U^{t,s+1}_j)- \epsilon^{1/10} n^2/r^2 
 \geq  (1- \epsilon^{1/15}) e_{G^*_{t}}(V^{t, s+1}_i,U^{t,s+1}_j).$$
 Thus by Proposition~\ref{prop: reg subgraph} with \eqref{eq: F* e-regular}, the graph $\hat{G}[V^{t,s+1}_i,U^{t,s+1}_j ] = G^*_{t}[ V^{t,s+1}_i,U^{t,s+1}_j]\setminus E(\Phi^s)$ is $(\epsilon^{1/50},(d^2))^+$-regular.}
Thus (A4)$_{\ref{embedone}}$ holds with the above objects and parameters. 
By \ref{G1}, for each $i \in [r]$ we have \begin{eqnarray}\label{edge calc}
e_{\phi^s \cap G^*_t}(V_i^t, \bigcup_{j \in [r]\backslash \{i\}} V_j) \le \sum_{v \in V_i^t} \Big(\frac{4k\Delta j^s_t(v)n}{qr} + \frac{\epsilon^{1/9}sn}{r}\Big) \stackrel{\ref{Q2}, \eqref{eq: Jsv size}, \ref{Res2}}{\le} \frac{\epsilon^{1/9}n^2}{r}. \end{eqnarray} 
Thus, for $i\in V(Q_{t,s+1}) =V(\hat{R})$, we have
\begin{align*}
d_{R}(i) - d_{\hat{R}}(i) &\leq \frac{ e_{\Phi^s\cap G^*_t}(V_i\backslash V_i^t, Res_t ) +  e_{\Phi^s\cap G^*_t}(V_i^t, \bigcup_{j \in [r]\backslash\{i\}} V_j)}{\epsilon^{1/10} n^2/r^2} + |V(R)\setminus V(\hat{R})|\\
 &\stackrel{\ref{G2},\ref{Q3}, \eqref{edge calc}}{\leq } 
\frac{ \epsilon^{1/3}s n^2/r^2 + \epsilon^{1/9}n^2/r}{\epsilon^{1/10}n^2/r^2} + \epsilon r   \stackrel{\ref{Q2}}{\leq} \epsilon^{1/100} r.
\end{align*}
This with \eqref{eq: R min deg} and \eqref{eq: deltak 1-1/k} implies that (A5)$_{\ref{embedone}}$ holds for $\hat{R}$.
For all $ij\in E(Q_{t,s+1})$ and $u\in U_i^{t,s+1}$, by \eqref{eq: Ft degree v},
we have 
$$d_{F_{t,s+1},V_j^{t,s+1}}(u) \geq 2d^2  |V_j\setminus Res_t|/3 \stackrel{\ref{Res2},\eqref{eq: Vi size}}{\geq} d^3 m .$$
Thus (A6)$_{\ref{embedone}}$ holds.
By \ref{F'1}, \ref{F'4} and the fact that $i_t^{s+1}(v) = i_t^{s}(v)+1$ for all $v \in V_0^{t,s}$, (A8)$_{\ref{embedone}}$ holds (for $C^{*,t}_{v,\ell(v,t')}$, $C^t_{v,\ell(v,t')}$ and all $v\in V_0^{t,s}$).
If $v \in V_0^{t,s}, t' \in [T]$ and $i\in C^t_{v, \ell(v, t')} \subseteq C^{*,t}_{v,\ell(v,t')}$ then \ref{F'5} implies that $i\in V(Q_{t,s+1})$.
Moreover, by \eqref{eq: Uits sizess} we have $|U^{t,s+1}_{i}| \ge |V_i^t| - 5k\epsilon^2n/r$. 
Together with \ref{F'3} this implies that $d_{F'_{t'}(v,\ell(v,t')), U^{t,s+1}_i}(v) \geq (1- \epsilon)|U_i^{t,s+1}|/q$. 
Thus (A7)$_{\ref{embedone}}$ holds.
To check (A9)$_{\ref{embedone}}$, note that
for each $u\in U_0^{t,s+1}$,
we have 
$$ d_{G^*_t\cap \Phi^s}(u)
\stackrel{\ref{G1}}{\leq} 4k\Delta j_t^s(u) n/(qr) +\epsilon^{1/9}sn/r  \stackrel{\ref{Q2},\eqref{eq: Jsv size}}{\leq}   \epsilon^{1/10} n.$$
Thus, 
\begin{align*}
|\{ i\in V(Q_{t,s+1}) : d_{\hat{G},V_i^{t,s+1}}(u) \geq d^2m/3 \}| &\geq
|\{ i\in V(Q_{t,s+1}) : d_{G^*_t,V_i^{t,s+1}}(u) \geq d^2m/2 \}|  - 
\frac{ d_{G^*_t\cap \Phi^s}(u) 
}{ d^2m/6} \\
& \hspace{-0.2cm} \stackrel{\eqref{eq: v degree good in G*t}}{\geq} 
(1- 1/k + \sigma/2)r - \frac{ \epsilon^{1/10} n }{d^2m/6}
\stackrel{\eqref{eq: Vi size}}{\geq} (1-1/k + \sigma/3)r.
\end{align*}
This implies that 
$$|\{ i\in V(Q_{t,s+1}) : d_{\hat{G},V_j^{t,s+1}}(u) \geq d^3 m \text{ for all } j\in N_{Q_{t,s+1}}(i) \}|\geq \sigma^2 r.$$
This shows that (A9)$_{\ref{embedone}}$ holds.
Hence, by Lemma~\ref{embedone}, we obtain a function $\psi_{s+1}$ packing $\mathcal{H}_{t,s+1}$ into $\hat{G}\cup F_{t,s+1}\cup \hat{F'}$ and satisfying the following.
\begin{enumerate}[label={\text{\rm(B\arabic*)}}]
\item \label{B1} $\Delta(\psi_{s+1}(\mathcal{H}_{t,s+1})) \le 4k\Delta n/(qr),$ 
\item \label{B2} for each $u\in Res_t\setminus U_0^{t,s+1}$, we have
$d_{\psi_{s+1}(\mathcal{H}_{t,s+1})\cap \hat{G}}(u) \le 10 \Delta \epsilon^{1/8}n/r,$
\item\label{B3} for each $i \in V(Q_{t,s})$, we have $e_{\psi_{s+1}(\mathcal{H}_{t,s+1})\cap \hat{G}}(V^{t,s+1}_i, Res_t) < 10 \epsilon^{1/2} n^2/r^2.$
\end{enumerate}
Moreover, \ref{G3} with \ref{G4} implies that 
$\psi_{s+1}(\mathcal{H}_{t,s+1})$ is edge-disjoint from $\Phi^{s}$, thus the map
$\phi_{s+1}:= \phi_{s}\cup  \psi_{s+1}$ 
packs $\bigcup_{s'=1}^{s+1}\mathcal{H}_{t,s'}$ into $G_{t}\cup\bigcup_{s'=1}^{\kappa/T} F_{t,s'}\cup F'_{t}$.
Now it remains to show that $\phi_{s+1}$ satisfies (G1)$_{s+1}$--(G4)$_{s+1}$.

Consider any vertex $u\in Res_t$. 
If $u\in U_0^{t,s+1}$, then we know that $j_t^{s+1}(u) = j_t^{s}(u)+1$.
Thus  \ref{G1} together with \ref{B1} implies (G1)$_{s+1}$ for the vertex $u$.
If $u\in Res_t \setminus U_0^{t,s+1}$, then  we have $j_t^{s+1}(u) = j_t^{s}(u)$, thus 
\ref{G1} together with \ref{B2} implies (G1)$_{s+1}$.

For each $i\in [r]$, \eqref{eq: def V*G*} implies that the vertices in $V_i\setminus (V^{t,s+1}_i \cup V_i^t) \subseteq V_0^{t,s+1}$ are not incident to any edges in $\Phi^{s+1}\cap G^*_t$. Thus it is easy to see that  \ref{G2} together with \ref{B3} implies (G2)$_{s+1}$.
As $\psi_{s+1}$ packs $\mathcal{H}_{t,s+1}$ into $\hat{G}\cup F_{t,s+1} \cup \hat{F'}$, \eqref{eq: Def G hat F hat} together with \ref{G3} implies (G3)$_{s+1}$.
 Moreover, we have
$$
i^{s+1}_t(v) = \left\{\begin{array}{ll}
i^s_t(v)+ 1 & \text{ if } v\in V_0^{t,s+1},\\
i^s_t(v) & \text{ otherwise.}
\end{array}\right.
$$
Thus, \eqref{eq: Def G hat F hat} together with \ref{G4} and the definition of $\ell(v,t')$ implies (G4)$_{s+1}$.
 
By repeating this for each $s\in [\kappa/T]$ in order, we obtain a function $\phi_{\kappa/T}$ which packs $\mathcal{H}_{t}$ into $G_t\cup F_t\cup F'_t$. By taking the union of such functions over all $t\in [T]$, we obtain a desired function packing $\mathcal{H}$ into $\displaystyle \bigcup_{t\in [T]} G_t\cup F_t\cup F'_t \subseteq G$. This completes the proof.
\end{proof}

The proof of Theorem~\ref{thm:2}, follows almost exactly the same lines as that of  Theorem~\ref{thm:main}, with one very minor difference.
Indeed, the only place where we need the condition that $G$ is almost regular
is when we apply Lemma~\ref{lem: factor decomposition} in Step 1
to obtain (Q1)--(Q5).
Thus to prove Theorem~\ref{thm:2}, we only need to replace the application of Lemma~\ref{lem: factor decomposition} with an application of the following result.
(Note that  (B1) below implies both (Q3) and (Q4).)

\begin{lemma}\label{lem: factor decomposition2}
Suppose $n, q, T \in \mathbb{N}$ with $0< 1/n \ll \epsilon, 1/T, 1/q, \nu \le 1/2$ and $0<1/n \ll \nu < \sigma/2 < 1$ and $\delta= 1/2+\sigma$ and $q$ divides $T$.
Let $G$ be an $n$-vertex multi-graph with edge-multiplicity at most $q$, such that for all $v\in V(G)$ we have  
$d_G(v) \geq q \delta n.$

Then there exists a subset $V'\subseteq V(G)$ with $|V'|\leq 1$ and $|V(G)\backslash V'|$ being even, and there exist pairwise edge-disjoint matchings $F_{1,1},\dots, F_{1,\kappa}, F_{2,1}, \dots, F_{T,\kappa}$ of $G$ with $\kappa = \frac{(\delta + \sqrt{2\delta-1} -\nu)qn}{2T}\pm 1$ satisfying the following. 
\begin{enumerate}
\item[\rm (B1)] For each $(t',i)\in [T]\times[\kappa]$, we have that $V(F_{t',i}) = V(G) \backslash V'$,
\item[\rm (B2)] for all $t'\in [T]$ and $u,v \in V(G)$, we have 
$|\{ i \in [\kappa] :  u \in N_{F_{t',i}}(v)  \}|\leq 1.$
\end{enumerate}
\end{lemma}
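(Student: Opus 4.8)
\textbf{Proof proposal for Lemma~\ref{lem: factor decomposition2}.}
The plan is to follow the same two-stage strategy as in the proof of Lemma~\ref{lem: factor decomposition}: first split the multi-graph $G$ into $q$ simple graphs $G^1,\dots,G^q$ of roughly equal degree using a random edge-colouring, and then decompose (almost all of) each $G^c$ into perfect matchings, finally regrouping the matchings so that parallel edges of $G$ land in different groups. As in the earlier proof it suffices to treat the case $T=q$, the general case following by splitting each group into $T/q$ equal parts by relabelling; I would begin with that reduction.

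For the edge-colouring step I would repeat verbatim the argument from Lemma~\ref{lem: factor decomposition}: for each $\{u,v\}\in\binom{V(G)}{2}$ pick a uniformly random $e_G(u,v)$-subset $A_{\{u,v\}}\subseteq[q]$ and colour the parallel edges between $u$ and $v$ bijectively onto $A_{\{u,v\}}$. This ensures each $G^c$ is simple, and a Chernoff bound (Lemma~\ref{lem: chernoff}) together with a union bound shows that with positive probability every vertex $v$ satisfies $d_{G^c}(v)=\frac{1}{q}d_G(v)\pm\xi n$ for some auxiliary $0<1/n\ll\xi\ll\nu$, and in particular $\delta(G^c)\geq(\delta-\xi)n$ for all $c$. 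Fix such a colouring.

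The new ingredient is the matching decomposition. Here, instead of invoking Theorem~\ref{thm: hamilton decomp} and the Hajnal--Szemer\'edi / Pippenger--Spencer machinery to get $K_k$-factors, I would use the fact that a graph $H$ on an even number of vertices with $\delta(H)\geq(1/2+\gamma)n$ has $(\delta(H)+\sqrt{2\delta(H)-1}-o(1))n/2$ edge-disjoint perfect matchings. More precisely, the number of edges of the densest even-regular spanning subgraph of such an $H$ is $(\delta(H)+\sqrt{2\delta(H)-1})n^2/4\pm o(n^2)$ by the result of~\cite{CKO} quoted in the discussion after Theorem~\ref{thm:2} (and Theorem~\ref{thm: hamilton decomp} combined with Tutte's $r$-factor theorem gives the existence of such a regular subgraph), and any even-regular graph decomposes into perfect matchings. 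Applying this to each $G^c$ restricted to $V(G)\setminus V'$, where $V'$ is empty if $n$ is even and a single vertex otherwise, yields for each $c\in[q]$ a collection of at least $\kappa':=\frac{(\delta+\sqrt{2\delta-1}-\nu/2)n}{2}$ pairwise edge-disjoint perfect matchings $M^c_1,\dots,M^c_{\kappa'}$ of $G^c-V'$ (using $\xi\ll\nu$ to absorb the error). Set $\kappa:=\kappa'$; note $\kappa=\frac{(\delta+\sqrt{2\delta-1}-\nu)qn}{2T}\pm1$ since $T=q$, after adjusting $\nu$ slightly and reconciling parities. For each $(c,i)\in[q]\times[\kappa]$ put $F_{c,i}:=M^c_i$. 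Since the $G^c$ are pairwise edge-disjoint, $\{F_{c,i}\}$ is a collection of pairwise edge-disjoint perfect matchings of $G-V'$, giving (B1); and (B2) is immediate because each $M^c_i$ is a (simple) matching, so a given pair $u,v$ is an edge of at most one $F_{c,i}$ for fixed $c$.

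The main obstacle, such as it is, is bookkeeping the parity and error-term juggling so that $\kappa$ comes out exactly as claimed: one must ensure $|V(G)\setminus V'|$ is even, that the even-regular subgraphs produced by Theorem~\ref{thm: hamilton decomp} have degree as close as possible to the extremal value $(\delta+\sqrt{2\delta-1}-\nu)n/2$, and that after relabelling into $T$ groups (when $T>q$) the group sizes are all equal to the stated $\kappa$ up to the additive $\pm1$. None of this is conceptually hard; it is the same kind of estimate as in Lemma~\ref{lem: factor decomposition}, just with "perfect matching" in place of "almost $K_k$-factor" and with the sharper edge count from~\cite{CKO} replacing the $(1-1/(k+1))$-type bound. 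I expect the write-up to be shorter than that of Lemma~\ref{lem: factor decomposition} precisely because we no longer need the Pippenger--Spencer nibble or the passage to an "almost" factor: a genuine perfect matching decomposition of an even-regular graph is available directly.
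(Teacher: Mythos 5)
Your overall structure (the random edge-colouring into simple graphs $G^c$, the reduction to $T=q$, the choice of $V'$ with $|V(G)\setminus V'|$ even, and the target count of matchings) is the same as the paper's, but the middle step — producing the perfect matchings — rests on two incorrect assertions.

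First, the claim ``any even-regular graph decomposes into perfect matchings'' is false. An even-regular graph on an even number of vertices need not contain even a single perfect matching: the vertex-disjoint union of two triangles is a $2$-regular graph on $6$ vertices with no perfect matching. Decomposing a dense $r$-regular graph into perfect matchings is precisely the content of the $1$-factorization theorem from~\cite{CKOLT}, which requires $r\gtrsim n/2$. But the regular spanning subgraph you would obtain here has degree roughly $(\delta+\sqrt{2\delta-1})n/2\approx n/4 + \Theta(\sqrt{\sigma})\,n$, which is bounded away from $n/2$ for small $\sigma$, so that theorem does not apply and there is no way to conclude a $1$-factorization from regularity alone.

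Second, you invoke Theorem~\ref{thm: hamilton decomp} to obtain the regular spanning subgraph, but that result requires $\Delta(G)\le\delta(G)+\gamma^2 n$, i.e.\ near-regularity. Theorem~\ref{thm:2} (and hence Lemma~\ref{lem: factor decomposition2}) only imposes a \emph{minimum} degree condition on $G$, so the simple graphs $G^c$ you produce need not satisfy this near-regularity, and Theorem~\ref{thm: hamilton decomp} cannot be applied. (It would also only yield $r\le\delta(G^c)-\gamma n$ under that additional hypothesis, not the sharp CKO value.)

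The paper avoids both issues by skipping the detour through regular spanning subgraphs entirely: it applies the Hamilton cycle result of~\cite{CKO} directly to each $G^c_*:=G^c[V(G)\setminus V']$ — this only needs the minimum degree hypothesis — to obtain $\geq(\delta-\alpha n+\sqrt{n(2\delta-n)})/4$ edge-disjoint Hamilton cycles, and then observes that each Hamilton cycle on the even vertex set $V(G)\setminus V'$ decomposes into two perfect matchings. Your stated ``fact'' (that graphs of this minimum degree on an even vertex set have $\approx(\delta+\sqrt{2\delta-1})n/2$ edge-disjoint perfect matchings) is correct and is exactly what the Hamilton cycle route produces, but the chain of reasoning you gave for it does not work; you need to cite the Hamilton cycle statement of~\cite{CKO} rather than arguing via a regular spanning subgraph.
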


The proof of the above lemma is very similar (but simpler) than that of 
Lemma~\ref{lem: factor decomposition}. We proceed as in the proof of Lemma~\ref{lem: factor decomposition} to obtain simple graphs $G^c$ with $\delta(G^c)>\delta n-\nu^2 n$. We let $V'\subseteq V(G)$ be such that $|V'|\leq 1$ and $|V(G)\backslash V'|$ is even. The difference is that we now apply the following result of~\cite{CKO} to each $G_*^c:=G^c[V(G)\backslash V']$
to obtain the desired matchings $M_i^c$:
for every $\alpha>0$,  any sufficiently large $n$-vertex graph with minimum degree $\delta \ge (1/2 + \alpha)n$ contains at least $ (\delta - \alpha n + \sqrt{n(2\delta - n)})/4$ edge-disjoint Hamilton cycles.

\section*{Acknowledgement}

We are grateful to the referee for helpful comments on an earlier version.

  \bigskip

{\footnotesize \obeylines \parindent=0pt
	
	\begin{tabular}{lllll}
        Padraig Condon,  Daniela K\"uhn and Deryk Osthus	&\ & 	Jaehoon Kim       \\
		School of Mathematics &\ & Mathematics Institute		  		 	 \\
		University of Birmingham &\ & University of Warwick 	  			 	 \\
		Birmingham &\ & Coventry                             			 \\
        B15 2TT      &\ & 		CV4 7AL				  				\\
		UK &\ & UK						      \\
	\end{tabular}
}

\begin{flushleft}
	{\it{E-mail addresses}: \tt{\{pxc644, d.kuhn, d.osthus\}@bham.ac.uk}, Jaehoon.Kim.1@warwick.ac.uk}.
\end{flushleft}

\end{document}